\documentclass[11pt]{amsart}
\usepackage{amscd,amsmath,amssymb,amsfonts,verbatim}
\usepackage[cmtip, all]{xy}
\usepackage{MnSymbol}
\usepackage{xcolor}
\usepackage{tikz}
\usepackage{tikz-cd}
\usepackage{comment}

\usepackage[colorlinks]{hyperref}
\hypersetup{
  colorlinks=true,
  citecolor=red,
  linkcolor=blue,
  urlcolor=cyan}

\newtheorem{thm}{Theorem}[section]
\newtheorem{prop}[thm]{Proposition}
\newtheorem{lem}[thm]{Lemma}
\newtheorem{cor}[thm]{Corollary}
\newtheorem{conj}[thm]{Conjecture}
\theoremstyle{definition}

\newtheorem{defn}[thm]{Definition}
\theoremstyle{remark}
\newtheorem{remk}[thm]{Remark}
\newtheorem{remks}[thm]{Remarks}

\newtheorem{exm}[thm]{Example}
\newtheorem{exms}[thm]{Examples}
\newtheorem{notat}[thm]{Notation}
\numberwithin{equation}{section}

\newenvironment{Def}{\begin{defn}}%
{\hfill$\square$\end{defn}}
\newenvironment{rem}{\begin{remk}}%
{\hfill$\square$\end{remk}}
{\hfill$\square$\end{remks}}
{\hfill$\square$\end{exm}}
{\hfill$\square$\end{exms}}
{\hfill$\square$\end{notat}}

\newcommand{\thmref}{Theorem~\ref}
\newcommand{\propref}{Proposition~\ref}

\newcommand{\lemref}{Lemma~\ref}

\newcommand{\sX}{{\mathcal X}}

\newcommand{\sZ}{{\mathcal Z}}

\newcommand{\C}{{\mathbb C}}

\newcommand{\F}{{\mathbb F}}

\newcommand{\M}{{\mathbb M}}

\renewcommand{\P}{{\mathbb P}}
\newcommand{\Q}{{\mathbb Q}}

\newcommand{\Z}{{\mathbb Z}}

\newcommand{\ff}{{\mathfrak f}}

\newcommand{\Ker}{{\rm Ker}}

\newcommand{\CH}{{\rm CH}}

\newcommand{\inj}{\hookrightarrow}
\newcommand{\red}{{\rm red}}

\newcommand{\Hom}{{\rm Hom}}

\newcommand{\Spec}{{\rm Spec \,}}
\newcommand{\sing}{{\rm sing}}

\newcommand{\Char}{{\rm char}}
\newcommand{\cdh}{{\rm cdh}}

\newcommand{\Sch}{{\operatorname{\mathbf{Sch}}}}

\newcommand{\Sm}{{\mathbf{Sm}}}

\newcommand{\dm}{{\mathbf{DM}}}

\newcommand{\et}{{\text{\'et}}}

\newcommand{\ds}{{/\kern-3pt/}}

\newcommand{\Tor}{{\operatorname{Tor}}}

\newcommand{\un}{\underline}

\renewcommand{\dim}{\text{\rm dim}}

\newcommand{\tuborg}{\left\{\begin{array}{ll}}
\newcommand{\sluttuborg}{\end{array}\right.}

\newcommand{\zar}{{\rm zar}}
\newcommand{\nis}{{\rm nis}}

\newcommand{\reg}{{\rm reg}}

\newcommand{\tor}{{\rm tor}}

\newcommand{\etl}{{\acute{e}t}}

\newcommand{\hs}{\heartsuit}

\newcounter{elno}

\newcounter{elno-abc}   

\newcounter{elno-abc-prime}

\begin{document}

\title{Motivic Cohomology and K-groups of varieties over higher local fields}
\author{Rahul Gupta, Amalendu Krishna, Jitendra Rathore}
\address{Institute of Mathematical Sciences, A CI of Homi Bhabha National Institute, 4th Cross St., CIT Campus, Tharamani, Chennai,
  600113, India.} 
\email{rahulgupta@imsc.res.in}
\address{Department of Mathematics at the University of California, Santa Barbara, California, USA.}
\email{amalenduk@ucsb.edu}
\address{Department of Mathematics at the University of California, Santa Barbara, California, USA.}
\email{jitendra@ucsb.edu}

\keywords{Local fields, 0-cycles, Algebraic $K$-Theory, Milnor $K$-theory}

\subjclass[2020]{Primary 14C25, 14F22; Secondary 14F30, 19D45}

\maketitle

\begin{quote}\emph{Abstract.}
    For quasi-projective varieties over a higher local field $k_N$, we prove that its $K$-groups, above a suitable degree, are divisible-by-finite. 
   We also prove the finiteness of the prime-to-$p$ torsion subgroup of certain higher Chow groups for smooth projective varieties over such fields, where $p$ denotes the final residue characteristic of $k_N$. As an application, we show that the kernel of the tame reciprocity map is uniquely $p'$-divisible. A key ingredient in achieving these results is the finiteness of \'etale cohomology groups over such fields.

\end{quote}
\setcounter{tocdepth}{1}
\tableofcontents

\section{Introduction}\label{sec:Intro}

\subsection{Background} The algebraic $K$-groups of an algebraic variety $X$ are important invariants attached to $X$ and they encode deep arithmetic and geometric information about $X$. One of the central themes in algebraic $K$-theory is to understand these groups for nice classes of algebraic varieties. These groups are very difficult to compute, even for fields. It was a remarkable achievement when Quillen \cite{Quillen-Annals} computed all $K$-groups of finite fields.

One of the deep conjectures in the subject involves the Bass conjecture and the Parshin conjecture. The former predicts that for a smooth projective variety $X$ over a finite field, the algebraic $K$-groups $K_{i}(X)$ are finitely generated for all $i \geq 0$, while the latter predicts that $K_{i}(X)$ are torsion for $i > 0$. Over several decades, many formulations in terms of motivic cohomology, as well as their relations to other known conjectures, have been established \cite{Geisser-Ktheory}, \cite{Geisser-EMS}, \cite{Geisser-Commentarii}, \cite{Kahn-ENS}. These conjecture are far from being resolved, except in dimensions at most one (\cite[Chapter 6, Theorem 6.1]{Weibel-K}),  but there are some partial results in higher dimension (see \cite{Soule}, \cite{Kahn-ENS}).

Passing from finite fields to local fields, in contrast, the $K$-groups are not necessarily finitely generated. In \cite{Tate-Kyoto}, Tate showed that $K_{2}(F)$ is the direct sum of a finite group and a divisible
group, where $F$ is a non-archimedean local field. This was later appropriately extended to higher algebraic $K$-groups for non-archimedean local fields (see \cite[Chapter VI, Section 7]{Weibel-K}). In the archimedian setting, the structure of $K$-groups for the archimedean local fields $\mathbb{R}$ and $\mathbb{C}$ has also been established (see \cite[Chapter VI, Theorem 1.6, Theorem 3.1]{Weibel-K}). For smooth varieties over $\C$,
Pedrini-Weibel \cite{Weibel-Pedrini-surface}, \cite{Weibel-Pedrini} proved that algebraic $K$-groups of a smooth variety $X$ over $\C$, above the dimension of $X$, are divisible-by-finite. 
In other words, they proved that $K_n(X)$ is the direct sum of a finite group and a divisible group when $n > {\rm dim}(X)$.

In this article, we study the $K$-groups of quasi-projective varieties over finite fields, non-archimedean local fields, and more generally, higher local fields (in the sense of Kato), and establish analogous divisible-by-finite results. The aforementioned results of Pedrini and Weibel are among the primary motivations for our work. We now state our main results.

\subsection{Algebraic $K$-groups for quasi-projective varieties over $N$-local fields} Recall that for $N \geq 0$,  an $N$-local field is defined inductively so that  it is
a complete discrete valued field $k_N$ such that its residue field $k_{N-1}$ is an $(N-1)$-local field, where $0$-local field is a finite field. 
In particular, $k_N$ is an $N$-local field if and only if there exists an $(N+1)$-tuple  $(k_N, k_{N-1}, \dots, k_1, k_0)$ of fields such that each $k_i$ is a complete discrete valuation field, $k_{i-1}$ is the residue field of $k_i$, and $k_0$ is a finite field.
We let $p>0$ denote the  characteristic of $k_0$. 
Our first main result of this paper is the following.

\begin{thm}\label{thm:Main:structure of algebraic-K-group over N-local}
    Let $X$ be a connected quasi-projective variety of dimension $d\geq 0$ over $k_{N}$. Then for any integer $m \geq d+N+1$, we have
    \begin{center}
        $K_{m}(X) \cong F \oplus D$,
    \end{center}
    where $F$ is a finite group and $D$ is  a uniquely $p'$-divisible group. In particular, $K_{m}(X)_{\tor}$ (except $p$-torsion) is finite. Moreover, if ${\rm char}(k_N) = {\rm char}(k_0)$, then $D$ is a uniquely divisible group and $K_{m}(X)_{\tor}$ is finite. 
\end{thm}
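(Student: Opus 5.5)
Let $\ell \neq p$ be a prime. The plan is to reduce the divisible-by-finite decomposition to two finiteness statements for each $\ell$, plus vanishing for almost all $\ell$. We use the following elementary fact about an abelian group $A$. Suppose that for every prime $\ell \neq p$ both $A/\ell$ and ${}_\ell A$ are finite, and that both vanish for all but finitely many $\ell$. Suppose moreover that the $\ell$-primary torsion $A\{\ell\}$ has finite corank zero (i.e.\ is finite) and the maps $A/\ell^{n+1}\to A/\ell^{n}$ stabilize. Then $A \cong F\oplus D$ with $F$ finite and $D$ uniquely $p'$-divisible. Concretely, the bounded-exponent part of the $p'$-torsion splits off as a finite direct summand, and the quotient is uniquely $p'$-divisible. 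Thus we must control $K_m(X)/\ell^n$ and ${}_{\ell^n}K_m(X)$, uniformly in $n$ and in $\ell$.

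\emph{Step 1: reduction to mod $\ell^n$ K-theory.} First reduce to the case that $X$ is reduced, and then, by cdh-descent or the localization sequence together with induction on dimension, to a manageable situation. For singular $X$ use $KH$: the comparison $K/\ell^n \simeq KH/\ell^n$ holds for $\ell$ invertible (Weibel). Since $KH$ satisfies cdh descent, a resolution or abstract blow-up square lets us induct on $d$. Note that the bound $m\ge d+N+1$ is stable under passing to lower-dimensional pieces.

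\emph{Step 2: the key input.} The universal coefficient sequence
\[
0\to K_m(X)/\ell^n\to K_m(X;\Z/\ell^n)\to {}_{\ell^n}K_{m-1}(X)\to 0
\]
shows it suffices to prove that $K_m(X;\Z/\ell^n)$ is finite, with order bounded independently of $n$ in the relevant range. The tool is the motivic-to-K-theory (Atiyah–Hirzebruch) spectral sequence combined with Beilinson–Lichtenbaum (Rost–Voevodsky). For $m$ large relative to the cohomological dimension this identifies mod $\ell^n$ K-theory with étale K-theory. The $\ell$-cohomological dimension of $k_N$ is $N+1$, so $X$ has $\ell$-cohomological dimension at most $2d+N+1$ (or $d+N+1$ for affine $X$, by Artin vanishing). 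Hence for $m\ge d+N+1$ the relevant motivic groups $H^{i}(X,\Z/\ell^n(j))$ all lie in the Beilinson–Lichtenbaum range and equal étale cohomology $H^i_{\et}(X,\mu_{\ell^n}^{\otimes j})$. These groups are finite by the finiteness of étale cohomology over $k_N$, which is the key ingredient advertised in the abstract. The spectral sequence has finitely many nonzero terms, so $K_m(X;\Z/\ell^n)$ is finite.

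\emph{Step 3: uniformity in $n$ and in $\ell$.} Pass to the limit: $\varprojlim_n H^i_{\et}(X,\Z_\ell(j))$ is a finitely generated $\Z_\ell$-module. Its torsion-free part, in weights $j$ corresponding to $m\ge d+N+1$, should vanish by a weight argument, because the weights on $H^i$ with $i\le 2d+N+1$ differ from $2j$. This gives the finite-corank statements. For all but finitely many $\ell$, the groups $H^i_{\et}(X,\Z_\ell(j))$ are torsion free and vanish in this range; this follows from a Gabber-type independence-of-$\ell$ statement. I expect this step to be the main obstacle. The natural first attempt is dévissage along the residue fields: Gysin/localization for the henselian tower $k_N\to k_{N-1}\to\cdots\to k_0$ reduces to varieties over a finite field. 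There one uses Deligne's weights together with Gabber's theorem that $H^*(\bar X,\Z_\ell)$ is torsion free for almost all $\ell$.

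\emph{Step 4: equal characteristic.} If $\mathrm{char}(k_N)=p$, then additionally use Geisser–Levine. This gives $K_m(X;\Z/p^n)\cong$ logarithmic de Rham–Witt (Milnor) contributions, which vanish in degrees above $d+N$. The reason is that the $p$-rank satisfies $[k_N:k_N^p]=p^N$, so $\Omega^{j}_{\log}=0$ for $j>d+N$. Consequently $K_m(X)$ is uniquely $p$-divisible for $m\ge d+N+1$ (for singular $X$, via cdh arguments of Geisser–Hesselholt type). Combined with the above, $D$ is uniquely divisible and the whole torsion is finite.
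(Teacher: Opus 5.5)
\paragraph{The gap: Step 3.}
Step 3 is where essentially all of the content of the theorem sits, and it is not carried out. Finiteness of $H^i_{\et}(X,\Z/\ell^n(j))$ for each fixed $\ell^n$ is standard and is not enough. The splitting $F\oplus D$ needs a single bound $M$ with $|H^i_{\et}(X,\Z/m(j))|\le M$ for \emph{all} $m$ prime to $p$ in the range $i\le 2j-d-N-1$. Over this range that amounts to finiteness of $H^i_{\et}(X,\bigoplus_{\ell\neq p}\Q_\ell/\Z_\ell(j))$.

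Your weight heuristic does not provide this bound. Over $k_N$ with $N\ge 1$, the groups $H^i_{\et}(X,\Q_\ell(j))$ are governed by arithmetic contributions $H^a(k_N,-)$ for $a\le N+1$, which a geometric weight count does not see. For example, $H^2_{\et}(\mathrm{Spec}\,k_1,\Q_\ell(1))\cong\Q_\ell\neq 0$, although the only geometric cohomology has weight $0\neq 2$. The correct vanishing range also depends on $N$, not on whether weights differ from $2j$.

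Likewise, Gabber's torsion-freeness theorem concerns smooth projective varieties over a separably closed field. By itself it does not give vanishing of $H^i_{\et}(X,\Z/\ell(j))$ for almost all $\ell$ when $X$ is singular or open and the base is arithmetic.

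\paragraph{How the paper does it.}
The paper proves the uniform finiteness by induction on $N$, in three stages.
\begin{enumerate}
\item Reduce to smooth projective varieties with semistable models. This uses cdh squares for the singular locus, de Jong--Gabber--Temkin alterations, and, in characteristic $p$, Raynaud--Gruson flatification with a degree-$p^r$ trace argument.
\item For such varieties, use Gabber purity, proper base change to the special fiber, and a descending induction over strata of the snc special fiber. This reduces to $k_{N-1}$, and ultimately to the Colliot-Th\'el\`ene--Sansuc--Soul\'e and Kahn bounds over finite fields.
\item Pass from projective to quasi-projective $X$ by a separate purity-and-shrinking argument.
\end{enumerate}
You mention the d\'evissage along the residue fields only as a ``natural first attempt''. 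As written, therefore, the uniformity in $n$ and $\ell$, and hence the decomposition, is not established.

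\paragraph{The rest of your outline.}
The other steps follow the paper: the universal coefficient sequence, the group-theoretic splitting with $D\{p'\}$ finite and $p'$-divisible hence zero, and the passage to \'etale data for $m\ge d+N+1$. For that last step the paper uses the Dwyer--Friedlander spectral sequence and Quillen--Lichtenbaum in the Rost--Voevodsky/Clausen--Mathew form. This applies directly to singular quasi-projective $X$, so your $KH$/cdh reduction in Step 1 is unnecessary. In characteristic $p$ that reduction would in any case need $\ell'$-alterations with traces, not resolutions.

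In Step 2, the stated reason is wrong: it is not the global cohomological dimension that makes every term $H^{2j-m}(X,\Z/\ell^n(j))$ \'etale. Terms with $j\le m$ lie in the Beilinson--Lichtenbaum range. Terms with $j>m\ge d+N+1$ exceed the cohomological dimension $d+N+1$ of the Zariski local rings, so there too motivic and \'etale cohomology agree.

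Step 4 (Geisser--Levine, Gersten, and $\Omega^{j}_{k(\eta)}=0$ for $j>d+N$) is the paper's argument, but it only works for smooth $X$. $p$-adic $K$-theory is not cdh-invariant in characteristic $p$; for instance $K_0$ and $KH_0$ of a cusp differ by $\mathrm{Pic}\cong k$. So ``cdh arguments'' do not carry the vanishing of $K_m(-;\Z/p)$ over to singular $X$. The paper's corresponding lemma is also stated only for smooth $X$.
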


In the case where $X$ is a smooth projective variety over a finite field, we get following corollary, which shows the equivalence of Bass and Parshin's Conjectures in higher degrees. 
 \begin{cor}\label{cor:main:equivalence of conjectures}
      	Let $X$ be a smooth projective variety of dimension $d \geq 0$ over a finite field $k$. For any integer $m \geq  d + 1$, the following are equivalent.
\begin{enumerate}
    \item $K_{m}(X)$ is a finite group of order not divisible by $p$.
    \item  $K_{m}(X)$ is finitely generated.
    \item $K_{m}(X)$ is torsion.
\end{enumerate}
     
  Moreover, if $K_{d}(X)$ is torsion, then $K_{d}(X)$ is finite.
      \end{cor}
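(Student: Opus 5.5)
We would deduce the corollary from Theorem~\ref{thm:Main:structure of algebraic-K-group over N-local} applied with $N=0$, so that $k_N=k_0=k$ is finite and ${\rm char}(k_N)={\rm char}(k_0)$. For $m\geq d+1=d+N+1$ the theorem gives a decomposition $K_m(X)\cong F\oplus D$ with $F$ finite and $D$ uniquely divisible. The three conditions then all become statements about $D$.

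(1)$\Rightarrow$(2) is trivial. For (2)$\Rightarrow$(3): a finitely generated group has no nonzero divisible subgroup, so if $K_m(X)$ is finitely generated then $D=0$ and $K_m(X)=F$ is finite, in particular torsion. For (3)$\Rightarrow$(1): a uniquely divisible group is a $\Q$-vector space, hence torsion-free. So if $K_m(X)$ is torsion, $D$ is torsion and torsion-free, whence $D=0$ and $K_m(X)\cong F$ is finite.

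The remaining point in (1) is that $|F|$ is prime to $p$. We would show that $K_m(X)$ has no $p$-torsion for $m\geq d+1$. The plan is to use Geisser--Levine: for $X$ smooth over a perfect field of characteristic $p$, $K_m(X)$ has no $p$-torsion and is $p$-divisible in degrees where the relevant logarithmic de~Rham--Witt groups vanish. Concretely, one passes through the motivic Atiyah--Hirzebruch spectral sequence with motivic cohomology $H^{2q-m}(X,\Z(q))$. With $\Z/p^r$ coefficients these groups are $H^{q-m}_{\zar}(X,W_r\Omega^q_{\log})$. This vanishes unless $q\leq d$ and $q\geq m$, impossible when $m\geq d+1$. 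Hence $K_m(X;\Z/p^r)=0$ for $m\geq d+1$, and the universal coefficient sequence shows $K_m(X)$ and $K_{m-1}(X)$ are respectively uniquely $p$-divisible, and $p$-divisible with no $p$-torsion for $m-1\geq d+1$. In particular $K_m(X)$ has no $p$-torsion for $m\geq d+1$, so $F$ has order prime to $p$. The main care needed is to verify the vanishing range, including the edge $m=d+1$.

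For the final assertion, if $K_d(X)$ is torsion, we need the structure of $K_d(X)$ itself, which lies just outside the range of the theorem. The plan is to use the mod-$n$ exact sequence
\[0\to K_{d+1}(X)/n\to K_{d+1}(X;\Z/n)\to K_d(X)[n]\to 0\]
together with finiteness of $K_d(X)[n]$ for all $n$ coming from finiteness of $K_{d+1}(X;\Z/n)$. For $n$ prime to $p$, that finiteness comes from étale cohomology finiteness over finite fields via Rost--Voevodsky. For $n=p$, it comes from the de~Rham--Witt computation above, which gives $K_d(X)[p]=0$.

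The remaining difficulty is bounding the prime-to-$p$ torsion uniformly, i.e.\ showing $K_d(X)_{\tor}\{\ell\}$ is finite and vanishes for almost all $\ell$. We would use that $K_d(X)\{\ell\}$ is a quotient of $H^{-1}_{\et}$-type groups computed by Galois cohomology of $H^{*}(\bar X,\Q_\ell/\Z_\ell(j))$. Weil weights make the Frobenius invariants finite and zero for almost all $\ell$, as in Kahn/Soulé. Combined with $K_d(X)$ torsion, this gives finiteness. This weight argument is the step I expect to be the main obstacle.
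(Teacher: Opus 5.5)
Your proof of the equivalence of (1)--(3) for $m\ge d+1$ is correct and is essentially the paper's argument. The paper deduces the corollary from Theorem~\ref{thm:structure K-group for smooth over finite fields}, which is the main theorem at $N=0$ combined with the Geisser--Levine vanishing $K_m(X;\Z/p)=0$ for $m\ge d+1$. It then observes that a uniquely divisible group that is finitely generated or torsion must vanish. Your check of the range $m\le q\le d$ in the logarithmic de~Rham--Witt description, including the edge case $m=d+1$, is fine.

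The final assertion about $K_d(X)$ is not proved as written. Finiteness of ${}_nK_d(X)$ for each individual $n$ does not give finiteness of $K_d(X)\{p'\}$; consider $\bigoplus_{\ell\neq p}\Z/\ell$ or $\Q_\ell/\Z_\ell$. You defer the needed uniformity to a Weil-weight argument that is only sketched, and ``$K_d(X)\{\ell\}$ is a quotient of $H^{-1}_{\et}$-type groups'' is not a precise statement.

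The missing observation is that the \'etale input you already invoke is uniform in $n$. The theorems of Colliot-Th\'el\`ene--Sansuc--Soul\'e and Kahn (Theorem~\ref{finiteness-smooth-finite field}) assert finiteness of $\bigoplus_{\ell\neq p}H^i_{\et}(X,\Q_\ell/\Z_\ell(j))$. Hence they give a bound $|H^i_{\et}(X,\Z/n(j))|\le M$, independent of $n\in\mathbb{L}$, on the diagonal $2j-i=d+1$ that feeds $K^{\et}_{d+1}$.

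Running this through the Dwyer--Friedlander spectral sequence and Quillen--Lichtenbaum gives $|K_{d+1}(X;\Z/n)|\le M$ for all $n\in\mathbb{L}$ (Proposition~\ref{prop:algebraic K-group-bounded-with-finite coefficient} with $N=0$). So $|{}_nK_d(X)|\le M$ for all such $n$. Then $K_d(X)\{p'\}$, being a directed union of subgroups of order at most $M$, is finite; this is Corollary~\ref{finitess of p'-torsion in algebraic K theory} with $m=d$.

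Together with $K_d(X)\{p\}=0$, which you do obtain from $K_{d+1}(X;\Z/p)=0$, this shows $K_d(X)_{\tor}$ is finite. That is how the paper concludes. The Weil weights, and the vanishing for almost all $\ell$, are already built into the CTSS/Kahn finiteness theorem, so no separate weight argument is needed.
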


 For smooth projective varieties over a finite field $k_{0}$ (i.e., the case $N = 0$), the group $D$ in \thmref{thm:Main:structure of algebraic-K-group over N-local} is conjecturally zero. This stronger version of the theorem was established for $d = 0$ by Quillen \cite{Quillen-Annals} and for $d = 1$ by Harder and Quillen (see \cite[Chapter VI, Theorem 6.1]{Weibel-K}). In higher dimensions, $K$-groups are known to be torsion for certain classes of surfaces and threefolds (see \cite{Gregory-JPA}, \cite[Théorème 6]{Soule}, and \cite[Corollaire 2.1]{Kahn-ENS}). Explicit examples of such smooth projective surfaces include abelian surfaces, Kummer surfaces, and supersingular $K3$ surfaces \cite[Proposition A.5]{Gregory-JPA}. Hence by Theorem \ref{thm:Main:structure of algebraic-K-group over N-local}, one can conclude that $D$ is zero for these classes of varieties when $m$ is greater than the dimension of  variety.
 When $N = 1$, the case $d = 0$ is well-understood (see \cite{Tate-Kyoto}, \cite{Merkurjev} and \cite[Chapter VI, Theorem 7.3]{Weibel-K}). Furthermore, known results in class field theory for smooth projective varieties over local fields have partially aided the study of $K$-groups \cite{Bloch-Annals}, \cite{Saito-JNT}, \cite{Forre-Crelle} through an understanding of $SK_{1}(X)$. By contrast, very little is known when $N \geq 2$ (see \cite{Fesenko-Invitation}).

\medskip
We now state analogue results for \'etale $K$-groups of smooth projective varieties over finite and local fields, which were obtained as a consequence of proving similar results for algebraic $K$-groups. For $i \ge 0$, the \'{e}tale $K$-group  $K^{\et}_{i}(X)$ is defined as the $i^{th}$ homotopy
group of the spectrum $\widehat{\mathbf{K}}^{\et}_{X}$, which is defined in
\cite[Definition~14.3]{Dwyer-Friedlander-EtaleK} (also see \cite{Friedlander}). For $v \ge 1$ and $i \ge 0$, the $i^{th}$ \'{e}tale $K$-group with coefficients
$\mathbb{Z}/l^{v}$, denoted by $K^{\et}_{i}(X,\mathbb{Z}/l^{v})$, is defined
 as the $i^{th}$ homotopy group of the spectrum
$\widehat{\mathbf{K}}^{\et}_{X} \wedge \mathcal{M}(l^{v})$, where
$\mathcal{M}(l^{v})$ denotes the mod $l^{v}$ Moore spectrum
(see \cite[Definition~4.3]{Dwyer-Friedlander-EtaleK}), where $l$ is a prime invertible on $X$. We have the following.

\begin{thm}\label{thm:main:structure of etale-K-group over N-local for smooth}
    Let $X$ be a smooth projective  variety of dimension $d \geq 0$ over a field $k$.
\begin{enumerate}
    \item If $k$ is finite, then for any integer $m \geq 1$, we have
    \begin{center}
        $K^{\et}_{m}(X) \cong F \oplus D$,
    \end{center}
    where $F$ is finite and $D$ is a uniquely $p'$-divisible group.

\vspace{2mm}
     \item If $k$ is a non-archimedian local field and $X$ admits a good reduction, then for any integer $m \geq 2$, we have
    \begin{center}
        $K^{\et}_{m}(X) \cong F' \oplus D'$,
    \end{center}
    where $F'$ is a finite group and $D'$ is a uniquely $p'$-divisible group.
\end{enumerate}  
\end{thm}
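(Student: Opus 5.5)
The plan is to derive the structure of $K^{\et}_m(X)$ from the structure of the algebraic $K$-groups, working one prime $l \neq p$ at a time. The comparison between algebraic and étale $K$-theory with finite coefficients will be our main tool.

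\textbf{Step 1: reduce to finiteness of mod-$l$ groups.} Let $A$ be an abelian group. Suppose that $A/l$ and ${}_lA$ are finite for all $l\neq p$, and vanish for almost all $l$. Suppose moreover that the $l$-primary torsion $A\{l\}$ is a direct sum of a finite group and a divisible group. Then the standard argument used by Pedrini--Weibel shows that $A\cong F\oplus D$ with $F$ finite and $D$ uniquely $p'$-divisible. So the first task is to control the groups $K^{\et}_m(X,\Z/l^v)$.

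\textbf{Step 2: the coefficient sequence and the Atiyah--Hirzebruch spectral sequence.} For each $v$ there is a universal coefficient sequence
\[
0\to K^{\et}_m(X)/l^v\to K^{\et}_m(X,\Z/l^v)\to {}_{l^v}K^{\et}_{m-1}(X)\to 0 .
\]
The Dwyer--Friedlander spectral sequence
\[
E_2^{s,t}=H^s_{\et}(X,\Z/l^v(t/2))\Rightarrow K^{\et}_{t-s}(X,\Z/l^v)
\]
has $E_2$-terms that are finite in our situations:
\begin{enumerate}
\item over a finite field, because étale cohomology of $X$ with finite coefficients is finite;
\item over a local field, because of the finiteness of étale cohomology over such fields, which is the key ingredient announced in the abstract.
\end{enumerate}
The spectral sequence is also bounded, since the cohomological dimension is $2d+1$, respectively $2d+2$. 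Hence $K^{\et}_m(X,\Z/l^v)$ is finite for every $v$.

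The harder point is uniformity in $l$ and $v$. We need $K^{\et}_m(X)/l$ and ${}_lK^{\et}_m(X)$ to vanish for almost all $l$, and the $l$-primary parts to stabilise.

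\textbf{Step 3: uniformity via weights.} In case (1), Deligne's weights show that $H^s(X_{\bar k},\Q_l(j))^{\mathrm{Gal}}$ and the coinvariants vanish whenever $s\neq 2j$. Consequently $H^s_{\et}(X,\Z_l(j))$ is finite for $2j\notin\{s,s+1\}$. Moreover, by Gabber's torsion-freeness theorem for almost all $l$, these groups are zero for almost all $l$.

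The range $m\geq 1$ matters here. For $m=t-s\geq 1$ we have $s<t=2j$, so the only problematic terms are those with $s=2j-1$. These can contribute divisible pieces, since $H^{2j-1}(X,\Q_l/\Z_l(j))$ contains a divisible part $\Q_l/\Z_l$-dual to the invariants. That divisible part is exactly the allowed $D$.

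\textbf{Step 4: case (2).} Good reduction lets us use proper smooth base change along the model $\mathcal X$ over $\mathcal O_k$. This identifies the Galois cohomology of $H^*(X_{\bar k})$ with the unramified part, and reduces weight arguments to the special fibre. The cohomological dimension of $k$ is one more than that of a finite field, so one loses one degree, which accounts for the bound $m\geq 2$.

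\textbf{Step 5: assembling the answer.} Taking the inverse limit over $v$ gives $K^{\et}_m(X)\otimes\Z_l$ as finite plus divisible, with the finite part zero for almost all $l$. Combining over all $l\neq p$ with Step 1 then yields the decompositions $K^{\et}_m(X)\cong F\oplus D$ and $K^{\et}_m(X)\cong F'\oplus D'$.

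\textbf{Main obstacle.} The hardest part is the uniform-in-$l$ vanishing in Step 3. I would first try to reduce it to Gabber's theorem together with weight arguments. If that does not suffice, I would compare with algebraic $K$-theory via the Thomason/Levine comparison $K_m(X,\Z/l^v)\cong K^{\et}_m(X,\Z/l^v)$ in degrees $m\geq d+N+1$. Theorem~\ref{thm:Main:structure of algebraic-K-group over N-local} would then handle those degrees, and the low degrees would be treated directly via the spectral sequence.
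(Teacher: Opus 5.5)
Your skeleton (the Dwyer--Friedlander spectral sequence, the coefficient sequence $0\to K^{\et}_m(X)/n\to K^{\et}_m(X,\Z/n)\to {}_nK^{\et}_{m-1}(X)\to 0$, and a splitting lemma) is the one the paper uses. However, the two steps that carry the content are wrong as written.

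\textbf{Step 1 is false.} Take $A=\Z_l$ with $l\neq p$. Then $A/l'$ is finite for every $l'$ and zero for $l'\neq l$, and $A$ has no torsion at all. Yet $A$ is not finite $\oplus$ uniquely $p'$-divisible: it has no nonzero finite subgroup, and $\Z_l/l\neq 0$. Finiteness of $A/l$, together with a hypothesis on $A\{l\}$, does not control $A/l^v$ as $v\to\infty$. What is needed, and what the paper uses (Lemma~\ref{lem: group-theory-bounded}, via \cite{GKR}), is a single bound $|A/n|\le M$ valid for all $n\in\mathbb{L}$. Then $D\{p'\}=0$ because $D\{p'\}$ is divisible and finite, and its finiteness comes from a uniform bound on ${}_nA$. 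So everything hinges on a bound $|K^{\et}_m(X,\Z/n)|\le M$, uniform in $n\in\mathbb{L}$, in degrees $m$ and $m+1$.

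\textbf{Step 3 has an off-by-one error that leads to the wrong conclusion.} Step 3 is supposed to supply that uniform bound. Over a finite field, Hochschild--Serre exhibits $H^s_{\et}(X,\Q_l(j))$ as an extension of $H^0(k,H^s(X_{\bar k},\Q_l(j)))$ (weight $s-2j$) by $H^1(k,H^{s-1}(X_{\bar k},\Q_l(j)))$ (weight $s-1-2j$). Hence it vanishes unless $s\in\{2j,2j+1\}$, not $s\notin\{2j,2j-1\}$ as you wrote. This is Theorem~\ref{finiteness-smooth-finite field}(4), with the vanishing for almost all $l$ built into the finiteness of $\bigoplus_{l\neq p}H^s(X,\Q_l/\Z_l(j))$ \cite{Kahn-Jnt}, \cite{CTSS}. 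In particular $H^{2j-1}(X,\Q_l/\Z_l(j))$ is finite (weights $-1$ and $-2$) and has no divisible part.

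Your remedy would not work even if such a part existed. The group $D$ is invisible mod $l^v$: being divisible and $p'$-torsion-free, it contributes nothing to $K^{\et}_m(X)/l^v$ or to $l^v$-torsion. A $\Q_l/\Z_l$-piece in $E_2$ that is not killed by differentials would make $K^{\et}_m(X,\Z/l^v)$ unbounded in $v$, which contradicts the statement rather than producing $D$. You give no control of differentials, so the terms must be bounded outright.

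The correct bookkeeping (Proposition~\ref{etale cohomology bounded for finite and local}) runs as follows.
\begin{itemize}
\item On the diagonal $t-s=m$ with $t=2j$ one has $s=2j-m\le 2j-1$ once $m\ge 1$. So every $E_2$-term avoids the bad range and is bounded uniformly in $n\in\mathbb{L}$, and hence so is $K^{\et}_m(X,\Z/n)$.
\item Over a local field with good reduction, the bad range genuinely grows to $\{2j-1,2j,2j+1\}$. The extra degree comes from the purity term $H^{i-1}(A,n-1)$ in Corollary~\ref{good-reduction-finiteness}, equivalently from the piece $H^0(\ff,V(-1))$ of $H^1(k,V)$. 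That, and not any absorption into $D$, is why one needs $s\le 2j-2$, i.e.\ $m\ge 2$.
\end{itemize}

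\textbf{The fallback does not help.} Your route through Theorem~\ref{thm:Main:structure of algebraic-K-group over N-local} fails for two reasons. Quillen--Lichtenbaum only reaches $m\ge d+cd(k)$, nowhere near $m\ge 1$. Also, in the paper that theorem is itself deduced from the étale-side bounds, not the other way round.
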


One of the key results used to prove Theorems~\ref{thm:Main:structure of algebraic-K-group over N-local} and~\ref{thm:main:structure of etale-K-group over N-local for smooth} is to establish the finiteness results for the \'{e}tale cohomology of quasi-projective varieties over higher local fields. These results are themselves of independent importance, with a wide range of applications. We now state our results for the \'{e}tale cohomology of quasi-projective varieties.

\subsection{Finiteness of étale cohomology groups over $N$-local fields}

The finiteness of the étale cohomology groups with divisible coefficients has been a central problem (for example, see \cite[Section~2.1]{CTSS}, 
\cite{Kahn-Jnt} and \cite[Section~7]{GKR}). We generalize these results to quasi-projective varieties over higher local fields. We proved the following.

\begin{thm}\label{thm:finiteness for quasiprojective over N-local fields}
    Let $X$ be a quasi-projective variety of dimension $d \geq 0$ over an $N$-local field $k_{N}$. Then there exists an integer $M$ such that
\[
	 \lvert H_{\et}^{i} (X, \mathbb{Z}/m(n) ) \rvert \leq M
	\]   for all integers $m$ such that $(m, p) = 1$  if any of the following holds.
    \begin{enumerate}
        \item $i \nin [2n-d-N, n+d+2]$;
        \item $n \nin [0, d+N]$.
    \end{enumerate}
\end{thm}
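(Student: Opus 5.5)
Write $\Lambda_m(n)=\mathbb{Z}/m(n)$ with $(m,p)=1$. The plan is to induct on $N$ and $d$. The base case is $N=0$, i.e.\ $X$ over a finite field $k_0$. The reduction from $k_N$ to the residue field $k_{N-1}$ uses the Hochschild--Serre spectral sequence for $X\to \Spec k_N$, split into the inertia and the unramified part. For $(m,p)=1$, the tame quotient of the absolute Galois group of $k_N$ is an extension of $G_{k_{N-1}}$ by $\prod_{\ell\neq p}\Z_\ell(1)$. For a finite $\Lambda_m$-module $A$ this gives a short exact sequence
\[0\to H^i(k_{N-1}^{\rm ur}\text{-part}, A)\to H^i(k_N,A)\to H^{i-1}(k_{N-1},A(-1))\to 0.\]
More geometrically, one first reduces to $X_{\bar k_N}$-cohomology: $H^q(X_{\bar k_N},\Lambda_m(n))$ is finite, and its order is bounded independently of $m$ after taking the torsion of $\ell$-adic cohomology into account. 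Concretely, I would reduce everything to the statement that the groups $H^j(G_{k_N}, H^q(X_{\bar k},\Q_\ell/\Z_\ell(n)))$ have bounded size. These are then summed over $\ell\neq p$, using that the weights force vanishing for almost all $\ell$.

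The key steps, in order, are the following.
\begin{enumerate}
\item Use de Jong alterations and a trace argument, or rather a cdh/Mayer--Vietoris and localization dévissage with induction on $d$, to reduce to $X$ smooth projective. The localization sequence $H^i_Z\to H^i_X\to H^i_U$ and purity (after alteration) shift $(i,n)$ by $(2c,c)$. One checks that the ranges $i\notin[2n-d-N,n+d+2]$ and $n\notin[0,d+N]$ are stable under these shifts; this is what fixes the shape of the bounds.
\item For smooth projective $X$ over $k_N$, use the finite $\ell$-cohomological dimension $N+1$ of $k_N$. Only $j\le N+1$ contributes, and $H^q(X_{\bar k})=0$ unless $0\le q\le 2d$.
\item Handle the $m$-uniformity by passing to the divisible coefficient $\Q_\ell/\Z_\ell(n)$ and controlling the finite Bockstein error through the torsion in $H^{*}(X_{\bar k},\Z_\ell)$. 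By Gabber, this torsion vanishes for almost all $\ell$.
\item Show that $H^j(k_N,V)$ is finite, and zero for almost all $\ell$, when $V=H^q(X_{\bar k},\Q_\ell/\Z_\ell(n))$. Iterating the inertia sequence $N$ times reduces this to Galois cohomology over the finite field $k_0$ of $H^q$ twisted by $n-s$ for $0\le s\le N$. Over $k_0$, $H^0$ and $H^1$ of $\Q_\ell/\Z_\ell(w)$-type modules are finite and vanish for almost all $\ell$ once the Frobenius eigenvalues are $\neq 1$. Weil/Deligne weights (extended to the degenerations via the monodromy-weight filtration and Gabber's uniform-in-$\ell$ results) guarantee this whenever the weight $q-2(n-s)\neq 0$. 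The two hypotheses on $(i,n)$ are exactly what forces nonzero weight in every contributing term. Condition (2) gives $n-s\notin[0,d]$, which is incompatible with weight $0$ for $0\le q\le 2d$. Condition (1) forces either $q>2(n-s)$ for all contributing terms, or the relevant indices to lie out of range.
\end{enumerate}

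The main obstacle is step 4, namely the uniformity over all $\ell\neq p$ and all $m$. For the case of bad reduction I would first try Rapoport--Zink/Gabber weight spectral sequences along the tower $k_N\to k_{N-1}\to\cdots$, together with the Gabber--Suh independence of $\ell$. The bookkeeping that shows the precise interval $[2n-d-N,n+d+2]$ arises, rather than a weaker one, is where I expect most of the work to lie.
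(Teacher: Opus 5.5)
\textbf{There is a genuine gap.} Your route (Hochschild--Serre over $\bar k_N$, then Frobenius weights at the bottom of the tower) differs from the paper's, and its decisive step~4 does not hold as stated.

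First, the inertia sequence you write is short exact only for \emph{unramified} $A$. In general $H^j(k_N,V)$ is built from $H^a(k_{N-1},V^{I})$ and $H^a(k_{N-1},V_{I}(-1))$. In bad reduction these modules are not of weight $q-2(n-s)$. For a Tate curve $E$ over a local field, $H^1(E_{\bar k},\Q_\ell)^{I}\cong\Q_\ell$ has weight $0$. Hence $H^0\bigl(G_{k_0},H^1(E_{\bar k},\Q_\ell/\Z_\ell)^{I}\bigr)$ is infinite, although $q-2(n-s)=1\neq 0$ (take $n=s=0$).

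So your criterion ``finite whenever $q-2(n-s)\neq 0$'', and your check that conditions (1) and (2) force nonzero weight, only cover iterated good reduction. The repair you invoke, the monodromy--weight filtration, is the weight--monodromy conjecture, which is open in mixed characteristic. Moreover, for $N\ge 2$ the intermediate residue fields $k_{N-1},\dots,k_1$ are not finite, so Deligne's theory gives nothing directly at those levels.

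To make your approach work you would have to build an iterated weight theory down the tower:
\begin{itemize}
\item semistable models, after alteration, for every stratum at every level;
\item Rapoport--Zink-type filtrations with $\Q_\ell/\Z_\ell$-coefficients;
\item control of torsion in the $E_\infty$-subquotients, uniformly in $\ell$.
\end{itemize}
None of this is supplied, and it is essentially the whole content of the theorem. As written, the proposal is a heuristic for the shape of the range, not a proof. A minor point: $|H^q(X_{\bar k_N},\Z/m(n))|$ grows like $m^{b_q}$, so it is not bounded in $m$; only its deviation from $(\Z/m)^{b_q}$ is.

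\textbf{How the paper avoids this.} The paper never analyses Galois representations. For smooth projective $X$ with semistable model $\mathcal X$ and special fibre $A$, it uses:
\begin{itemize}
\item the localization sequence $H^i(\mathcal X,n)\to H^i(X,n)\to H^{i+1}_A(\mathcal X,n)$;
\item proper base change $H^i(\mathcal X,n)\cong H^i(A,n)$;
\item Gabber's absolute purity on the components of the simple normal crossing fibre, via Mayer--Vietoris for supports and descending induction on codimension.
\end{itemize}
This reduces everything to projective varieties over $k_{N-1}$. Induction on $N$ then ends at the finite-field theorems of Kahn and Colliot-Th\'el\`ene--Sansuc--Soul\'e, where all the weight and $\ell$-uniformity input is already packaged for $\bigoplus_{\ell\neq p}\Q_\ell/\Z_\ell$-coefficients.

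The remaining cases are handled as follows:
\begin{itemize}
\item Singular projective $X$: alterations of $p$-power degree together with a trace argument.
\item Quasi-projective $X$: a stratification-by-regular-loci plus purity argument.
\item Uniform bound on $H^i(X,\Z/m(n))$: the Bockstein sequence, using finiteness of $\bigoplus_{\ell\ne p}H^{i-1}$ and $H^{i}$ with $\Q_\ell/\Z_\ell(n)$-coefficients. This is why the upper end of the interval is $n+d+2$ rather than $n+d+1$.
\end{itemize}
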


For $N=0$, the theorem follows from \cite[Theorem~2]{Kahn-Jnt} and for $N=1$, 
the part~$(2)$ of the theorem is known in the case when $X$ is projective by \cite[Corollary~7.11]{GKR}.

  \subsection{Bloch's Higher Chow groups}

        	We now study the finiteness of torsion subgroup in higher Chow groups for smooth projective varieties over finite, local, or more generally higher local fields. Bass conjectured that algebraic $K$-groups of smooth projective varieties over finite fields are finitely generated. A more stronger conjecture predicts that
        	higher Chow groups of such varieties are also finitely generated abelian groups.
  A weaker form of the above conjecture is as follows. 
    \vspace{1mm}

        	\begin{conj}\label{higher chow are f.g.}
        			 Let $X$ be a smooth projective variety of dimension $d \geq 0$ over a finite field $k$. Then $\CH^{i}(X,j)_{\tor}$ is finite for all $i, j \geq 0$.
        	\end{conj}

            First, note that if $ i > d + j$, then $\CH^{i}(X,j) = 0$ by dimension argument. So only interesting range is $i \leq d + j$. Not much is known about this conjecture in general except two cases $j = 0$, $i= d$ and $ j = 1, i = d +1$. In first case when $j = 0$, the groups $\CH^{d}(X,0) = \CH^{d}(X) \cong \CH_{0}(X)$ are finitely generated as shown by Kato-Saito using unramified class field theory (see \cite{Kato-Saito-Unramified}). In the second case, $\CH^{d+1}(X,1)$ is shown to be  finite by Akthar (see \cite[Theorem 1.3]{Akhtar}).  Here, we prove the following. 
        	
\begin{thm}\label{finiteness for chow groups over finite fields}
        		Let $X$ be a smooth projective variety of dimension $d  \geq 0$ over a finite field $k$. Then $\CH^{i}(X,j) \lbrace p' \rbrace$ is finite for $  i \geq d+1$ and $j \geq 0$. 
        	\end{thm}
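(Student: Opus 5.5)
We want finiteness of $\CH^{i}(X,j)\{p'\}$ for $i\ge d+1$, $X$ smooth projective over a finite field. Write $\CH^i(X,j)=H^{2i-j}_{\mathcal M}(X,\Z(i))$. The plan is to compare the torsion with étale cohomology via the Beilinson--Lichtenbaum (Bloch--Kato, Voevodsky--Rost) theorem and then use finiteness of étale cohomology over finite fields, i.e.\ \thmref{thm:finiteness for quasiprojective over N-local fields} with $N=0$, equivalently \cite[Theorem~2]{Kahn-Jnt}.

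Fix a prime $l\neq p$. From the coefficient sequence $0\to\Z(i)\xrightarrow{l^v}\Z(i)\to\Z/l^v(i)\to 0$ we get a surjection
\[
H^{2i-j-1}_{\mathcal M}(X,\Z/l^v(i))\twoheadrightarrow \CH^i(X,j)[l^v],
\]
and hence a surjection from $\varinjlim_v H^{2i-j-1}_{\mathcal M}(X,\Z/l^v(i))=H^{2i-j-1}_{\mathcal M}(X,\Q_l/\Z_l(i))$ onto $\CH^i(X,j)\{l\}$. Since $i\ge d+1>d$, motivic cohomology with finite coefficients agrees with étale cohomology in all degrees $\le i+1$; in fact for $i\ge \dim X$ the Zariski and étale motivic complexes $\Z/l^v(i)$ agree on $X$ (the cohomological dimension argument: the cone of $\Z/l^v(i)_{\zar}\to R\epsilon_*\Z/l^v(i)_{\et}$ is concentrated in degrees $\ge i+2$ stalkwise, and Zariski cohomological dimension of $X$ is $d$, so the comparison is an isomorphism in degrees $\le i+1$; for degrees above, a Gersten/Kato-complex argument or the vanishing $H^q_{\mathcal M}(X,\Z/l^v(i))=0$ for $q>i+d$ handles it). The degree is $2i-j-1\le i+d$, so we need the comparison up to degree $i+d$; this is the point requiring care, and I would use that $H^{q}_{\mathcal M}(X,\Z/l^v(i))\cong H^q_{\et}(X,\Z/l^v(i))$ for all $q$ when $i\ge d$ (Suslin's rigidity/$i\ge\dim$ comparison, known for smooth varieties). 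Thus $\CH^i(X,j)\{l\}$ is a quotient of $H^{2i-j-1}_{\et}(X,\Q_l/\Z_l(i))$.

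Now the key input: for $n=i\ge d+1$, hypothesis (2) of \thmref{thm:finiteness for quasiprojective over N-local fields} with $N=0$ holds, so there is $M$ with $|H^{q}_{\et}(X,\Z/m(i))|\le M$ for all $m$ prime to $p$ and all $q$ (only finitely many $q$ are nonzero). Taking $m=l^v$ and the colimit, $|H^{2i-j-1}_{\et}(X,\Q_l/\Z_l(i))|\le M$ for every $l$. Moreover, to get finiteness of the full prime-to-$p$ part, take $m=\prod l^{v_l}$ over finitely many primes: $|\bigoplus_{l\in S}H^{q}_{\et}(X,\Q_l/\Z_l(i))|\le M$ uniformly in the finite set $S$, so the groups vanish for all but finitely many $l$ and their sum over $l\ne p$ has order $\le M$. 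Hence $\CH^i(X,j)\{p'\}=\bigoplus_{l\ne p}\CH^i(X,j)\{l\}$ is finite of order at most $M$.

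The main obstacle is the motivic-to-étale comparison in the needed degree range $2i-j-1$, possibly exceeding $i+1$; I would first try the Suslin-style result that for $i\ge d$ the map is an isomorphism in all degrees, and if unavailable, reduce high degrees using the dimension vanishing and the Kato complex over finite fields.
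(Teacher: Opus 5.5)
Your argument is correct and is essentially the paper's proof: $\CH^i(X,j)\{p'\}$ is a quotient of $H^{2i-j-1}_M(X,(\Q/\Z)'(i))$, the \'etale realization is an isomorphism in all degrees because the weight satisfies $i\ge d+\mathrm{cd}(k)=d+1$ (\lemref{lem:Real-iso infty}), and the \'etale target is finite by Kahn's theorem (\thmref{finiteness-smooth-finite field}, or equivalently your uniform bound with $N=0$). Two of your side claims need correcting, though neither affects the proof since the hypothesis is exactly $i\ge d+1$: the all-degrees comparison requires $i\ge d+\mathrm{cd}(k)$, not $i\ge d$ (it fails, e.g., for $X=\Spec(k)$, $i=0$, degree $1$), and Beilinson--Lichtenbaum gives only injectivity, not an isomorphism, in degree $i+1$.
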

        	
 We will now discuss some results for varieties over non-archimedian local fields. We denote by $k$ a local field with the residue characteristic $p>0$. Conjecture \ref{higher chow are f.g.} is no longer true for smooth projective varieties over $k$. In \cite{AS}, Asakura and Saito construct an example of smooth projective curves $C$ over $p$-adic fields such that $\CH^{2}(C, 1) \lbrace p' \rbrace $ is infinite. Furthermore, it is shown in loc. cit. that there are examples of smooth projective surfaces $X$ over $p$-adic fields having good reduction for which $\CH^{2}(X) \lbrace p' \rbrace$ is not finite \cite[Theorem 1.1]{AS}. In \cite[Proposition 1.4]{Luder}, Lüders proved that for a smooth projective variety of dimension $d$ over a local field having a good reduction, the groups $_n\CH^{d+j}(X, j)$ are finite if $j \geq 1$ and $p\nmid n$. We generalize this result and prove the following.
       
       \begin{thm}\label{thm:Main-3}
       	       Let $X$ be a smooth projective variety of dimension $d  \geq 0$ over a local field $k$ that admits a good reduction. Then the $\CH^{i}(X, j) \lbrace p' \rbrace$ is finite if $j \geq 1$ and either $i \leq j+2$ or $i \geq d + 1$.         
        \end{thm}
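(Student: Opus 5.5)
The plan is to reduce finiteness of $\CH^{i}(X,j)\{p'\}$ to finiteness of \'etale cohomology with finite coefficients, and then obtain that finiteness from \thmref{thm:finiteness for quasiprojective over N-local fields} (with $N=1$) together with good reduction. Fix a prime $l\neq p$.

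\emph{Step 1: torsion as a quotient of cohomology with finite coefficients.} The universal coefficient sequence for higher Chow groups
\[
0\to \CH^{i}(X,j+1)\otimes \Z/l^{v}\to \CH^{i}(X,j;\Z/l^{v})\to {}_{l^{v}}\CH^{i}(X,j)\to 0
\]
passes to the colimit over $v$. This exhibits $\CH^{i}(X,j)\{l\}$ as a quotient of $\CH^{i}(X,j;\Q_l/\Z_l)$. It therefore suffices to show two things: the groups $\CH^{i}(X,j;\Q_l/\Z_l)$ are finite, and they vanish for all but finitely many $l$. Equivalently, we want a bound on $|\CH^{i}(X,j;\Z/m)|$ that is uniform in $m$ prime to $p$.

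\emph{Step 2: comparison with \'etale cohomology.} By Beilinson--Lichtenbaum (Rost--Voevodsky) we have
\[
\CH^{i}(X,j;\Z/m)\cong H^{2i-j}_{\et}(X,\Z/m(i))\quad\text{whenever } 2i-j\le i+1,\ \text{i.e. } i\le j+1.
\]
For $i\ge d+1$ we use the Suslin--Geisser comparison above the dimension: when $i\ge d+1$, the cycle complex mod $m$ agrees with \'etale hypercohomology. This can be seen via the coniveau/Gersten spectral sequence, since every point has codimension $\le d<i$, so the Beilinson--Lichtenbaum range covers all residue fields. Concretely, $\CH^{i}(X,j;\Z/m)\cong H^{2i-j}_{\et}(X,\Z/m(i))$ for $i\ge d+1$. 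In the range $i\ge d+1$ we have $n=i>d+1=d+N$, so part (2) of \thmref{thm:finiteness for quasiprojective over N-local fields} gives a uniform bound $M$. The edge case $i=d+1$ lies in $[0,d+1]$, so there we instead use part (1) of that theorem. We check $2i-j<2n-d-1$, which means $j>d+1$. If that fails, we fall back to the Hochschild--Serre argument described below.

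\emph{Step 3: the low range $i\le j+2$.} For $i\le j+1$ the comparison isomorphism holds directly, with degree $2i-j\le i+1$. For $i=j+2$ we get degree $i+2$, which is one beyond Beilinson--Lichtenbaum. Here we use the injection $\CH^{i}(X,j;\Z/m)\hookrightarrow H^{i+2}_{\et}$, which holds in that degree. We then bound the \'etale groups via Hochschild--Serre for $\bar X$ over $k$, which has cohomological dimension $2$:
\[
H^{r}_{\et}(X,\Z/m(i)) \ \text{is built from}\ H^{a}(k,H^{r-a}(\bar X,\Z/m(i))),\quad a=0,1,2.
\]
Good reduction makes $H^{b}(\bar X,\Z_l(i))$ an unramified Galois module. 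It is pure of weight $b-2i$ by Deligne's Weil conjectures applied to the special fibre. The $a=0$ and $a=2$ terms (the latter dual to $H^0(k,\cdot(1-i))$) are then finite and uniformly bounded unless weight $0$ appears. Weight $0$ occurs only if $b=2i$ (respectively $b=2i-2$). The degree constraint $b=r-a$ with $r=2i-j\le i+2$ and $j\ge 1$ rules this out exactly when $i\le j+2$. The $a=1$ term is controlled by local Tate duality and the Euler characteristic formula ($\ell\ne p$ gives Euler characteristic $1$), so it is bounded by the $a=0,2$ terms.

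\emph{Expected obstacle.} The main difficulty is making these bounds uniform in $l$. This needs Gabber's theorem that $H^{b}(\bar X,\Z_l)$ is torsion-free for almost all $l$, combined with the weight argument, to show that the Galois invariants vanish for $l\gg0$. It also needs care in the boundary degree $i=j+2$, where only an injection into \'etale cohomology is available.
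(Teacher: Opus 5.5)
Your overall strategy matches the paper's: torsion as a quotient of $(\Q/\Z)'$-coefficient motivic cohomology, Beilinson--Lichtenbaum comparison with \'etale cohomology, then finiteness of the \'etale side from good reduction and weights. But there are two genuine gaps.

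\emph{The degree bookkeeping is off by one in a way that breaks the argument.} In your universal coefficient sequence the middle term is the weight-$i$ group in cohomological degree $2i-j-1$; its subgroup is $\CH^i(X,j+1)/l^v=H^{2i-j-1}_M(X,\Z(i))/l^v$. Yet in Step~2 you identify it with $H^{2i-j}_{\et}(X,\Z/m(i))$. You also misquote Beilinson--Lichtenbaum. The realization map is an isomorphism in degrees $\le$ weight and injective in degree weight$+1$ (Lemma~\ref{lem:Real-iso}). It is not an isomorphism through weight$+1$, and nothing gives injectivity in degree weight$+2$, which your treatment of $i=j+2$ relies on.

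The two shifts cancel in the numerology, which is why $i\le j+2$ comes out right. But you end up bounding the wrong \'etale group, and for $j=1$ it is not bounded. For $X=\Spec k$, $i=j=1$, you would need $H^1_{\et}(k,\Z/m(1))=k^\times/m$ to be bounded independently of $m$, which fails because of the valuation. The correct group is $H^0_{\et}(k,\Z/m(1))=\mu_m(k)$.

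Your Hochschild--Serre count hides this. The $a=1$ term $H^1(k,H^{r-1}(\bar X))$ is controlled by $H^0$ and $H^2$ of the same module $H^{r-1}(\bar X)$, not by your $a=0,2$ terms. For $r=2i-j$, its $H^2$-part is dual to invariants of weight $j-1$, which is weight $0$ when $j=1$. Also, the weight-$0$ conditions for $a=0,2$ are just $j=0$; the condition $i\le j+2$ plays no role there.

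With the correct target $H^{2i-j-1}_{\et}(X,(\Q/\Z)'(i))$, the relevant weights are $-j-1$, $\{-j-2,\,j\}$ and $j+1$, none of which is zero for $j\ge 1$. This is Corollary~\ref{good-reduction-finiteness}, and it is exactly how the paper concludes.

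\emph{The range $i\ge d+1$ is not justified.} Agreement of mod-$m$ motivic and \'etale cohomology in all degrees fails for $i=d+1$. In the coniveau argument a codimension-$c$ point contributes in weight $i-c$, but its residue field has $l$-cohomological dimension $d-c+2$. So you need $i\ge d+cd(k)=d+2$ (Lemma~\ref{lem:Real-iso}). Already for $d=0$, $i=1$, one has $H^2_M(k,\Z/m(1))=0\neq\Z/m=H^2_{\et}(k,\mu_m)$.

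For $i\ge d+2$ your Step~2 is fine, and Theorem~\ref{thm:finiteness for quasiprojective over N-local fields}(2) then makes good reduction unnecessary there. But consider $i=d+1$ and $1\le j\le d-2$, which are the only cases not already in $i\le j+2$. There the relevant degree $2d+1-j$ is at least weight$+2$. Your fallback to Hochschild--Serre only bounds \'etale groups and says nothing about the realization map.

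What is needed is the vanishing of $H^s_{\rm Zar}(X,\mathcal{H}^{d+2}(\mu_m^{\otimes d+1}))$ for $s\ne d$ (Kato's conjecture, via Kerz--Saito). This is fed into the triangle $\tau_{\le d+1}R\epsilon_*\mu_m^{\otimes d+1}\to R\epsilon_*\mu_m^{\otimes d+1}\to \mathcal{H}^{d+2}(\mu_m^{\otimes d+1})[-d-2]$, as in Lemma~\ref{realisation_iso_good red}. The paper carries this out only in degree $2d$, which gives $(i,j)=(d+1,1)$ in Corollary~\ref{torsion in SK1}. Note that the paper's own proof (Theorem~\ref{thm:Main-3 restate}) is stated for $i\ge d+2$. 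So the cases $i=d+1$, $2\le j\le d-2$ of the statement as written are not covered by the paper's argument either.
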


        Note that Theorem~\ref{thm:Main-3} is a generalization of \cite[Proposition~1.4]{Luder} in two aspects: first, the above theorem establishes that the $p'$-torsion subgroup of $\text{CH}^{i}(X, j)$ is finite; second, the result covers a broader range of indices for $i$ and $j$. We also generalized this to smooth varieties over $N$-local fields and proved the following.

        \begin{thm}\label{thm:main:finiteness of p'-torsion in chow groups over N-fields}
 Let $X$ be a smooth projective variety of dimension $d \geq 0$ over $k_{N}$, where $N\geq 2$. Then  $\CH^{i}(X, j) \lbrace p' \rbrace$ is finite if $j \geq d+N$ and either $i \leq j+2$ or $i \geq d + N+1$. 
\end{thm}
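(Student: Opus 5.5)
The plan is to reduce finiteness of $\CH^{i}(X,j)\{p'\}$ to finiteness of étale cohomology with finite coefficients, uniformly in the coefficient, and then invoke Theorem~\ref{thm:finiteness for quasiprojective over N-local fields}.

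\textbf{Step 1: reduce to a uniform bound on $\ell^v$-torsion.} Fix a prime $\ell\neq p$. The universal coefficient sequence for motivic cohomology,
\[
0\to H^{2i-j-1}_{\mathcal M}(X,\Z(i))\otimes\Z/\ell^v \to H^{2i-j-1}_{\mathcal M}(X,\Z/\ell^v(i))\to {}_{\ell^v}\CH^{i}(X,j)\to 0,
\]
together with $\CH^i(X,j)=H^{2i-j}_{\mathcal M}(X,\Z(i))$, shows that ${}_{\ell^v}\CH^i(X,j)$ is a quotient of $H^{2i-j-1}_{\mathcal M}(X,\Z/\ell^v(i))$. It therefore suffices to prove two things: that $|H^{2i-j-1}_{\mathcal M}(X,\Z/m(i))|$ is bounded independently of $m$ prime to $p$, and that this group vanishes for all but finitely many primes $\ell$. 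A uniform bound $M$ over all $m$ with $(m,p)=1$ handles both at once. Indeed, $\CH^i(X,j)\{p'\}$ is then a torsion group all of whose $m$-torsion subgroups have order at most $M$. This forces it to have finitely many nonzero primary parts, and each primary part has bounded order, hence is finite.

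\textbf{Step 2: compare motivic and étale cohomology.} For $X$ smooth, the Beilinson--Lichtenbaum conjecture (the Bloch--Kato theorem of Voevodsky--Rost) gives an isomorphism
\[
H^{q}_{\mathcal M}(X,\Z/m(i))\cong H^{q}_{\et}(X,\Z/m(i)) \quad \text{for } q\le i,
\]
and an injection for $q=i+1$. Here $q=2i-j-1$, so $q\le i+1$ is equivalent to $i\le j+2$. In the range $i\le j+2$ the motivic group therefore injects into $H^{2i-j-1}_{\et}(X,\Z/m(i))$.

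We then apply Theorem~\ref{thm:finiteness for quasiprojective over N-local fields} with $n=i$ and cohomological degree $q=2i-j-1$. We need either $i\notin[0,d+N]$ or $q\notin[2i-d-N,\,i+d+2]$. The condition $q<2i-d-N$ is equivalent to $j+1>d+N$, which is exactly the hypothesis $j\ge d+N$. So the étale group is uniformly bounded, and the case $i\le j+2$ follows.

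\textbf{Step 3: the range $i\ge d+N+1$.} Here $i\notin[0,d+N]$, so the étale side is uniformly bounded by part~(2) of Theorem~\ref{thm:finiteness for quasiprojective over N-local fields}. We still have to control the motivic group when $q=2i-j-1>i+1$, i.e.\ when $i>j+2$, where Beilinson--Lichtenbaum does not apply directly.

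Since $j\ge d+N$, we get $q=2i-j-1\le 2i-d-N-1$. I would argue that in this range the cycle map is still injective, using the fact that $i>d$ together with Suslin's comparison (or Geisser's argument for weights above the dimension). For $i\ge d+1$, motivic cohomology with finite coefficients of a smooth $d$-dimensional $X$ agrees with étale cohomology in all degrees. This follows because $\Z/m(i)\simeq \Z/m(d)\otimes\mu_m^{\otimes(i-d)}$ étale locally, combined with Suslin's rigidity and duality identification of $H^*_{\mathcal M}(X,\Z/m(i))$ for $i\ge d$ with étale cohomology. One then concludes as in Step~1.

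\textbf{Main obstacle.} I expect the obstacle to be Step~3: justifying motivic–étale agreement for weights $i\ge d+1$ over a non-algebraically-closed base. The approach I would try first is to pass to Suslin homology via duality for smooth projective $X$, and then apply Suslin's theorem that étale and Suslin homology agree with finite coefficients.
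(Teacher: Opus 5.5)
Your Steps 1 and 2 are essentially the paper's argument. The universal coefficient sequence reduces the problem to bounding $H^{2i-j-1}_{\mathcal M}(X,\Z/m(i))$ uniformly in $m$ prime to $p$; the paper uses $(\Q/\Z)'$-coefficients instead, which is equivalent. The realization map is injective for $i\le j+2$. The étale side is uniformly bounded because $2i-j-1<2i-d-N$ is equivalent to $j\ge d+N$.

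The gap is in Step 3. You claim that for $i\ge d+1$, motivic cohomology with finite coefficients of a smooth $d$-dimensional variety agrees with étale cohomology in all degrees. This is false over $k_N$. Take $X=\Spec(k_N)$, $d=0$, $i=1$. Motivic cohomology of a field vanishes in degrees above the weight, so $H^2_{\mathcal M}(k_N,\Z/m(1))=0$. But $H^2_{\et}(k_N,\mu_m)={}_m\mathrm{Br}(k_N)$, which is nonzero for $m>1$. Suslin's rigidity/duality comparison, and his identification of Suslin homology with étale homology, are theorems over algebraically closed fields. Over a base of positive cohomological dimension, the Galois cohomology of the base produces étale classes that are not motivic. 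So neither your main argument nor your fallback through Suslin homology works here.

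The missing idea is a cohomological-dimension argument. This is what Lemma~\ref{lem:Real-iso} (from \cite{GKR}) supplies in the paper.

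\begin{itemize}
\item By Beilinson--Lichtenbaum, $\Z/m(i)_{\mathcal M}\simeq\tau_{\le i}R\epsilon_*\mu_m^{\otimes i}$ on the Zariski site of the smooth variety $X$.
\item The sheaves $R^q\epsilon_*\mu_m^{\otimes i}$ vanish for $q>d+\mathrm{cd}_\ell(k_N)=d+N+1$. Their stalks are étale cohomology of local rings of a $d$-dimensional variety over $k_N$.
\item Hence for $i\ge d+N+1$ the truncation changes nothing, and $\epsilon^{q,i}_{X,m}$ is an isomorphism in every degree $q$.
\item Part (2) of Theorem~\ref{thm:finiteness for quasiprojective over N-local fields} then gives the uniform bound.
\end{itemize}

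This is also why the threshold in the statement is $d+N+1$ rather than $d+1$. With this replacement for Step 3, your proof coincides with the paper's.
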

See \cite{Hiranouchi-Sugiyama} for recent work on the groups $\CH^{i}(X, j)$, when $i = d+N+1$ and $j = N+1$, for smooth projective varieties $X$ over $k_{N}$.

\subsection{Application to Class field theory}
 Finally, we  give an application of these results to tame class field theory for smooth varieties over local fields. The tame reciprocity map $\rho^{t}_{X} : C^{t}(X) \rightarrow \pi^{ab,t}_{1}(X)$ for a smooth variety $X$ over a non-archimedian local field is well studied in \cite{GKR}. It is shown in loc. cit. that the kernel of the map $\rho^{t}_{X}$ is $p'$-divisible whenever  $X$ has a smooth compactification $\overline{X}$, which has a  good reduction. Here, we strengthen this result by showing that it is, in fact, uniquely $p'$-divisible. The analogous result for $\overline{X}$ over a $p$-adic field was shown in \cite[p. 185]{Szamuely}. We have the following:

      	\begin{thm}\label{thm-kernel-tame-p'-divisible}
      		The kernel of the tame reciprocity map $\rho^{t}_{X} : C^{t}(X) \rightarrow \pi^{ab,t}_{1}(X)$ is uniquely $p'$-divisible.
      	\end{thm}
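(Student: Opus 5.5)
The plan is to show that $\ker(\rho^t_X)$ is $p'$-divisible (which \cite{GKR} already gives under the standing hypotheses: $\overline{X}$ smooth projective with good reduction) and that it has no nontrivial $p'$-torsion. Unique $p'$-divisibility then follows. So the new content is torsion-freeness away from $p$. Fix a prime $\ell \neq p$ and let $K = \ker(\rho^t_X)$. I will show that $K[\ell] = 0$.

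First I compare with finite coefficients. The tame reciprocity map with $\Z/\ell^v$-coefficients is known from \cite{GKR} to be an isomorphism (or at least injective):
\[ C^t(X)/\ell^v \to \pi_1^{ab,t}(X)/\ell^v . \]
This comes from identifying $C^t(X)/\ell^v$ with a motivic or étale cohomology group $H^{2d+1}_{c}(X,\Z/\ell^v(d+1))$ via the cycle class map. Consider the short exact sequence $0 \to K \to C^t(X) \to \mathrm{Im} \to 0$ and tensor it with $\Z/\ell$. The snake lemma gives
\[ \mathrm{Im}[\ell] \to K/\ell \to C^t(X)/\ell \to \mathrm{Im}/\ell \to 0 . \]
Injectivity mod $\ell$ already forces $K/\ell$ to be the image of $\mathrm{Im}[\ell]$; this is consistent with $K$ being divisible. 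For torsion, apply the snake lemma to multiplication by $\ell$:
\[ 0 \to K[\ell] \to C^t(X)[\ell] \to \mathrm{Im}[\ell] . \]
So it suffices to show that $C^t(X)\{\ell\} \to \pi_1^{ab,t}(X)$ is injective.

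Second, I control $C^t(X)\{\ell\}$ by a finiteness statement. Using the localization sequence for $X \subset \overline{X}$, the group $C^t(X)$ is built from $\CH_0(\overline{X}) = \CH^{d}(\overline{X})$ and the boundary contributions $\CH^{d+1}(\overline{X},1)$-type terms. By Theorem~\ref{thm:Main-3} (with $i \ge d+1$, $j=1$) and the known finiteness of $\CH_0(\overline{X})\{p'\}$ modulo its divisible part, the $\ell$-primary torsion of $C^t(X)$ is of cofinite type. Its divisible part maps into $K$. The standard argument is as follows. For $x \in C^t(X)[\ell^v]$ the Bockstein realizes $x$ as the image of a class in $H^{2d}(\ldots,\Z/\ell^v(d+1))$. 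This étale cohomology group is finite by Theorem~\ref{thm:finiteness for quasiprojective over N-local fields}, with $N=1$ and $n=d+1 \notin [0,d+1]$ failing. So instead I use part (1): $i = 2d$ lies outside $[2n-d-1, n+d+2] = [d+1, 2d+3]$? It does not. I would therefore work with the compactly supported dual (Poincaré duality over a local field) to land in a degree where Theorem~\ref{thm:finiteness for quasiprojective over N-local fields} applies with a uniform bound $M$. I expect this degree bookkeeping, i.e. identifying exactly which bounded étale group controls the torsion, to be the main obstacle.

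Third, the uniform bound $M$ independent of $v$ gives the conclusion. The torsion $C^t(X)\{\ell\}$ injects, up to a finite group of bounded order, into $\varprojlim_v$ of finite groups compatible with the reciprocity maps. Combined with the mod-$\ell^v$ injectivity, a divisible element of $K$ killed by $\ell$ would give nonzero elements in the finite coefficient groups surviving to every level. That contradicts boundedness together with divisibility: a bounded-order divisible $\ell$-group is zero. Hence $K[\ell]=0$ for every $\ell\neq p$. Combined with $p'$-divisibility from \cite{GKR}, this shows $K$ is uniquely $p'$-divisible.
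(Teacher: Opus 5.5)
The outer frame of your proposal matches the paper's. $K=\Ker(\rho^t_X)$ is $p'$-divisible by \cite{GKR}. Hence $K\{\ell\}$ is $\ell$-divisible: if $\ell^v y=x$ with $x$ of $\ell$-power order, then $y$ is $\ell$-power torsion too. A finite $\ell$-divisible $\ell$-group is zero, so everything reduces to the finiteness of $C^t(X)\{p'\}$. The mod-$\ell^v$ injectivity and the inverse-limit discussion are not needed. Also, ``it suffices that $C^t(X)\{\ell\}\to\pi_1^{ab,t}(X)$ be injective'' merely restates $K\{\ell\}=0$.

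The gap is that this finiteness, which is the whole content of the proof, is never established, and your second step does not supply it. By \cite[Prop.~8.15]{GKR}, $C^t(X)[1/p]\cong H^{2d+1}_{c,M}(X,\Z[1/p](d+1))$. This is a weight-$(d+1)$ group, and $\CH_0(\overline X)$ plays no role in it. In a localization sequence the boundary term enters only through its image, so its torsion does not bound the torsion of $C^t(X)$. ``Cofinite type'' is no help either: you need the divisible part of $C^t(X)\{\ell\}$ to be \emph{zero}, not merely to map into $K$. Finally, in the paper the finiteness of $\CH^{d+1}(\overline X,1)\{p'\}$ (Corollary~\ref{torsion in SK1}) itself comes only from the realization lemma discussed below, not from the general range of Theorem~\ref{thm:Main-3}.

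Your Bockstein idea is the right one, but it must be run in \emph{motivic} cohomology. $C^t(X)[m]$ is a quotient of $H^{2d}_{c,M}(X,\Z/m(d+1))$, and you need a bound on this group that is uniform in $m\in\mathbb{L}$. The paper uses the localization sequence $H^{2d-1}_M(\overline X\setminus X,\Z/m(d+1))\to H^{2d}_{c,M}(X,\Z/m(d+1))\to H^{2d}_M(\overline X,\Z/m(d+1))$. The left term is uniformly bounded by \cite[Cor.~10.11]{GKR}; here the weight $d+1$ exceeds $\dim(\overline X\setminus X)+1$. For the right term, your plan is missing two ingredients.

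First, the realization map $H^{2d}_M(\overline X,\Z/m(d+1))\to H^{2d}_{\et}(\overline X,\Z/m(d+1))$ must be injective. For $d\ge 3$ this bidegree lies outside the range of Lemma~\ref{lem:Real-iso}. The paper proves it in Lemma~\ref{realisation_iso_good red} by showing that the Zariski cohomology of $\mathcal H^{d+2}(\mu_m^{\otimes d+1})$ vanishes in the relevant degrees. That vanishing uses the Kerz--Saito theorem \cite{Kerz-Saito} (Kato's conjecture) on the Bloch--Ogus complex. No degree bookkeeping on the \'etale side, Poincar\'e duality included, can replace this, because the problem is the comparison between motivic and \'etale cohomology.

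Second, the uniform bound on the \'etale group comes from the good-reduction finiteness, Corollary~\ref{good-reduction-finiteness}, not from Theorem~\ref{thm:finiteness for quasiprojective over N-local fields}. For $n=d+1$, the degrees $2d-1$ and $2d$ avoid $\{2n-1,2n,2n+1\}$. So $H^{2d-1}_{\et}(\overline X,d+1)$ and $H^{2d}_{\et}(\overline X,d+1)$ are finite, and hence $|H^{2d}_{\et}(\overline X,\Z/m(d+1))|$ is bounded independently of $m$. You correctly noticed that the general theorem fails in bidegree $(2d,d+1)$ but left the replacement open. That is exactly where the good-reduction hypothesis and Kato's conjecture enter the argument.
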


\subsection{Outline of proofs}
       We briefly outline our proofs. The main ingredient to prove the theorems mentioned above is to prove the uniform finiteness of \'etale cohomology groups, i.e., to prove \thmref{thm:finiteness for quasiprojective over N-local fields}. We achieve this in Sections~\ref{sec:Et-Proj-local}, \ref{section:proj over local field} (for projective varieties) and \ref{section: quasi-proj over N-local} (for quasi-projective varieties).  The main input here is similar uniform boundedness results of
       Colliot-Th\'el\`ene-Sansuc-Soul\'e  \cite[Section~2.1]{CTSS} and Kahn \cite{Kahn-Jnt} over finite fields. 
       For a smooth  variety $X$ over a local field that admits a semi-stable reduction, using the above results, Gabber purity theorem and an induction, we prove the uniform finiteness for the \'etale cohomology groups of a semi-stable model of $X$ with support in the special fiber (see \thmref{support-sncsubvariety-codimension-finite}).

       When $X$ is also projective, we appeal to Gabber rigidity theorems and localization long exact sequence to conclude the proof of \thmref{thm:finiteness for quasiprojective over N-local fields} for such varieties (see \thmref{finiteness-smoothness-semistable model}). We then use alteration theorems of de Jong and Gabber
      and the flatification theorems of Raynaud-Gruson to conclude the proof of \thmref{thm:finiteness for quasiprojective over N-local fields} for all projective varieties over local fields (see \thmref{thm:Uni-bound-Proj-local}). In Section \ref{section:proj over local field}, we prove  similar results for projective varieties over higher local fields (see \propref{support-sncsubvariety-codimension-finite N-local field}, \thmref{key theorem}, and \thmref{projective-N-local field-bounded-restate} for the corresponding statements). To prove \thmref{thm:finiteness for quasiprojective over N-local fields} for a smooth quasi-projective schemes over an $N$-local field, the main idea is to use localization theorem, purity theorem and to approximate a singular scheme by regular schemes from inside (see Section \ref{section: quasi-proj over N-local} for details).

       In Section \ref{sec:Etale-N-local}, using \thmref{thm:finiteness for quasiprojective over N-local fields} and a  spectral sequence, we obtain divisible-by-finite type results for \'etale $K$-groups (see \thmref{thm:structure of etale-K-group over N-local}). The first part of \thmref{thm:Main:structure of algebraic-K-group over N-local} is then obtained in Section \ref{sec:Alg-K-N-local} using Quillen-Lichtenbaum Conjecture and results about the \'etale $K$-groups obtained in the previous section. To complete the proof of the theorem, we show that in a suitable range, $K$-groups of a smooth variety over a positive characteristic $N$-local field are $p$-divisible (see \lemref{lem:Gei-Lev-p-div}). Over perfect fields, this result is known by Geisser and Levine \cite{GL}. 
       We then discuss the various results (from previous sections) over finite fields and local fields in Section \ref{sec:Alg-K-groups-Finite-field} and obtain \thmref{thm:main:structure of etale-K-group over N-local for smooth} and Corollary \ref{cor:main:equivalence of conjectures}.

        One of the main ingredients for proving Theorem \ref{finiteness for chow groups over finite fields} and Theorem \ref{thm:Main-3} is to use Voevodsky comparison result to relate the motivic cohomology groups and the higher Chow groups. In Section \ref{sec:Mot-HCG}, we then use the results on {\et}ale realization map proved in \cite{GKR} and  
        \thmref{thm:finiteness for quasiprojective over N-local fields},  to conclude the proof of \thmref{finiteness for chow groups over finite fields}.

        In order to prove Theorem \ref{thm-kernel-tame-p'-divisible}, we compare the tame reciprocity map  with the {\et}ale realization map from compactly supported motivic cohomology to compactly supported {\et}ale cohomology groups with finite invertible coefficient. We already know that the kernel of this {\et}ale realization map is $p'$-divisible by our results from \cite{GKR}. It therefore suffices to show that the kernel does not have any prime-to-$p$ torsion. This is achieved in Section \ref{sec:App-CFT}.

\subsection{Notations}\label{sec:Notn}
We let $k$ be a field and let $\Sch_k$ denote the category of 
separated Noetherian $k$-schemes. We let $\Sm_k$ denote the category of smooth (in particular,
finite type) $k$-schemes. 
A $k$-scheme will mean an object of $\Sch_k$.
The product $X \times_{\Spec(k)} Y$ in $\Sch_k$ will be written as
$X \times Y$. We let $X^{(q)}$ (resp. $X_{(q)}$) denote the set of points on $X$ having
codimension (resp. dimension) $q$. We shall let $X_\sing$ (resp. $X_\reg$) denote the
singular locus (resp. regular locus) of $X$ with the reduced closed subscheme
structure. We let $\sZ_0(X)$ denote the free abelian group of 0-cycles on $X$.

We let $\Sch_{k/\zar}$ (resp. $\Sch_{k/\nis}$, resp. $\Sch_{k/ \etl})$
denote the Zariski (resp. Nisnevich, resp. {\'e}tale) site of $\Sch_{k}$.
Unless we mention the topology specifically,
all cohomology groups in this paper will be considered with respect to the
{\'e}tale topology.
We shall let $cd(X)$ denote the {\'e}tale cohomological
dimension of torsion sheaves on $X$.

For an abelian group $A$, we shall write $\Tor^1_{\Z}(A, {\Z}/n)$ as
$_nA$ and $A/{nA}$ as $A/n$. We shall let $A\{p'\}$ denote the
subgroup of elements of $A$ which are torsion of order prime to $p$.
We let $A\{p\}$ denote the subgroup of elements of $A$ which are torsion of 
order some power of $p$. For a prime $p $,  we say an abelian group $A$ is $p'$-divisible if $A/n = 0$ for all integers $n$ such that $(p, n) = 1$. We say $A$ is uniquely $p'$-divisible group if $A$ is $p'$-divisible and $A \lbrace p' \rbrace = 0$. The tensor product $A \otimes_{\Z} B$ will be written as
$A \otimes B$. For a set $\mathbb{M}$ of prime numbers, we shall let  $I_\M$ be the set of natural numbers whose prime divisors lie in  $\mathbb{M}$. We denote by $A_{\mathbb{M}}$, the 
inverse limit $\varprojlim_{m \in I_\M} \ A/{m}$, and shall call $A_\M$, the
$\M$-completion of $A$. 

\section{Finiteness in étale cohomology over finite and local fields} \label{sec:Et-Proj-local}

Throughout the section, we denote by $k$  any non-archimedian local field. We write  $H_{\et}^{i} (X, n) =\oplus_{l \neq p}  H_{\et}^{i} (X, \mathbb{Q}_{l}/\mathbb{Z}_{l}(n))$, where $l$ varies over all primes except $p$, the residue characteristic of $k$. In this section, we prove the following main result.

\begin{thm}\label{projective-local field-bounded}
    Let $X$ be a projective variety of dimension $d$ over a local field $k$. Then the group $H_{\et}^{i} (X, n)$  is finite if any one of the following holds.
    \begin{enumerate}
        \item  $i \nin [2n-d-1, n+d+1]$;
        \item $n \nin [0, d+1]$.
    \end{enumerate}
    \end{thm}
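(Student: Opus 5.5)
We follow the strategy announced in the outline. First we treat the case where $X$ is smooth and projective over $k$ and has a projective strictly semi-stable model $\sX$ over the ring of integers $\mathcal{O}_k$, with special fibre $Y$ a simple normal crossing divisor over the finite residue field $k_0$. Our inputs over $k_0$ are the uniform boundedness results of Colliot-Th\'el\`ene-Sansuc-Soul\'e \cite[Section~2.1]{CTSS} and Kahn \cite[Theorem~2]{Kahn-Jnt}. For a variety $Z$ of dimension $e$ over $k_0$, these give $|H^i_\et(Z,\Z/m(n))|$ bounded independently of $m$ prime to $p$ whenever $i\notin[2n-e,n+e+1]$ or $n\notin[0,e]$; hence $H^i_\et(Z,n)$ is finite in that range.

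Step one is to bound $H^i_{Y}(\sX,\Q_l/\Z_l(n))$, summed over $l\ne p$. We stratify $Y$ by the intersections $Y_J$ of its components; these are smooth over $k_0$ of dimension $d-|J|+1$. Gabber's absolute purity gives
\[
H^i_{Y_J^\circ}(U,\Lambda(n))\cong H^{i-2c}(Y_J^\circ,\Lambda(n-c)),
\]
where $c=|J|$ is the codimension in $\sX$ (of dimension $d+1$) and $Y_J^\circ$ is the open stratum in the relevant open $U$. A Mayer--Vietoris or localization induction on the number of components then expresses $H^*_Y(\sX)$ through the groups $H^{i-2c}(Y_J^\circ,n-c)$. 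Finiteness over $k_0$ for these shifted indices holds exactly when $i\notin[2n-d-1,n+d+1]$ or $n\notin[0,d+1]$. The shifts by $(2c,c)$ are compatible with the window, and one loses one unit on each side compared with $\dim \sX=d+1$.

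Step two uses proper base change (Gabber's rigidity for henselian pairs) to get $H^i(\sX,\Lambda)\cong H^i(Y,\Lambda)$, which is finite in the range by the $k_0$-result applied to $Y$, a variety of dimension $d$. The localization sequence
\[
\cdots\to H^i_Y(\sX)\to H^i(\sX)\to H^i(X)\to H^{i+1}_Y(\sX)\to\cdots
\]
then yields finiteness of $H^i(X,n)$. To cover every index in the claimed window we must check that the neighbouring terms $H^i(\sX)$ and $H^{i+1}_Y(\sX)$ are also in their finite ranges; this bookkeeping of indices is routine.

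For a general projective $X$ we induct on $d$. De Jong's or Gabber's $l'$-alterations give $X'\to X$, generically finite of degree prime to $l$, with $X'$ smooth projective and admitting a semi-stable model after a finite extension of $k$. A Raynaud--Gruson flattening, or a stratification, gives a closed $Z\subset X$ with $\dim Z<d$ over whose complement the map is finite flat. Trace arguments show that $H^i(X\setminus Z)$ is, up to prime-to-$l$ index, a direct summand of $H^i(X'\setminus Z')$. The localization sequences for $(X,Z)$ and $(X',Z')$, together with induction applied to $Z$ and $Z'$ (whose finite windows are larger because their dimension is smaller), then transfer finiteness to $X$.

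The main obstacle is uniformity in $l$. Alterations are only prime to a single $l$, so the argument gives finiteness for each $l$ separately. To conclude that $H^i(X,n)$, a sum over all $l\ne p$, is finite, we also need vanishing for almost all $l$. We will handle this with Gabber's refinement, which gives finitely many alterations whose degrees have $\gcd$ prime to every $l\ne p$, combined with the uniform bounds of Kahn that hold for all $m$ at once.
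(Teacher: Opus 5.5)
Your argument is correct, and it differs from the paper's in two places.

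\textbf{Semistable case.} The paper does not use open strata. It runs a descending induction on the codimension $c$ and on the number of components, using the sequence $H^{i}_{Y_1}\oplus H^{i}_{Y_1'}\to H^i_Y\to H^{i+1}_{Y_1\cap Y_1'}$. Purity is applied to the closed strata $Y_J$, which are smooth \emph{projective} over the residue field, so the weight bound $i\notin[2n,2n+1]$ of \cite{CTSS} is available there. Your filtration by the closed unions of deeper strata applies purity to the non-proper strata $Y_J^\circ$ instead. This needs only Kahn's bounds \cite{Kahn-Jnt} for smooth varieties. Your bookkeeping does give $H^{i+1}_Y(\sX)$ finite for $i\notin[2n-d-1,n+d+1]$ or $n\notin[0,d+1]$, with $c=1$ the binding case.

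\textbf{General case.} The paper uses Temkin's $p$-alterations with Raynaud--Gruson flattening, so that after a blow-up the strict transform is globally finite flat of $p$-power degree. It then uses the abstract blow-up long exact sequence, and uniformity in $l$ comes for free because $p$ is invertible on the coefficients. You use Gabber's $\ell'$-alterations instead and recover uniformity with finitely many alterations: one alteration, plus an $\ell'$-alteration for each of the finitely many primes $\ell\neq p$ dividing its degree. The kernel of $H^i_c(U)\to\bigoplus_j H^i_c(U'_j)$ is then killed by every degree, hence by a power of $p$, so it vanishes.

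What each route buys: the paper's needs only a single alteration and no prime-by-prime bookkeeping. Yours avoids Temkin's theorem and flattening, and treats mixed characteristic exactly like equal characteristic. By contrast, the paper's characteristic-zero step applies the semistable case directly to a resolution $W$ of $X$, which presupposes a regular model of $W$ over $\mathcal{O}_k$ with snc special fibre. Your argument also yields part (2) directly, whereas the paper cites \cite[Lemma 10.9]{GKR}.

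\textbf{One point to make explicit.} The groups attached to $X\setminus Z$ and $X'\setminus Z'$ must be compactly supported cohomology. That is, use
\[
H^{i-1}(Z)\to H^i_c(X\setminus Z)\to H^i(X)\to H^i(Z),
\]
together with the trace on $H^i_c$ of the finite flat map $U'\to U$. With ordinary cohomology of $X\setminus Z$, the localization sequence involves $H^*_Z(X)$ for singular $X$ and $Z$, and induction on $\dim Z$ does not control that.
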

    
We have the following corollary. 

\begin{cor}
    Let $X$ be a projective variety of dimension $d \geq 0$ over a local field $k$. Then there exists an integer $M$ such that
    \[
	 \lvert H_{\et}^{i} (X, \mathbb{Z}/m(n) ) \rvert \leq M
	\]   for all integers $m$ such that $(m, p) = 1$ if any of the following holds.
    \begin{enumerate}
        \item $i \nin [2n-d-1, n+d+2];$
        \item $n \nin [0, d+1]$.
    \end{enumerate}
\end{cor}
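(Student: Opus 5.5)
We deduce the corollary from Theorem~\ref{projective-local field-bounded} through the Bockstein sequence attached to $0 \to \Z/m(n) \to \Q_l/\Z_l(n) \xrightarrow{m} \Q_l/\Z_l(n) \to 0$, taken prime by prime. Write $m = \prod_{l} l^{v_l}$ with $l \neq p$. Since $X$ is of finite type over $k$ and $m$ is invertible, the sheaf $\Z/m(n)$ splits as $\bigoplus_l \Z/l^{v_l}(n)$. For each $l$ there is an exact sequence
\[
0 \to H^{i-1}_{\et}(X,\Q_l/\Z_l(n))/l^{v_l} \to H^{i}_{\et}(X,\Z/l^{v_l}(n)) \to {}_{l^{v_l}}H^{i}_{\et}(X,\Q_l/\Z_l(n)) \to 0 .
\]
Summing over $l \mid m$ gives
\[
|H^i_{\et}(X,\Z/m(n))| \le |H^{i-1}_{\et}(X,n)/m| \cdot |{}_m H^{i}_{\et}(X,n)|,
\]
where $H^j_{\et}(X,n) = \oplus_{l\neq p} H^j_{\et}(X,\Q_l/\Z_l(n))$ as in the notation of this section.

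We bound the two factors as follows. If $H^i_{\et}(X,n)$ is finite, then $|{}_m H^i_{\et}(X,n)| \le |H^i_{\et}(X,n)|$ uniformly in $m$. If $H^{i-1}_{\et}(X,n)$ is finite, then $|H^{i-1}_{\et}(X,n)/m| \le |H^{i-1}_{\et}(X,n)|$. Hence it suffices to show that, under either hypothesis of the corollary, both $H^i_{\et}(X,n)$ and $H^{i-1}_{\et}(X,n)$ are finite. We then take $M = |H^{i-1}_{\et}(X,n)|\cdot|H^i_{\et}(X,n)|$.

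Now we check the indices against Theorem~\ref{projective-local field-bounded}.
\begin{itemize}
\item[(2)] If $n \notin [0,d+1]$, the theorem gives finiteness of $H^j_{\et}(X,n)$ for every $j$, in particular for $j = i$ and $j = i-1$.
\item[(1)] If $i \notin [2n-d-1, n+d+2]$, there are two subcases. Either $i < 2n-d-1$, in which case also $i-1 < 2n-d-1$. Or $i > n+d+2$, in which case both $i$ and $i-1$ exceed $n+d+1$. In either subcase neither $i$ nor $i-1$ lies in $[2n-d-1, n+d+1]$, so the theorem applies to both degrees.
\end{itemize}
This explains the shift of the upper endpoint from $n+d+1$ to $n+d+2$: it accounts for the $H^{i-1}$ term.

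The only point needing care is the bookkeeping above: making sure the direct sum over all $l \neq p$ is handled correctly. We use that $H^j_{\et}(X,n)$ is a torsion group whose $l$-primary part is $H^j_{\et}(X,\Q_l/\Z_l(n))$. Consequently the $m$-torsion and $m$-cotorsion of the full sum are the corresponding sums over $l \mid m$, and both are bounded by the order of the finite group. No further input is needed, so the corollary is essentially formal once Theorem~\ref{projective-local field-bounded} is granted.
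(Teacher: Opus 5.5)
Your proof is correct and is essentially the paper's argument: the paper uses the same long exact sequence coming from $\Z/m(n) \to (\Q/\Z)'(n) \xrightarrow{m} (\Q/\Z)'(n)$ and then invokes Theorem~\ref{projective-local field-bounded}. The paper leaves implicit the bookkeeping you make explicit, namely that both $H^{i}_{\et}(X,n)$ and $H^{i-1}_{\et}(X,n)$ must lie in the finite range, which is why the upper endpoint shifts to $n+d+2$; your bound $M=|H^{i-1}_{\et}(X,n)|\cdot|H^{i}_{\et}(X,n)|$ depends on $(i,n)$, which is the right reading of the statement.
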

\begin{proof}
    For any integer $m$ such that $(m, p) = 1$. We have the exact triangle
  \[
    {\Z}/m(n) \to ({\Q}/{\Z})'(n) \xrightarrow{m} ({\Q}/{\Z})'(n),
  \]
  where $({\Q}/{\Z})' = {\underset{\ell \neq p}\bigoplus} {\Q_\ell}/{\Z_\ell}$ and we have the following long exact sequence,
\begin{equation}
      \cdots \rightarrow H_{\et}^{i-1}(X, n) \rightarrow H_{\et}^{i}(X, \mathbb{Z}/m(n)) \rightarrow H_{\et}^{i}(X, n)  \rightarrow \cdots \end{equation}
Taking the limit over integers $m$ such that $(m,p) = 1$, the required result now follows from Theorem \ref{projective-local field-bounded}.
\end{proof}

We now prove Theorem \ref{projective-local field-bounded} and will achieve this in several steps. We first prove it when $X$ is smooth projective having semi-stable model and later reduce general case to the above case via alteration.  We begin with a recollection of known finiteness result for \'etale cohomology groups of varieties over finite fields.

\begin{thm}\label{finiteness-smooth-finite field}
	Let $X$ be a smooth variety of dimension $d$ over a finite field. Then $H_{\et}^{i} (X, n)$
	is finite if any of the following holds.
    \begin{enumerate}
        \item $i \nin [n, 2n+1]$.
        \item $i \nin [n, n+d+1]$.
        \item $n \nin [0, d]$.
        \item  $i \nin [2n, 2n+1]$ and $X$ is projective.
    \end{enumerate}
\end{thm}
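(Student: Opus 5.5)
The plan is to reduce everything to Frobenius weights via the Hochschild--Serre spectral sequence, following Colliot-Th\'el\`ene--Sansuc--Soul\'e \cite[Section~2.1]{CTSS} and Kahn \cite{Kahn-Jnt}. Let $\bar X = X\times_k \bar k$ and $G = \mathrm{Gal}(\bar k/k)\cong \hat{\Z}$. Since $cd(G)=1$, the spectral sequence degenerates into short exact sequences
\[
0\to H^{i-1}_{\et}(\bar X,\Q_l/\Z_l(n))_G \to H^i_{\et}(X,\Q_l/\Z_l(n)) \to H^i_{\et}(\bar X,\Q_l/\Z_l(n))^G\to 0 .
\]
It therefore suffices to show two things for $j\in\{i-1,i\}$. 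First, for each $l\neq p$, the coinvariants of $H^{j}$ (with $j=i-1$) and the invariants of $H^{j}$ (with $j=i$) are finite. Second, they vanish for all but finitely many $l$. Writing $\varphi$ for the geometric Frobenius, both statements will follow once we know two facts. The eigenvalue $1$ does not occur for $\varphi$ on $H^j(\bar X,\Q_l(n))$. And, for almost all $l$, $H^j(\bar X,\Z_l(n))$ and $H^{j+1}(\bar X,\Z_l(n))$ are torsion-free and $\det(1-\varphi)$ is an $l$-adic unit. Granting these, the invariants and coinvariants of $(\Q_l/\Z_l)$-cohomology are controlled by $\ker$ and $\mathrm{coker}$ of $1-\varphi$ on $\Z_l$-lattices plus finite torsion.

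Next I would verify the eigenvalue condition using Deligne's weight theory. For $X$ smooth, $H^j(\bar X,\Q_l)$ is mixed with weights in $[j,\min(2j,2d)]$. After the twist, the weights of $H^j(\bar X,\Q_l(n))$ lie in $[j-2n,\min(2j,2d)-2n]$, and the eigenvalue $1$ can only occur if weight $0$ is in this range. Weight $0$ is in range only when $n\le j\le 2n$ and $0\le n\le d$. For $j\in\{i-1,i\}$ this is excluded in each case of the statement:
\begin{itemize}
\item if $i\notin[n,2n+1]$, case (1) follows directly;
\item if $n\notin[0,d]$, we get case (3);
\item case (2) reduces to case (1) when $n\le d$, since then $n+d+1\ge 2n+1$, and to case (3) otherwise;
\item for $X$ projective, $H^j(\bar X,\Q_l(n))$ is pure of weight $j-2n$, so only $j=2n$ is bad, which gives case (4).
\end{itemize}
In all cases $H^j(\bar X,\Q_l/\Z_l(n))$ vanishes for $j>2d$, which handles the top range.

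The main obstacle is the uniformity in $l$, i.e.\ that only finitely many $l$ contribute. For $X$ projective and smooth, I would use two inputs. One is that the characteristic polynomial of $\varphi$ on $H^j(\bar X,\Q_l)$ has integer coefficients independent of $l$ (Deligne, Katz--Messing). The other is Gabber's theorem that $H^*(\bar X,\Z_l)$ is torsion-free for almost all $l$. Then $\det(1-\varphi\,|\,H^j(n))$ is a fixed nonzero rational number, invertible at almost all $l$. For general smooth $X$, the $l$-independence and the torsion-freeness are not available directly. Here I would follow Kahn \cite{Kahn-Jnt}: choose a smooth compactification after a de Jong alteration, or use a filtration by strata, and run a localization/Gysin argument. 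This reduces the finitely many bad graded pieces of the weight filtration to cohomology of smooth projective varieties, for which the previous argument applies. Equivalently, one can simply invoke \cite[Theorem~2]{Kahn-Jnt} and \cite[Section~2.1]{CTSS} for this uniform step.
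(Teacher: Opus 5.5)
Your argument is correct and follows essentially the paper's route: the paper proves this statement purely by citing \cite[Theorems~1,~2]{Kahn-Jnt} and \cite[Theorem~2]{CTSS}. Your Hochschild--Serre and weight analysis (with Gabber's torsion-freeness and Deligne's integrality and $l$-independence of characteristic polynomials) is the CTSS argument behind case~(4), and it correctly gives per-$l$ finiteness in cases (1)--(3), while the uniformity in $l$ for non-proper $X$ ultimately rests, as in the paper, on \cite{Kahn-Jnt}; your alteration/Gysin sketch for that step is plausible, but it would need to be carried out, e.g.\ by an induction on dimension using purity as in Section~\ref{section: quasi-proj over N-local}, to stand on its own.
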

\begin{proof}
    This follows from combining \cite[Theorem 1, Theorem 2]{Kahn-Jnt} and \cite[Theorem 2]{CTSS}.
\end{proof}

\begin{thm}\label{snc-projective-finite field}
    Let $X$ be a projective  variety over a finite field. Then the group $H_{\et}^{i} (X, n)$ is finite if $ i \nin [2n, n + d +1] $ or $n > d$.
\end{thm}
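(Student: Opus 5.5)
We argue by induction on $d=\dim X$, using de Jong's alterations to reduce from an arbitrary projective variety to smooth projective ones, where \thmref{finiteness-smooth-finite field}(3),(4) apply. Since $H^i_{\et}(X,n)$ depends only on $X_{\red}$, we may assume $X$ reduced. The case $d=0$ is immediate: $X_{\red}$ is a finite disjoint union of spectra of finite fields, each of cohomological dimension $1$, so $H^i_{\et}(X,n)$ is finite for every $i$ and $n$.

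\emph{Reduction to a smooth cover.} Take an alteration $\pi\colon X'\to X$ with $X'$ smooth projective over a finite extension of $k$. Replacing $\pi$ by a Galois alteration (Gabber--de Jong), we may arrange that the generic degree of $\pi$ is invertible in each coefficient $\Z_\ell$ for a given $\ell\neq p$. Let $U\subset X$ be a dense open over which $\pi$ is finite and flat, and put $Z=X\setminus U$ and $Z'=\pi^{-1}(Z)$; both have dimension $<d$. Consider the localization sequences
\[
\cdots\to H^i_c(U,n)\to H^i(X,n)\to H^i(Z,n)\to\cdots,\qquad
\cdots\to H^i_c(U',n)\to H^i(X',n)\to H^i(Z',n)\to\cdots .
\]
A trace map for the finite flat morphism $U'\to U$ exhibits $H^i_c(U,n)$ as a direct summand of $H^i_c(U',n)$, at least for each $\ell$ separately. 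By induction $H^*(Z,n)$ and $H^*(Z',n)$ are finite in the ranges $i\notin[2n,n+(d-1)+1]$ or $n>d-1$, and these ranges contain the corresponding ranges for dimension $d$. On the smooth projective $X'$, \thmref{finiteness-smooth-finite field}(4) gives finiteness for $i\notin[2n,2n+1]$, and part (3) gives it for $n>d$. Chasing the two long exact sequences then yields finiteness of $H^i_c(U',n)$, hence of its summand $H^i_c(U,n)$, and so of $H^i(X,n)$. One must also control the boundary degree $i\pm1$ terms; this works because the inductive range for $Z$ is strictly larger by one.

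\emph{Main obstacle.} The hard step is making the trace argument uniform over all $\ell\neq p$ at once, since $\deg\pi$ may be divisible by some $\ell$. To handle this I would split the primes: for the finitely many $\ell$ dividing $\deg\pi$, use a different alteration of degree prime to $\ell$, which Gabber's $\ell'$-alterations provide. For each such $\ell$ the $\ell$-primary part $H^i(X,\Q_\ell/\Z_\ell(n))$ is of cofinite type, and finiteness for a single $\ell$ follows from the same chase. For all other $\ell$ a single alteration suffices. It then remains to show that $H^i(X,\Q_\ell/\Z_\ell(n))=0$ for all but finitely many $\ell$. I would get this from the corresponding vanishing for $X'$ and $Z'$: Weil-weight arguments in the smooth projective case, as in \cite{CTSS,Kahn-Jnt}, give the vanishing of the groups there for almost all $\ell$, and this transfers through the summand relation. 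Note also that the threshold $n>d$ needs $H^{2n}$ and $H^{2n+1}$ of $X'$ to be finite when $n>d$; this holds because these groups vanish for $i>2d+1$ and otherwise part (3) applies.
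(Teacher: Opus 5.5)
Your argument is correct, but it follows a different route from the paper. The paper's proof is a one-line citation of \cite[Theorem 3(b)]{Kahn-Jnt}. You instead derive the statement from the smooth projective case (\thmref{finiteness-smooth-finite field}(3),(4)) by induction on $d$, an alteration, the two localization sequences and a trace argument. This is essentially the same d\'evissage the paper itself runs over local fields in \thmref{thm:Uni-bound-Proj-local}.

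Your index bookkeeping checks out:
\begin{itemize}
\item $H^i(Z,n)$ is covered by the inductive range $i\notin[2n,n+d]$, and $H^{i-1}(Z',n)$ by $i-1\notin[2n,n+d]$ (or $n>d-1$).
\item $H^i(X',n)$ is covered by part (4) when $0\le n\le d$, since then $n+d+1\ge 2n+1$, and by part (3) otherwise.
\end{itemize}
The citation buys brevity and Kahn's weight-theoretic explanation of the interval. Your route uses only the smooth projective case plus alterations, and it shows that $[2n,n+d+1]$ comes from the smooth bound $2n+1$ combined with the one-step dimension drop on the boundary.

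Two remarks.

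First, your ``main obstacle'' is not one, so the detour through Gabber's $\ell'$-alterations and separate almost-all-$\ell$ weight arguments can be dropped. (Galois alterations alone would not make the degree prime to $\ell$ anyway.) Take a single de Jong alteration of degree $e$.
\begin{itemize}
\item The identity $\pi_*\pi^*=e$ shows that the kernel of $\pi^*\colon H^i_c(U,n)\to H^i_c(U',n)$ is killed by $e$.
\item So this kernel lies in $\bigoplus_{\ell\mid e}H^i_c(U,\Q_\ell/\Z_\ell(n))[\ell^{v_\ell(e)}]$.
\item That group is finite, because the $\ell^v$-torsion of $H^i_c(U,\Q_\ell/\Z_\ell(n))$ is a quotient of the finite group $H^i_c(U,\Z/\ell^v(n))$.
\end{itemize}
Both the smooth input and your induction hypothesis already concern the full sum over $\ell\neq p$. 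Hence the same diagram chase gives finiteness of $H^i(X,n)$ directly, uniformly in $\ell$.

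Second, your base case overstates. On $\Spec\F_q$ with $n=0$ one has $H^0=H^1=\bigoplus_{\ell\neq p}\Q_\ell/\Z_\ell$, which is infinite. These are exactly the excluded degrees $i\in[0,1]$, so the statement for $d=0$ still holds.
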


\begin{proof} This follows from \cite[Theorem 3(b)]{Kahn-Jnt}.
    \end{proof}

\begin{Def}
    Recall that a reduced Noetherian scheme $Y$ of pure dimension $d$ with
 irreducible components $\{Y_1, \ldots , Y_r\}$ is called a normal crossing scheme
 if for every nonempty subset subset $J \subset \{1, 2, ...,  r\}$, the scheme theoretic 
intersection $Y_J := {\underset{i \in J}\bigcap} Y_i$ is a regular scheme
which is either empty or of pure dimension $d + 1 - |J|$. It is clear from this
definition that if $Y$ is a normal crossing scheme, then
$Y_i \bigcap \ (\cup_{j \neq i} Y_j)$ is also a normal crossing scheme (of smaller
dimension than that of $Y$).
\end{Def}

\begin{Def}
    We let $\sX$ be a connected Noetherian regular scheme with a flat and projective
morphism $ \F \colon \sX \to S$ of relative dimension $d \le 1$, where $S$ is the spectrum of  a discrete valuation ring.
We let $X$ denote the generic
fiber and $\sX_s$ the (scheme-theoretic) closed fiber of $\F$. We shall assume that
$Y := (\sX_s)_\red$ is a simple normal crossing divisor on $\sX$. A morphism $\F$
satisfying these properties will be called a semi-stable model for the $k$-scheme $X$. Moreover, if $ \F $ is a smooth  morphism, we then say $X$ has a good reduction over $k$.
We let $u \colon X \inj \sX$ and $\iota \colon Y \inj \sX$ be the inclusions.
We let $f \colon X \to \Spec(k)$ and $g \colon \sX_s \to \Spec(\ff)$ denote the
structure maps.
\end{Def}

\begin{thm}\label{support-smoothsubvariety-codimension-finite}
	Let $X$ be a smooth variety of dimension $d \geq 0$ over a local field $k$ which has semistable model $\mathcal{X}$ with special fiber $A$. Suppose $Y$ is a smooth subvariety of codimension $c$ in $A$ over finite field. Then $H_{et, Y}^{i} (\mathcal{X}, n)$
	is finite if any of the following holds. 
    \begin{enumerate}
        \item $i \nin [n+c+1, n+d+2]$.
        \item $i \nin [n+c+1, 2n+1]$.
        \item $n \nin [c+1, d+1]$.
        \item  $i \nin [2n, 2n+1]$ and $\mathcal{X}$ is projective.
    \end{enumerate}
\end{thm}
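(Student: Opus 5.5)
Since $Y$ is a smooth (hence regular) closed subscheme of the regular scheme $\mathcal{X}$, I will use Gabber's absolute purity theorem for the pair $(\mathcal{X}, Y)$. Here $\dim \mathcal{X} = d+1$ and $\dim A = d$, so $\dim Y = d-c$ and the codimension of $Y$ in $\mathcal{X}$ is $c+1$. For every prime $l \neq p$ (which is invertible on $\mathcal{X}$), purity gives a canonical isomorphism
\[
H^{i}_{\et, Y}(\mathcal{X}, \mathbb{Z}/l^{v}(n)) \cong H^{i-2(c+1)}_{\et}(Y, \mathbb{Z}/l^{v}(n-c-1)),
\]
compatible with the transition maps in $v$. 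Passing to the colimit over $v$ and summing over $l \neq p$ yields
\[
H^{i}_{\et, Y}(\mathcal{X}, n) \cong H^{i-2c-2}_{\et}(Y, n-c-1).
\]
The whole statement is thereby reduced to finiteness of étale cohomology of the smooth variety $Y$ of dimension $d-c$ over the finite residue field. That case is Theorem~\ref{finiteness-smooth-finite field}.

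Set $i' = i-2c-2$, $n' = n-c-1$ and $d' = d-c$, and translate each condition of Theorem~\ref{finiteness-smooth-finite field}.
\begin{enumerate}
\item Condition (1) there, $i' \notin [n', 2n'+1]$, becomes $i \notin [n+c+1, 2n+1]$, which is our item (2).
\item Condition (2) there, $i' \notin [n', n'+d'+1]$, becomes $i \notin [n+c+1, n+d+2]$, which is our item (1), since $n'+d'+1+2c+2 = n+d+2$.
\item Condition (3) there, $n' \notin [0, d']$, becomes $n \notin [c+1, d+1]$, which is our item (3).
\item Condition (4) there, $i' \notin [2n', 2n'+1]$, becomes $i \notin [2n, 2n+1]$, since $2n'+2c+2 = 2n$. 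This requires $Y$ to be projective, which holds when $\mathcal{X}$ is projective over $S$: then $A$ is projective over the residue field and $Y$ is closed in $A$. This gives item (4).
\end{enumerate}

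The main obstacle is justifying purity with the right twists and checking compatibility with the direct limits. I would invoke Gabber's absolute purity theorem for regular pairs with $l$ invertible, applied with the torsion coefficients $\mathbb{Z}/l^{v}(n)$. Étale cohomology with supports commutes with filtered colimits of sheaves on the Noetherian scheme $\mathcal{X}$, so the colimit to $\mathbb{Q}_l/\mathbb{Z}_l$ is harmless. The remaining points are minor:
\begin{itemize}
\item If $Y$ is not connected or has several components, apply the argument componentwise, since there are finitely many components.
\item If $Y$ is empty, the groups vanish.
\item Negative twists $n' < 0$ are handled by Theorem~\ref{finiteness-smooth-finite field}, which allows any integer $n$, provided one uses the standard convention $\mathbb{Z}/l^{v}(n) = \mathrm{Hom}(\mu_{l^{v}}^{\otimes -n}, \mathbb{Z}/l^{v})$.
\end{itemize}
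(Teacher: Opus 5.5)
Your proposal is correct and follows the paper's proof. Both apply Gabber's absolute purity to the regular pair $(\mathcal{X}, Y)$, with $Y$ of codimension $c+1$ in $\mathcal{X}$, to obtain $H^{i}_{\et, Y}(\mathcal{X}, n) \cong H^{i-2c-2}_{\et}(Y, n-c-1)$, and then invoke Theorem~\ref{finiteness-smooth-finite field} for the smooth variety $Y$ of dimension $d-c$ over the finite residue field. Your explicit translation of the four index conditions, and your remark that projectivity of $\mathcal{X}$ forces $Y$ to be projective for item (4), spell out what the paper leaves implicit.
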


\begin{proof}
   By Gabber's purity theorem \cite[Theorem 2.1.1]{F}, we have
\begin{equation}
H_{\et, Y}^{i} (\mathcal{X}, n) \cong H_{\et}^{i-2c-2} (Y, n-c-1). 
\end{equation}
 Since $Y$ is a smooth variety of dimension $d-c$ over a finite field, the result  follows from Theorem \ref{finiteness-smooth-finite field}.  
\end{proof}


\begin{cor}\label{good-reduction-finiteness}
	Let $X$ be a smooth projective variety of dimension $d \geq 0$ over local field $k$ which has good reduction $\mathcal{X}$ with special fiber $A$. Then $H_{\et}^{i} (X, n)$ is finite whenever $i \neq 2n-1, 2n, 2n+1$.
\end{cor}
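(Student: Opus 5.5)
The plan is to compare the cohomology of the generic fiber $X$ with that of the smooth projective model $\mathcal{X}$ through the localization sequence for $\mathcal{X}$ with closed fiber $A$ and open complement $X$:
\[
\cdots \to H^{i}_{\et, A}(\mathcal{X}, n) \to H^{i}_{\et}(\mathcal{X}, n) \to H^{i}_{\et}(X, n) \to H^{i+1}_{\et, A}(\mathcal{X}, n) \to \cdots.
\]
The group $H^i_{\et}(X,n)$ is then finite as soon as $H^{i}_{\et}(\mathcal{X}, n)$ and $H^{i+1}_{\et, A}(\mathcal{X}, n)$ are both finite, and the work is to determine the indices $i$ for which this holds.

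For the middle term, since $\mathcal{X}$ is proper over the henselian ring $\mathcal{O}_k$, Gabber's proper base change (rigidity) gives $H^{i}_{\et}(\mathcal{X}, n) \cong H^{i}_{\et}(A, n)$ for prime-to-$p$ torsion coefficients. Good reduction means $A$ is a smooth projective variety of dimension $d$ over the finite field $\ff$, so Theorem~\ref{finiteness-smooth-finite field}(4) shows this group is finite for $i \nin [2n, 2n+1]$.

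For the supported term, $A$ is a smooth subvariety of codimension $c=0$ in itself, so Theorem~\ref{support-smoothsubvariety-codimension-finite} applies with $Y=A$. Directly, Gabber purity gives $H^{i+1}_{\et, A}(\mathcal{X}, n) \cong H^{i-1}_{\et}(A, n-1)$, and by Theorem~\ref{finiteness-smooth-finite field}(4) again this is finite when $i-1 \nin [2n-2, 2n-1]$, that is, when $i \nin [2n-1, 2n]$.

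Combining the two conditions, $H^{i}_{\et}(X,n)$ is finite whenever $i \nin [2n-1, 2n+1]$, which is exactly the claim. The step needing most care is the identification $H^{i}_{\et}(\mathcal{X}, n) \cong H^{i}_{\et}(A, n)$ with $\Q_\ell/\Z_\ell$ coefficients. Proper base change gives it for each finite level $\Z/\ell^v(n)$, and one then passes to the direct limit over $v$ and sums over $\ell \neq p$. Both operations commute with étale cohomology on these noetherian schemes, so the isomorphism persists.
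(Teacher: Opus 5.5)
Your proposal is correct and follows essentially the same route as the paper. The paper also uses the localization sequence for $A \subset \mathcal{X}$, identifies $H^{i}_{\et}(\mathcal{X}, n) \cong H^{i}_{\et}(A, n)$ (finite for $i \notin [2n, 2n+1]$), and applies purity $H^{i+1}_{\et, A}(\mathcal{X}, n) \cong H^{i-1}_{\et}(A, n-1)$ (finite for $i \notin [2n-1, 2n]$). One small point of naming: the identification for the proper scheme $\mathcal{X}$ over the henselian ring $\mathcal{O}_k$ is the classical proper base change theorem (Milne, Chapter VI, Corollary 2.7), not Gabber's affine rigidity, though the fact you actually use is correct.
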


\begin{proof}
    By the long exact sequence,
	\begin{equation}
	\cdots \rightarrow H_{\et}^{i}( \mathcal{X}, n) \rightarrow  H_{\et}^{i} (X,n) \rightarrow H_{\et, A}^{i+1}(\mathcal{X}, n) \rightarrow \cdots,
	\end{equation}
	it  suffices to show the finiteness of $H_{\et}^{i}( \mathcal{X}, n)$ and $H_{\et, A}^{i+1}(\mathcal{X}, n)$ in the given range of $i$.  Now by \cite[Chapter VI, Corollary 2.7]{Milne-etale}, we have $ H_{\et}^{i}( \mathcal{X}, n) \cong H_{\et}^{i}( A, n)$, which is finite by Theorem \ref{finiteness-smooth-finite field} if $i \neq 2n, 2n+1$. On the other hand, $H_{et, A}^{i+1}(\mathcal{X}, n)$ is finite by Theorem \ref{support-smoothsubvariety-codimension-finite} when  $i \neq 2n-1, 2n$. Combining both finiteness, we get the desired result.
\end{proof}


\begin{thm}\label{support-sncsubvariety-codimension-finite}
	Let $X$ be a smooth variety of dimension $d$ over a local field $k$ which has semistable model $\mathcal{X}$ with special fiber $A$. Suppose $Y$ a simple normal crossing subvariety of codimension $c$ in $A$ over the residue  field. Then $H_{\et, Y}^{i} (\mathcal{X}, n)$
	is finite if any of the following holds. 
    \begin{enumerate}
        \item $i \nin [n+c+1, n+d+2]$.
        \item $i \nin [n+c+1, 2n+1]$.
        \item $n \nin [c+1, d+1]$.
        \item  $i \nin [2n+c-d,  2n+1]$ and $\mathcal{X}$ is projective.
    \end{enumerate}
\end{thm}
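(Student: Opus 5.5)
We argue by induction on the number $r$ of irreducible components of the simple normal crossing subvariety $Y=Y_1\cup\cdots\cup Y_r$, each $Y_j$ being a smooth subvariety of codimension $c$ in $A$. For $r=1$, $Y$ is smooth of codimension $c$ in $A$, and all four ranges are covered by \thmref{support-smoothsubvariety-codimension-finite}: conditions (1)--(3) coincide, and condition (4) is weaker than $i\nin[2n,2n+1]$ because $2n+c-d\le 2n$. To allow the induction we prove the statement for all simple normal crossing subvarieties of $A$ of any codimension simultaneously. We use that, for $Y=Y'\cup Y_r$ with $Y'=Y_1\cup\cdots\cup Y_{r-1}$, the intersection $Y'\cap Y_r$ is again a simple normal crossing subvariety of codimension $c+1$ in $A$ with fewer components.

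The key tool is the Mayer--Vietoris long exact sequence for cohomology with supports in closed subsets:
\[
\cdots\to H^{i}_{\et,Y'\cap Y_r}(\mathcal{X},n)\to H^{i}_{\et,Y'}(\mathcal{X},n)\oplus H^{i}_{\et,Y_r}(\mathcal{X},n)\to H^{i}_{\et,Y}(\mathcal{X},n)\to H^{i+1}_{\et,Y'\cap Y_r}(\mathcal{X},n)\to\cdots.
\]
Hence $H^i_{\et,Y}(\mathcal{X},n)$ is finite as soon as the middle terms in degree $i$ and the intersection term in degree $i+1$ are finite. The middle terms are handled by the induction hypothesis at codimension $c$, applied to $Y'$ and $Y_r$. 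The term $H^{i+1}_{\et,Y'\cap Y_r}$ is handled by induction at codimension $c+1$ with index $i+1$. It remains to check that each range shifts correctly:
\begin{itemize}
\item[(1)] If $i<n+c+1$, then $i+1<n+(c+1)+1$. If $i>n+d+2$, then $i+1>n+d+2$.
\item[(2)] The argument is identical to (1).
\item[(3)] If $n\nin[c+1,d+1]$, then either $n\le c$, so $n<c+2$, or $n>d+1$. In both cases the codimension-$(c+1)$ statement applies.
\item[(4)] If $i<2n+c-d$, then $i+1<2n+(c+1)-d$. If $i>2n+1$, then $i+1>2n+1$.
\end{itemize}
This bookkeeping explains the shape of the lower bound $2n+c-d$ in (4).

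The base of the induction on codimension is $c=d$. There $Y$ is a finite set of closed points, i.e.\ a disjoint union of smooth schemes, so the claim follows from the smooth case and additivity. When the intersection $Y'\cap Y_r$ is empty, its cohomology vanishes.

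The main obstacle is making the double induction (on $r$, and downward on $c$) consistent, together with the subtlety in (4). The smooth case (4) of \thmref{support-smoothsubvariety-codimension-finite} rests on the projectivity hypothesis. For a smooth $Y_j$ its proof uses purity to reduce to $H^{i-2c-2}(Y_j,n-c-1)$ with $Y_j$ projective over the residue field. That needs $Y_j$ to be projective, which holds since it is closed in the projective special fiber. It also yields finiteness outside $[2n,2n+1]$, which is stronger than needed. The actual loss of $d-c$ in the lower bound comes only from the repeated degree shift in the Mayer--Vietoris step. I would verify carefully that at most $d-c$ shifts occur, which holds because intersections beyond codimension $d$ are empty.
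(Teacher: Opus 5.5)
Your proposal is correct and is essentially the paper's proof: an outer downward induction on $c$ with base case $c=d$ (handled by the smooth case), an inner induction on the number of components, and the Mayer--Vietoris sequence for supports, with the same shifted ranges for the $H^{i+1}_{Y'\cap Y_r}$ term. One small correction to an aside: $Y'\cap Y_r$ need not have fewer irreducible components, since two components can meet in a disconnected locus. You never use this, because that term is covered by the induction on codimension.
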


\begin{proof}
We  prove this result by descending induction on $c$. First note that $c \leq d$.
Also if $c =d$, then $Y$ is smooth and hence we are done by Theorem \ref{support-smoothsubvariety-codimension-finite}. Assume that the result holds for all simple normal crossing subvarieties of codimension greater than $c$ inside $A$. We now prove the result for $Y$.  As $Y$ is simple normal crossing variety, we have the decomposition $Y = Y_{1} \cup \cdots \cup Y_{r}$ for some integer $r$, where each $Y_{i}$ is regular (hence smooth) variety over finite field for $1 \leq i \leq r$. Also, $Y$ is of pure codimension $c$ in $A$. We will prove this case by induction on $r$.	If $ r = 1$, then $Y$ is smooth and hence we are done by Theorem \ref{support-smoothsubvariety-codimension-finite}. Assume now that the result holds when number of irreducible components of $Y$ inside $A$ is less than $r$, where $r \geq 2$. Write $Y = Y_{1} \cup Y'_{1}$, where $Y'_{1} = Y_{2} \cup \cdots \cup Y_{r}$.
	
	Using the long exact sequence for support cohomology, we have the exact sequence 
	\begin{equation}
	H^{i}_{Y_{1}}(\mathcal{X}, n) \bigoplus H^{i}_{Y'_{1}}(\mathcal{X}, n)  \rightarrow H^{i}_{Y}(\mathcal{X}, n) \rightarrow H^{i+1}_{Y_{1} \cap Y'_{1}}(\mathcal{X}, n).
	\end{equation}
	Here, $H^{i}_{Y_{1}}(\mathcal{X}, n)$ is finite by the case $r = 1$, and the group $H^{i}_{Y'_{1}}(\mathcal{X}, n)$ is finite by the induction hypothesis on $r$ whenever $i$ and $n$ vary as in $(1)-(4)$.  Therefore we are reduced to show that $H^{i+1}_{Y_{1} \cap Y'_{1}}(\mathcal{X}, n)$ is finite in each of the above four cases, where  $Y_{1} \cap Y'_{1}$ is simple normal crossing scheme inside $A$ of codimension $c+1$.  We now use the induction hypothesis on $c$ to conclude that the group $H^{i+1}_{Y_{1} \cap Y'_{1}}(\mathcal{X}, n)$ is finite in each of the following cases:
     \begin{enumerate}
        \item[(1')] $i \nin [n+c+1, n+d+1]$.
        \item[(2')] $i \nin [n+c+1, 2n]$.
        \item[(3')] $n \nin [c+2, d+1]$.
        \item[(4')]  $i \nin [2n+ c-d, 2n]$ and $\mathcal{X}$ is projective.
    \end{enumerate}
    
Note that if the integers $i, n, d $ and $c$ satisfy one of the conditions $(1)-(4)$, then they also satisfy the corresponding condition from $(1')-(4')$. Hence, the group  $H^{i}_{Y}(\mathcal{X}, n)$ is finite in each of the given cases $(1)-(4)$. We have shown the result for $r$, which in particular also completes the proof for $c.$ This completes the proof.
\end{proof}

\begin{thm}\label{finiteness-smoothness-semistable model}
    Let $X$ be a smooth projective variety  of dimension $d$ over a local field $k$ which has a semistable model $\mathcal{X}$ with special fiber $A$. Then the group $H_{\et}^{i} (X, n)$  is finite if any of the following holds.
    \begin{enumerate}
        \item $i \nin [2n-d-1, n+d+1]$.
       \item $n \nin [0, d+1]$.
       \end{enumerate}
\end{thm}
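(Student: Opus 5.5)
We will follow the proof of Corollary~\ref{good-reduction-finiteness}, with the smooth-fiber inputs replaced by their normal crossing analogues. The starting point is the localization sequence for $u\colon X\inj \mathcal{X}$ and $\iota\colon A\inj\mathcal{X}$:
\[
\cdots \to H^{i}_{\et}(\mathcal{X},n)\to H^{i}_{\et}(X,n)\to H^{i+1}_{\et,A}(\mathcal{X},n)\to\cdots .
\]
It therefore suffices to show that both $H^{i}_{\et}(\mathcal{X},n)$ and $H^{i+1}_{\et,A}(\mathcal{X},n)$ are finite in the ranges (1) and (2).

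For the first group we use proper base change (Gabber rigidity, \cite[Chapter VI, Corollary 2.7]{Milne-etale}). Since $\mathcal{X}$ is proper over the henselian ring $S$, we have $H^{i}_{\et}(\mathcal{X},n)\cong H^{i}_{\et}(A,n)=H^i_{\et}(A_{\red},n)$. Here $A_{\red}$ is a projective simple normal crossing variety of dimension $d$ over the finite residue field. By Theorem~\ref{snc-projective-finite field}, this group is finite when $i\nin[2n,n+d+1]$ or $n>d$. It remains to check that this covers (1) and (2):
\begin{itemize}
\item If $i<2n-d-1$ or $i>n+d+1$, then we are outside $[2n,n+d+1]$.
\item If $n>d+1$, this is covered by $n>d$.
\item If $n<0$, then $2n<n$. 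The group vanishes for $i<0$, and for $i\ge 0>2n$ we are again outside $[2n,n+d+1]$ as long as $i\le n+d+1$. For larger $i$ we are in range (1).
\end{itemize}
The case $n<0$ thus needs a small check of this kind, possibly also using $cd(A)\le 2d+1$.

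For the second group we apply Theorem~\ref{support-sncsubvariety-codimension-finite} with $Y=A_{\red}$, which has codimension $c=0$ in $A$, and with $i$ replaced by $i+1$. The group $H^{i+1}_{A}(\mathcal{X},n)$ is then finite when any of the following holds:
\begin{itemize}
\item $i+1\nin[n+1,n+d+2]$, i.e. $i\nin[n,n+d+1]$;
\item $n\nin[1,d+1]$;
\item $i+1\nin[2n-d,2n+1]$, i.e. $i\nin[2n-d-1,2n]$, using that $\mathcal{X}$ is projective.
\end{itemize}
Under (1), $i<2n-d-1$ lies outside $[2n-d-1,2n]$, and $i>n+d+1$ lies outside $[n,n+d+1]$. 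Under (2), $n>d+1$ gives finiteness directly. If $n<0$, condition (3) of Theorem~\ref{support-sncsubvariety-codimension-finite} applies, since $n\nin[1,d+1]$.

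The only nonformal point is the boundary case $n=0$. Then $2n-d-1<0$, so we need finiteness for all $i\in[0,d+1]$ by (2), and Theorem~\ref{support-sncsubvariety-codimension-finite} gives the support term only for $n\nin[1,d+1]$, which does include $n=0$. The term $H^i(A,0)$ for $n=0$ is covered by Theorem~\ref{snc-projective-finite field} unless $i\in[0,d+1]$. So the main obstacle is handling $H^i_{\et}(A,\Q_l/\Z_l)$ directly for $n=0$ and $i\le d+1$.

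For this we use the Mayer--Vietoris (descent) spectral sequence for the snc variety $A_{\red}$, whose $E_1$ terms are $H^q_{\et}(A_J,0)$ for the smooth projective strata $A_J$. By Theorem~\ref{finiteness-smooth-finite field}(4), $H^q_{\et}(A_J,0)$ is finite unless $q\in\{0,1\}$. For $q=0,1$ these groups are $(\Q/\Z)'$-divisible of finite corank, but they are not finite. In that case we instead use finiteness of the kernel and cokernel of the maps in the localization sequence, showing that the divisible parts cancel. We expect this cancellation argument to be the delicate step.

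If the direct argument fails, a fallback is to observe that $H^i_{\et}(X,\Q_l/\Z_l(0))$ for $i\le d+1$ can be treated via weights. The Frobenius-invariants of $H^i(\bar X,\Q_l)$ vanish for $i\ge1$ except for weight-$0$ contributions, and one can control these through the monodromy filtration of the semistable model.
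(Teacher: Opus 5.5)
Your proof of (1) is the paper's argument: the localization sequence, $H^i_{\et}(\mathcal X,n)\cong H^i_{\et}(A,n)$ together with Theorem~\ref{snc-projective-finite field}, and Theorem~\ref{support-sncsubvariety-codimension-finite} with $c=0$. For large $i$ you use $i\nin[n,n+d+1]$ instead of $i\nin[2n-d-1,2n]$; this harmless variant spares you the reduction to $n\le d+1$. Your proof of (2) for $n>d+1$ is also fine. The paper does not derive (2) this way at all: it cites \cite[Lemma 10.9]{GKR} and then assumes $0\le n\le d+1$. Your attempt to prove (2) yourself has two genuine problems.

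\textbf{The case $n=0$.} This is a misreading of the statement. Since $0\in[0,d+1]$, hypothesis (2) is never triggered at $n=0$. Hypothesis (1) then only asks for $i>d+1$, which your argument already covers. The claim you set yourself there, finiteness of $H^i_{\et}(X,0)$ for $0\le i\le d+1$, is false. Indeed, $H^0_{\et}(X,0)$ already contains $\bigoplus_{l\neq p}\Q_l/\Z_l$, and $H^1_{\et}(X,0)$ is infinite too, since it contains the unramified characters of the constant field. No cancellation of divisible parts and no weight argument can rescue this. The non-finite groups $H^q_{\et}(A_J,0)$, $q=0,1$, that you ran into are a real obstruction, not a technicality. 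That whole paragraph should be deleted.

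\textbf{The case $n<0$.} Your check of $H^i_{\et}(A,n)$ here has the inequality reversed. If $n<0$ and $0\le i\le n+d+1$, then $2n<i\le n+d+1$, so $i$ lies \emph{inside} $[2n,n+d+1]$. In that range Theorem~\ref{snc-projective-finite field} says nothing, and the range is nonempty whenever $-d-1\le n\le -1$. So, as written, (2) is unproved for negative $n$. You can either cite \cite[Lemma 10.9]{GKR} as the paper does, or argue directly. For a direct argument, use the descent spectral sequence
\[
E_1^{a,b}=\bigoplus_{|J|=a+1}H^b_{\et}(A_J,n)\Rightarrow H^{a+b}_{\et}(A_{\red},n)
\]
for the smooth projective strata $A_J$ of $A_{\red}$; equivalently, induct on the number of components using closed Mayer--Vietoris. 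For $n<0$, every $E_1$-term is finite by Theorem~\ref{finiteness-smooth-finite field}(3), because $n\nin[0,\dim A_J]$. Only finitely many terms are nonzero, so $H^i_{\et}(A,n)$ is finite for all $i$. Combined with your correct treatment of the support term via $n\nin[1,d+1]$, this settles (2) for $n<0$.
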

\begin{proof}
 The part $(2)$ of this theorem is already known (see \cite[Lemma 10.9]{GKR}). Hence without loss of generality, one can assume that $n \leq d + 1$. In this case, we have $2n-d-1 \leq 2n \leq n+d+1$. We now show $(1)$. By the long exact sequence
	\begin{equation}
	\cdots \rightarrow H_{\et}^{i}( \mathcal{X}, n) \rightarrow  H_{\et}^{i} (X,n) \rightarrow H_{et, A}^{i+1}(\mathcal{X}, n) \rightarrow \cdots,
	\end{equation}
	it suffices to show the finiteness of $H_{\et}^{i}( \mathcal{X}, n)$ and $H_{et, A}^{i+1}(\mathcal{X}, n)$ in the given range of $i$.  Now by \cite[Chapter VI, Corollary 2.7]{Milne-etale}, we have $ H_{\et}^{i}( \mathcal{X}, n) \cong H_{\et}^{i}( A, n)$, which is finite by Theorem \ref{snc-projective-finite field} if $i \nin [2n, n+d+1]$. On the other hand, $H_{et, A}^{i+1}(\mathcal{X}, n)$ is finite by Theorem \ref{support-sncsubvariety-codimension-finite} when  $i \nin [2n-d-1, 2n]$. Combining both finiteness, we get the desired result.
	\end{proof}


\begin{thm} \label{thm:Uni-bound-Proj-local}
    Let $X$ be a projective variety of dimension $d\geq 0$ over a local field $k$. Then the group $H_{\et}^{i} (X, n)$  is finite if any one of the following holds.
    \begin{enumerate}
        \item  $i \nin [2n-d-1, n+d+1]$;
        \item $n \nin [0, d+1]$.
    \end{enumerate}  
\end{thm}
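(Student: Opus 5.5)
We argue by induction on $d=\dim X$, reducing to the smooth projective case with a semi-stable model, which is \thmref{finiteness-smoothness-semistable model}. For $d=0$, $X_\red$ is a finite disjoint union of spectra of finite extensions of $k$, which are again local fields. The groups $H^i_{\et}(\Spec L, n)$ vanish for $i>2$ and are finite for $i\le 2$ by local Tate duality. This gives the base case. Since étale cohomology is insensitive to nilpotents, we may also assume $X$ is reduced, and by Mayer--Vietoris we may reduce to $X$ integral.

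For general integral $X$, we use de Jong--Gabber alterations. There is a proper generically finite surjection $\pi\colon X'\to X$, with $X'$ smooth projective over a finite extension $k'$ of $k$. We may take $X'$ to admit a projective semi-stable model over $\mathcal{O}_{k'}$ and, by Gabber's refinement, take $\deg\pi$ prime to any fixed $\ell\neq p$. A single alteration will not handle all $\ell$ at once. We therefore use two alterations of coprime degrees, or a finite family of alterations whose degrees have gcd prime to every $\ell\ne p$; this is possible using Gabber's $\ell'$-alterations for two distinct primes $\ell_1,\ell_2$. Choose a proper closed $Z\subset X$ with $\dim Z<d$ such that $\pi$ is finite flat of degree $\delta$ over $U=X\setminus Z$; Raynaud--Gruson flatification allows us to arrange this after blowing up if necessary. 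Put $Z'=\pi^{-1}(Z)$.

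Next we compare the cohomology of $X$ and $X'$. Using the compactly supported long exact sequences for $U\subset X$ and $\pi^{-1}(U)\subset X'$, we get
\[
\cdots\to H^i_c(U,n)\to H^i(X,n)\to H^i(Z,n)\to\cdots.
\]
Over $U$ there is a trace map with $\pi_*\pi^*=\delta$ on $H^i_c(U,n)$. So $\delta\cdot H^i_c(U,n)$ factors through $H^i_c(\pi^{-1}U,n)$. By the localization sequence on $X'$, that group sits between $H^{i-1}(Z',n)$ and $H^i(X',n)$. The group $H^i(X',n)$ is finite by \thmref{finiteness-smoothness-semistable model} in the stated range, since $\dim X'=d$. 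The groups $H^{i-1}(Z,n)$ and $H^{i-1}(Z',n)$ are finite by induction, because $Z$ and $Z'$ have dimension $\le d-1$. The relevant range for dimension $d-1$, namely $[2n-d,\,n+d]$, is contained in $[2n-d-1,\,n+d+1]$, and the index shift by one is absorbed by these containments. The $n$-range in case (2) is treated the same way.

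The outcome is that $H^i_c(U,n)$ has finite image under $\delta$ and finite kernel data. Since each $\ell$-primary part of $H^i_c(U,n)$ is a cofinitely generated $\Z_\ell$-module, killing it up to a finite group by an integer prime to $\ell$ forces that $\ell$-part to be finite. Running the argument with two coprime-degree alterations covers every $\ell\neq p$. Finally, \emph{uniformity over all $\ell$} is needed, i.e.\ the $\ell$-parts must vanish for almost all $\ell$. I expect this to be the main obstacle. My plan for it is to note that for all but finitely many $\ell$, $\ell\nmid\delta$. For such $\ell$, $H^i_c(U,\Q_\ell/\Z_\ell(n))$ is then a direct summand of $H^i_c(\pi^{-1}U,\Q_\ell/\Z_\ell(n))$. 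That group is controlled by the finite groups $\bigoplus_\ell H^i(X',n)$ and $\bigoplus_\ell H^{i-1}(Z',n)$, which vanish for almost all $\ell$. Hence only finitely many primes contribute, and the total group $H^i_{\et}(X,n)$ is finite.
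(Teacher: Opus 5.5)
Your argument is correct in substance, but it takes a genuinely different route from the paper.

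\textbf{How the two routes differ.} The paper splits by characteristic. In characteristic $0$ it takes a birational resolution $W\to X$, which it takes to admit a semistable model over $k$, and uses the abstract blow-up exact sequence with $Z=X_{\rm sing}$ and $E=\pi^{-1}(Z)$. In characteristic $p$ it uses Temkin's $p$-alterations and Raynaud--Gruson flatification to produce a blow-up $X'\to X$ and a finite flat map $W'\to X'$ of degree $p^r$. Then $\pi'_*\pi'^*=p^r$ is invertible on every $\ell$-primary part at once, so $H^i_{\et}(X',n)\hookrightarrow H^i_{\et}(W',n)$ with no bookkeeping over $\ell$.

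You instead argue uniformly in all characteristics:
\begin{enumerate}
\item a single alteration with a semistable model over some $\mathcal{O}_{k'}$;
\item restriction to a dense open $U$ over which it is finite flat (generic flatness already gives this, so no flatification is needed);
\item the compactly supported localization sequences on $X$ and $X'$;
\item a trace argument run prime by prime.
\end{enumerate}
Uniformity in $\ell$ comes from the finite groups $H^i_{\et}(X',n)$ and $H^{i-1}_{\et}(Z',n)$ having only finitely many nonzero $\ell$-components. Your route needs only de Jong's alteration theorem, not a semistable birational resolution or $p$-alterations. The cost is the extra input that $H^i_c(U,\Q_\ell/\Z_\ell(n))$ is cofinitely generated (from finiteness of $H^i_c(U,\Z/\ell(n))$ over a local field), plus the prime-by-prime bookkeeping.

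\textbf{Corrections.}

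\emph{The $\ell'$-alteration detour.} The passage through $\ell'$-alterations is unjustified as written, and it is also unnecessary. Gabber's $\ell_1'$- and $\ell_2'$-alterations have degrees prime to $\ell_1$ and to $\ell_2$ respectively, but these degrees need not be coprime to each other. Your own cofinite-generation remark already handles the primes $\ell\mid\delta$. Write $A_\ell=H^i_c(U,\Q_\ell/\Z_\ell(n))\cong(\Q_\ell/\Z_\ell)^r\oplus F_\ell$ with $F_\ell$ finite. Then $\delta A_\ell\supseteq(\Q_\ell/\Z_\ell)^r$, so finiteness of $\delta A_\ell$ forces $r=0$ for every $\delta\neq 0$. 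So one alteration of arbitrary degree suffices. For $\ell\nmid\delta$ you do not even need cofinite generation, since $\delta$ acts bijectively on $A_\ell$.

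\emph{The base case.} Your base case is overstated: $H^i_{\et}(\Spec(L),n)$ is not finite for all $i\le 2$. For example, $H^2_{\et}(\Spec(L),1)=\bigoplus_{\ell\neq p}\Q_\ell/\Z_\ell$ is the prime-to-$p$ Brauer group. The groups $H^0_{\et}(\Spec(L),0)$, $H^1_{\et}(\Spec(L),0)$ and $H^1_{\et}(\Spec(L),1)$ are infinite as well. These are exactly the excluded cases $i\in[2n-1,n+1]$ with $n\in\{0,1\}$. So the base case still holds in the range you need, e.g.\ by Proposition~\ref{finiteness-N-local field} with $N=1$.

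\emph{The last step.} In the sequence $H^i_c(U,n)\to H^i_{\et}(X,n)\to H^i_{\et}(Z,n)$, what bounds the right-hand side is $H^i_{\et}(Z,n)$, not $H^{i-1}_{\et}(Z,n)$. It is finite in the required range by the same containment of intervals.
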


\begin{proof}
  The part $(2)$ is known (see \cite[Lemma 10.9]{GKR}). Hence without loss of generality, one can assume that $ 0 \leq  n \leq d + 1$. We  prove the part $(1)$ by the induction on $d$, the dimension of $X$. Since étale cohomology satisfy nil-invariance, we can assume that $X$ is reduced. Further, applying the Mayer-Vietoris sequence for closed covers, we can assume that $X$ is integral. In fact, this reduction can be achieved by similar methods as in Theorem \ref{snc-projective-finite field}.  
  If $d = 0$, we are done by Theorem \ref{finiteness-smoothness-semistable model}. So we assume that $d \geq 1$.  By \cite[Theorem 4.3.1]{Temkin} (also see \cite[Lemma 10.9]{GKR}), we can find a finite field extension ${k'}/k$
      and a smooth projective integral $k'$-scheme $W$ together with a projective
      morphism $\pi \colon W \to X$ such that the following hold.
      \begin{enumerate}
      \item[(1')]
        $k' = k$ and $\pi$ is birational if $\Char(k) = 0$.
      \item[(2')]
        The degrees  $[k':k]$ is some power of $p$
        if $\Char(k) = \Char(\ff) = p$.
      \item[(3')] The morphism 
        $\pi$ is a $p$-alteration (an alteration {\`a} la de Jong whose degree is a
        $p$-power) if $\Char(k) = p$.
      \item[(4')]
        $W \to \Spec(k')$ admits a semistable reduction.
      \end{enumerate}
 If $\Char(k) = 0$, then we can find a resolution of singularities
  $\pi \colon W \to X$. If we let $Z = X_\sing$ and $E = \pi^{-1}(Z)$, then
  $\{W \amalg Z \rightarrow X\}$ is a cdh cover of $X$ such that
  $\dim(Z) < \dim(X)$ and $\dim(E) < \dim(X)$.
 A combination of proper base change theorem and
localization theorem in {\'e}tale cohomology
(see \cite[Rmk. III.3.13, Cor. VI.2.3]{Milne-etale}) implies that we have an exact sequence (see \cite[Lemma 10.2]{GKR})
\begin{equation}\label{eqn:Real-iso-3}
    \xymatrix@C.8pc{
{\begin{array}{l}
        H_{\et}^{i-1}(W, n) \\
       \hspace*{1.2cm} \oplus \\
       H_{\et}^{i-1}(Z, n)
     \end{array}}
   \ar[r] &   H_{\et}^{i-1}(E, n) \ar[r] & H_{\et}^i(X, n)
\ar[r] & {\begin{array}{l}
        H_{\et}^{i}(W, n) \\
       \hspace*{1.2cm} \oplus \\
       H_{\et}^{i}(Z, n)
     \end{array}}
   \ar[r] &  H_{\et}^{i}(E, n).}
\end{equation}
  Since $\dim(Z) < \dim(X)$,  the induction hypothesis implies that the group $H_{\et}^{i}(Z, n)$ is finite whenever $i \nin [2n-\text{dim}(Z)-1, n+\text{dim}(Z)+1]$. Similarly, since $\dim(E) < \dim(X)$, the group $H_{\et}^{i-1}(E, n)$ is also finite if  $ i-1 \nin [2n-\text{dim}(E)-1, n+ $dim$(E)+1] $, which holds iff $ i \nin [2n-\text{dim}(E), n+ $dim$(E)+2] $. These conditions are clearly satisfied whenever $ i \nin [2n-d-1, n+ d+1] $ as dim$(Z) \leq d-1$, and dim$(E) \leq d-1$. Hence, we are reduced to show the group $ H_{\et}^{i}(W, n)$ is finite whenever $ i \nin [2n-d-1 , n+d+1]$. But this follows from Theorem \ref{finiteness-smoothness-semistable model}.

 We now assume that $\Char(k) = p > 0$.
Using Temkin's strengthening of the alteration theorems of de Jong and Gabber
(see \cite[Thm.~1.2.5]{Temkin}) and the flatification theorems of Raynaud-Gruson, it is shown in the proof of
\cite[Lemma 10.2]{GKR} that there exists a commutative diagram
\begin{equation}\label{eqn:Real-iso-1}
  \xymatrix@C1pc{
    W' \ar[r]^-{\pi'} \ar[d]_-{f'} & X' \ar[d]^-{f} \\
    W \ar[r]^-{\pi} & X,}
\end{equation}
where $f$ is the blow-up along a nowhere dense closed subscheme $Z \inj X$ and $f'$ is
the strict transform of $W$ along $f$. The map $\pi$ is a $p$-alteration of $X$
such that $W$ is integral and smooth over $k$ and $\pi'$ is a finite and flat
morphism between integral schemes of degree $p^r$ for some $r \ge 0$.

Since $\{X' \amalg Z \rightarrow X\}$ is a cdh cover of $X$, we can apply the above
characteristic zero
argument to reduce the proof to showing that the lemma holds for
${X'}$.
By applying this argument to $f'$ and using induction,
we see that the lemma holds for $W'$ because it
holds for $W$ by Theorem \ref{finiteness-smoothness-semistable model}.

The assertion for $X'$ now follows as the map $\pi'$ is finite flat map of degree $p^r$, hence by pull-back push-forward  arguments for etale cohomology (see \cite[Lemma 10.2]{GKR}), we have a commutative
diagram
\begin{equation}\label{eqn:Real-iso-2.0}
  \xymatrix@C1pc{
    H^i(X', n) \ar[r]^-{\pi'^*} \ar[d]_-{\epsilon^*_{X'}} &
    H^i(W', n) \ar[r]^-{\pi'_*} \ar[d]^-{\epsilon^*_{W'}} &
    H^i(X', n) \ar[d]^-{\epsilon^*_{X'}} \\
    H^i_\et(X', n) \ar[r]^-{\pi'^*} &
    H^i_\et(W', n) \ar[r]^-{\pi'_*}  &
    H^i_\et(X', n)}
\end{equation}
such that the composition of the horizontal arrows on each of the two rows is
multiplication by $p^r$. This implies that $H^i_\et(X', n) \hookrightarrow H^i_\et(W', n)$ as $(n,p) = 1$. This concludes the proof.
\end{proof}

\section{Finiteness in étale cohomology  over higher local fields}\label{section:proj over local field}

Let $N$ be any non-negative integer. 
Recall that an $N$-local field is defined inductively so that  it is complete discrete valued field $k_N$ such that its residue field $k_{N-1}$ is an $(N-1)$-local field, where $0$-local field is a finite field. 
In particular, $k_N$ is an $N$-local field if and only if there exists an $(N+1)$-tuple  $(k_N, k_{N-1}, \dots, k_1, k_0)$ of fields such that each $k_i$ is a complete discrete valuation field, $k_{i-1}$ is the residue field of $k_i$, and $k_0$ is a finite field.
We let $p>0$ denote the  characteristic of $k_0$. 
We call these fields as higher local fields. With this definition,  finite fields are $0$-local fields and local fields are $1$-local fields. These fields were introduced by K. Kato. In this section, we generalize the finiteness result proved in the previous section over local fields to $N$-local fields for any $N\geq 1$. For any quasi-projective $k_{N}$-variety $X$, we write  $H_{\et}^{i} (X, n) =\oplus_{l \neq p}  H_{\et}^{i} (X, \mathbb{Q}_{l}/\mathbb{Z}_{l}(n))$, where $l$ varies over all primes except $p$, the characteristic of the finite field $k_{0}$. The main result of this section is the following:

\begin{thm}\label{projective-N-local field-bounded}
    Let $X$ be a projective variety  of dimension $d \geq 0$ over an $N$-local field $k_{N}$. Then the group $H_{\et}^{i} (X, n)$  is finite if any one of the following holds.
    \begin{enumerate}
        \item  $i \nin [2n-d-N, n+d+1]$;
        \item $n \nin [0, d+N]$.
    \end{enumerate}
    \end{thm}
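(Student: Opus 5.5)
We argue by induction on $N$, with the case $N=1$ being Theorem \ref{thm:Uni-bound-Proj-local} (and $N=0$ essentially Theorem \ref{snc-projective-finite field}). The inductive step copies the local-field argument of Section \ref{sec:Et-Proj-local}, with the finite residue field replaced by the $(N-1)$-local field $k_{N-1}$ and Theorem \ref{snc-projective-finite field} replaced by the induction hypothesis for projective varieties over $k_{N-1}$.

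\textbf{Step 1 (reduction to semistable models).} Nil-invariance and Mayer--Vietoris for closed covers let us assume $X$ is integral. Exactly as in the proof of Theorem \ref{thm:Uni-bound-Proj-local}, we then use Temkin's alterations (with the flatification trick and the cdh-type exact sequence (\ref{eqn:Real-iso-3}) in positive characteristic) together with induction on $d$. The lower-dimensional pieces $Z$ and $E$ are handled by the induction hypothesis, because the range $[2n-\dim-N, n+\dim+1]$ shrinks as the dimension drops, even after the shift by one for $E$. This reduces part (1) to the case where $X$ is smooth projective over a finite extension of $k_N$ with a semistable model $\mathcal{X}$ over the ring of integers. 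The special fiber $A$ is then a simple normal crossing projective variety of dimension $d$ over an $(N-1)$-local field. A finite extension of an $N$-local field is again $N$-local, so this is harmless. For part (2), I would argue via cohomological dimension: $cd_l(k_N)=N+1$, so $cd_l(X)\le 2d+N+1$. For $n>d+N$ the twist should become irrelevant in the same way as in \cite[Lemma 10.9]{GKR}, by induction through the localization sequence below. Alternatively, part (2) can be reduced to part (1) by checking that $n\notin[0,d+N]$ forces $i$ outside $[2n-d-N,n+d+1]$ in the relevant range of $i$.

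\textbf{Step 2 (semistable case).} We use the localization sequence
\[
H^i(\mathcal{X},n)\to H^i(X,n)\to H^{i+1}_A(\mathcal{X},n)
\]
and proper base change $H^i(\mathcal{X},n)\cong H^i(A,n)$.

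For the first term, we apply the induction hypothesis to the projective $(N-1)$-local variety $A$: $H^i(A,n)$ is finite for $i\notin[2n-d-N+1,n+d+1]$.

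For the support term, we prove the analogue of Theorem \ref{support-sncsubvariety-codimension-finite}. For a smooth stratum $Y$ of codimension $c$ in $A$, Gabber purity gives
\[
H^{i+1}_Y(\mathcal{X},n)\cong H^{i-2c-1}(Y,n-c-1).
\]
Here $Y$ is smooth projective of dimension $d-c$ over $k_{N-1}$. The induction hypothesis (part (1) for dimension $d-c$, level $N-1$) gives finiteness when
\[
i-2c-1\notin[2(n-c-1)-(d-c)-N+1,\ (n-c-1)+(d-c)+1],
\]
that is, when $i\notin[2n-d-N-c,\ n+d]$. Descending induction on $c$ and induction on the number of components, exactly as before, then gives finiteness of $H^{i+1}_A(\mathcal{X},n)$ for $i\notin[2n-d-N,n+d+1]$. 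Combining the two terms yields (1).

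\textbf{Main obstacle.} I expect the main difficulty to be the bookkeeping at the endpoints. The interval for $A$ must come out as $[2n-d-N+1,\,n+d+1]$, which requires the induction hypothesis over $k_{N-1}$ applied to a singular (snc) projective variety. That is fine, since the statement is for all projective varieties. The support-cohomology induction over strata also needs both bounds, lower and upper, to survive the shift $i\mapsto i+1$ and $c\mapsto c+1$ in the Mayer--Vietoris step. If the upper bound fails there, I would instead use the smooth-variety bound (an analogue of Theorem \ref{finiteness-smooth-finite field}, giving $i\notin[n,n+d+N]$-type ranges for smooth $Y$ over $k_{N-1}$). That bound would itself have to be proved inside the same induction on $N$.
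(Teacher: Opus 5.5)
Your strategy is the paper's own. The paper inducts on $N$ starting from Theorem~\ref{thm:Uni-bound-Proj-local}, and reduces to the semistable case by alterations and induction on $d$. It then uses localization, proper base change $H^i(\mathcal{X},n)\cong H^i(A,n)$, Gabber purity and descending induction on the strata of $A$ (Propositions~\ref{support-regular-subvariety-codimension-finite N-local field}, \ref{support-sncsubvariety-codimension-finite N-local field} and Theorem~\ref{key theorem}). However, two steps do not work as you wrote them.

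\textbf{The purity computation.} With $Y$ of dimension $d-c$ over $k_{N-1}$, the condition $i-2c-1\notin[2n-c-d-N-1,\,n+d-2c]$ is equivalent to $i\notin[2n+c-d-N,\,n+d+1]$, not to $i\notin[2n-d-N-c,\,n+d]$. This matters. Your interval extends below $2n-d-N$ once $c\ge 1$ (take $i=2n-d-N-1$), so the claimed finiteness of $H^{i+1}_A(\mathcal{X},n)$ for $i\notin[2n-d-N,n+d+1]$ would not follow from it. The correct interval lies inside $[2n-d-N,n+d+1]$ for every $c\ge 0$, and with it the Mayer--Vietoris step goes through. The term $H^{i+2}_{Y_1\cap Y_1'}(\mathcal{X},n)$ (codimension $c+1$, degree shifted by one) is finite for $i\notin[2n+c-d-N,\,n+d]$. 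So both endpoints survive the shift, and the ``smooth-variety fallback'' you anticipate is unnecessary.

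\textbf{Part (2).} Your ``alternative'' reduction of (2) to (1) via $cd_l(X)\le 2d+N+1$ is false.
\begin{itemize}
\item For $n=d+N+1$ the interval in (1) is $[d+N+2,\,2d+N+2]$, which meets $[0,2d+N+1]$ as soon as $d\ge 1$.
\item For $-d-1\le n<0$ that interval contains $[0,n+d+1]$.
\end{itemize}
Your first suggestion only gestures at an argument. The version that works, and the one the paper uses, carries hypothesis (2) through the same induction:
\begin{itemize}
\item Over $k_{N-1}$, $H^i(A,n)$ is finite for $n\notin[0,d+N-1]$.
\item The purity terms $H^{i-2c-1}(Y,n-c-1)$ are finite for $n\notin[c+1,d+N]$.
\item The Mayer--Vietoris terms are finite for $n\notin[c+2,d+N]$.
\item In the alteration step, condition (2) passes to $Z$, $E$ (and the exceptional loci in positive characteristic) because their dimensions are smaller.
\item The base case $N=1$ of (2) is \cite[Lemma 10.9]{GKR}, as in Theorem~\ref{thm:Uni-bound-Proj-local}.
\end{itemize}
With these two corrections your proof coincides with the paper's.
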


Using this, one can deduce the following.

\begin{cor}
    Let $X$ be a projective variety of dimension $d$ over $k_{N}$. Then there exists an integer $M$ such that
\[
	 \lvert H_{\et}^{i} (X, \mathbb{Z}/l^{v}(n) ) \rvert \leq M
	\]  for all $v \geq 0$ and $l \neq p$ if any of the following  holds.
    \begin{enumerate}
        \item $i \nin [2n-d-N, n+d+2];$
        \item $n \nin [0, d+N]$.
    \end{enumerate}
\end{cor}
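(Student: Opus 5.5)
The plan is to deduce the corollary from Theorem~\ref{projective-N-local field-bounded}, in the same way the analogous corollary in Section~\ref{sec:Et-Proj-local} was deduced from Theorem~\ref{projective-local field-bounded}. Fix a prime $l \neq p$ and $v \geq 1$; the case $v=0$ is trivial. The Bockstein triangle
\[
\Z/l^v(n) \to \Q_l/\Z_l(n) \xrightarrow{l^v} \Q_l/\Z_l(n)
\]
gives the exact sequence
\[
0 \to H^{i-1}_{\et}(X, \Q_l/\Z_l(n))/l^v \to H^i_{\et}(X, \Z/l^v(n)) \to {}_{l^v}H^i_{\et}(X, \Q_l/\Z_l(n)) \to 0 .
\]
Consequently
\[
|H^i_{\et}(X,\Z/l^v(n))| \leq |H^{i-1}_{\et}(X,\Q_l/\Z_l(n))/l^v| \cdot |{}_{l^v}H^i_{\et}(X,\Q_l/\Z_l(n))|.
\]
Each factor is bounded by the order of the corresponding $l$-primary summand of $H^{i-1}_{\et}(X,n)$, respectively $H^i_{\et}(X,n)$, whenever those groups are finite. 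Hence it suffices to show that both $H^{i-1}_{\et}(X,n)$ and $H^i_{\et}(X,n)$ are finite under hypotheses (1) or (2). We then take $M = |H^{i-1}_{\et}(X,n)|\cdot|H^i_{\et}(X,n)|$, which bounds the order uniformly in $l$ and $v$, because the $l$-primary part of a finite group is a subgroup of it.

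The index check is as follows. Under (2), $n \nin [0,d+N]$, and Theorem~\ref{projective-N-local field-bounded}(2) applies to all degrees, so both $H^{i-1}$ and $H^i$ are finite. Under (1), $i \nin [2n-d-N, n+d+2]$. This gives $i \nin [2n-d-N, n+d+1]$ directly, and for the shifted degree it gives $i-1 \nin [2n-d-N-1, n+d+1]$, hence $i-1 \nin [2n-d-N, n+d+1]$. So Theorem~\ref{projective-N-local field-bounded}(1) gives finiteness of both groups. The widening of the upper end of the interval from $n+d+1$ to $n+d+2$ is exactly what absorbs the degree shift in $H^{i-1}$.

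There is no real obstacle here. The one point needing care is that the bound must be uniform in $l$ as well as $v$. This holds because $H^i_{\et}(X,n)$ is the direct sum over all $l \neq p$, so its finiteness forces all but finitely many $l$-primary parts to vanish, and the single finite group bounds every $l$ simultaneously. The substantive input is Theorem~\ref{projective-N-local field-bounded} itself, which we take as given.
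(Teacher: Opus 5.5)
Your proof is correct and follows the paper's own argument. The paper likewise uses the Bockstein sequence (with $(\Q/\Z)'(n)$ and all $m$ prime to $p$ in place of your prime-by-prime $\Q_l/\Z_l(n)$) to reduce to finiteness of $H^{i-1}_{\et}(X,n)$ and $H^{i}_{\et}(X,n)$ via Theorem~\ref{projective-N-local field-bounded}; your explicit index check, showing that widening the interval to $n+d+2$ absorbs the shift to $H^{i-1}$, fills in the step the paper leaves implicit.
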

\begin{proof}
    For any integer $m$ such that $(m, p) = 1$. We have the exact triangle
  \[
    {\Z}/m(n) \to ({\Q}/{\Z})'(n) \xrightarrow{m} ({\Q}/{\Z})'(n),
  \]
  where $({\Q}/{\Z})' = {\underset{\ell \neq p}\bigoplus} {\Q_\ell}/{\Z_\ell}$ and we have the following long exact sequence,
\begin{equation}
      ... \rightarrow H_{\et}^{i-1}(X, n) \rightarrow H_{\et}^{i}(X, \mathbb{Z}/m(n)) \rightarrow H_{\et}^{i}(X, n)  \rightarrow... \end{equation}
Taking the limit over integers $m$ such that $(m,p) = 1$, the required result now follows from Theorem \ref{projective-N-local field-bounded}.
\end{proof}

We prove the theorem in several steps. We begin with the finiteness result for $\Spec(k_{N})$.

\begin{prop}\label{finiteness-N-local field}
    If  $X = \Spec(k_{N})$, then $H_{\et}^{i} (\Spec(k_{N}), n)$
	is finite if any of the following holds. 
    \begin{enumerate}
        \item $i \nin [2n-N, n+1]$.
       \item $n \nin [0, N]$.
       \end{enumerate}
\end{prop}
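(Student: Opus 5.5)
I will argue by induction on $N$, with the case $N=0$ following from the structure of Galois cohomology of a finite field. For $N=0$, $k_0$ is finite with absolute Galois group $\widehat{\Z}$, so $H^i_\et(k_0, n)=0$ for $i\ge 2$. Moreover $H^0(k_0,\Q_l/\Z_l(n))$ and $H^1(k_0,\Q_l/\Z_l(n))$ are the invariants and coinvariants of Frobenius on $\Q_l/\Z_l(n)$. Both are finite when $n\neq 0$, since Frobenius acts by $q^n\neq 1$. For $n\ne 0$ they are also zero for all but finitely many $l$, namely those $l$ with $l\mid q^n-1$. When $n=0$, $H^0=\Q/\Z'$ and $H^1=(\Q/\Z)'$ are infinite, and this matches the excluded range: the interval $[2n-N,n+1]=[0,1]$ is exactly where infiniteness may occur. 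For $n\neq 0$ everything is finite, and the statement follows in both cases (1) and (2).

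For the inductive step, write $K=k_N$ with residue field $k=k_{N-1}$, and use the residue sequence for a complete discretely valued field with $l\neq p$. Since $l$ is invertible, the inertia group $I$ has $l$-cohomological dimension $1$ and its tame quotient gives $H^1(I,\Q_l/\Z_l(n))\cong \Q_l/\Z_l(n-1)$. The Hochschild--Serre spectral sequence for $1\to I\to G_K\to G_k\to 1$ then degenerates into the split short exact sequence
\[
0\to H^i(k,n)\to H^i(K,n)\to H^{i-1}(k,n-1)\to 0.
\]
This is the standard purity/Gysin sequence for $\Spec$ of the henselian DVR $\mathcal{O}_K$; one may alternatively derive it from Gabber purity together with $H^i(\mathcal{O}_K,-)\cong H^i(k,-)$ by proper base change for henselian rings. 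Summing over $l\neq p$, $H^i(K,n)$ is finite as soon as both $H^i(k,n)$ and $H^{i-1}(k,n-1)$ are finite.

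It remains to check the index ranges, which amounts to bookkeeping. By induction, $H^i(k,n)$ is finite if $i\notin[2n-N+1,n+1]$ or $n\notin[0,N-1]$. Likewise $H^{i-1}(k,n-1)$ is finite if $i-1\notin[2n-2-N+1,n]$, that is $i\notin[2n-N,n+1]$, or if $n\notin[1,N]$.

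For case (1), if $i\notin[2n-N,n+1]$, then $i\notin[2n-N+1,n+1]$ as well, so both groups are finite.

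For case (2), if $n<0$ both conditions hold. If $n>N$, then $n\notin[0,N-1]$ and $n\notin[1,N]$, so both groups are again finite.

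The main obstacle is justifying the residue sequence for all $l\neq p$ uniformly, in particular the degeneration and splitting when $k$ is imperfect of characteristic $p$. Since $l\neq p$, only the tame part of inertia matters, and it is $\prod_{l\ne p}\Z_l(1)$. Finiteness only needs exactness, not splitting, so I will use exactness alone and avoid the splitting question.
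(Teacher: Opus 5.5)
Your proof is correct and follows essentially the same route as the paper: induction on $N$, the exact sequence $H^i_{\et}(k_{N-1},n)\to H^i_{\et}(k_N,n)\to H^{i-1}_{\et}(k_{N-1},n-1)$, and the same index bookkeeping. The paper obtains that sequence from the localization sequence for $\Spec(\mathcal{O}_{k_N})$, the isomorphism $H^i_{\et}(\mathcal{O}_{k_N},n)\cong H^i_{\et}(k_{N-1},n)$ and Gabber purity, which is exactly your stated alternative. The only differences are cosmetic: you check the base case $N=0$ by a direct Frobenius computation instead of citing Kahn and Colliot-Th\'el\`ene--Sansuc--Soul\'e, and your condition $n\notin[1,N]$ for $H^{i-1}_{\et}(k_{N-1},n-1)$ is the accurate one, where the paper harmlessly writes $n\notin[0,N-1]$.
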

\begin{proof}
 First note that when $N = 0$, this follows from Theorem \ref{finiteness-smooth-finite field}. In this case, the result says that $H_{\et}^{i} (\Spec(k_{0}), n)$ is finite either $n \neq 0$ or $i \nin [0,1]$.  So we assume that $N \geq 1$, and prove this result by induction on $N$. As $X = \Spec(k_{N})$, we can choose $\mathcal{X} = \Spec(\mathcal{O}_{{k}_{N}}) $ and $A = \Spec(k_{N-1})$. 
 Assume that $N \geq 1$ and the results holds for $N-1$.  Using the long exact sequence, we have
\begin{equation}
	... \rightarrow H_{\et}^{i}( \mathcal{X}, n) \rightarrow  H_{\et}^{i} (X,n) \rightarrow H_{\et, A}^{i+1}(\mathcal{X}, n) \rightarrow ..,
	\end{equation}
it suffices to show the finiteness of $H_{ \et}^{i}( \mathcal{X}, n)$ and $H_{et, A}^{i+1}(\mathcal{X}, n)$ in the given range of $i$ and $n$.  Now by \cite[Chapter VI, Corollary 2.7]{Milne-etale}, we have $ H_{\et}^{i}( \mathcal{X}, n) \cong H_{\et}^{i}( A, n)$.  The induction hypothesis implies that $H_{\et}^{i}( A, n)$ is finite if either $i \nin [2n-(N-1), n+1]$ or $n \nin [0, N-1]$. In particular, these conditions are satisfied for given ranges of $i$ and $n$. On the other hand, by purity isomorphism, we have $H_{\et, A}^{i+1}(\mathcal{X}, n) \cong H_{\et}^{i-1}( A, n-1)$. Applying the induction hypothesis, $H_{\et}^{i-1}( A, n-1)$ is finite if either  $i-1 \nin [2(n-1)-N+1, n-1 + 1]$ or $n \nin [0,  N-1]$, which holds iff either  $i \nin [2n-N, n+1]$ or $n \nin [0,  N-1]$. In particular, these conditions are also satisfied in the given range of $i$ and $n$. This completes the proof.
\end{proof}

\begin{prop}\label{support-regular-subvariety-codimension-finite N-local field}
	Let $X$ be a smooth projective variety of dimension $d$ over $k_{N}$ which has semi-stable model $\mathcal{X}$ with special fiber $A$. Suppose $Y$ is regular subvariety of codimension $c$ in $A$ over $k_{N-1}$. Assume that  Theorem \ref{projective-N-local field-bounded} holds for all projective varieties over $k_{N-1}$. Then $H_{\et, Y}^{i+1} (\mathcal{X}, n)$
	is finite if any of the following holds. 
    \begin{enumerate}
        \item $i \nin [2n+c-d-N, n+d+1]$.
       \item $n \nin [c+1, d+N]$.
       \end{enumerate}
    
\end{prop}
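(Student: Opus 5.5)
Since $Y$ is a regular closed subscheme of codimension $c$ in $A$, and $A$ is a divisor in the regular scheme $\mathcal{X}$, the inclusion $Y \inj \mathcal{X}$ is a closed immersion of regular schemes of codimension $c+1$. I would therefore start, as in Theorem \ref{support-smoothsubvariety-codimension-finite}, with Gabber's absolute purity theorem \cite[Theorem 2.1.1]{F}, applied with coefficients $\mathbb{Q}_l/\mathbb{Z}_l(n)$ for $l \neq p$ and summed over $l$. This gives
\[
H_{\et, Y}^{i+1}(\mathcal{X}, n) \cong H_{\et}^{i+1-2c-2}(Y, n-c-1) = H_{\et}^{i-2c-1}(Y, n-c-1).
\]
The problem is then to show finiteness of $H_{\et}^{j}(Y, m)$, where $j = i-2c-1$ and $m = n-c-1$. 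Here $Y$ is a projective variety over $k_{N-1}$ of dimension $d-c$. It is projective because it is closed in the projective special fiber $A$.

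Next I would invoke the hypothesis that Theorem \ref{projective-N-local field-bounded} holds over $k_{N-1}$. Applied to $Y$, it gives finiteness if either $j \nin [2m-(d-c)-(N-1),\, m+(d-c)+1]$ or $m \nin [0, d-c+N-1]$. The rest is bookkeeping in the indices.

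For condition (2), $m \nin [0, d-c+N-1]$ is equivalent to $n-c-1 \nin [0, d-c+N-1]$, that is, $n \nin [c+1, d+N]$. This is exactly (2).

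For condition (1), substituting $j = i-2c-1$ turns the lower bound $j < 2m-d+c-N+1$ into
\[
i-2c-1 < 2n-2c-2-d+c-N+1, \quad\text{i.e.}\quad i < 2n+c-d-N.
\]
This matches the lower endpoint of (1). The upper bound $j > m+d-c+1$ becomes $i-2c-1 > n-c-1+d-c+1$, i.e. $i > n+d+1$. So $i \nin [2n+c-d-N, n+d+1]$ is exactly the required condition, and both parts follow.

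The main obstacle is justifying absolute purity in this setting. In mixed characteristic, or when $k_{N-1}$ is not perfect, $Y$ is only assumed regular rather than smooth, so Gabber's theorem is needed in its full form for regular pairs with $l$ invertible. This holds since $l \neq p$ and $p$ is the characteristic of the final residue field, so $l$ is invertible on $\mathcal{X}$. I would also check that $Y$ has pure codimension $c+1$ in $\mathcal{X}$; if $Y$ is not connected, I would apply purity to each connected component. Everything else is the index arithmetic above.
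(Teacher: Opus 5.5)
Your proposal is correct and is the paper's proof. You use Gabber's absolute purity for the codimension-$(c+1)$ regular pair $Y \subset \mathcal{X}$ to identify $H^{i+1}_{\et,Y}(\mathcal{X},n)$ with $H^{i-2c-1}_{\et}(Y,n-c-1)$, then apply the assumed theorem over $k_{N-1}$ to the $(d-c)$-dimensional projective variety $Y$, and your index arithmetic produces exactly conditions (1) and (2); your extra checks (projectivity of $Y$ via $A$, $l$ invertible on $\mathcal{X}$, working componentwise) are justifications the paper leaves implicit.
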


\begin{proof}
      By Gabber's purity theorem \cite[Theorem 2.1.1]{F}, we have
\begin{equation}
H_{\et, Y}^{i+1} (\mathcal{X}, n) \cong H_{\et}^{i+1-2(c+1)} (Y, n-c-1) \cong H_{\et}^{i-2c-1} (Y, n-c-1) . 
\end{equation}
Since $Y$ is a projective variety over $k_{N-1}$,  our assumption implies that  $H_{\et}^{i-2c-1} (Y, n-c-1)$ is finite if any of the following holds;
\begin{enumerate}
    \item[$(1')$] $i-2c-1 \nin [2(n-c-1)-(d-c)-(N-1), n-c-1+ d-c+1]$;
    \item[$(2')$] $n-c-1 \nin [0, d-c+(N-1)]$.
\end{enumerate}
Here, the condition $(1')$ is equivalent to $i \nin [2n-(d-c)-N, n+d+1]$. 
Also, $(2')$ is equivalent to $n \nin [c+1, d+N]$, which 
completes the proof.
\end{proof}


\begin{prop}\label{support-sncsubvariety-codimension-finite N-local field}
	Let $X$ be a smooth projective variety of dimension $d$ over $k_{N}$ which has semi-stable model $\mathcal{X}$ with special fiber $A$. Suppose $Y$ a simple normal crossing subvariety of codimension $c$ in $A$ over $k_{N-1}$. Assume that Theorem \ref{projective-N-local field-bounded} holds for all projective varieties over $k_{N-1}$. Then $H_{\et, Y}^{i+1} (\mathcal{X}, n)$
	is finite if any of the following holds. 
    \begin{enumerate}
        \item $i \nin [2n+c-d-N, n+d+1]$.
       \item $n \nin [c+1, d+N]$.
       \end{enumerate}
    
\end{prop}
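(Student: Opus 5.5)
We argue exactly as in the proof of \thmref{support-sncsubvariety-codimension-finite}, now using \propref{support-regular-subvariety-codimension-finite N-local field} as the base case in place of \thmref{support-smoothsubvariety-codimension-finite}. The argument is a descending induction on the codimension $c$, and for fixed $c$ an induction on the number $r$ of irreducible components of $Y$.

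First, note that $c \le d$. If $c = d$, or more generally if $r = 1$, then $Y$ is regular of codimension $c$ in $A$, and the claim is precisely \propref{support-regular-subvariety-codimension-finite N-local field}. This uses the standing assumption that \thmref{projective-N-local field-bounded} holds over $k_{N-1}$.

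For $r \ge 2$, write $Y = Y_1 \cup Y_1'$ with $Y_1' = Y_2 \cup \cdots \cup Y_r$. The Mayer--Vietoris sequence for cohomology with supports gives the exact sequence
\[
H^{i+1}_{Y_1}(\mathcal{X}, n) \oplus H^{i+1}_{Y_1'}(\mathcal{X}, n) \to H^{i+1}_{Y}(\mathcal{X}, n) \to H^{i+2}_{Y_1 \cap Y_1'}(\mathcal{X}, n).
\]
In the ranges (1) and (2), the left term is finite: the first summand by the case $r=1$, and the second by induction on $r$, since $Y_1'$ is again a simple normal crossing subvariety of pure codimension $c$. The right term involves $Y_1 \cap Y_1'$, which is a simple normal crossing subvariety of codimension $c+1$ in $A$. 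Writing $H^{i+2} = H^{(i+1)+1}$, the descending induction hypothesis applied with $i+1$ in place of $i$ shows it is finite provided either
\[
i+1 \nin [2n+(c+1)-d-N,\ n+d+1] \quad\text{or}\quad n \nin [c+2, d+N].
\]
The first condition is equivalent to $i \nin [2n+c-d-N,\ n+d]$. This interval is contained in $[2n+c-d-N,\ n+d+1]$, so condition (1) for $Y$ implies it. Likewise $[c+2, d+N] \subset [c+1, d+N]$, so condition (2) for $Y$ implies the second. Hence the middle term is finite, which completes both inductions.

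The only step needing care is this index bookkeeping, namely checking that the shift $i \mapsto i+1$ together with $c \mapsto c+1$ keeps the new conditions weaker than the original ones. The computation above shows that the left endpoint is unchanged and the right endpoint only drops, so the implication holds. Beyond that, we need the harmless facts that the components $Y_j$ and their intersections are regular projective $k_{N-1}$-varieties, so that the base case applies, and that the Mayer--Vietoris sequence with supports is exact for the torsion coefficients $\mathbb{Q}_l/\mathbb{Z}_l(n)$.
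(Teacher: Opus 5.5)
Your proposal is correct and follows the paper's proof essentially step for step. The paper uses the same descending induction on $c$ with an inner induction on the number of components, takes \propref{support-regular-subvariety-codimension-finite N-local field} as the base case, and applies the same Mayer--Vietoris sequence with supports. Your explicit check that the shift $(i,c)\mapsto(i+1,c+1)$ only shrinks the excluded ranges supplies index bookkeeping that the paper merely asserts.
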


\begin{proof}
We will prove this result by descending induction on $c$. First note that $c \leq d$. Also if $c =d$, then $Y$ is regular and hence we are done by Proposition \ref{support-regular-subvariety-codimension-finite N-local field}. Assume that the result holds for all simple normal crossing subvarieties of codimension greater than $c$ inside $A$. We now prove the result for $Y$.  As $Y$ is simple normal crossing variety, so $Y = Y_{1} \cup ... \cup Y_{r}$ for some integer $r$, where each $Y_{i}$ is regular variety over $k_{N-1}$ for $1 \leq i \leq r$. Also, $Y$ is of pure codimension $c$ in $A$. We will prove this case by induction on $r$.	If $ r = 1$, then $Y$ is regular and hence we are again done by Proposition \ref{support-regular-subvariety-codimension-finite N-local field}. So assume results holds when number of irreducible components of $Y$ inside $A$ is less than $r$, where $r \geq 2$. Write $Y = Y_{1} \cup Y'_{1}$, where $Y'_{1} = Y_{2} \cup ... \cup Y_{r}$. Using the long exact sequence for support cohomology, we have
\begin{equation}\label{support-long-sequence}
	H^{i+1}_{Y_{1}}(\mathcal{X}, n) \bigoplus H^{i+1}_{Y'_{1}}(\mathcal{X}, n)  \rightarrow H^{i+1}_{Y}(\mathcal{X}, n) \rightarrow H^{i+2}_{Y_{1} \cap Y'_{1}}(\mathcal{X}, n).
	\end{equation}
	
Here, $H^{i+1}_{Y_{1}}(\mathcal{X}, n)$ is finite by the case $r = 1$, and the group $H^{i+1}_{Y'_{1}}(\mathcal{X}, n)$ is finite by induction hypothesis on $r$ in the given ranges of $i$ and $n$.  Therefore we are reduced to show that $H^{i+2}_{Y_{1} \cap Y'_{1}}(\mathcal{X}, n)$ is finite in the given ranges of $i$ and $n$. Note that $Y_{1} \cap Y'_{1}$ is simple normal crossing scheme inside $A$ of codimension $c+1$.  We now use the induction hypothesis on $c$ to conclude that the group $H^{i+2}_{Y_{1} \cap Y'_{1}}(\mathcal{X}, n)$ is finite whenever $i$ and $n$ satisfy $(1)$ or $(2)$.  Hence, $(\ref{support-long-sequence})$ implies that the group $H^{i}_{Y}(\mathcal{X}, n)$ is finite when $i$ and $n$ satisfy $(1)$ or $(2)$. This completes the proof by induction for $r$ and in particular completes the proof by induction on $c$. This completes the proof of the proposition.
\end{proof}

  

\begin{thm}\label{key theorem}
    Let $N \geq 2$ be a fixed integer and $X$ be a projective variety of dimension $d$ over $k_{N}$. Assume that Theorem $\ref{projective-N-local field-bounded}$ holds for all projective varieties over $k_{N-1}$. Then $H_{\et}^{i} (X, n)$
	is finite if any of the following holds.
    \begin{enumerate}
        \item $i \nin [2n-d-N, n+d+1]$.
       \item $n \nin [0, d+N]$.
       \end{enumerate}
    
\end{thm}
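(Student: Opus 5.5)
The plan follows the local-field argument of Section~\ref{sec:Et-Proj-local}, with Theorem~\ref{snc-projective-finite field} and Theorem~\ref{support-sncsubvariety-codimension-finite} replaced by the inductive hypothesis over $k_{N-1}$ and Proposition~\ref{support-sncsubvariety-codimension-finite N-local field}.

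\textbf{Step 1: the semi-stable case.} First treat a smooth projective $X$ of dimension $d$ over $k_N$ admitting a semi-stable model $\mathcal{X}$ with special fiber $A$. Here $A_{\red}$ is a projective simple normal crossing variety of dimension $d$ over $k_{N-1}$. Use the localization sequence
\[
\cdots \to H^i_{\et}(\mathcal{X},n)\to H^i_{\et}(X,n)\to H^{i+1}_{\et,A}(\mathcal{X},n)\to\cdots
\]
together with proper base change $H^i_{\et}(\mathcal{X},n)\cong H^i_{\et}(A,n)$ from \cite[Chapter VI, Corollary 2.7]{Milne-etale}. Since $A$ is projective over $k_{N-1}$, the assumed Theorem~\ref{projective-N-local field-bounded} over $k_{N-1}$ shows that $H^i(A,n)$ is finite when $i\notin[2n-d-N+1,n+d+1]$ or $n\notin[0,d+N-1]$. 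Proposition~\ref{support-sncsubvariety-codimension-finite N-local field} with $c=0$ shows that $H^{i+1}_A(\mathcal{X},n)$ is finite when $i\notin[2n-d-N,n+d+1]$ or $n\notin[1,d+N]$. Both conditions are implied by (1), so (1) follows. For (2), the remaining cases are $n=0$ (for $H^{i+1}_A$) and $n=d+N$ (for $H^i(A)$), which need separate handling. For $n=d+N$, combine with (1): $2n-d-N=d+N>n+d+1$ when $N\ge 2$, so (1) already covers every $i$. This is exactly where $N\ge2$ enters. For $n=0$, (1) covers $i\notin[-d-N,d+1]$. For the remaining $i$, I would instead use the argument of \cite[Lemma 10.9]{GKR} for $n\le 0$, namely weight/duality considerations, or handle it as that reference does. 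I expect this boundary case to be the main technical annoyance.

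\textbf{Step 2: the general projective case.} Reduce to Step 1 by induction on $d$, copying the proof of Theorem~\ref{thm:Uni-bound-Proj-local}. Nil-invariance and Mayer--Vietoris for closed covers reduce to $X$ integral. The case $d=0$ is Proposition~\ref{finiteness-N-local field} applied to finite extensions of $k_N$, which are again $N$-local. Next apply Temkin's alteration theorem to obtain $W\to X$ with $W$ smooth projective over a finite extension $k'$, again $N$-local, and with semi-stable reduction. In characteristic zero this map is birational; in characteristic $p$ it is a $p$-alteration, made flat after blow-up via Raynaud--Gruson.

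\textbf{Step 3: the cdh sequence and $p$-power trace.} Use the cdh exact sequence~\eqref{eqn:Real-iso-3} with $Z$ and $E$ of dimension less than $d$. The shifted ranges for $E$ are contained in $[2n-d-N,n+d+1]$, so induction applies. When the alteration is finite flat of degree $p^r$, the pull-back/push-forward trick shows that $H^i(X',n)$ injects into $H^i(W',n)$, since $p$ is invertible on the prime-to-$p$ coefficients. This completes the reduction to Step 1.

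The main obstacle I anticipate is whether semi-stable reduction is available over $N$-local fields, since these have imperfect residue fields when $N\ge2$. Temkin's theorem is stated for general discretely valued fields, so I would invoke it directly. The secondary obstacle is the endpoint $n=0$ discussed in Step 1.
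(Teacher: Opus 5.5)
Your proposal follows the paper's proof. The shared steps are: reduction to $X$ integral, induction on $d$ with $d=0$ from Proposition~\ref{finiteness-N-local field}, and Temkin's alteration combined with the cdh sequence~\eqref{eqn:Real-iso-3.1} and the $p^r$ trace argument. For the semi-stable $W$ you also match the paper: the localization sequence, $H^i_{\et}(\mathcal{X},n)\cong H^i_{\et}(A,n)$ with the hypothesis over $k_{N-1}$ applied to the special fibre, and Proposition~\ref{support-sncsubvariety-codimension-finite N-local field} with $c=0$. The paper compresses this last step into a bare citation of that proposition, so your Step~1 is the more complete write-up.

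The one thing to fix is your treatment of (2) in Step~1: the boundary cases you worry about do not exist.
\begin{itemize}
\item Hypothesis (2) says $n<0$ or $n>d+N$. Every such $n$ lies outside both $[0,d+N-1]$ and $[1,d+N]$. So $H^i_{\et}(A,n)$ and $H^{i+1}_{\et,A}(\mathcal{X},n)$ are both finite, and (2) follows immediately.
\item The values $n=0$ and $n=d+N$ are excluded by (2) itself. You had the containment of ranges backwards, and the appeal to \cite[Lemma 10.9]{GKR} for $n=0$ should be dropped.
\item Your argument for $n=d+N$ is false in any case: $2n-d-N=d+N<2d+N+1=n+d+1$, so (1) does not cover every $i$ there.
\item That is also not where $N\ge 2$ enters. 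The hypothesis $N\ge2$ plays no role in the index bookkeeping. It only reflects that the induction on $N$ starts from $N=1$ (Theorem~\ref{projective-local field-bounded}), so the assumed statement over $k_{N-1}$ is one already proved.
\end{itemize}
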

\begin{proof}

We shall prove this using the induction on $d$, the dimension of $X$. Since étale cohomology groups satisfy nil-invariance, we can assume that $X$ is reduced and further applying the Mayer-Vietoris sequence for closed covers, we can assume that $X$ is integral. In fact, this reduction can be achieved by similar methods as in Theorem \ref{snc-projective-finite field}.  
  If $d = 0$, we are done by Proposition \ref{finiteness-N-local field}. So we shall assume that $d \geq 1$.  By \cite[Theorem 4.3.1]{Temkin}(also see \cite[Lemma 10.9]{GKR}), we can find a finite field extension ${k_{N}'}/k_{N}$
      and a smooth projective integral $k_{N}'$-scheme $W$ together with a projective
      morphism $\pi \colon W \to X$ such that the following hold.
      \begin{enumerate}
      \item[(1')]
        $k_{N}' = k_{N}$ and $\pi$ is birational if $\Char(k_{N}) = 0$.
      \item[(2')]
        The degrees  $[k_{N}':k_{N}]$ is some power of $p$
        if $\Char(k_{N}) = p$.
      \item[(3')] The morphism 
        $\pi$ is a $p$-alteration (an alteration {\`a} la de Jong whose degree is a
        $p$-power) if $\Char(k_{N}) = p$.
      \item[(4')]
        $W \to \Spec(k')$ admits a semistable reduction.
      \end{enumerate}
 If $\Char(k_{N}) = 0$, then we can find a resolution of singularities
  $\pi \colon W \to X$. If we let $Z = X_\sing$ and $E = \pi^{-1}(Z)$, then
  $\{W \amalg Z \rightarrow X\}$ is a cdh cover of $X$ such that
  $\dim(Z) < \dim(X)$ and $\dim(E) < \dim(X)$.
 A combination of proper base change theorem and
localization theorem in {\'e}tale cohomology
(see \cite[Rmk. III.3.13, Cor. VI.2.3]{Milne-etale}) implies that we have an exact sequence (see \cite[Lemma 10.2]{GKR})
\begin{equation}\label{eqn:Real-iso-3.1}
    \xymatrix@C.8pc{
{\begin{array}{l}
        H_{\et}^{i-1}(W, n) \\
       \hspace*{1.2cm} \oplus \\
       H_{\et}^{i-1}(Z, n)
     \end{array}}
   \ar[r] &   H_{\et}^{i-1}(E, n) \ar[r] & H_{\et}^i(X, n)
\ar[r] & {\begin{array}{l}
        H_{\et}^{i}(W, n) \\
       \hspace*{1.2cm} \oplus \\
       H_{\et}^{i}(Z, n)
     \end{array}}
   \ar[r] &  H_{\et}^{i}(E, n).}
\end{equation}
  Since $\dim(Z) < \dim(X)$, the induction hypothesis implies that  the group $H_{\et}^{i}(Z, n)$ is finite whenever $i \nin [2n-\text{dim}(Z)-N, n+\text{dim}(Z)+1]$. Similarly, since $\dim(E) < \dim(X)$, $H_{\et}^{i-1}(E, n)$ is also finite if  $ i-1 \nin [2n-\text{dim}(E)-N, n+ $dim$(E)+1] $, which holds iff $ i \nin [2n-\text{dim}(E)-N+1, n+ $dim$(E)+2] $. These conditions are clearly satisfied whenever $ i \nin [2n-d-N, n+ d+1] $ as dim$(E) \leq d-1$. Hence we are reduced to show the group $ H_{\et}^{i}(W, n)$ is finite whenever $ i \nin [2n-d-N , n+d+1]$. This follows from Proposition \ref{support-sncsubvariety-codimension-finite N-local field}.

 We now assume that $\Char(k) = p > 0$.
Using Temkin's strengthening of the alteration theorems of de Jong and Gabber
(see \cite[Thm.~1.2.5]{Temkin}) and the flatification theorems of Raynaud-Gruson, it is shown in the proof of
\cite[Lemma 10.2]{GKR} that there exists a commutative diagram
\begin{equation}\label{eqn:Real-iso-1.1}
  \xymatrix@C1pc{
    W' \ar[r]^-{\pi'} \ar[d]_-{f'} & X' \ar[d]^-{f} \\
    W \ar[r]^-{\pi} & X,}
\end{equation}
where $f$ is the blow-up along a nowhere dense closed subscheme $Z \inj X$ and $f'$ is
the strict transform of $W$ along $f$. The map $\pi$ is a $p$-alteration of $X$
such that $W$ is integral and smooth over $k$ and $\pi'$ is a finite and flat
morphism between integral schemes of degree $p^r$ for some $r \ge 0$.

Since $\{X' \amalg Z \rightarrow X\}$ is a cdh cover of $X$, we can apply the above
characteristic zero
argument to reduce the proof to showing that the lemma holds for
${X'}$.
By applying this argument to $f'$ and using induction,
we see that the lemma holds for $W'$ because it
holds for $W$ by Proposition \ref{support-sncsubvariety-codimension-finite N-local field}.

The assertion for $X'$ now follows as the map $\pi'$ is finite flat map of degree $p^r$, hence by pull-back push-forward  arguments for étale cohomology (see \cite[Lemma 10.2]{GKR}), we have a commutative
diagram
\begin{equation}\label{eqn:Real-iso-2}
  \xymatrix@C1pc{
    H^i(X', n) \ar[r]^-{\pi'^*} \ar[d]_-{\epsilon^*_{X'}} &
    H^i(W', n) \ar[r]^-{\pi'_*} \ar[d]^-{\epsilon^*_{W'}} &
    H^i(X', n) \ar[d]^-{\epsilon^*_{X'}} \\
    H^i_\et(X', n) \ar[r]^-{\pi'^*} &
    H^i_\et(W', n) \ar[r]^-{\pi'_*}  &
    H^i_\et(X', n)}
\end{equation}
such that the composition of the horizontal arrows on each of the two rows is
multiplication by $p^r$. This implies that $H^i_\et(X', n) \hookrightarrow H^i_\et(W', n)$ as $(n,p) = 1$. This concludes the proof.
\end{proof}

 We now begin proving the main result of this section. We recall the statement for reader's convenience.

\begin{thm}\label{projective-N-local field-bounded-restate}
    Let $N \geq 1$ be an integer and let $X$ be a projective variety of dimension $d$ over an $N$-local field $k_{N}$. Then the group $H_{\et}^{i} (X, n)$  is finite if any one of the following holds.
    \begin{enumerate}
        \item  $i \nin [2n-d-N, n+d+1]$;
        \item $n \nin [0, d+N]$.
    \end{enumerate}
    \end{thm}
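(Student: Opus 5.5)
The plan is induction on $N$, with Theorem \ref{key theorem} supplying the inductive step; essentially all the geometric work has already been done.

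\emph{Base case $N=1$.} Here $k_1$ is a non-archimedean local field with finite residue field $k_0$. Theorem \ref{thm:Uni-bound-Proj-local} gives finiteness of $H^i_{\et}(X,n)$ when $i\notin[2n-d-1,n+d+1]$ or $n\notin[0,d+1]$. This is exactly the statement with $N=1$.

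\emph{Inductive step.} Let $N\ge 2$ and assume the theorem for every projective variety over every $(N-1)$-local field. The residue field $k_{N-1}$ of $k_N$ is such a field, so the hypothesis of Theorem \ref{key theorem} holds. That theorem then gives the conclusion for all projective $X$ over $k_N$, in both ranges (1) and (2).

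One bookkeeping point needs care. Theorem \ref{key theorem} passes to a finite extension $k_N'/k_N$ (a $p$-power extension in positive characteristic) and to semistable models over $\mathcal{O}_{k_N'}$. The residue field of $k_N'$ is a finite extension of $k_{N-1}$, which is again an $(N-1)$-local field. The induction hypothesis must therefore be phrased for all $(N-1)$-local fields, not only for $k_{N-1}$ itself. Phrased this way, it applies through Propositions \ref{support-regular-subvariety-codimension-finite N-local field} and \ref{support-sncsubvariety-codimension-finite N-local field} to the strata of the special fibre.

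\emph{Main obstacle.} The substantive input is the input to Theorem \ref{key theorem}, so the real checks are there.

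\begin{itemize}
\item For the smooth semistable variety $W$, finiteness of $H^i(W,n)$ comes from the localization sequence $H^i(\mathcal{W},n)\to H^i(W,n)\to H^{i+1}_A(\mathcal{W},n)$.
\item By proper base change, $H^i(\mathcal{W},n)\cong H^i(A,n)$, where $A$ is a projective variety over an $(N-1)$-local field. By induction this is finite off $[2n-d-(N-1),\,n+d+1]$ (and for $n\notin[0,d+N-1]$), which covers the required range.
\item The term $H^{i+1}_A(\mathcal{W},n)$ is handled by Proposition \ref{support-sncsubvariety-codimension-finite N-local field} with $c=0$. It is finite off $[2n-d-N,\,n+d+1]$ or for $n\notin[1,d+N]$.
\end{itemize}

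Combining the two terms gives exactly the range in the theorem. For the case $n=0$ of part (2), note that $n=0$ lies outside $[0,d+N]$ only if $n<0$, so no extra case arises.

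The remaining cdh-descent and $p$-power trace arguments are identical to those in the local-field case.
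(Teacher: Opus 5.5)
Your proposal is correct and matches the paper's proof: an induction on $N$, with the case $N=1$ supplied by Theorem~\ref{thm:Uni-bound-Proj-local} and the inductive step by Theorem~\ref{key theorem}. Your observation that the inductive hypothesis must cover all $(N-1)$-local fields, because of the finite extension $k_N'/k_N$ and its residue field, usefully spells out what the paper calls an ``easy induction on $N$''.
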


\begin{proof}
This follows from Theorem \ref{key theorem} via an easy induction on $N$ as the desired result holds for $N = 1$ by Theorem \ref{projective-local field-bounded}.  This completes the proof.
\end{proof}

\begin{cor}\label{quasiprojective-N-local field-bounded}
    Let $N \geq 1$ be an integer and $X$ be a projective variety of dimension $d$ over an $N$-local field $k_{N}$. Suppose $U$ be a non-empty open subset of $X$. Then the group $H_{c, \et}^{i} (U, n)$  is finite if any one of the following holds.
    \begin{enumerate}
        \item  $i \nin [2n-d-N, n+d+1]$;
        \item $n \nin [0, d+N]$.
    \end{enumerate}
    \end{cor}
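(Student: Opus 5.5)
We would deduce the corollary from Theorem \ref{projective-N-local field-bounded-restate} and the localization sequence for compactly supported cohomology, arguing by induction on $d=\dim(X)$. Let $Z = X \setminus U$ with its reduced closed subscheme structure, so $Z$ is a projective variety over $k_N$ with $\dim(Z) \le d-1$ (or $Z$ is empty, in which case $H^i_{c,\et}(U,n)=H^i_\et(X,n)$ and we are done by the theorem). Since $X$ is proper, $H^i_{c,\et}(U,n) = H^i_\et(X, j_!\,(\Q/\Z)'(n))$, where $j\colon U \inj X$ is the inclusion. The short exact sequence $0 \to j_! j^* \to \mathcal{F} \to i_* i^* \to 0$ for $\mathcal{F} = \Q_\ell/\Z_\ell(n)$, summed over $\ell \neq p$, gives the long exact sequence
\[
\cdots \to H^{i-1}_\et(X,n) \to H^{i-1}_\et(Z,n) \to H^i_{c,\et}(U,n) \to H^i_\et(X,n) \to H^i_\et(Z,n) \to \cdots .
\]
Thus $H^i_{c,\et}(U,n)$ is finite as soon as $H^i_\et(X,n)$ and $H^{i-1}_\et(Z,n)$ are finite.

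For case (2), suppose $n \nin [0,d+N]$. Then $H^i_\et(X,n)$ is finite by Theorem \ref{projective-N-local field-bounded-restate}(2). For $Z$, if $n<0$ then $n \nin [0,\dim Z + N]$ directly. If $n > d+N$ then $n > \dim(Z)+N$, so again $n \nin [0, \dim Z+N]$. In either case $H^{i-1}_\et(Z,n)$ is finite by the same theorem applied to $Z$ (componentwise, or directly, since the theorem holds for any projective variety; if $Z$ is not equidimensional we use its dimension $\le d-1$).

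For case (1), suppose $i \nin [2n-d-N, n+d+1]$. Then $H^i_\et(X,n)$ is finite by part (1) of the theorem. For $Z$ of dimension $e \le d-1$, the group $H^{i-1}_\et(Z,n)$ is finite when $i-1 \nin [2n-e-N, n+e+1]$, that is, when $i \nin [2n-e-N+1, n+e+2]$. Since $e \le d-1$, we have $2n-e-N+1 \ge 2n-d-N+2 > 2n-d-N$ and $n+e+2 \le n+d+1$. So this interval is contained in $[2n-d-N, n+d+1]$, and $i$ lies outside it. Hence both outer terms are finite and so is $H^i_{c,\et}(U,n)$.

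The only points needing care are, first, that the theorem is stated for projective varieties of a given dimension while $Z$ may be reducible and non-equidimensional, and second, the case $Z=\emptyset$. For the first point we note that the reduction to integral $X$ via Mayer--Vietoris, already used in the proof of Theorem \ref{key theorem}, shows the theorem holds with $d$ replaced by an upper bound on the dimension, because the excluded intervals only grow with $d$. No induction on $d$ is then actually needed beyond applying the theorem to $Z$. The main (mild) obstacle is this bookkeeping of index ranges, which the computation above handles.
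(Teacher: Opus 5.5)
Your proposal is correct and is essentially the paper's proof. Both use the localization sequence $H^{i-1}_{\et}(X\setminus U,n)\to H^i_{c,\et}(U,n)\to H^i_{\et}(X,n)$ together with Theorem~\ref{projective-N-local field-bounded-restate} applied to $X$ and to $X\setminus U$.

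Your explicit index check uses $\dim(X\setminus U)\le d-1$ to cover the endpoint $i=n+d+2$, which is bookkeeping the paper leaves implicit. That dimension bound requires $U$ to be dense in $X$. This is automatic when $X$ is irreducible; in general you can replace $X$ by $\overline{U}$.
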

\begin{proof}
We have the long exact sequence for compactly supported étale cohomology groups,
\begin{center}
    $ ... \rightarrow H_{c, \et}^{i-1} (X \setminus U, n) \rightarrow H_{c, \et}^{i} (U, n) \rightarrow H_{c, \et}^{i} (X, n) \rightarrow ... $ 
\end{center}
Since $X \setminus U$ and $X$ are projective, we have $H_{c, \et}^{i-1} (X \setminus U, n) \cong H_{\et}^{i-1} (X \setminus U, n)$, and $H_{c, \et}^{i} (X, n) \cong H_{\et}^{i} (X, n)$. Both these groups are finite by Theorem \ref{projective-N-local field-bounded-restate} for given ranges of $i$ and $n$. This completes the proof.
\end{proof}

\section{Finiteness for quasi-projective varieties over $N$-local fields}
\label{section: quasi-proj over N-local}

We keep the notations same as in the beginning of \S~3. In this section, we generalize the finiteness results proved for projective varieties in the previous section to quasi-projective varieties over  $N$-local fields for any integer $N\geq 1$. For any quasi-projective $k_{N}$-variety $X$, we write  $H_{\et}^{i} (X, n) =\oplus_{l \neq p}  H_{\et}^{i} (X, \mathbb{Q}_{l}/\mathbb{Z}_{l}(n))$, where $l$ varies over all primes except $p$, the characteristic of finite field $k_{0}$. The main result of this section is the following:

\begin{thm}\label{quasi-projective-N-local field-bounded}
    Let $X$ be a quasi-projective variety of dimension $d$ over an $N$-local field $k_{N}$. Then the group $H_{\et}^{i}(X, n)$  is finite if any one of the following holds.
    \begin{enumerate}
        \item  $i \nin [2n-d-N, n+d+1]$;
        \item $n \nin [0, d+N]$.
    \end{enumerate}
    \end{thm}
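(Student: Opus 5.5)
We would prove Theorem~\ref{quasi-projective-N-local field-bounded} by induction on $d=\dim(X)$, following the scheme of the proof of Theorem~\ref{key theorem}. The difference is that projectivity is now replaced by a compactification with simple normal crossing boundary, and the projective result (Theorem~\ref{projective-N-local field-bounded-restate}) serves as the input. We assume $0\le n\le d+N$ only where convenient; the case $n\notin[0,d+N]$ runs through the same exact sequences and the same index bookkeeping. By nil-invariance we may assume $X$ reduced, and the case $d=0$ is Proposition~\ref{finiteness-N-local field} applied to the finitely many finite extensions of $k_N$ occurring. The base case we really need is the following.

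\emph{Smooth case with good compactification.} Let $W$ be smooth quasi-projective of dimension $d$ over a finite extension of $k_N$ (again an $N$-local field). Suppose $W=\overline W\setminus D$ with $\overline W$ smooth projective and $D=D_1\cup\dots\cup D_r$ a simple normal crossing divisor. Each $D_J=\bigcap_{j\in J}D_j$ is then smooth projective of dimension $d-|J|$. Gabber purity gives
\[
H^i_{D_J}(\overline W,n)\cong H^{i-2|J|}(D_J,n-|J|).
\]
By Theorem~\ref{projective-N-local field-bounded-restate}, with $c=|J|$, this group is finite when $i\notin[2n-d-N+c,\,n+d+1]$ or $n\notin[c,d+N]$. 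Running the double induction of Proposition~\ref{support-sncsubvariety-codimension-finite N-local field} (descending on $c$, then on the number of components, via the Mayer--Vietoris sequence for supports) gives finiteness of $H^{i+1}_{D}(\overline W,n)$. Since $c\ge 1$, this holds whenever $i+1\notin[2n-d-N+1,\,n+d+1]$, i.e.\ whenever $i\notin[2n-d-N,\,n+d]$, and likewise whenever $n\notin[0,d+N]$. The localization sequence
\[
H^i(\overline W,n)\to H^i(W,n)\to H^{i+1}_D(\overline W,n)
\]
and the projective case then give the claim for $W$ in the required range.

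\emph{General case.} For $\Char(k_N)=0$ we take a resolution $\pi\colon W\to X$ that is an isomorphism over $X_\reg$. We may arrange, by resolving a projective closure and then the boundary, that $W$ has a smooth projective compactification with snc boundary. With $Z=X_\sing$ and $E=\pi^{-1}(Z)$, the proper cdh square gives the long exact sequence \eqref{eqn:Real-iso-3.1}. This sequence only needs $\pi$ proper, so it remains valid for quasi-projective $X$. The groups $H^i(Z,n)$ and $H^{i-1}(E,n)$ are finite in the stated range by the induction hypothesis, since $\dim Z,\dim E\le d-1$; the index shift is the same computation as in the proof of Theorem~\ref{key theorem}. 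The group $H^i(W,n)$ is finite by the smooth case above.

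For $\Char(k_N)=p$ we argue as in Theorem~\ref{key theorem}. Using Temkin's $p$-alterations together with Raynaud--Gruson flatification, we obtain the square \eqref{eqn:Real-iso-1.1}, now with $W$ smooth and admitting an snc compactification. The cdh sequence for $\{X'\amalg Z\to X\}$ reduces the problem to $X'$ by induction on dimension. The same sequence applied to $f'$ reduces $W'$ to $W$. Finally, the trace argument for the finite flat map $\pi'$ of degree $p^r$ shows that $H^i(X',n)\hookrightarrow H^i(W',n)$, because $p$ is invertible on the coefficients.

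The main obstacle is to produce the alteration $W$ with a smooth projective compactification whose boundary is simple normal crossing, over the possibly imperfect field $k_N$, while keeping the degree a power of $p$ (or birational in characteristic $0$). We would first try to obtain it from Temkin's theorem applied to the pair $(\overline X,\overline X\setminus X)$, for a projective closure $\overline X$, taking the inverse image of the boundary to be snc. If only a regular, rather than smooth, compactification is available, the argument still runs: Gabber purity and Theorem~\ref{projective-N-local field-bounded-restate} require only regularity and projectivity of the strata $D_J$.
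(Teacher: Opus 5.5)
Your proof is correct, but it handles smooth quasi-projective varieties by a different mechanism than the paper.

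The paper never asks for control of the boundary. It first proves a shrinking lemma (\lemref{finiteness can be shrinked for higher dimension}): for regular $X'$ and dense open $X$, the restriction $H^i(X',n)\to H^i(X,n)$ has finite kernel and cokernel in the stated range. This is shown by peeling off $X'\setminus X$ along its successive singular loci, applying Gabber purity to each regular stratum, and invoking the induction hypothesis in lower dimension, with curves handled separately. Next, for regular $X$ it finds a dense open $U$ over which an alteration of a compactification is finite flat. The trace embeds $H^i(U,n)$ into the cohomology of an open subset of a smooth projective variety, and the shrinking lemma is applied twice to return to $X$. Singular $X$ are then reduced to regular ones by the same cdh, flatification and trace argument that you use.

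Your route instead asks the resolution to make the boundary simple normal crossing: Hironaka in characteristic $0$, and in characteristic $p$ Temkin's $p$-alteration with $b^{-1}(\overline X\setminus X)$ snc, where regular strata suffice, as you note. The smooth case then follows from the projective theorem alone, with no induction, via purity on the strata $D_J$ and Mayer--Vietoris for supports. Your bookkeeping, namely that $H^{i+1}_D(\overline W,n)$ is finite for $i\notin[2n-d-N,n+d]$, checks out.

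The trade-off is this. The paper needs only the weaker alteration statement plus generic flatness, at the cost of the stratification induction and the separate curve case. Your argument needs the boundary-controlled alteration, but it is shorter and treats all dimensions uniformly.

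One small step you should state explicitly: reduce to $X$ integral before altering, via nil-invariance and closed Mayer--Vietoris as in \thmref{key theorem}. The alteration and flatification steps require integrality.
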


Using this, one can deduce the following.

\begin{cor}
    Let $X$ be a quasi-projective variety of dimension $d$ over $k_{N}$. Then there exists an integer $M$ such that
\[
	 \lvert H_{\et}^{i} (X, \mathbb{Z}/m(n) ) \rvert \leq M
	\]  for all integers $m$ such that $(m, p) = 1$ if any of the following holds.
    \begin{enumerate}
        \item $i \nin [2n-d-N, n+d+2];$
        \item $n \nin [0, d+N]$.
    \end{enumerate}
\end{cor}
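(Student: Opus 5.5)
This corollary follows from Theorem~\ref{quasi-projective-N-local field-bounded} by the same coefficient-change argument used for the two earlier corollaries in the projective case.

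Fix an integer $m$ with $(m,p)=1$. The exact triangle
\[
\Z/m(n) \to ({\Q}/{\Z})'(n) \xrightarrow{m} ({\Q}/{\Z})'(n)
\]
gives a long exact sequence
\[
\cdots \to H^{i-1}_{\et}(X,n) \xrightarrow{m} H^{i-1}_{\et}(X,n) \to H^i_{\et}(X,\Z/m(n)) \to H^i_{\et}(X,n) \xrightarrow{m} \cdots .
\]
From it we extract a short exact sequence
\[
0 \to H^{i-1}_{\et}(X,n)/m \to H^i_{\et}(X,\Z/m(n)) \to {}_m H^i_{\et}(X,n) \to 0 .
\]
Consequently
\[
\lvert H^i_{\et}(X,\Z/m(n))\rvert \le \lvert H^{i-1}_{\et}(X,n)/m\rvert \cdot \lvert {}_m H^i_{\et}(X,n)\rvert .
\]
Whenever $H^{i-1}_{\et}(X,n)$ and $H^i_{\et}(X,n)$ are finite, we may therefore take
\[
M = \lvert H^{i-1}_{\et}(X,n)\rvert \cdot \lvert H^i_{\et}(X,n)\rvert ,
\]
since a quotient or a subgroup of a finite group is no larger than the group. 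This $M$ does not depend on $m$.

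It remains to check that the hypotheses give finiteness of both groups via Theorem~\ref{quasi-projective-N-local field-bounded}.
\begin{itemize}
\item \emph{Case (2).} If $n \notin [0,d+N]$, both groups are finite by part (2) of that theorem.
\item \emph{Case (1), $i>n+d+2$.} Then both $i$ and $i-1$ exceed $n+d+1$, so both lie outside $[2n-d-N,n+d+1]$.
\item \emph{Case (1), $i<2n-d-N$.} Then $i-1<2n-d-N$ as well.
\end{itemize}
In every case both degrees lie outside $[2n-d-N,n+d+1]$, so both groups are finite. The upper end of the interval shifts from $n+d+1$ to $n+d+2$ precisely because the degree $i-1$ must also be controlled.

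All the substantive work lies in Theorem~\ref{quasi-projective-N-local field-bounded} itself. Given that theorem, this corollary is purely formal, and I expect no obstacle here beyond keeping track of the index shift.
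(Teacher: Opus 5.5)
Your argument is correct and is the same as the paper's: the triangle $\Z/m(n)\to(\Q/\Z)'(n)\xrightarrow{m}(\Q/\Z)'(n)$, its long exact sequence, and finiteness of both $H^{i-1}_{\et}(X,n)$ and $H^{i}_{\et}(X,n)$ in the shifted range. Your explicit bound $M=\lvert H^{i-1}_{\et}(X,n)\rvert\cdot\lvert H^{i}_{\et}(X,n)\rvert$ makes precise the paper's vaguer ``taking the limit over $m$''. You also correctly invoke Theorem~\ref{quasi-projective-N-local field-bounded}, whereas the paper's proof cites the projective Theorem~\ref{projective-N-local field-bounded}, which is evidently a slip.
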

\begin{proof}
    For any integer $m$ such that $(m, p) = 1$. We have the exact triangle
  \[
    {\Z}/m(n) \to ({\Q}/{\Z})'(n) \xrightarrow{m} ({\Q}/{\Z})'(n),
  \]
  where $({\Q}/{\Z})' = {\underset{\ell \neq p}\bigoplus} {\Q_\ell}/{\Z_\ell}$ and we have the following long exact sequence
\begin{equation}
      \cdots \rightarrow H_{\et}^{i-1}(X, n) \rightarrow H_{\et}^{i}(X, \mathbb{Z}/m(n)) \rightarrow H_{\et}^{i}(X, n)  \rightarrow \cdots \end{equation}
Taking the limit over integers $m$ such that $(m,p) = 1$, the required result now follows from Theorem \ref{projective-N-local field-bounded}.
\end{proof}

\subsection{Finiteness for curves} In this subsection, we prove the main results of this section for curves.

\begin{prop}\label{prop:finiteness can be shrinked-curve}
    Let $N \geq 1$ be any integer. Suppose $X'$ is a regular quasi-projective integral curve over $k_{N}$, and $X \subset X'$ is a non-empty  dense open subset of $X'$. Then the natural map
    \begin{center}
         $H_{\et}^{i} (X', n) \rightarrow H_{\et}^{i} (X, n) $
    \end{center}
    has finite kernel and cokernel if any one of the following holds;
    \begin{enumerate}
        \item  $i \nin [2n-N-1, n+2]$;
        \item $n \nin [0, N+1]$.
    \end{enumerate}

    
    \end{prop}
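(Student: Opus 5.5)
The plan is to compare $H_{\et}^{i}(X',n)$ and $H_{\et}^{i}(X,n)$ through the localization sequence for the closed complement. Set $Z := X'\setminus X$ with its reduced structure. Since $X'$ is a curve and $X$ is dense, $Z$ is a finite set of closed points $z_1,\dots,z_s$. Because $X'$ is regular, each $z_j$ is a regular closed point of codimension one. Each residue field $k(z_j)$ is a finite extension of $k_N$, hence is itself an $N$-local field with the same final residue characteristic $p$. The long exact sequence of cohomology with supports reads
\[
\cdots \to H^{i}_{Z,\et}(X',n) \to H^{i}_{\et}(X',n) \to H^{i}_{\et}(X,n) \to H^{i+1}_{Z,\et}(X',n) \to \cdots .
\]
So the kernel of the restriction map is a quotient of $H^{i}_{Z}(X',n)$, and its cokernel injects into $H^{i+1}_{Z}(X',n)$. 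It therefore suffices to show that $H^{i}_{Z}(X',n)$ and $H^{i+1}_{Z}(X',n)$ are finite in the stated ranges.

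To compute these groups, use excision to split $H^{*}_{Z}(X',n)=\bigoplus_j H^{*}_{z_j}(X',n)$. Then apply Gabber's purity theorem \cite[Theorem 2.1.1]{F}, as in Proposition \ref{support-regular-subvariety-codimension-finite N-local field}, to the regular pair $(X', z_j)$ of codimension $1$ with coefficients prime to $p$. This gives
\[
H^{i}_{z_j}(X',n)\cong H^{i-2}_{\et}(\Spec k(z_j), n-1).
\]
By Proposition \ref{finiteness-N-local field}, applied to the $N$-local field $k(z_j)$, the group $H^{i-2}(k(z_j),n-1)$ is finite if $i-2\nin[2n-2-N,\,n]$, that is, if $i\nin[2n-N,\,n+2]$, or if $n-1\nin[0,N]$, that is, if $n\nin[1,N+1]$. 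Similarly, $H^{i+1}_{z_j}(X',n)\cong H^{i-1}(k(z_j),n-1)$ is finite if $i\nin[2n-N-1,\,n+1]$ or $n\nin[1,N+1]$.

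Both intervals $[2n-N,n+2]$ and $[2n-N-1,n+1]$ lie inside $[2n-N-1,n+2]$, so condition (1) gives finiteness of both groups. Likewise, $n\nin[0,N+1]$ implies $n\nin[1,N+1]$, so condition (2) does too. Since $s$ is finite, the direct sums over the points $z_j$ are finite, which concludes the argument.

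The only delicate point is the purity step. One has to check that Gabber's absolute purity applies to $X'$, a regular excellent scheme of finite type over $k_N$, with $\ell\neq p$ coefficients. One also has to pass from finite coefficients $\Z/\ell^v(n)$ to the colimit $\Q_\ell/\Z_\ell(n)$ and the direct sum over $\ell\neq p$; this is compatible because cohomology commutes with filtered colimits on these noetherian schemes. A further point, in positive characteristic, is that the residue fields may be inseparable over $k_N$. That does not matter, since any finite extension of an $N$-local field is again $N$-local with the same $p$, so Proposition \ref{finiteness-N-local field} applies verbatim.
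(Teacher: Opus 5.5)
Your proposal is correct and follows the paper's proof essentially verbatim. You use the localization sequence for $Z=X'\setminus X$, then Gabber purity to identify $H^{i}_{Z}(X',n)$ and $H^{i+1}_{Z}(X',n)$ with $\bigoplus_j H^{i-2}_{\et}(k(z_j),n-1)$ and $\bigoplus_j H^{i-1}_{\et}(k(z_j),n-1)$, and finally Proposition~\ref{finiteness-N-local field} with the same index bookkeeping. Your remark that each $k(z_j)$ is again an $N$-local field with the same $p$ makes explicit a point the paper uses only implicitly.
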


\begin{proof}
Set $X'-X = Z$, which is a closed subscheme of $X'$. Since $X'$ is a curve, we have $Z =  \bigcup_{\alpha \in I_{1}} Z^{\alpha}$, where $Z^{\alpha}$ are $0$-dimensional schemes and  $I_{1}$ is a finite set. We have the following long exact sequence of support cohomology groups.
\begin{equation}\label{support-sequence for X, X'-curve}
   H_{Z}^{i}(X', n) \rightarrow H_{\et}^{i}(X', n) \rightarrow H_{\et}^{i}(X, n) \rightarrow H_{Z}^{i+1}(X', n)     
   \end{equation}
By Gabber purity isomorphism, we have
\begin{equation}
    H_{Z}^{i}(X', n) \cong \oplus_{\alpha \in I_{1}} H_{Z^{\alpha}}^{i}(X', n) \cong \oplus H_{\et}^{i-2}(Z^{\alpha}, n-1), \hspace{2mm} \text{and}
\end{equation}
\begin{equation}
     H_{Z}^{i+1}(X', n) \cong \oplus_{\alpha \in I_{1}} H_{Z^{\alpha}}^{i+1}(X', n) \cong \oplus H_{\et}^{i-1}(Z^{\alpha}, n-1).
\end{equation}

Using Proposition \ref{finiteness-N-local field}, $H_{\et}^{i-2}(Z^{\alpha}, n-1)$ is finite if either $i-2 \nin [2(n-1)-N, (n-1)+1] $ or $n-1 \nin [0, N]$, which is equivalent to either $i \nin [2n-N, n+2] $ or $n \nin [1, N+1]$. These conditions are satisfied by $(1)$ and $(2)$.  Also,  $H_{\et}^{i-1}(Z^{\alpha}, n-1)$ is finite if either $i-1 \nin [2(n-1)-N, (n-1)+1] $ or $n-1 \nin [0, N]$, which is equivalent to either $i \nin [2n-N-1, n+1] $ or $n \nin [1, N+1]$. These conditions are satisfied whenever $i$ and $n$ satisfy $(1)$ and $(2)$. This completes the proof in view of \eqref{support-sequence for X, X'-curve}.
\end{proof}



\begin{thm}\label{quasi-projective-bounded-curve} Let $X$ be a quasi-projective curve over an $N$-local field $k_{N}.$ Then $H_{\et}^{i} (X, n)$ is finite if any of the following holds.
\begin{enumerate}
        \item  $i \nin [2n-N-1, n+2]$;
        \item $n \nin [0, N+1]$.
    \end{enumerate}

\end{thm}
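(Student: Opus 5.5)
We reduce to the projective regular case and then pass to an open subset using Proposition~\ref{prop:finiteness can be shrinked-curve}. Étale cohomology is insensitive to nilpotents, so we may assume $X$ is reduced. If $X$ is $0$-dimensional, the claim is Proposition~\ref{finiteness-N-local field}, since the conditions there are weaker: the interval $[2n-N,n+1]$ is contained in $[2n-N-1,n+2]$, and $[0,N]$ is contained in $[0,N+1]$.

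\textbf{Step 1: peel off the $0$-dimensional components.} Let $X_1$ be the union of the one-dimensional irreducible components of $X$, and let $P$ be the finite set of isolated points. Then $X=X_1\amalg P$, so
\[
H^i_\et(X,n)=H^i_\et(X_1,n)\oplus H^i_\et(P,n),
\]
and the second summand is handled by the $0$-dimensional case. We may therefore assume $X$ has pure dimension one.

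\textbf{Step 2: reduce to a regular integral curve.} Since $k_N$ need not be perfect, we work with the regular locus rather than the smooth locus. Let $Z=X_\sing$; this is a finite set of closed points. Put $U=X\setminus Z$, which is regular and dense. Consider the localization sequence
\[
H^i_Z(X,n)\to H^i_\et(X,n)\to H^i_\et(U,n).
\]
It suffices to show that $H^i_Z(X,n)$ and $H^i_\et(U,n)$ are finite in the stated range.

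\emph{The supported term.} Excise to the henselizations $\mathcal{O}^h_{X,z}$ at the points $z\in Z$. This gives a sequence
\[
H^{i-1}\bigl(\Spec \mathcal{O}^h_{X,z},n\bigr)\to H^{i-1}\bigl(\Spec \mathcal{O}^h_{X,z}\setminus\{z\},n\bigr)\to H^i_z(X,n)\to H^{i}\bigl(\Spec \mathcal{O}^h_{X,z},n\bigr).
\]
The outer terms are controlled as follows.
\begin{itemize}
\item Gabber's affine analogue of proper base change identifies $H^{j}(\Spec \mathcal{O}^h_{X,z},n)$ with $H^j(k(z),n)$.
\item The punctured henselian spectrum is a finite disjoint union of spectra of henselian-valued fields $L$. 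Each such $L$ has residue field finite over $k(z)$.
\item For each $L$, the tame part of its cohomology sits in a sequence with $H^{j}(\kappa_L,n)$ and $H^{j-1}(\kappa_L,n-1)$, exactly as in the proof of Proposition~\ref{finiteness-N-local field}. Here $\kappa_L$ is an $N$-local field.
\end{itemize}
Feeding in Proposition~\ref{finiteness-N-local field}, the index bookkeeping gives finiteness for $i\nin[2n-N-1,n+2]$ or $n\nin[0,N+1]$. The index shift by one (from $H^{i-1}$ of the punctured spectrum) is exactly what produces the endpoints $2n-N-1$ and $n+2$.

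\emph{The open term.} We may take $U$ integral, working component by component. Choose a regular projective compactification $\bar U$. This exists because $U$ is a regular curve: take the normalization of a projective closure, which is regular in dimension one. By Proposition~\ref{prop:finiteness can be shrinked-curve} applied to $U\subset \bar U$, the restriction map $H^i_\et(\bar U,n)\to H^i_\et(U,n)$ has finite kernel and cokernel in the stated range. Theorem~\ref{projective-N-local field-bounded-restate} with $d=1$ gives $H^i_\et(\bar U,n)$ finite for $i\nin[2n-1-N,n+2]$ or $n\nin[0,1+N]$. These are precisely the required conditions, so $H^i_\et(U,n)$ is finite.

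\textbf{Main obstacle.} The expected difficulty is the supported term $H^i_Z(X,n)$ at singular points. Gabber purity does not apply there, since $X$ is not regular at $Z$. A cleaner alternative, which I would try first, avoids the local analysis altogether. Let $\nu\colon\tilde X\to X$ be the normalization and $E=\nu^{-1}(Z)$. Then $\{\tilde X\amalg Z\to X\}$ is a cdh (abstract blow-up) cover, and the same long exact sequence as \eqref{eqn:Real-iso-3} (\cite[Lemma 10.2]{GKR}) relates $H^*(X)$ to $H^*(\tilde X)\oplus H^*(Z)$ and $H^{*-1}(E)$. Here $Z$ and $E$ are $0$-dimensional, so Proposition~\ref{finiteness-N-local field} handles them, with the $i-1$ shift giving the range $[2n-N+1,n+2]$ for the $E$-term. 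The regular curve $\tilde X$ is then handled by the compactification argument above, so the whole problem reduces to the regular case.
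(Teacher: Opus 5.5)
Your proposal is correct, and the ``cleaner alternative'' you say you would try first is exactly the paper's proof. The paper reduces to $X$ integral and takes the normalization $\pi\colon W\to X$. It uses the abstract blow-up sequence \eqref{eqn:Real-iso-3.2} with $Z=X_\sing$ and $E=\pi^{-1}(Z)$, and kills $H^{i-1}_\et(E,n)$ and $H^i_\et(Z,n)$ with Proposition~\ref{finiteness-N-local field}. It then treats the regular curve $W$ by embedding it in a regular projective curve $W'$ and combining Proposition~\ref{prop:finiteness can be shrinked-curve} with Theorem~\ref{projective-N-local field-bounded-restate} for $d=1$.

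The argument in your main text differs only at the singular points. Instead of the normalization sequence, you use the localization sequence for $Z\subset X$ and excise to $\Spec \mathcal{O}^h_{X,z}$. You then identify $H^j(\Spec\mathcal{O}^h_{X,z},n)$ with $H^j(k(z),n)$, and control the punctured spectrum via the residue sequences of the henselian DVRs dominating its branches. This also works, for three reasons:
\begin{enumerate}
\item the branches are fraction fields of henselian DVRs, because $\mathcal{O}^h_{X,z}$ is excellent, so each $\mathcal{O}^h_{X,z}/\mathfrak{p}$ has finite, hence local and henselian, normalization;
\item their residue fields are finite over $k(z)$, hence $N$-local;
\item every $l\neq p$ is invertible there.
\end{enumerate}
The cost is noticeably more local input than the normalization sequence needs. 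That sequence only ever involves $0$-dimensional schemes over $k_N$, to which Proposition~\ref{finiteness-N-local field} applies directly.

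One small correction to your bookkeeping: the supported term $H^i_Z(X,n)$ is already finite for $i\notin[2n-N,n+2]$. The lower endpoint $2n-N-1$ therefore comes from the projective curve $\bar U$ (Theorem~\ref{projective-N-local field-bounded-restate} with $d=1$), not from the index shift at the punctured spectrum.
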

\begin{proof}
 Without loss of generality, one can assume that $X$ is integral. Consider the normalization map
  $\pi \colon W \to X$. If we let $Z = X_\sing$ and $E = \pi^{-1}(Z)$, then
  $\{W \amalg Z \rightarrow X\}$ is a cdh cover of $X$ such that
  $\dim(Z) = 0$ and $\dim(E) = 0$. We have the following long exact sequence.
 \begin{equation}\label{eqn:Real-iso-3.2}
    \xymatrix@C.8pc{
{\begin{array}{l}
        H_{\et}^{i-1}(W, n) \\
       \hspace*{1cm} \oplus \\
       H_{\et}^{i-1}(Z, n)
     \end{array}}
   \ar[r] &   H_{\et}^{i-1}(E, n) \ar[r] & H_{\et}^i(X, n)
\ar[r] & {\begin{array}{l}
        H_{\et}^{i}(W, n) \\
       \hspace*{1cm} \oplus \\
       H_{\et}^{i}(Z, n)
     \end{array}}
   \ar[r] &  H_{\et}^{i}(E, n)}.
\end{equation}
The groups $H_{\et}^{i-1}(E, n)$ and $H_{\et}^{i}(Z, n)$ are finite in the given ranges of $i$ and $n$ by Proposition \ref{finiteness-N-local field}.   We are therefore reduced to show that $H_{\et}^{i}(W, n)$ is finite in the given range of $i$ and $n$, where $W$ is regular quasi-projective curve over $k_{N}$. 
We can find a regular projective integral curve $W'$ such that $W$ is a dense open subset of $W'$. Therefore by Proposition \ref{prop:finiteness can be shrinked-curve}, we are reduced to show similar finiteness for $W'$, which is shown in Theorem \ref{projective-N-local field-bounded-restate}. This completes the proof.
\end{proof}

\subsection{Finiteness in higher dimensions} In this subsection, we prove the finiteness results for quasi-projective varieties of arbitrary dimensions.

\begin{lem}\label{finiteness can be shrinked for higher dimension}
    Let $N \geq 1$ be any integer. Suppose $X'$ is a regular quasi-projective variety of dimension $d \geq 1$ over $k_{N}$, and let $X \subset X'$ be a non-empty open dense subset of $X'$. Assume that Theorem \ref{quasi-projective-N-local field-bounded} holds for all quasi-projective varieties over $k_{N}$ of dimension less than $d$. Then the natural map
    \begin{center}
        $H_{\et}^{i} (X', n) \rightarrow H_{\et}^{i} (X, n)$
    \end{center}
   has finite kernel and cokernel if any one of the following holds.
    \begin{enumerate}
        \item  $i \nin [2n-N-d, n+d+1]$;
        \item $n \nin [0, d+N]$.
    \end{enumerate}
    \end{lem}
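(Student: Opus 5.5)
The plan is to use the localization sequence for the closed complement and control the cohomology with supports by stratifying the complement into regular pieces, so that Gabber purity applies to each piece.

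Set $Z = X' \setminus X$ with its reduced structure. Then $\dim Z \le d-1$, and every irreducible component of $Z$ has codimension $\ge 1$ in $X'$. The localization sequence reads
\[
H^{i}_{Z}(X', n) \to H^{i}_{\et}(X', n) \to H^{i}_{\et}(X, n) \to H^{i+1}_{Z}(X', n).
\]
It therefore suffices to prove the following claim and apply it with $j = i$ and $j = i+1$.

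\textbf{Claim.} For every regular quasi-projective $k_N$-variety $V$ of dimension $\le d$ and every closed subset $T \subset V$ that is nowhere dense in each component, the group $H^{j}_{T}(V,n)$ is finite whenever $j \nin [2n-N-d+1, n+d+1]$ or $n \nin [1, d+N]$.

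Both groups $H^i_Z$ and $H^{i+1}_Z$ are then finite under hypothesis (1) or (2) of the lemma.

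I would prove the claim by induction on $\dim T$; the case $T = \emptyset$ is trivial. Let $T^{\circ} \subset T$ be the open dense subset of regular points of $T$ that lie on exactly one irreducible component. Put $T_1 = T \setminus T^{\circ}$, which has smaller dimension, and $V_1 = V \setminus T_1$, which is still regular. Excision together with the triple $T_1 \subset T \subset V$ gives an exact sequence
\[
H^{j}_{T_1}(V,n) \to H^{j}_{T}(V,n) \to H^{j}_{T^{\circ}}(V_1,n).
\]
The left term is handled by induction. Decompose $T^{\circ}$ into its connected components, each of which is regular of some pure codimension $c \ge 1$ in $V_1$. Gabber's purity theorem then gives
\[
H^{j}_{T^{\circ}}(V_1,n) \cong \bigoplus H^{j-2c}_{\et}(T^{\circ}_c, n-c).
\]

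Each $T^{\circ}_c$ is quasi-projective of dimension $\dim V - c < d$, so by the hypothesis of the lemma (Theorem~\ref{quasi-projective-N-local field-bounded} in dimension $< d$) each summand is finite when $j - 2c \nin [2(n-c)-N-(\dim V - c),\, (n-c)+(\dim V-c)+1]$. This condition is $j \nin [2n-N-\dim V+c,\, n+\dim V+1]$. Since $c \ge 1$ and $\dim V \le d$, it is implied by $j \nin [2n-N-d+1, n+d+1]$. Likewise the twist condition $n - c \nin [0, \dim V - c + N]$ becomes $n \nin [c, \dim V + N]$, which is implied by $n \nin [1, d+N]$.

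Now translate back to the lemma. For $j = i$ with $i \nin [2n-N-d, n+d+1]$, the claim applies directly. For $j = i+1$, the condition $i+1 \nin [2n-N-d+1, n+d+1]$ follows from $i \nin [2n-N-d, n+d+1]$ as well, since $i+1$ is then either $\le 2n-N-d$ or $\ge n+d+3$. Hypothesis (2) is covered by the twist condition.

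The main obstacle I expect is the bookkeeping in the stratification step. I need to make sure that excision is applied in a regular ambient scheme at each stage, which is why $V_1 = V \setminus T_1$ is regular. I also need the components of $T^{\circ}$ to have well-defined codimension even when $T$ is not equidimensional, which is why I split $T^{\circ}$ by connected components. Finally, I must check that the index shift between $H^i_Z$ and $H^{i+1}_Z$ stays inside the stated range. The single inequality $c \ge 1$ absorbs that shift, and this is exactly why the lower bound $2n-N-d$ appears in the statement.
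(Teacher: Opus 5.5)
Your proposal is correct and is essentially the paper's argument: both stratify the complement by successive singular loci, apply Gabber's absolute purity to each regular stratum (which has codimension $c\ge 1$), and feed the result into the lower-dimensional case of Theorem~\ref{quasi-projective-N-local field-bounded}. The only difference is bookkeeping. The paper composes the restriction maps $H^i_{\et}(X_{m+1},n)\to H^i_{\et}(X_m,n)$ along the chain of opens $X\subset X_1\subset\cdots\subset X'$, each with finite kernel and cokernel, whereas you bound $H^j_Z(X',n)$ directly through the triple sequences for $T_1\subset T\subset V$; your range check, with $c\ge 1$ absorbing the shift from $j=i$ to $j=i+1$, is right.
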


\begin{proof}
This Lemma holds for $d =1$ by Proposition \ref{prop:finiteness can be shrinked-curve}, and hence we can assume that $d \geq 2$.  We set $Z = X' - X$, which is a closed subscheme of $X'$ with  dim($Z) < $ dim$(X')$. Also, set $Z_{1} = Z_{sing}$, the singular locus of $Z$, and  $X_{1} = X' - Z_{1} $. Note that  $X_{1} - X  = Z - Z_{1} $, which is a regular closed subscheme of $X_{1}$. So we can write, $ Z - Z_{1}  = \bigcup_{\alpha \in I_{1}} Z_{1}^{\alpha}$, where $Z^{\alpha}_{1}$ is regular connected scheme of codimension $c_{\alpha}$ in $X_{1}$, and $I_{1}$ is a finite set.   We then have a long exact sequence of support cohomology sequence;
\begin{equation}\label{support-sequence for X, X'-higher dimension}
   H_{Z - Z_{1}}^{i}(X_{1}, n) \rightarrow H_{\et}^{i}(X_{1}, n) \rightarrow H_{\et}^{i}(X, n) \rightarrow H_{Z - Z_{1}}^{i+1}(X_{1}, n).  
   \end{equation}
By Gabber purity isomorphism, we have
\begin{equation}
    H_{Z - Z_{1}}^{i}(X_{1}, n) \cong \oplus_{\alpha \in I_{1}} H_{Z_{1}^{\alpha}}^{i}(X_{1}, n) \cong  H_{\et}^{i-2c_{\alpha}}(Z^{\alpha}_{1}, n-c_{\alpha}), \hspace{2mm} \text{and}
\end{equation}
\begin{equation}
    H_{Z - Z_{1}}^{i+1}(X_{1}, n) \cong \oplus_{\alpha \in I_{1}} H_{Z_{1}^{\alpha}}^{i+1}(X_{1}, n) \cong  H_{\et}^{i+1-2c_{\alpha}}(Z^{\alpha}_{1}, n-c_{\alpha}).
\end{equation}
Since $Z^{\alpha}_{1}$ are quasi-projective varieties over $k_{N}$ of dimension less than $d$,  the induction hypothesis implies that  the group $H_{\et}^{i-2c_{\alpha}}(Z^{\alpha}_{1}, n-c_{\alpha})$ is finite if either $i-2c_{\alpha} \nin [2(n-c_{\alpha})- (d-c_{\alpha})-N, (n-c_{\alpha}) + (d-c_{\alpha}) + 1]$ or $n - c_{\alpha} \nin [0, (d-c_{\alpha})+N]$, which is equivalent to either $i \nin [2n-(d-c_{\alpha})-N , n+d+1]$  or $n \nin [c_{\alpha}, d + N]$. Similarly, the group $H_{\et}^{i+1-2c_{\alpha}}(Z^{\alpha}_{1}, n-c_{\alpha})$ is finite if either $i+1-2c_{\alpha} \nin [2(n-c_{\alpha})- (d-c_{\alpha})- N, (n-c_{\alpha}) + (d-c_{\alpha}) + 1]$ or $n - c_{\alpha} \nin [0, (d-c_{\alpha})+N]$, which is equivalent to either $i \nin [2n-(d-c_{\alpha})-N-1, n+d]$  or $n\nin [c_{\alpha}, d + N]$. This conditions are satisfied whenever  $i \nin [2n-N-1, n+d]$  or $n\nin [0, d + N]$ as $d-c_{\alpha} \leq d-1$ for each $c_{\alpha}$.  It follows from (\ref{support-sequence for X, X'-higher dimension}) that the following map
\begin{center}
   $ H_{\et}^{i}(X_{1}, n) \rightarrow H_{\et}^{i}(X, n)$
\end{center}
has finite kernel and cokernel whenever $i$ and $n$ satisfy $(1)$ or $(2)$. We note that $X_{1}$ is an open subscheme of regular scheme $X'$, with complement $X' - X_{1} = Z_{1}$, where dim($Z_{1}) < $ dim($Z$). Repeating the above steps by replacing $X$ by $X_{1}$, we can find $X_{2}$ and $Z_{2}$ such that $X_{1}$ is an open subscheme of $X_{2}$, dim($Z_{2}) < $ dim($Z_{1}$), and the natural map
\begin{center}
   $ H_{\et}^{i}(X_{2}, n) \rightarrow H_{\et}^{i}(X_{1}, n)$
\end{center}
has finite kernel and cokernel whenever $i$ and $n$ satisfy $(1)$ or $(2)$. Repeating this process finitely many times, we can find $X_{k}$ and $Z_{k}$ such that $X_{k-1}$ is an open subscheme of $X_{k}$, dim($Z_{k}) = 0$, and the natural map
\begin{center}
   $ H_{\et}^{i}(X_{k}, n) \rightarrow H_{\et}^{i}(X_{k-1}, n)$
\end{center}
has finite kernel and cokernel whenever $i$ and $n$ satisfy $(1)$ or $(2)$. Since dim($Z_{k}) = 0$, we have $Z_{k+1} = \emptyset $, and hence $X_{k+1} = X'$.  By composing all $(k+1)$-maps, we conclude that the map
   \begin{center}
   $ H_{\et}^{i}(X', n) \rightarrow H_{\et}^{i}(X, n)$
\end{center}
has finite kernel and cokernel whenever $i$ and $n$ satisfy $(1)$ or $(2)$. This completes the proof.
\end{proof}

\begin{lem}\label{existence of open subscheme}
    Let $N \geq 1$ be any integer, and let $X$ be a regular quasi-projective variety  of dimension $d$ over $k_{N}$. Assume that Theorem \ref{quasi-projective-N-local field-bounded} holds for all quasi-projective varieties over $k_{N}$ of dimension less than $d$.  Then there exists an open dense subscheme $U$ of $X$ such that  $H_{\et}^{i} (U, n) $  is finite if any one of the following holds;
    \begin{enumerate}
        \item  $i \nin [2n-N-d, n+d+1]$;
        \item $n \nin [0, d+N]$.
    \end{enumerate}

    \end{lem}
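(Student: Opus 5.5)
We want an open dense $U\subset X$ whose cohomology can be controlled by the projective results of Section~\ref{section:proj over local field}. We may assume $X$ is integral, since $X$ regular means its connected components are integral and we can treat each separately. Embed $X$ as an open dense subscheme of a projective integral variety $\overline{X}$, its closure in some projective space. The plan is to find an open $U$ sitting inside both $X$ and a regular projective model. Then Lemma~\ref{finiteness can be shrinked for higher dimension} compares $U$ with that model, and Theorem~\ref{projective-N-local field-bounded-restate} gives finiteness for the model.

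\textbf{Case $\Char(k_N)=0$.} Take a resolution of singularities $\pi\colon W\to\overline{X}$ that is an isomorphism over the regular locus. Then $W$ is regular, projective and of dimension $d$, and $\pi^{-1}(X)$ is isomorphic to $X$, hence is an open dense subscheme of $W$. Take $U=X$, regarded as an open subset of $W$. Lemma~\ref{finiteness can be shrinked for higher dimension}, applied to $X'=W$ under the inductive hypothesis on dimension, shows that $H^i_\et(W,n)\to H^i_\et(U,n)$ has finite kernel and cokernel in the stated ranges. Theorem~\ref{projective-N-local field-bounded-restate} gives finiteness of $H^i_\et(W,n)$ in exactly those ranges, so $H^i_\et(U,n)$ is finite.

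\textbf{Case $\Char(k_N)=p$.} Here we use, as in Theorem~\ref{key theorem}, a $p$-alteration $\pi\colon W\to\overline{X}$ with $W$ smooth projective over a finite purely inseparable or $p$-power extension $k_N'$. The extension $k_N'$ is again an $N$-local field with the same final residue characteristic $p$, so Theorem~\ref{projective-N-local field-bounded-restate} applies to $W$. Since $\pi$ is generically finite of $p$-power degree, we shrink to a dense open $U\subset X$ over which $\pi$ is finite flat of degree $p^r$, and put $V=\pi^{-1}(U)$. Then $V$ is open dense in the regular projective $W$. Lemma~\ref{finiteness can be shrinked for higher dimension} over $k_N'$ applies to $V\subset W$; its inductive hypothesis is needed in dimensions $<d$ over $k_N'$, which we ensure by running the whole induction simultaneously over all $N$-local fields. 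This gives finiteness of $H^i_\et(V,n)$ in the ranges. The trace argument from the proof of Theorem~\ref{key theorem} gives $\pi_*\pi^*=p^r$ on $H^i_\et(U,n)$. Since $p^r$ is invertible on the prime-to-$p$ coefficients, $\pi^*$ is injective, so $H^i_\et(U,n)\hookrightarrow H^i_\et(V,n)$ and $H^i_\et(U,n)$ is finite.

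\textbf{Main obstacle.} The delicate step is the positive characteristic case. One must check that the alteration can be chosen so that $\pi$ is finite flat over a dense open of $X$; this follows from generic flatness together with generic finiteness. One must also check that the trace identity holds for $H^i_\et$ of the non-proper scheme $U$, which holds for finite flat morphisms by the standard trace map $\pi_*\pi^*\Lambda\to\Lambda$. Finally, the dimension induction hypothesis of Lemma~\ref{finiteness can be shrinked for higher dimension} has to be available over the extension field $k_N'$.
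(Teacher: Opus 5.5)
Your proposal is correct and follows essentially the same route as the paper: compactify $X\subset\overline{X}$, take an alteration $\pi\colon W\to\overline{X}$ (de Jong--Gabber--Temkin), shrink to a dense open $U\subset X$ over which $\pi$ is finite flat of degree $e$ (with $e=1$ or $p^r$), use the trace identity to inject $H^i_\et(U,n)$ into $H^i_\et(\pi^{-1}(U),n)$, and conclude via Lemma~\ref{finiteness can be shrinked for higher dimension} and Theorem~\ref{projective-N-local field-bounded-restate} applied to $W$. The paper handles both characteristics uniformly through the degree $e$. Your worry about needing the inductive hypothesis over $k_N'$ is unnecessary: $W$ is already a regular projective $k_N$-variety of dimension $d$, and every variety over $k_N'$ is also one over $k_N$, so both results can be applied over $k_N$ directly.
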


\begin{proof}

 Without loss of generality, one can assume that $X$ is integral. Let $X$ be a regular quasi-projective integral variety over $k$ and let 
    $X \inj \overline{X}$ be a compactification of $X$.  Let $\pi\colon W \to \overline{X}$ be an alteration morphism of de Jong and Gabber \cite[Theorem 4.3.1]{Temkin}(also see \cite[Lemma 2.3]{GR}). In particular, the morphism $\pi$ is generically finite of degree $e$, where $e =1$ if ${{\rm Char}}(k_{N})=0$ and $e=p^r$ if ${{\rm Char}}(k_{N})=p>0$. (Note that by a generically finite map of degree $1$, we mean a birational map.) 
    Let $V \inj \overline{X}$ be a dense open subset of $\overline{X}$ such that the induced map $\pi_{V'}\colon V' = \pi^{-1}(V) \to V$ is finite and flat of degree $e$. Since $X$ is integral, the intersection $U = V \cap X$ is a dense open subset of $X$. We let $\pi_{U'} \colon U'=\pi^{-1}(U)\to U$ denote the induced map. Since $\pi_{V'}$ is finite and flat of degree $e$, it follows that the morphism $\pi_{U'}$ is also finite and flat of degree $e$. Therefore, the map $(\pi_{U'})^*\colon H_{\et}^i(U, n) \to H_{\et}^i(U', n)$ is injective as $ (\pi_{U'})_* \circ \pi_{U'}^* =  e$ (see (\ref{eqn:Real-iso-2})). We claim that $U$ is the desired subscheme in the lemma. To show this, it is enough to show that $H_{\et}^i(U', n)$ is finite whenever $i$ and $n$ satisfy the given conditions.  Since $U'$ is a dense open subscheme of a regular projective scheme $W$, by Proposition \ref{finiteness can be shrinked for higher dimension}, we are reduce to show that the desired finiteness result for $W$. But this follows from Theorem \ref{projective-N-local field-bounded-restate}, which completes the proof.
    \end{proof}
   
\begin{cor}\label{cor:finiteness for X embedded inside projective X'}
    Let $N \geq 1$ be any integer. Suppose $X'$ is a regular quasi-projective variety of dimension $d \geq 1$ over $k_{N}$, 
    Assume that Theorem \ref{quasi-projective-N-local field-bounded} holds for all quasi-projective varieties over $k_{N}$ of dimension less than $d$. Then $ H_{\et}^{i} (X', n)$ is finite if any one of the following holds;
    \begin{enumerate}
        \item  $i \nin [2n-N-d, n+d+1]$;
        \item $n \nin [0, d+N]$.
    \end{enumerate}
       \end{cor}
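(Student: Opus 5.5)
The plan is to combine Lemma~\ref{existence of open subscheme} with Lemma~\ref{finiteness can be shrinked for higher dimension}. Both are already proved under exactly the standing hypothesis of the corollary: Theorem~\ref{quasi-projective-N-local field-bounded} holds for quasi-projective $k_N$-varieties of dimension less than $d$.

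First, reduce to $X'$ integral. A regular Noetherian scheme is a disjoint union of its connected components, and each component is integral. Étale cohomology turns this finite disjoint union into a direct sum. Each component has dimension at most $d$. Components of dimension exactly $d$ are handled by the argument below. For components of smaller dimension $d'$, the induction hypothesis gives finiteness whenever $i \notin [2n-N-d', n+d'+1]$ or $n \notin [0, d'+N]$. These conditions are implied by the ones for $d$, since the intervals for $d'$ sit inside those for $d$.

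Now let $X'$ be integral, regular, quasi-projective of dimension $d$.
\begin{enumerate}
\item Apply Lemma~\ref{existence of open subscheme} to $X'$. This produces a dense open $U \subset X'$ such that $H^i_{\et}(U,n)$ is finite whenever (1) or (2) holds.
\item Apply Lemma~\ref{finiteness can be shrinked for higher dimension} to the pair $U \subset X'$; this is legitimate because $X'$ is regular and $U$ is dense open. It says that the restriction map $H^i_{\et}(X',n) \to H^i_{\et}(U,n)$ has finite kernel and cokernel in the same range of $(i,n)$.
\item Conclude from the exact sequence $0 \to \ker \to H^i_{\et}(X',n) \to \operatorname{im} \to 0$. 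The image is a subgroup of the finite group $H^i_{\et}(U,n)$, and the kernel is finite, so $H^i_{\et}(X',n)$ is finite.
\end{enumerate}

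The only point needing care is that the index ranges in the two lemmas and in the corollary coincide. All three are stated with $i \notin [2n-N-d, n+d+1]$ or $n \notin [0,d+N]$, so no shift of indices is required. I would also check that Lemma~\ref{finiteness can be shrinked for higher dimension} really applies with $d \geq 1$ (for $d=1$ it reduces to Proposition~\ref{prop:finiteness can be shrinked-curve}). Beyond this bookkeeping there is no genuine obstacle: the corollary is a formal combination of the two lemmas.
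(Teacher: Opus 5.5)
Your proposal is correct and matches the paper's proof: the paper takes the dense open subscheme given by Lemma~\ref{existence of open subscheme} and applies Lemma~\ref{finiteness can be shrinked for higher dimension} to that pair inside $X'$, with the same index ranges. Your preliminary reduction to integral $X'$ is harmless extra care, which the paper absorbs into the ``without loss of generality $X$ is integral'' step of Lemma~\ref{existence of open subscheme}.
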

\begin{proof}
   By Lemma \ref{existence of open subscheme}, there exists a dense open  subscheme $X$ of $X'$ such that $ H_{\et}^{i} (X, n)$ is finite whenever $i$ and $n$ satisfy the conditions $(1)$ or $(2)$. We now apply Lemma \ref{finiteness can be shrinked for higher dimension} for the pair $(X, X')$ to conclude that $H_{\et}^{i} (X', n)$ is finite whenever $i$ and $n$ satisfy the conditions $(1)$ or $(2)$. This completes the proof.
\end{proof}

\begin{thm}\label{thm:finiteness for quasiprojective under inductive assumption}
    Let $N \geq 1$ be any integer. Suppose $X'$ is a quasi-projective variety of dimension $d \geq 1$ over $k_{N}$, 
    Then $ H_{\et}^{i} (X', n)$ is finite if any one of the following holds;
    \begin{enumerate}
        \item  $i \nin [2n-N-d, n+d+1]$;
        \item $n \nin [0, d+N]$.
    \end{enumerate}
\end{thm}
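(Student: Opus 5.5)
The plan is induction on $d=\dim X'$. The base case $d=0$ is handled by Proposition~\ref{finiteness-N-local field} applied to the finitely many residue fields, each an $N$-local field (a finite extension of $k_N$ is again $N$-local). Curves ($d=1$) are already covered by Theorem~\ref{quasi-projective-bounded-curve}. For $d\ge 2$ I assume the statement, i.e.\ Theorem~\ref{quasi-projective-N-local field-bounded}, for all quasi-projective varieties of dimension $<d$. This is exactly the inductive hypothesis required in Lemma~\ref{finiteness can be shrinked for higher dimension}, Lemma~\ref{existence of open subscheme} and Corollary~\ref{cor:finiteness for X embedded inside projective X'}. In particular Corollary~\ref{cor:finiteness for X embedded inside projective X'} then gives the result for every \emph{regular} quasi-projective $X'$ of dimension $d$.

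Next I reduce to the integral case. Étale cohomology is nil-invariant, so $X'$ may be taken reduced. Write $X'=X_1\cup X_1'$ with $X_1$ an irreducible component and $X_1'$ the union of the others. The Mayer--Vietoris sequence for this closed cover reads
\[
H^{i-1}_{\et}(X_1\cap X_1',n)\to H^i_{\et}(X',n)\to H^i_{\et}(X_1,n)\oplus H^i_{\et}(X_1',n).
\]
Here $\dim(X_1\cap X_1')\le d-1$. By induction, $H^{i-1}_{\et}(X_1\cap X_1',n)$ is finite whenever $i-1\notin[2n-(d-1)-N,\,n+d]$, that is, whenever $i\notin[2n-d-N+1,\,n+d+1]$, or whenever $n\notin[0,d-1+N]$. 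Both conditions are implied by (1), respectively (2). An induction on the number of components therefore reduces us to $X'$ integral.

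For $X'$ integral, set $Z=X'_{\sing}$ and $U=X'\setminus Z$. Then $U$ is regular, dense and open, and $\dim Z<d$. I would use the localization sequence with compact supports,
\[
H^{i-1}_{c}(Z,n)\to H^i_c(U,n)\to H^i_c(X',n)\to H^i_c(Z,n),
\]
but this uses compact supports, whereas the theorem concerns ordinary cohomology. I therefore prefer the cdh/abstract blow-up square, as in the proof of Theorem~\ref{quasi-projective-bounded-curve}. Take $\pi\colon W\to X'$ to be a projective alteration with $W$ regular quasi-projective (Temkin), with $E=\pi^{-1}(Z)$, where $Z$ is a nowhere dense closed subset over which $\pi$ fails to be finite flat. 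In characteristic $0$ take $\pi$ a resolution. The exact sequence~\eqref{eqn:Real-iso-3.2} then reduces the problem to $W$ (regular of dimension $d$, handled by Corollary~\ref{cor:finiteness for X embedded inside projective X'}), to $Z$, and to $E$ in degree $i-1$. The last two have dimension $<d$ and are handled by induction exactly as in the proof of Theorem~\ref{key theorem}. The degree shift for $E$ is absorbed because $\dim E\le d-1$, just as in the Mayer--Vietoris step.

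The main obstacle is characteristic $p$, where only a $p$-alteration exists. There I would copy the argument of Theorem~\ref{key theorem}: use the Raynaud--Gruson flattening diagram~\eqref{eqn:Real-iso-1.1}. The cdh cover $X''\amalg Z\to X'$ reduces the problem to the blow-up $X''$. The map $\pi'\colon W'\to X''$ is finite flat of degree $p^r$, so the trace argument of~\eqref{eqn:Real-iso-2} makes $H^i_{\et}(X'',n)\hookrightarrow H^i_{\et}(W',n)$, since all coefficients are prime to $p$. The group $H^i_{\et}(W',n)$ is controlled from the regular $W$ via the blow-up square for $f'$, using Corollary~\ref{cor:finiteness for X embedded inside projective X'} together with induction on dimension. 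The delicate point to check is that all these constructions stay quasi-projective and that the exceptional loci really have dimension $\le d-1$, so that the index shifts in degrees $i-1$ and $i$ remain inside the excluded ranges (1) and (2).
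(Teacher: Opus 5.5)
Your proposal is correct and follows the paper's proof. The paper argues by induction on $d$ with curves as the base case, reduces to the regular case by the resolution argument (characteristic $0$) or the $p$-alteration, Raynaud--Gruson flattening and trace argument (characteristic $p$) of Theorem~\ref{key theorem}, and then applies Corollary~\ref{cor:finiteness for X embedded inside projective X'}; you follow the same route and simply spell out what the paper leaves as ``mutatis mutandis'' (the $d=0$ base, the Mayer--Vietoris reduction to integral $X'$, and the index bookkeeping for $Z$, $E$, $E'$). One harmless slip: $i-1\notin[2n-d+1-N,\,n+d]$ is equivalent to $i\notin[2n-d-N+2,\,n+d+1]$, not $[2n-d-N+1,\,n+d+1]$, but either interval lies inside $[2n-d-N,\,n+d+1]$, so your conclusion is unaffected.
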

\begin{proof}
    The case $d=1$ is covered by Theorem~\ref{quasi-projective-bounded-curve}. 
We therefore assume that $d \geq 2$ and proceed by induction on $d$. Let $X'$ be a quasi-projective variety of dimension $d$. 
By proceeding \emph{mutatis mutandis} as in the proof of Theorem~\ref{key theorem} 
and using the induction hypothesis, we reduce to the case where $X$ is a regular quasi-projective variety over $k_N$.
The result then follows by combining the induction hypothesis with Corollary~\ref{cor:finiteness for X embedded inside projective X'}. This completes the proof.
\end{proof}

\section{Étale $K$-groups over higher local fields} \label{sec:Etale-N-local}
In this section, we study the étale $K$-groups for varieties and prove certain structure of étale $K$-groups for quasi-projective varieties over higher local fields. We  use this structure to prove analogous results for algebraic $K$-groups in the later part of the paper.

\'{E}tale $K$-theory is closely related to algebraic $K$-theory via the
Quillen--Lichtenbaum conjectures. One of the advantages of \'{e}tale
$K$-theory over algebraic $K$-theory is the existence of an
Atiyah--Hirzebruch--type spectral sequence (\cite[Proposition 5.2]{Dwyer-Friedlander-EtaleK}) whose $E_{2}^{p,q}$-terms are
given by \'{e}tale cohomology and which converges to \'{e}tale $K$-groups.
In this section, we exploit this spectral sequence to compute \'{e}tale
$K$-groups of varieties over higher local fields, using the known results
on the \'{e}tale cohomology of such varieties established in the previous
sections.

Let $k$ be a field and let $l$ be a prime invertible in $k$ such that
$\mathrm{cd}_{l}(k) < \infty$. Let $X$ be a Noetherian scheme of finite type
over $k$. The \'{e}tale $K$-theory spectrum of $X$, denoted by
$\widehat{\mathbf{K}}^{\et}_{X}$, is defined in
\cite[Definition~14.3]{Dwyer-Friedlander-EtaleK} (also see \cite{Friedlander}). The \'{e}tale $K$-theory
groups $K^{\et}_{i}(X)$ are defined, for $i \ge 0$, as the $i^{th}$ homotopy
group of the spectrum $\widehat{\mathbf{K}}^{\et}_{X}$. For $v \ge 1$, the \'{e}tale $K$-theory groups with coefficients
$\mathbb{Z}/l^{v}$, denoted by $K^{\et}_{i}(X,\mathbb{Z}/l^{v})$, are defined
for $i \ge 0$ as the $i^{th}$ homotopy group of the spectrum
$\widehat{\mathbf{K}}^{\et}_{X} \wedge \mathcal{M}(l^{v})$, where
$\mathcal{M}(l^{v})$ denotes the mod $l^{v}$ Moore spectrum
(see \cite[Definition~4.3]{Dwyer-Friedlander-EtaleK}). The main result of this section is the following.

\begin{thm}\label{thm:structure of etale-K-group over N-local}
    Let $X$ be connected quasi-projective variety of dimension $d$ over $k_{N}$. Then for any integer $m \geq d+N+1$, we have
    \begin{center}
        $K^{\et}_{m}(X) \cong F \oplus D$,
    \end{center}
    where $F$ is a finite and $D$ is uniquely $p'$-divisible group.
\end{thm}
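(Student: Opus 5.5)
The plan is to use the Dwyer--Friedlander Atiyah--Hirzebruch spectral sequence
\[
E_2^{s,t} = H^s_{\et}(X, \Z_l(t/2)) \Longrightarrow K^{\et}_{t-s}(X),
\]
which is defined for each prime $l \neq p$ (with $E_2^{s,t}=0$ for $t$ odd), and to work one prime at a time first. Write $K^{\et}_m(X)$ as built from its $l$-adic pieces, so that $K^{\et}_m(X) \otimes \Z_l$ is controlled by the $l$-adic spectral sequence. I would first pass to finite coefficients $\Z/l^v$ and use the corollary to Theorem~\ref{quasi-projective-N-local field-bounded}. For $m \geq d+N+1$, the relevant terms are $H^s_{\et}(X,\Z/l^v(n))$ with $2n-s = m$ and $s\le cd_l(X) \le 2d+N+1$ (or $s$ in a neighbourhood $m+1$ for the torsion/Bockstein part). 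I would check that in this range either $n > d+N$ or $s < 2n-d-N$. Indeed, $2n-s=m\ge d+N+1$ gives $s \le 2n-d-N-1$. So each such group has order bounded by some $M$ independent of $l$ and $v$, and it vanishes for almost all $l$. There are only finitely many nonzero $s$, since $cd$ is finite. Hence $K^{\et}_m(X,\Z/l^v)$ and $K^{\et}_{m+1}(X,\Z/l^v)$ have orders bounded uniformly in $v$, and they vanish for all but finitely many $l$.

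Next I would convert this into the structure statement. From the universal coefficient sequence
\[
0\to K^{\et}_m(X)/l^v \to K^{\et}_m(X,\Z/l^v) \to {}_{l^v}K^{\et}_{m-1}(X)\to 0,
\]
the uniform bound gives that $K^{\et}_m(X)/l^v$ is bounded, and that ${}_{l^v}K^{\et}_{m}(X)$ is bounded by using the sequence in degree $m+1$. A bounded $l^v$-torsion for all $v$ means $K^{\et}_m(X)\{l\}$ is finite, because the $l$-primary torsion of such a group is a finite group once the orders stabilize. Also, $K^{\et}_m(X)/l^v$ stabilizes, so the $l$-divisible part has finite index quotient. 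Since these groups vanish for all but finitely many $l$, the full $p'$-torsion $T$ is finite, and $K^{\et}_m(X)/\ell$ is zero for almost all $\ell$.

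Finally, I would split off the finite part. Let $D$ be the maximal $p'$-divisible subgroup. I would argue that $K^{\et}_m(X)/D$ is finite, since it is detected by the finitely many, bounded quotients mod $l^v$. I would then show that $D$ is uniquely $p'$-divisible after removing the finite torsion $T$: a divisible group with finite torsion splits as torsion plus uniquely divisible, and a finite divisible $l$-group is zero. So $D\{p'\}=0$. Given this, the finite group $F$ is a complement, obtained because $D$ is divisible, hence injective relative to $p'$-divisibility; I would use Pontryagin-type arguments prime by prime.

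The main obstacle is the careful index bookkeeping, including the convergence and the finitely many differentials. One also needs the compatibility of taking limits over $v$ with the bounded groups, to get finiteness rather than just boundedness. I would handle this via the Mittag-Leffler property of finite groups.
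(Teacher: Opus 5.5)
Your proposal is correct and follows essentially the same route as the paper. Both arguments bound $|K^{\et}_m(X,\Z/l^v)|$ uniformly via the Dwyer--Friedlander spectral sequence together with the uniform bound on $H^s_{\et}(X,\Z/l^v(n))$ for $s<2n-d-N$. Both then use the universal coefficient sequence to bound $K^{\et}_m(X)/n$ and ${}_nK^{\et}_m(X)$, split off a finite group, and finally get $D\{p'\}=0$ because that group is finite and $p'$-divisible.

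The only differences are organizational. You work prime by prime, using that an $l$-group of order at most $M$ vanishes once $l>M$, and you prove the splitting by hand rather than citing \cite[Corollary~11.4]{GKR}. In that hand proof, state explicitly that $K^{\et}_m(X)/D$ is killed by an integer prime to $p$. This is what makes $\mathrm{Ext}^1(K^{\et}_m(X)/D, D)=0$ follow from $p'$-divisibility of $D$ alone; "$D$ is divisible" is more than you actually have.
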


We first recall a well-known spectral sequence, which we use  to achieve the main results of this section.

\begin{thm}\cite[Proposition 5.2]{Dwyer-Friedlander-EtaleK}\label{spectral-sequence-etale coh to etale K}
	Let $X$ be a connected Noetherian scheme over $\Z/l$ of finite $\mathbb{Z}/l$-cohomological dimension. Then there is a natural strongly convergent, fourth quadrant spectral sequence
	\[
E^{p,-q}_{2} = H_{\et}^{p} (X, \mathbb{Z}/l^{v}(q/2) ) \Rightarrow K^{\et}_{q-p}(X, \mathbb{Z}/l^{v}),
\]
where 	$\mathbb{Z}/l^{v}(\frac{q}{2})$ is $0$ unless $q$ is a non-negative even integer.

\end{thm}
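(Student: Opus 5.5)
This statement is quoted from \cite[Proposition 5.2]{Dwyer-Friedlander-EtaleK}, so the natural plan is to recall the construction of $\widehat{\mathbf{K}}^{\et}_{X}\wedge\mathcal{M}(l^{v})$ and show that the spectral sequence is its descent (hypercohomology) spectral sequence. The key steps, in order, are: identify the spectrum as étale hypercohomology, write down the associated spectral sequence, compute the coefficient sheaves, and check convergence.

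\emph{Step 1: hypercohomology model.} Following \cite[Definition~14.3]{Dwyer-Friedlander-EtaleK}, I will use the description of mod-$l^v$ étale $K$-theory as the étale hypercohomology spectrum
\[
\widehat{\mathbf{K}}^{\et}_{X}\wedge\mathcal{M}(l^{v})\simeq \mathbb{H}_{\et}\bigl(X,\mathbf{K}/l^{v}\bigr).
\]
Here $\mathbf{K}/l^v$ is the presheaf of spectra $U\mapsto \mathbf{K}(U)\wedge\mathcal{M}(l^v)$ on the small étale site of $X$. On the Dwyer--Friedlander side this is modelled by the étale topological type; that model agrees with hypercohomology on mod-$l^v$ homotopy because $l$ is invertible on $X$. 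The Postnikov tower of the sheafified presheaf then gives the standard Brown--Gersten/descent spectral sequence
\[
E_2^{p,-q}=H^p_{\et}\bigl(X,\widetilde{\pi}_q(\mathbf{K}/l^v)\bigr)\Longrightarrow \pi_{q-p}\,\mathbb{H}_{\et}(X,\mathbf{K}/l^v),
\]
where $\widetilde{\pi}_q$ denotes the étale sheafification of the homotopy presheaf. Naturality in $X$ is automatic from the functoriality of the Postnikov tower.

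\emph{Step 2: identifying the coefficient sheaves.} It suffices to compute stalks, i.e.\ $K_q(\mathcal{O}^{sh}_{X,\bar x},\Z/l^v)$ for strictly henselian local rings with $l$ invertible. By Gabber's rigidity theorem these agree with $K_q(\bar\kappa,\Z/l^v)$ for the separably closed residue field $\bar\kappa$. By Suslin's computation this group is $\mu_{l^v}^{\otimes q/2}$ when $q\ge 0$ is even and $0$ otherwise. These identifications come from the Bott element and are compatible with the Galois action, so they glue to $\widetilde{\pi}_q(\mathbf{K}/l^v)\cong \Z/l^v(q/2)$, with the convention that this sheaf is zero unless $q$ is a non-negative even integer. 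This yields exactly the stated $E_2$-term. I expect this step to be the main obstacle: the rigidity and Suslin inputs must be invoked carefully, and the isomorphisms must be shown to be natural in order to assemble into an isomorphism of sheaves rather than merely of stalks.

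\emph{Step 3: convergence.} The spectral sequence lives in the fourth quadrant, since $p\ge 0$ and $q\ge 0$. By the finite $\Z/l$-cohomological dimension hypothesis, $H^p_{\et}(X,-)$ vanishes on $l$-torsion sheaves for $p>\mathrm{cd}_l(X)$. Each total degree $q-p$ therefore receives contributions from only finitely many $(p,q)$, and only finitely many differentials are nonzero at each spot. The Postnikov tower of $\mathbb{H}_{\et}$ is then convergent (Thomason's argument for finite cohomological dimension), so the spectral sequence is strongly convergent, with a finite filtration on each abutment $K^{\et}_{q-p}(X,\Z/l^v)$.
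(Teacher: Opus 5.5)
The paper gives no argument for this statement; it simply quotes Dwyer--Friedlander. In their setting the spectral sequence is essentially formal from the definition. $\widehat{\mathbf{K}}^{\et}_X$ is obtained by mapping the \'etale topological type $(X)_{\et}$ into a Galois-twisted model of $l$-adic $K$-theory, whose mod $l^v$ homotopy groups are $\Z/l^v(q/2)$ by construction. The spectral sequence is then the Atiyah--Hirzebruch (Postnikov) spectral sequence of that mapping spectrum. Its $E_2$-term is identified with \'etale cohomology by the Artin--Mazur--Friedlander comparison for local coefficients, and strong convergence comes from $\mathrm{cd}_l(X)<\infty$.

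Your route through Thomason's model $\mathbb{H}_{\et}(X,\mathbf{K}/l^v)$ and its descent spectral sequence is also correct. However, your Step~1 is not a formality that follows from $l$ being invertible. The equivalence between the Dwyer--Friedlander spectrum and \'etale hypercohomology of $\mathbf{K}/l^v$ is a genuine comparison theorem (cf.\ Thomason; Jardine). Its proof consists of your Step~2 --- Gabber rigidity and Suslin's theorem identify the stalks of $\mathbf{K}/l^v$ with the Dwyer--Friedlander coefficient spectrum --- together with the Artin--Mazur comparison. You should therefore cite it rather than justify it in one line.

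The two routes trade off as follows. The Dwyer--Friedlander argument needs no rigidity input at all for this spectral sequence. Yours produces the spectral sequence directly for \'etale-sheafified $K$-theory, which is the form in which Clausen--Mathew state Quillen--Lichtenbaum. In that sense your Step~1 makes explicit an identification that the paper uses tacitly when it combines this spectral sequence with Theorem~\ref{Quillen Lichtenbaum}.

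Finally, the hypothesis ``over $\Z/l$'' in the statement should read ``over $\Z[1/l]$'', as you correctly assume throughout.
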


\begin{prop}\label{prop:K^et-bounded-with-finite coefficient}
     Let $X$ be connected quasi-projective variety of dimension $d$ over $k_{N}$. Then for any given integer $m \geq d+N+1$, there exists an integer $M$ (depending upon $m$) such that we have
    \begin{center}
        $  \lvert  K^{\et}_{m}(X, \mathbb{Z}/l^{v}) \rvert \leq M$
    \end{center}
    for all primes $l \neq p$ and all $v \geq 1$.
\end{prop}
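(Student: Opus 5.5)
We use the Dwyer--Friedlander spectral sequence of \thmref{spectral-sequence-etale coh to etale K},
\[
E_2^{s,-2t} = H^{s}_{\et}(X, \Z/l^v(t)) \Rightarrow K^{\et}_{2t-s}(X, \Z/l^v).
\]
The bound on $K^{\et}_m(X,\Z/l^v)$ will come from bounding, uniformly in $l \neq p$ and $v$, every $E_2$-term on the line $2t-s=m$. First we check that only finitely many such terms are nonzero, with a number independent of $l$ and $v$. For $l \neq p$ the $l$-cohomological dimension of $X$ is at most $2d+N+1$: $\mathrm{cd}_l(k_N)=N+1$, and $X$ is a quasi-projective variety of dimension $d$. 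Hence $E_2^{s,-2t}=0$ unless $0\le s\le 2d+N+1$, and $t$ is then determined by $t=(m+s)/2$. So at most $2d+N+2$ terms contribute, and the spectral sequence converges strongly. Consequently $|K^{\et}_m(X,\Z/l^v)|$ is at most the product of the orders of the $E_\infty$-terms on this line, which is at most the product of the orders of the corresponding $E_2$-terms.

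Next we show each contributing term lies in the range where the Corollary following \thmref{quasi-projective-N-local field-bounded} gives a uniform bound $M_{s,t}$ independent of $l$ and $v$. Fix $s$ with $0\le s\le 2d+N+1$ and $t=(m+s)/2$, where $m\ge d+N+1$.
\begin{itemize}
\item Since $t\ge (d+N+1)/2$, we cannot simply use $t>d+N$. We instead use condition (1) of the Corollary, which requires $s\notin[2t-d-N,\ t+d+2]$.
\item We have $2t-d-N = m+s-d-N \ge s+1 > s$. So $s$ lies strictly below this interval, and (1) holds.
\end{itemize}
Thus every nonzero $E_2$-term on the line is bounded by some $M_{s,t}$. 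Setting $M=\prod_{s=0}^{2d+N+1} M_{s,(m+s)/2}$, taken over those $s$ with $m+s$ even, gives the required uniform bound.

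The point needing care is the uniformity of the Corollary's bound across both $l$ and $v$. The Corollary is stated for $\Z/m(n)$ with $(m,p)=1$, which includes $m=l^v$. Its proof passes through $H^i_{\et}(X,n)$ with coefficients $\oplus_{l\ne p}\Q_l/\Z_l(n)$. The cohomology with $\Z/l^v(n)$ coefficients is sandwiched between $H^{i-1}_{\et}(X,n)$ and $H^{i}_{\et}(X,n)$. At the relevant degrees the condition on $i$ also holds for $i-1$, because $s-1<2t-d-N$ as well. Both of these are finite groups, and this is what yields a bound uniform in $l$ and $v$.

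We also need the cohomological dimension bound for non-proper $X$ and for $l$ in every characteristic case. It follows from $\mathrm{cd}_l(k_N)=N+1$ for $l\ne p$, which is proved by induction on $N$, together with the standard bound $\mathrm{cd}_l(X)\le 2\dim X+\mathrm{cd}_l(k)$. This also supplies the finite cohomological dimension hypothesis that \thmref{spectral-sequence-etale coh to etale K} requires.
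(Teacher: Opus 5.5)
Your proof is correct and is the paper's argument: the Dwyer--Friedlander spectral sequence combined with the uniform bound of Theorem~\ref{thm:finiteness for quasiprojective over N-local fields}, applied via the observation that $s \le 2t-d-N-1$ on the line $2t-s=m$. You also use $\mathrm{cd}_l(X)\le 2d+N+1$ to see that only finitely many terms lie on that line, so that the product of the bounds is uniform in $l$ and $v$; this makes explicit a step the paper skips when it passes from bounded $E_\infty$-terms directly to a bound on $K^{\et}_m(X,\Z/l^v)$.
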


\begin{proof}
    By applying Proposition \ref{spectral-sequence-etale coh to etale K}, we have
     \[
E^{p,-q}_{2} = H_{\et}^{p} (X, \mathbb{Z}/l^{v}(q/2) ) \Rightarrow K^{\et}_{q-p}(X, \mathbb{Z}/l^{v}).
\] 
It is enough to show that there exists an integer $M$ such that $\lvert E^{p,-q}_{2} \rvert \leq M $ whenever $q-p \geq d+N+1$, for all primes $l \neq p$ and every $v \geq 1$. Indeed, the condition $q-p \geq d+N+1$ is equivalent to $p \leq q-d -N-1$. This implies that $p \nin [q-d-N, \frac{q}{2}+d+2]$, and hence by Theorem \ref{thm:finiteness for quasiprojective over N-local fields}, we obtain
\begin{center}
   $ \lvert H_{\et}^{p} (X, \mathbb{Z}/l^{v}(q/2) ) \rvert \leq M$
\end{center}
for every prime $l \neq p$ and every $v \geq 1$. Fix any $q$ such that $q-p \geq d+N+1$. Then we have $  \lvert E^{p,-q}_{2} \rvert \leq M $, and consequently, 
\begin{center}
    $\lvert E^{p,-q}_{r} \rvert \leq M $ for all $r \geq 2$.
\end{center}
In particular, we have $\lvert E^{p,-q}_{\infty} \rvert \leq M $. Therefore, for any integer $m$ such that $m\geq d+N+1$ with $q -p = m$, we have $\lvert E^{p,-q}_{\infty} \rvert \leq M $. This implies that 
\begin{center}
    $ \lvert K^{\et}_{m}(X, \mathbb{Z}/l^{v}) \rvert \leq M $ 
\end{center}
for every prime $l \neq p$ and every $v \geq 1$. This completes the proof.
\end{proof}

We denote by $\mathbb{L}$ the set of natural numbers that are not divisible by $p$, where $p$ is the characteristic of $k_{0}$.

\begin{prop}\label{prop:K^et-bounded-with-finite coefficient for n}
     Let $X$ be connected quasi-projective variety of dimension $d$ over $k_{N}$. Then for any given integer $m \geq d+N+1$, there exists an integer $M$ (depending upon $m$) such that
    \begin{center}
        $  \lvert  K^{\et}_{m}(X, \mathbb{Z}/n) \rvert \leq M$
    \end{center}
    for all $n \in \mathbb{L}$.
\end{prop}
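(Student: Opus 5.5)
We will deduce Proposition~\ref{prop:K^et-bounded-with-finite coefficient for n} from Proposition~\ref{prop:K^et-bounded-with-finite coefficient} by splitting coefficients into primary parts. Write $n \in \mathbb{L}$ as $n = \prod_{j=1}^{s} l_j^{v_j}$ with distinct primes $l_j \neq p$. Since the mod $n$ Moore spectrum splits as the wedge $\mathcal{M}(n) \simeq \bigvee_j \mathcal{M}(l_j^{v_j})$ (equivalently, $\Z/n \cong \oplus_j \Z/l_j^{v_j}$ and smashing with Moore spectra commutes with finite wedges), we obtain
\[
K^{\et}_{m}(X, \Z/n) \cong \bigoplus_{j=1}^{s} K^{\et}_{m}(X, \Z/l_j^{v_j}).
\]
Here we interpret $K^{\et}_m(X,\Z/n)$ as $\pi_m$ of $\widehat{\mathbf{K}}^{\et}_X \wedge \mathcal{M}(n)$, and note that the $l$-adic constructions of Dwyer--Friedlander for the various $l$ are compatible with this decomposition.

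The difficulty is that the bound $M$ from Proposition~\ref{prop:K^et-bounded-with-finite coefficient} is uniform in $l$ and $v$, but the number of prime factors $s$ is unbounded, so the product of $s$ bounded groups need not be bounded. To fix this, we will sharpen the previous argument: the $E_2$-terms that can contribute are $H^{p}_{\et}(X, \Z/l^v(q/2))$ with $q-p=m$, and these are controlled by the groups $H^{p}_{\et}(X,n)$ of Theorem~\ref{quasi-projective-N-local field-bounded}. Those groups are finite direct sums over $l \neq p$ of their $l$-primary parts. Being finite, each $H^{p}_{\et}(X, q/2)$ has nonzero $l$-primary component for only finitely many primes $l$, and there are only finitely many relevant $(p,q)$, since $0 \le p \le cd(X)$ and $q = m+p$. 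Hence there is a finite set $S$ of primes outside of which the relevant $E_2$-terms vanish. For this we use the long exact sequence $H^{p-1}_{\et}(X,\Q_l/\Z_l(q/2)) \to H^{p}_{\et}(X,\Z/l^v(q/2)) \to H^{p}_{\et}(X,\Q_l/\Z_l(q/2))$, which shows that $H^{p}_{\et}(X,\Z/l^v(q/2))$ vanishes once both flanking $l$-parts vanish.

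We therefore first record that $K^{\et}_m(X,\Z/l^v)=0$ for all $v$ whenever $l \notin S$, with $S$ finite and independent of $v$. Then $|K^{\et}_{m}(X,\Z/n)| \le M^{|S|}$ for all $n \in \mathbb{L}$. The step needing the most care is checking that the index range in the vanishing argument matches the one used in Proposition~\ref{prop:K^et-bounded-with-finite coefficient}, including the shift $i-1$ coming from the exact sequence. This is the same range comparison already carried out there, using conditions $p \notin [q-d-N, q/2+d+2]$.
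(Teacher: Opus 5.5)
Your argument is correct, but it handles the unbounded number of prime factors of $n$ differently from the paper. The paper does not feed the prime-power bound of Proposition~\ref{prop:K^et-bounded-with-finite coefficient} into the decomposition. Instead it reruns that spectral-sequence argument with composite coefficients, using Theorem~\ref{thm:finiteness for quasiprojective over N-local fields}. That theorem already bounds $|H^{p}_{\et}(X,\Z/n(q/2))|$ uniformly in all $n\in\mathbb{L}$, not just prime powers, whenever $q-p\ge d+N+1$. Since orders are multiplicative over the primary decomposition, this gives $\prod_i |E_2^{p,-q}(l_i^{v_i})|\le M$. The paper then bounds each $|K^{\et}_m(X,\Z/l_i^{v_i})|$ by the product of the orders of the $E_2$-terms on the line $q-p=m$, and multiplies over $i$. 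This bounds $|K^{\et}_m(X,\Z/n)|$ by finitely many factors $|E_2^{p,-q}(n)|$.

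You instead use the finiteness of $H^{p}_{\et}(X,q/2)$ and $H^{p-1}_{\et}(X,q/2)$ from Theorem~\ref{quasi-projective-N-local field-bounded} to isolate a finite set $S$ of primes. Off $S$, every relevant $E_2$-term vanishes, hence so does $K^{\et}_m(X,\Z/l^v)$. You then apply Proposition~\ref{prop:K^et-bounded-with-finite coefficient} only for $l\in S$.

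The underlying input is the same in both routes. Finiteness of the whole prime-to-$p$ group $\bigoplus_{l\neq p}H^{*}_{\et}(X,\Q_l/\Z_l(*))$ is exactly what makes the bound in Theorem~\ref{thm:finiteness for quasiprojective over N-local fields} uniform in composite $m$. So the obstacle you flag is the one the paper absorbs into that theorem. Your route is slightly longer but gives the extra statement that $K^{\et}_m(X,\Z/l^v)=0$ for all but finitely many primes $l$. The paper's route is shorter because it never leaves $\Z/n$-coefficients.
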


\begin{proof}
 Let $n \in \mathbb{L}$ be any integer, and we write  $n = l^{v_{1}}_{1}l^{v_{2}}_{2}...l^{v_{s}}_{s}$  for some integer $s \geq 1$, where $ l_{1}, l_{2}, ..., l_{s}$ are distinct primes such that $ p \neq l_{i}$ for any $i \in \{ 1, 2, ..., s\}$. We let  $E^{p,-q}_{2}(n)  := H_{\et}^{p} (X, \mathbb{Z}/n(q/2) )$. Then we have
 \begin{equation}
     E^{p,-q}_{2}(n)  = E^{p,-q}_{2}(l^{v_{1}}_{1}) \oplus \cdots \oplus E^{p,-q}_{2}(l^{v_{s}}_{s}), \hspace{2mm}  \text{and} 
 \end{equation}
 \begin{equation}
     K^{\et}_{m}(X, \mathbb{Z}/n)  = K^{\et}_{m}(X, \mathbb{Z}/l^{v_{1}}_{1}) \oplus \cdots \oplus K^{\et}_{m}(X, \mathbb{Z}/l^{v_{s}}_{s}).
 \end{equation}
for any integer $m$. 
By Theorem \ref{thm:finiteness for quasiprojective over N-local fields} (also see Proposition \ref{prop:K^et-bounded-with-finite coefficient}), there exists an integer $M$ such that
\begin{center}
    $ \lvert  E^{p,-q}_{2}(n) \rvert \leq M$, whenever $q-p \geq d+N+1$.
\end{center}
This implies that 
\begin{center}
    $ \lvert  E^{p,-q}_{2}(n) \rvert  = \lvert E^{p,-q}_{2}(l^{v_{1}}_{1}) \rvert \cdots \lvert E^{p,-q}_{2}(l^{v_{s}}_{s}) \rvert \leq M$, whenever $q-p \geq d+N+1$.
\end{center}
 Now by mimicking the similar arguments as in the proof of Proposition \ref{prop:K^et-bounded-with-finite coefficient}, we have
 \begin{center}
    $ \lvert K^{\et}_{m}(X, \mathbb{Z}/n) \rvert  = \lvert K^{\et}_{m}(X, \mathbb{Z}/l^{v_{1}}_{1}) \rvert \cdots  \lvert K^{\et}_{m}(X, \mathbb{Z}/l^{v_{s}}_{s})  \rvert \leq M$,
\end{center}
  whenever $q-p \geq d+N+1$ with $q-p=m$. This completes the proof.
\end{proof}

\begin{cor}\label{cor: etale-K theory-mod and torsion boundedness} Let $X$ be a connected quasi-projective variety of dimension $d$ over $k_{N}$. Then for any given integer $m \geq d+N+1$, there exists an integer $M$ (depending upon $m$) such that

\begin{center}

$ \lvert  K^{\et}_{m}(X)/n \rvert \leq M$ and  \hspace{1mm}$ \lvert  _nK^{\et}_{m-1}(X) \rvert \leq M$.
 
   \end{center}
for all $n \in \mathbb{L}.$

\end{cor}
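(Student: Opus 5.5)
The natural tool here is the universal coefficient exact sequence for étale $K$-theory with finite coefficients. For every $n \in \mathbb{L}$, smashing $\widehat{\mathbf{K}}^{\et}_{X}$ with the cofiber sequence $\mathbb{S} \xrightarrow{n} \mathbb{S} \to \mathcal{M}(n)$ of the mod $n$ Moore spectrum gives a long exact homotopy sequence. From it we extract the short exact sequence
\[
0 \to K^{\et}_{m}(X)/n \to K^{\et}_{m}(X, \mathbb{Z}/n) \to {}_nK^{\et}_{m-1}(X) \to 0 .
\]
For composite $n$ the coefficient groups split into $l$-primary parts, as in the proof of Proposition \ref{prop:K^et-bounded-with-finite coefficient for n}. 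So it does no harm that the Dwyer–Friedlander construction is made one prime at a time. If needed, I would define $K^{\et}_m(X,\mathbb{Z}/n)$ as the direct sum of the $l$-primary pieces and check the sequence prime by prime. For each $l \neq p$, the map $n$ acts invertibly on the $l'$-part, so the $l$-primary part of the sequence is the one for $l^{v_l(n)}$.

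The main step is then just to take the integer $M$ supplied by Proposition \ref{prop:K^et-bounded-with-finite coefficient for n} for the given $m \geq d+N+1$. Then $|K^{\et}_{m}(X, \mathbb{Z}/n)| \leq M$ for all $n \in \mathbb{L}$. Because $K^{\et}_{m}(X)/n$ is a subgroup of this group, it has order at most $M$. Because ${}_nK^{\et}_{m-1}(X)$ is a quotient of it, it too has order at most $M$. This gives both bounds with the same $M$, uniformly in $n$.

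The only point requiring care is the precise form of the universal coefficient sequence for the spectrum $\widehat{\mathbf{K}}^{\et}_{X}$. It is defined as a kind of $l$-adic completion, so one should check that the $l$-primary pieces of $\pi_*$ behave as for an ordinary spectrum. If this is delicate, I would work directly with the $\mathbb{Z}/l^v$-coefficient sequence of \cite[Definition~4.3]{Dwyer-Friedlander-EtaleK} for each prime power and bound each $l$-primary part. The product over the primes dividing $n$ is then bounded by $M$, using the bound on $K^{\et}_m(X,\mathbb{Z}/n)$ as a whole, exactly as in Proposition \ref{prop:K^et-bounded-with-finite coefficient for n}.
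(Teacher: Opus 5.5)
Your proposal is correct and is the same argument as the paper's. The paper likewise combines the uniform bound $|K^{\et}_m(X,\mathbb{Z}/n)|\le M$ for all $n\in\mathbb{L}$, taken from the preceding proposition, with the universal coefficient sequence $0\to K^{\et}_m(X)/n\to K^{\et}_m(X,\mathbb{Z}/n)\to {}_nK^{\et}_{m-1}(X)\to 0$, and bounds the subgroup and the quotient by $M$. Your remark that the Dwyer--Friedlander construction is made one prime $l$ at a time, so the sequence should be checked on each $l$-primary piece, is a point the paper does not address, but it does not change the argument.
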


\begin{proof}
        
 Let $n \in \mathbb{L}$ be any integer.  The corollary follows from Proposition \ref{prop:K^et-bounded-with-finite coefficient for n} and the short exact sequence 
\begin{equation}\label{equation-K-et}
0 \rightarrow K^{\et}_{m}(X)/n \rightarrow K^{\et}_{m}(X, \mathbb{Z}/n) \rightarrow  _n K^{\et}_{m-1}(X) \rightarrow 0.
\end{equation}
\end{proof}

\begin{cor}\label{finitess of p'-torsion in etale K theory}
    Let $X$ be as in Corollary \ref{cor: etale-K theory-mod and torsion boundedness}. Then for $m \geq d +N$,  the $p'$-torsion subgroup of $K^{\et}_{m}(X) $ is finite i.e., $K^{\et}_{m}(X) \{ p' \} $ is finite.
\end{cor}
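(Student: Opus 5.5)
The plan is to deduce this from the uniform bound on $n$-torsion in Corollary~\ref{cor: etale-K theory-mod and torsion boundedness}. Apply that corollary with $m+1$ in place of $m$; this is allowed because $m \geq d+N$ gives $m+1 \geq d+N+1$. It yields an integer $M$ such that
\[
\lvert {}_nK^{\et}_{m}(X) \rvert \leq M \quad \text{for all } n \in \mathbb{L}.
\]
Write $T := K^{\et}_{m}(X)\{p'\}$. Then $T = \bigcup_{n \in \mathbb{L}} {}_nT$ and ${}_nT = {}_nK^{\et}_{m}(X)$. The subgroups ${}_{n!'}T$, where $n!'$ is the prime-to-$p$ part of $n!$, form an increasing chain whose union is $T$. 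Each has order at most $M$, so the chain stabilizes, and $T$ itself is finite of order at most $M$.

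To make this rigorous without fuss, I would argue directly. Suppose $T$ had $M+1$ distinct elements. They all lie in ${}_nT$ for $n$ the product of their orders, and this $n$ is prime to $p$. Hence $\lvert {}_nK^{\et}_m(X)\rvert \geq M+1$, contradicting the bound. So $\lvert T\rvert \leq M$.

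The only real point to check is the index shift. In the short exact sequence \eqref{equation-K-et}, the torsion appearing is ${}_nK^{\et}_{m-1}$ while the hypothesis is on $m$. So the finiteness range for torsion is one below that for $K^{\et}_m(X, \mathbb{Z}/n)$, which is exactly why the statement holds for $m \geq d+N$ rather than $m \geq d+N+1$. There is no substantive obstacle beyond confirming that the constant $M$ in Proposition~\ref{prop:K^et-bounded-with-finite coefficient for n} is uniform in $n \in \mathbb{L}$, which that proposition asserts.
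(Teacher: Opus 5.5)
Your proposal is correct and follows the paper's proof. The paper likewise applies Corollary~\ref{cor: etale-K theory-mod and torsion boundedness} (implicitly at index $m+1$) to get the uniform bound $\lvert {}_nK^{\et}_m(X)\rvert \le M$ for all $n \in \mathbb{L}$, then passes to the direct limit over $n \in \mathbb{L}$. Your counting argument makes that limit step explicit: any $M+1$ elements of $K^{\et}_m(X)\{p'\}$ lie in ${}_nK^{\et}_m(X)$ for a single $n \in \mathbb{L}$, which yields $\lvert K^{\et}_m(X)\{p'\}\rvert \le M$.
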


\begin{proof}
  For  $m \geq N+d$, Corollary \ref{cor: etale-K theory-mod and torsion boundedness} yields that $ \lvert  _nK^{\et}_{m}(X)  \rvert \leq M$  for all $n \in \mathbb{L}$. Taking the direct limit over all integers in $\mathbb{L}$, we get the desired result.
\end{proof}

Recall that we say an abelian group $A$ is $\mathbb{L}$-divisible if $A/n = 0$ for all $ n \in \mathbb{L}$. This is equivalent to saying that $A$ is $p'$-divisible group. We now recall the following result from group theory, which is well known to experts.

\begin{lem}\label{lem: group-theory-bounded}
     Let $A$ be an abelian group. Suppose that
      there exists an integer $M \gg 0$ such that $|{A}/{m}| \le M$ for all
      $m \in \mathbb{L}$. Then $A \cong F \bigoplus D$, where $F$ is finite group of
      order lying in $\mathbb{L}$ and $D$ is $\mathbb{L}$-divisible.
\end{lem}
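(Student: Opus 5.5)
The plan is to realize $D$ as $m_0A$ for a suitable $m_0\in\mathbb{L}$, and then split off the finite quotient using an $\operatorname{Ext}$ computation.

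\emph{Choice of $m_0$.} Order $\mathbb{L}$ by divisibility. For $m \mid m'$ in $\mathbb{L}$ the natural map $A/m' \to A/m$ is surjective. The orders $|A/m|$ are bounded by $M$, so we may choose $m_0\in\mathbb{L}$ with $|A/m_0|$ maximal. If $m'\in\mathbb{L}$ is a multiple of $m_0$, the surjection $A/m'\to A/m_0$ satisfies $|A/m'|\le |A/m_0|$, so it is an isomorphism. Since $m'A\subseteq m_0A$, this gives $m'A = m_0A$.

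\emph{Divisibility of $D$.} Set $D := m_0A$. For any $n\in\mathbb{L}$ we have $nm_0\in\mathbb{L}$, and it is a multiple of $m_0$. Hence
\[
nD = nm_0A = m_0A = D,
\]
so $D/n=0$ for all $n\in\mathbb{L}$, i.e.\ $D$ is $\mathbb{L}$-divisible.

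\emph{The quotient.} The quotient $A/D = A/m_0$ is finite of order at most $M$. It is killed by $m_0$, so its order divides a power of $m_0$ and therefore lies in $\mathbb{L}$.

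\emph{Splitting.} It remains to split
\[
0\to D\to A\to A/m_0\to 0 .
\]
Write the finite group $A/m_0$ as a direct sum of cyclic groups $\Z/e$ with $e\in\mathbb{L}$. For each summand, $\operatorname{Ext}^1_{\Z}(\Z/e, D)\cong D/eD = 0$ by $\mathbb{L}$-divisibility of $D$. Ext commutes with finite direct sums, so $\operatorname{Ext}^1_{\Z}(A/m_0, D)=0$ and the sequence splits. This yields $A\cong F\oplus D$ with $F\cong A/m_0$ finite of order in $\mathbb{L}$.

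\emph{Main obstacle.} The main subtlety is this splitting step. $D$ need not be divisible (its $p$-part is uncontrolled), so one cannot invoke injectivity of divisible groups. The argument avoids this because only $\operatorname{Ext}$ against a finite group of order prime to $p$ is needed, and that requires only $\mathbb{L}$-divisibility. The choice of $m_0$ via maximality of $|A/m|$ is what makes $D$ stable under multiplication by every $n\in\mathbb{L}$.
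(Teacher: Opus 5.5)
Your proof is correct and complete. Each step checks out:
\begin{itemize}
\item choosing $m_0$ with $|A/m_0|$ maximal forces $m'A=m_0A$ for every multiple $m'\in\mathbb{L}$ of $m_0$;
\item closure of $\mathbb{L}$ under products makes $D=m_0A$ an $\mathbb{L}$-divisible group;
\item $A/m_0$ has exponent dividing $m_0$, so its order lies in $\mathbb{L}$;
\item $\operatorname{Ext}^1_{\Z}(\Z/e,D)\cong D/eD=0$ gives the splitting without needing $D$ to be divisible.
\end{itemize}

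The paper gives no argument of its own. It simply specializes \cite[Corollary 11.4]{GKR} to $\M=$ the set of primes different from $p$, so that $I_\M=\mathbb{L}$. Your argument is a self-contained replacement for that citation.

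It also makes the structure explicit. Since $nm_0A=m_0A$ for all $n\in\mathbb{L}$, your $D$ is exactly $\bigcap_{m\in\mathbb{L}}mA$. Since the multiples of $m_0$ are cofinal in $\mathbb{L}$ with bijective transition maps, $A/m_0$ is the $\mathbb{L}$-completion $A_{\mathbb{L}}=\varprojlim_{m\in\mathbb{L}}A/m$ from the paper's notation section. So you obtain a canonical extension $0\to\bigcap_{m}mA\to A\to A_{\mathbb{L}}\to 0$ with $A_{\mathbb{L}}$ finite of order at most $M$; only the splitting is non-canonical.

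Nothing is lost in generality. You use only that the index set is closed under multiplication, so the argument works verbatim for any $I_\M$. The citation buys brevity. Your version makes visible the facts actually used later: $|F|\in\mathbb{L}$, which is what makes $F$ uniquely $p$-divisible in the proofs of the $K$-theory structure theorems.
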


\begin{proof}
The proof is an immediate consequence of \cite[Corollary 11.4]{GKR}, upon taking $\mathbb{M}$ to be the set of all primes different from $p$ in their argument.
\end{proof}

We now begin the proof of Theorem \ref{thm:structure of etale-K-group over N-local}.

\begin{proof}
Let $m \geq N+d+1$ be any given integer. We first claim that  \begin{center}
        $K^{\et}_{m}(X) \cong F \oplus D$,
    \end{center}
    where $F$ is a finite and $D$ is $p'$-divisible group. By Lemma \ref{lem: group-theory-bounded}, this claim is equivalent to showing that there exists an integer $M$ such that $ \lvert  K^{\et}_{m}(X)/n \rvert \leq M$ for all $n \in \mathbb{L}$. But this follows from Corollary \ref{cor: etale-K theory-mod and torsion boundedness}. This completes the proof of the claim. It now remains to show that $D$ is uniquely $p'$-divisible, which is equivalent to show $D  \lbrace p' \rbrace = 0$. First note that the group is $D  \lbrace p' \rbrace$ is $p'$-divisible as $D$ is $p'$-divisible. By applying  Corollary \ref{finitess of p'-torsion in etale K theory}, we get that $K^{\et}_{m}(X) \{ p' \}$ is finite, which in particular implies that $D  \lbrace p' \rbrace $ is finite. The group $D \{ p'\}$ is therefore finite $p'$-torsion and $p'$-divisible group, and hence $D \{p'\} = 0$. This completes the proof of  Theorem \ref{thm:structure of etale-K-group over N-local}. \end{proof}


\section{Algebraic $K$-groups over higher local fields} \label{sec:Alg-K-N-local}
In this section, we prove one of the main results of this paper, which describe the structure of algebraic $K$-groups of quasi-projective varieties over higher local fields. An analogous structure of $K$-groups for smooth varieties over archimedian local fields such as complex numbers was proved by Weibel-Pedrini \cite{Weibel-Pedrini-surface}, \cite{Weibel-Pedrini}. We first describe the structure of algebraic $K$-groups of quasi-projective varieties over $N$-local fields. We then specialize to smooth varieties over finite fields, where we prove a stronger result that also allows us to relate the Bass conjecture and the Parshin conjecture in higher degrees.

  Let $X$ be a quasi-projective variety over a field $k$. The algebraic $K$-groups of $X$ were defined by D. Quillen. We denote by $K_i(X)$ the $i^{th}$ algebraic $K$-group of $X$ in the sense of Quillen \cite{Quillen}; by definition, $K_i(X)$ is the $i^{th}$ homotopy group of the $K$-theory spectrum $\mathbf{K}(X)$. Similarly, one can define the $K$-groups with finite coefficient say $\mathbb{Z}/n$ for any integer $n \geq 2$, denoted as $K_{i}(X, \mathbb{Z}/n)$ (see \cite[IV, Definition 2.4]{Weibel-K}).
  

  We  denote by $k_{N}$ a $N$-local field and we let $p$ denote the characteristic of of the associated finite field $k_{0}$. We describe the structure of algebraic $K$-groups of quasi-projective varieties in the following theorem. 

  \begin{thm}\label{thm:structure of algebraic K-group over N-local}
    Let $X$ be a connected quasi-projective variety of dimension $d \geq 0$ over $k_{N}$. Then for any integer $m \geq d+N+1$, we have
    \begin{center}
        $K_{m}(X) \cong F \oplus D$,
    \end{center}
    where $F$ is a finite and $D$ is a uniquely $p'$-divisible group. In particular, $K_{m}(X)\{p'\}$ is finite.  Moreover, if ${\rm char}(k_N) = {\rm char}(k_0)$, then $D$ is a uniquely divisible group and $K_{m}(X)_{\tor}$ is finite. 
\end{thm}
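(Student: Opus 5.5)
The plan is to transfer Theorem \ref{thm:structure of etale-K-group over N-local} from \'etale to algebraic $K$-theory with finite coefficients, and then repeat the group-theoretic argument used for $K^{\et}_m(X)$.

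\emph{Step 1: the comparison map.} We consider the comparison map $K_m(X,\Z/l^v)\to K^{\et}_m(X,\Z/l^v)$ for primes $l\neq p$. By the Quillen--Lichtenbaum conjecture, now a theorem via the Bloch--Kato conjecture (Voevodsky--Rost) and its extension to singular schemes through cdh descent / Thomason's theorem, this map is an isomorphism in degrees $m\ge \mathrm{cd}_l(X)-1$. The degrees are compatible: $\mathrm{cd}_l(k_N)=N+1$, so $\mathrm{cd}_l(X)\le 2d+N+1$. However, QL for a variety of dimension $d$ over $k_N$ only gives an isomorphism above roughly $d+N$ if one uses the motivic formulation. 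Concretely, the motivic-to-\'etale map $H^i_{\mathcal M}(X,\Z/l^v(n))\to H^i_\et(X,\Z/l^v(n))$ is an isomorphism for $i\le n$ (Beilinson--Lichtenbaum), and the motivic cohomology vanishes for $i> n+d$. Comparing the motivic Atiyah--Hirzebruch spectral sequence with that of Theorem \ref{spectral-sequence-etale coh to etale K}, the terms contributing to degree $m=2n-i$ agree when $m\ge d+N+1$, up to \'etale terms with $i>n+d$. By Theorem \ref{thm:finiteness for quasiprojective over N-local fields} those \'etale terms are uniformly bounded in this range.

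\emph{Step 2: bounding $K_m(X,\Z/n)$ and transferring the splitting.} Using Step 1, we get an integer $M$ with $|K_m(X,\Z/n)|\le M$ for all $n\in\mathbb L$ and all $m\ge d+N+1$. For singular $X$ we work with homotopy $K$-theory and use that $K$ and $KH$ agree with $\Z/n$-coefficients for $n$ invertible (Weibel). The universal coefficient sequence
\[0\to K_m(X)/n\to K_m(X,\Z/n)\to {}_nK_{m-1}(X)\to 0\]
then bounds $|K_m(X)/n|$ for $m\ge d+N+1$ and $|{}_nK_m(X)|$ for $m\ge d+N$. Lemma \ref{lem: group-theory-bounded} gives $K_m(X)\cong F\oplus D$ with $D$ $p'$-divisible. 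Exactly as in the proof of Theorem \ref{thm:structure of etale-K-group over N-local}, $D\{p'\}$ is finite and $p'$-divisible, hence zero. Thus $K_m(X)\{p'\}$ is finite.

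\emph{Step 3: equal characteristic.} When $\Char(k_N)=\Char(k_0)=p$, it remains to show that $K_m(X)$ is uniquely $p$-divisible for $m\ge d+N+1$. This is the analogue of Geisser--Levine: for smooth $X$, $K_m(X,\Z/p)$ is computed by $H^{m-n}(X,\nu^n)$ (logarithmic de Rham--Witt), and these vanish once $n>\dim X+[k_N:k_N^p]$-rank. Here $[k_N:k_N^p]=p^N$, so $\nu^n=0$ for $n>d+N$. This gives $K_m(X,\Z/p)=0$ for $m>d+N$, i.e.\ $m\ge d+N+1$. The universal coefficient sequence then yields both $K_m(X)/p=0$ and ${}_pK_m(X)=0$ in the needed range, noting that ${}_pK_{m}$ comes from $K_{m+1}(X,\Z/p)$. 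For singular $X$ we reduce via cdh descent and $p$-alterations, or via Quillen's $K'$ together with a localization/d\'evissage argument.

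The main obstacle is twofold: making the Quillen--Lichtenbaum comparison precise in exactly the range $m\ge d+N+1$ for possibly singular quasi-projective $X$, and proving the $p$-divisibility over the imperfect field $k_N$. This $p$-divisibility is the content of \lemref{lem:Gei-Lev-p-div}, and it is where I would spend the most effort. Combining $p$-divisibility with $D$ being uniquely $p'$-divisible, $D$ is uniquely divisible and $K_m(X)_{\tor}=F$ is finite.
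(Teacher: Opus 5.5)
Your argument for the $p'$-part is the paper's. The paper quotes Quillen--Lichtenbaum (Voevodsky, Clausen--Mathew) as $K_m(X,\Z/n)\cong K^{\et}_m(X,\Z/n)$ for $m\ge d+\mathrm{cd}(k_N)=d+N+1$ and arbitrary quasi-projective $X$. It then uses the uniform bound on $K^{\et}_m(X,\Z/n)$, the universal coefficient sequence and Lemma~\ref{lem: group-theory-bounded}.

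Your detour through $\mathrm{cd}_l(X)\le 2d+N+1$ is beside the point; what matters is the bound $d+N+1$ on the cohomological dimensions of the residue fields. Your $E_2$-comparison does land in the right range. For smooth $X$ and $2n-i\ge d+N+1$, either $n\ge d+N+1$ or $i\le n-1$, so the motivic and \'etale terms agree outright and there are no extra \'etale terms to bound.

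For smooth $X$, your Step 3 uses the same input as Lemma~\ref{lem:Gei-Lev-p-div}, namely Geisser--Levine and $\Omega^n=0$ for $n>d+N$. You run it through the motivic spectral sequence, whereas the paper uses the Zariski descent spectral sequence and Gersten. Two corrections are needed:
\begin{itemize}
\item The terms contributing to $K_m(X,\Z/p)$ are $H^{n-m}_{\mathrm{Zar}}(X,\nu^n)$, so only $n\ge m\ge d+N+1$ occurs. With $H^{m-n}$ as you wrote it, small $n$ would contribute and nothing would vanish.
\item The identification $\Z/p(n)\simeq\nu^n[-n]$ is proved over perfect fields, so over $k_N$ you need a limit argument. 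The paper's route only needs Geisser--Levine for the field $k(\eta)$.
\end{itemize}

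The genuine gap is the last sentence of Step 3 (singular $X$), and it cannot be filled. None of your suggested reductions sees $p$-primary information in characteristic $p$:
\begin{itemize}
\item $K/p$ does not satisfy cdh descent. Only $KH$ does, and $K/p\to KH/p$ is not an equivalence when $p$ is not invertible.
\item Alterations of $p$-power degree give no control over $p$-primary groups.
\item Localization and d\'evissage compute $G$-theory, which differs from $K$-theory on singular schemes.
\end{itemize}

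In fact $K_m(X,\Z/p)$ need not vanish. Take the node $A=k_N[x,y]/(xy)$ with $I=(x,y)$. The split augmentation $A\to k_N$ gives $K_q(A)=K_q(k_N)\oplus K_q(A,I)$. Hesselholt's computation $K_q(A,I)\cong\bigoplus_{m\ge1}\mathbb{W}_m\Omega^{q-2m}_{k_N}$ then shows that $\mathbb{W}_m(k_N)$ is a direct summand of $K_{2m}(A)$ for every $m\ge 1$. Thus $K_{2m}(X,\Z/p)\neq 0$ for all $m$. For $N\ge 1$ this summand is an infinite $p$-primary torsion group, so $K_{2m}(X)_{\tor}$ is infinite whenever $2m\ge d+N+1$. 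The ``moreover'' clause therefore fails for this connected affine curve.

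That clause can only be proved for smooth $X$. This is also all the paper's own proof gives: Lemma~\ref{lem:Gei-Lev-p-div} is stated and proved only for smooth $X$, yet it is invoked for arbitrary quasi-projective $X$.
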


One of the key results we need to prove the above theorem is the theorem of Rost, Voevodsky \cite{V-Annals}, Weibel which was initially a conjecture known as the Quillen-Lichtenbaum Conjecture. Later the result was shown for arbitrary quasi-projective varieties by Clausen-Mathew (see \cite[Theorem 1.2]{Mathew-Clausen}). This result allow us to use results of étale $K$-groups to algebraic $K$-groups in large enough degree.

\begin{thm} (\cite{V-Annals}, \cite{Mathew-Clausen})\label{Quillen Lichtenbaum} 
	Let $X$ be a quasi-projective variety over $k$ and let  $n$ be a positive integer invertible on $X$. Then for any integer $m \geq d + cd(k)$, the map
    
    \begin{center}
        $K_{m}(X, \mathbb{Z}/n) \rightarrow K^{\et}_{m}(X, \mathbb{Z}/n)$ 
    \end{center}
 is an isomorphism.    
\end{thm}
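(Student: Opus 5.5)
This statement is the Quillen--Lichtenbaum conjecture, now a theorem, and we would not reprove it from scratch. The plan is to assemble it from the Bloch--Kato/Beilinson--Lichtenbaum theorem of Rost and Voevodsky \cite{V-Annals} in the smooth case, and from the descent results of Clausen--Mathew \cite{Mathew-Clausen} for arbitrary quasi-projective $X$. Throughout we fix $n$ invertible on $X$ and work one prime $\ell \mid n$ at a time, since both sides split over the primary parts of $n$.

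\emph{Smooth case.} We compare two spectral sequences.
\begin{enumerate}
\item The motivic Atiyah--Hirzebruch spectral sequence $E_2^{i,j}=H^{i}_{M}(X,\Z/n(j))$ converges to $K_{2j-i}(X,\Z/n)$.
\item The Dwyer--Friedlander spectral sequence of \thmref{spectral-sequence-etale coh to etale K} has $E_2$-terms $H^{i}_{\et}(X,\Z/n(j))$ and converges to $K^{\et}_{2j-i}(X,\Z/n)$.
\end{enumerate}
There is a map from the first to the second induced by the cycle-class (étale realization) map. By Beilinson--Lichtenbaum, $H^i_M(X,\Z/n(j))\to H^i_\et(X,\Z/n(j))$ is an isomorphism for $i\le j$ and injective for $i=j+1$.

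A term in total degree $m=2j-i$ lies outside this range exactly when $i>j$, that is, when $j>m$ and hence $i>m$. On the motivic side such terms vanish once $i>j+d$, i.e. once $i > 2d+m$. On the étale side they vanish for $i>\mathrm{cd}_\ell(X)\le 2d+\mathrm{cd}(k)$, with $d+\mathrm{cd}(k)$ in the affine case, where we use Jouanolou's trick. The hypothesis $m\ge d+\mathrm{cd}(k)$ is intended to guarantee that every $E_2$-term of total degree $m$ or $m+1$ on which the two sides disagree is zero on both sides. Then a comparison of the (strongly convergent) filtrations gives the isomorphism in degree $m$. We must also control the terms in degree $m+1$, which are the sources of differentials hitting degree $m$.

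\emph{Singular case.} Since $n$ is invertible, $K(-)/n\simeq KH(-)/n$ (Weibel). Both $KH/n$ and étale $K/n$ satisfy cdh descent, so for an abstract blow-up square
\[
E\to W,\qquad Z\to X,\qquad W\to X
\]
as in the proof of \thmref{key theorem} we get long exact Mayer--Vietoris sequences on both sides. We then induct on $d$. In characteristic $p$ we replace resolution by $\ell'$-alterations, as done earlier in the paper. The five lemma gives the isomorphism for $X$ from the isomorphisms for $W$, $Z$ and $E$.

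\emph{Main obstacle.} The Mayer--Vietoris sequences shift degrees by one. The naive induction therefore loses a degree: the five lemma needs surjectivity in degree $m+1$ and injectivity in degree $m-1$ for $Z$ and $E$. Here we would exploit the fact that $\dim Z,\dim E<d$, so the inductive range for them is one degree larger. This exactly absorbs the shift, provided we check the boundary cases carefully.

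If this bookkeeping fails at the edge, we would instead invoke directly \cite[Theorem 1.2]{Mathew-Clausen}. That result states that $K(X)/n\to K^{\et}(X)/n$ (equivalently, $K/n$ to its $\mathrm{L}_{K(1)}$-localization) is an isomorphism on $\pi_m$ for $m\ge \mathrm{vcd}_\ell(X)-1$, proved via étale hyperdescent. It then remains to bound $\mathrm{vcd}_\ell(X)\le d+\mathrm{cd}(k)$ using Gabber's affine Lefschetz theorem, which gives the stated range.
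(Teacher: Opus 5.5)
The paper does not prove this statement; it quotes \cite{V-Annals} and \cite[Theorem~1.2]{Mathew-Clausen}, so your last paragraph is the paper's route. Your self-contained sketch, however, has a genuine gap in the smooth case. You want every $E_2$-term of total degree $m=2j-i$ with $i>j$ to vanish on both sides. Your bounds ($i>j+d$ motivically; $i>\mathrm{cd}_\ell(X)$ on the \'etale side, where $\mathrm{cd}_\ell(X)$ can be $2d+\mathrm{cd}(k)$) do not give this, and the claim is false. Take a geometrically connected smooth projective surface $X$ over a finite field $k$ with $\mu_\ell\subset k$, $m=3=d+\mathrm{cd}(k)$ and $(i,j)=(5,4)$. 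Then $H^5_{\et}(X,\Z/\ell(4))\cong H^1(k,H^4_{\et}(X_{\bar k},\Z/\ell(4)))\cong H^1(k,\Z/\ell(2))\neq 0$, and the motivic group is isomorphic to it. Jouanolou's trick cannot help, since the affine torsor over $X$ has dimension larger than $d$.

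The missing idea is that these terms \emph{agree} rather than vanish. By \lemref{lem:Real-iso}, $H^i_M(X,\Z/n(j))\to H^i_{\et}(X,\Z/n(j))$ is an isomorphism for all $i$ once $j\ge d+\mathrm{cd}(k)$. The reason is that Beilinson--Lichtenbaum identifies $\Z/n(j)$ with $\tau_{\le j}R\epsilon_*\mu_n^{\otimes j}$ on $X_{\zar}$, and the Zariski stalks $H^b_{\et}(\Spec(\mathcal{O}_{X,x}),\mu_n^{\otimes j})$ vanish for $b>d+\mathrm{cd}(k)$ by affine vanishing. Since $i>j$ and $2j-i=m'$ force $j\ge m'+1$, all $E_2$-terms of total degree $\ge d+\mathrm{cd}(k)-1$ match. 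Injectivity for $i=j+1$ covers degree $d+\mathrm{cd}(k)-2$, so the comparison gives the isomorphism for $m\ge d+\mathrm{cd}(k)-1$. The hypothesis on $m$ enters through the vanishing of $R^b\epsilon_*\mu_n^{\otimes j}$, not through $\mathrm{cd}_\ell(X)$.

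The same issue affects your fallback. If $\mathrm{vcd}_\ell(X)$ means the \'etale cohomological dimension of $X$, the bound $\mathrm{vcd}_\ell(X)\le d+\mathrm{cd}(k)$ is false for projective $X$: over a finite field, $H^{2d+1}_{\et}(X,\mu_\ell^{\otimes d})\cong H^1(k,\Z/\ell)\neq 0$. Moreover, Gabber's affine Lefschetz theorem only concerns affine schemes. The range in \cite{Mathew-Clausen} is governed by the (virtual) $\ell$-cohomological dimensions of the residue fields $\kappa(x)$, $x\in X$. The bound you need is the classical $\mathrm{cd}_\ell(\kappa(x))\le \mathrm{trdeg}_k\kappa(x)+\mathrm{cd}_\ell(k)\le d+\mathrm{cd}(k)$.

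Two smaller points on the singular case. First, the feared loss of a degree disappears once you use the sharper smooth range above. In characteristic $0$ the five lemma only involves degrees $m,m+1$ with $\dim Z,\dim E<d$; the sharper range is needed for $W$ in degree $m-1$ in the blow-up square for $W'\to W$. Second, for the finite flat $\pi'$ of degree prime to $\ell$ you need a retract argument, not just the injectivity used for \'etale cohomology in \thmref{key theorem}. Here $\pi'_*\pi'^*$ is multiplication by $[\pi'_*\mathcal{O}_{W'}]$, which is a unit in $K_0(X')\otimes\Z_{(\ell)}$.
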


\begin{prop}\label{prop:algebraic K-group-bounded-with-finite coefficient}
     Let $X$ be a connected quasi-projective variety of dimension $d$ over $k_{N}$. Then for any given integer $m \geq d+N+1$, there exists an integer $M$ (depending upon $m$) such that we have
     \vspace{1mm}
\begin{enumerate}
        \item 
    
        $  \lvert  K_{m}(X, \mathbb{Z}/n) \rvert \leq M$  $\forall \hspace{2mm} n \in \mathbb{L}$. 

        \item $ \lvert  K_{m}(X)/n \rvert \leq M$ $\forall  \hspace{2mm}$ $n \in \mathbb{L}$. 
        
        \item  $\lvert  _nK_{m-1}(X) \rvert \leq M$ $\forall  \hspace{2mm}$  $ n \in \mathbb{L}$.

   \end{enumerate}

\end{prop}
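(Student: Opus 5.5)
The plan is to transport the uniform bounds already established for étale $K$-theory to algebraic $K$-theory via the Quillen--Lichtenbaum comparison (\thmref{Quillen Lichtenbaum}). We then read off parts (2) and (3) from the universal coefficient sequence.

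First we check that the degree range is admissible. For a prime $l \neq p$, the $l$-cohomological dimension of an $N$-local field $k_N$ is $N+1$. This is the standard inductive computation: finite fields have $cd_l = 1$, and passing to a complete discretely valued field raises $cd_l$ by one for $l$ invertible in the residue field. Since we only need $n \in \mathbb{L}$, which is invertible on $X$, we may take $cd(k) = N+1$ in \thmref{Quillen Lichtenbaum}. For $m \geq d+N+1$ the map $K_m(X,\Z/n) \to K^{\et}_m(X,\Z/n)$ is then an isomorphism.

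\emph{The one point to be careful about} is whether the Clausen--Mathew version requires $m \geq d + cd$ or $m \geq d+cd-1$; either way $d+N+1$ lies in the range. If instead the theorem's bound were read with $cd_l(k_N)=N+1$ giving $m\ge d+N+1$, this is exactly our hypothesis. Combining the isomorphism with Proposition~\ref{prop:K^et-bounded-with-finite coefficient for n} yields a constant $M$, depending only on $m$, with $\lvert K_m(X,\Z/n)\rvert \le M$ for all $n \in \mathbb{L}$. This is (1).

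For (2) and (3) we use the short exact sequence
\[
0 \to K_m(X)/n \to K_m(X,\Z/n) \to {}_nK_{m-1}(X) \to 0,
\]
coming from the cofiber sequence $\mathbf{K}(X) \xrightarrow{n} \mathbf{K}(X) \to \mathbf{K}(X)\wedge \mathcal{M}(n)$, exactly as in \eqref{equation-K-et}. Both outer terms have order at most $\lvert K_m(X,\Z/n)\rvert \le M$, which gives (2) and (3) with the same $M$.

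The only genuine obstacle is justifying the cohomological dimension input, namely $cd_l(k_N) = N+1$ for $l \ne p$. This follows by induction on $N$ using $cd_l(K) = cd_l(\text{residue field}) + 1$ for henselian discretely valued $K$ and $l$ prime to the residue characteristic. Everything else is formal.
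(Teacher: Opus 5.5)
Your proposal is correct and is essentially the paper's proof. The Quillen--Lichtenbaum comparison (\thmref{Quillen Lichtenbaum}) with $cd(k_N)=N+1$ identifies $K_m(X,\Z/n)$ with $K^{\et}_m(X,\Z/n)$ for $m\ge d+N+1$ and $n\in\mathbb{L}$, the uniform étale bound then gives (1), and the universal coefficient sequence gives (2) and (3). The only difference is that you spell out why $cd_l(k_N)=N+1$ for $l\neq p$, which the paper uses without comment.
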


\begin{proof}
     The parts $(2)$ and $(3)$ follow from $(1)$ by using the following short exact sequence (see \cite[Page. 306]{Weibel-K}).
\begin{equation}\label{equation-K-group-UCT}
0 \rightarrow K_{m}(X)/n \rightarrow K_{m}(X, \mathbb{Z}/n) \rightarrow  {}_{n} K_{m-1}(X) \rightarrow 0.
\end{equation}
It now remains to show $(1)$. By Theorem \ref{Quillen Lichtenbaum}, we have an isomorphism  
\begin{center}
        $K_{m}(X, \mathbb{Z}/n) \cong K^{\et}_{m}(X, \mathbb{Z}/n)$ 
    \end{center} for $m \geq d + N+1$, and for all $n \in \mathbb{L}$. Hence it suffices to show that the similar claim for étale $K$-groups in the given range. This follows from Proposition \ref{prop:K^et-bounded-with-finite coefficient for n}, which completes the proof.
\end{proof}

    \begin{cor}\label{finitess of p'-torsion in algebraic K theory}
    Let $X$ be as in Proposition \ref{prop:algebraic K-group-bounded-with-finite coefficient}. Then for $m \geq d +N$,  the $p'$-torsion subgroup of $K_{m}(X) $ is finite i.e., $K_{m}(X) \{ p' \} $ is finite.
\end{cor}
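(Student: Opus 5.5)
The plan mirrors the proof of Corollary~\ref{finitess of p'-torsion in etale K theory}, using Proposition~\ref{prop:algebraic K-group-bounded-with-finite coefficient} in place of Corollary~\ref{cor: etale-K theory-mod and torsion boundedness}. Fix $m \geq d+N$ and set $m' = m+1 \geq d+N+1$. Proposition~\ref{prop:algebraic K-group-bounded-with-finite coefficient}(3), applied with $m'$ in place of $m$, gives an integer $M$, depending only on $m'$, such that
\[
\lvert {}_nK_{m}(X) \rvert = \lvert {}_nK_{m'-1}(X) \rvert \leq M \quad \text{for all } n \in \mathbb{L}.
\]
This bound comes from the universal coefficient sequence \eqref{equation-K-group-UCT} in degree $m'$. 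There the $n$-torsion of $K_{m}(X)$ is a quotient of $K_{m'}(X,\mathbb{Z}/n)$. That group is identified with $K^{\et}_{m'}(X,\mathbb{Z}/n)$ by Theorem~\ref{Quillen Lichtenbaum}, which applies because $m' \geq d+N+1 \geq d+cd(k_N)$ when $n$ is prime to $p$. Its order is then bounded uniformly in $n$ by Proposition~\ref{prop:K^et-bounded-with-finite coefficient for n}.

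Next I would write $K_m(X)\{p'\} = \varinjlim_{n \in \mathbb{L}} {}_nK_m(X)$, where the colimit is over $\mathbb{L}$ ordered by divisibility and the transition maps are the inclusions ${}_nK_m(X) \subseteq {}_{nn'}K_m(X)$. Choose $n_0 \in \mathbb{L}$ with $\lvert {}_{n_0}K_m(X)\rvert$ maximal; this is possible since all these orders are at most $M$. For any $n \in \mathbb{L}$ we have ${}_{n_0}K_m(X) \subseteq {}_{n_0 n}K_m(X)$, and the right side also has order at most $M$, so maximality forces equality. Hence ${}_nK_m(X) \subseteq {}_{n_0}K_m(X)$ for every $n \in \mathbb{L}$, so $K_m(X)\{p'\} = {}_{n_0}K_m(X)$ is finite of order at most $M$.

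The only step needing care is the index shift: the $m$-torsion bound must be obtained from coefficients in degree $m+1$, so that the hypothesis $m+1 \geq d+N+1$ of Proposition~\ref{prop:algebraic K-group-bounded-with-finite coefficient} is met, and not from degree $m$ itself. This is exactly why the corollary holds in the range $m \geq d+N$, one degree lower than Theorem~\ref{thm:structure of algebraic K-group over N-local}.
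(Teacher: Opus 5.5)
Your proposal is correct and follows the paper's proof. Both apply Proposition~\ref{prop:algebraic K-group-bounded-with-finite coefficient}(3) in degree $m+1 \geq d+N+1$ to bound $\lvert {}_nK_m(X)\rvert$ uniformly in $n \in \mathbb{L}$, then pass to the colimit over $n$; you make explicit the index shift and the maximal-element argument that the paper compresses into ``taking the direct limit.''
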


\begin{proof}
    For $m \geq N+d$, Proposition \ref{prop:algebraic K-group-bounded-with-finite coefficient}(3) yields that $ \lvert  _nK_{m}(X)  \rvert \leq M$  for all $n \in \mathbb{L}$. Taking the direct limit over all integers $n$ in $\mathbb{L}$, we get the desired result.
\end{proof}

We prove with the following lemma before turning to the proof of \thmref{thm:structure of algebraic K-group over N-local}. The following lemma is motivated from a similar result of Geisser and Levine
 \cite[Last claim of Theorem~8.4]{GL} over perfect fields.

 \begin{lem} \label{lem:Gei-Lev-p-div}
     Let $X$ be a smooth variety of dimension $d \geq 0$ over $k_{N}$ of characteristics $p > 0$. Then $K_{m}(X, \mathbb{Z}/p) = 0$ if $m \geq d + N +1$. 
 \end{lem}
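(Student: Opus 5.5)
We follow Geisser--Levine: express $K$-theory with $\Z/p$-coefficients of a smooth variety in characteristic $p$ through logarithmic de Rham--Witt cohomology, then bound degrees via $p$-cohomological dimension and $p$-rank of $k_N$. Concretely, for $X$ smooth over a field of characteristic $p$ (not necessarily perfect), Geisser--Levine give the quasi-isomorphism $\Z/p(q)_{X} \simeq \nu^q_X[-q] = \Omega^q_{X,\log}[-q]$ in the Zariski (or Nisnevich) topology; this proof uses only Gersten/Bloch--Kato--Gabber and holds for smooth schemes over any field of characteristic $p$ (essentially smooth over $\F_p$). Hence $H^{s}_{\mathcal M}(X,\Z/p(q)) \cong H^{s-q}_{\zar}(X,\nu^q)$, and the motivic (Atiyah--Hirzebruch) spectral sequence
\[
E_2^{s,t}=H^{s-t}_{\mathcal M}(X,\Z/p(-t)) \Rightarrow K_{-s-t}(X,\Z/p)
\]
reduces the lemma to showing $H^{a}_{\zar}(X,\nu^q)=0$ whenever $q-a\geq d+N+1$ (the total degree $m=2q-s$ with $s=a+q$ gives $m=q-a$).

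The key step is the vanishing of $\nu^q$ itself for large $q$. Since $\nu^q_X\subset \Omega^q_X$ and $\Omega^q_{X}$ is computed via absolute differentials $\Omega^1_{X/\F_p}$, we need the $p$-rank: an $N$-local field of characteristic $p$ is $k_0((t_1))\cdots((t_N))$ with $k_0$ perfect, so $[k_N:k_N^p]=p^N$ and $\Omega^1_{k_N/\F_p}$ has dimension $N$. For $X$ smooth of dimension $d$, $\Omega^1_{X}$ is locally free of rank $d+N$, so $\Omega^q_X=0$ and hence $\nu^q_X=0$ for $q>d+N$. Moreover $H^a_{\zar}(X,\nu^q)=0$ for $a<0$ and $a>d$, so we only need $q\geq d+N+1+a$ with $a\ge 0$, forcing $q>d+N$, whence the group vanishes. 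Thus all $E_2$ terms contributing to $K_m(X,\Z/p)$ with $m\geq d+N+1$ are zero.

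It remains to justify convergence: the motivic spectral sequence for smooth $X$ is bounded (Zariski cohomological dimension $\le d$, and only $q\ge 0$ contributes, with finitely many terms per total degree), so it converges strongly and $K_m(X,\Z/p)=0$. The main obstacle is ensuring the Geisser--Levine identification $\Z/p(q)\simeq\nu^q[-q]$ is available over the imperfect field $k_N$; I would argue it via their proof for smooth schemes over $\F_p$, noting $X$ is a filtered limit of smooth $\F_p$-schemes (or directly regular and essentially smooth over $\F_p$ since $k_N$ is separable over $\F_p$, by Popescu), with both sides compatible with such limits; alternatively one may use the Bloch--Kato--Gabber isomorphism $K^M_q(F)/p\cong\nu^q(F)$ on all residue fields plus Gersten resolutions.
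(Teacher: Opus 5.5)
Your proposal is correct, but it takes a different route from the paper. The paper never identifies $\Z/p(q)_X$ with $\nu^q_X[-q]$ on $X$ and never uses the motivic spectral sequence. Instead it uses the Zariski descent spectral sequence $E_2^{i,j}=H^i(X_{\zar},(\mathcal{K}/p)_{-j})\Rightarrow K_{-i-j}(X,\Z/p)$. Since $i\ge 0$, it is enough that the sheaf $(\mathcal{K}/p)_n$ vanish for $n\ge d+N+1$. The Gersten resolution embeds this sheaf into $K_n(k(\eta),\Z/p)$ at the generic points, where Geisser--Levine and Bloch--Kato--Gabber give $K_n(k(\eta),\Z/p)\cong K^M_n(k(\eta))/p\cong\Omega^n_{k(\eta),\log}\subset\Omega^n_{k(\eta)}$. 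This is zero because $k(\eta)$ has $p$-rank $d+N$.

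The numerical input is the same as yours: rank $d+N$ of $\Omega^1_X$ versus $p$-rank $d+N$ of $k(\eta)$. However, the paper only needs the Geisser--Levine theorem for \emph{fields}, which holds for every field of characteristic $p$. It therefore sidesteps exactly the step you flag as the main obstacle. Your route is essentially Geisser--Levine's own proof of their Theorem 8.4 transported to $k_N$. It yields more — a spectral sequence $H^a_{\zar}(X,\nu^q)\Rightarrow K_{q-a}(X,\Z/p)$, i.e.\ a description of $K_*(X,\Z/p)$ rather than just a vanishing range — at the price of justifying the identification over an imperfect base.

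On that justification, your claim that the Geisser--Levine proof works verbatim over any field of characteristic $p$ is too quick. Their argument relies on Gersten resolutions for motivic cohomology sheaves, which come from Voevodsky's theory over perfect fields; the limit argument is what actually carries the weight. It is cleaner without Popescu:
\begin{itemize}
\item Write $k_N$ as the filtered union of its finitely generated subfields $F_\alpha$. These are separably generated over $\F_p$, hence essentially smooth.
\item Descend $X$ to smooth $X_\alpha/F_\alpha$. The transition maps $X_\beta=X_\alpha\times_{F_\alpha}F_\beta\to X_\alpha$ are then flat, so higher Chow groups, the sheaves $\nu^q$ and their Zariski cohomology all commute with the limit.
\item The natural Geisser--Levine isomorphisms on the $X_\alpha$ then give $H^s_{\mathcal M}(X,\Z/p(q))\cong H^{s-q}_{\zar}(X,\nu^q_X)$.
\end{itemize}
The vanishing of $\nu^q_X$ for $q>d+N$ must be checked on $X$ itself, which is what you do. It cannot be checked on the $X_\alpha$, whose absolute $\Omega^1$ has rank $d+\mathrm{trdeg}(F_\alpha/\F_p)$, which is unbounded.

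Finally, you use $s$ both as the spectral-sequence index and as the motivic degree. The bookkeeping $m=q-a$ is nonetheless correct.
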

\begin{proof}
   We have the following convergent spectral sequence (see \cite[Page~465]{GL}).  
    \begin{equation*}
         E^{i,j}_2  = H^i(X_{\rm Zar},(\mathcal{K}/p)_{-j} ) \implies K_{-i-j}(X,Z/p),
 \end{equation*}
    where $(\mathcal{K}/p)_{-j}$ is the Zariski sheaf associated to the presheaf $(U \mapsto K_{-j}(U, \Z/p))$. It therefore suffices to show that  the groups $ H^i(X_{\rm Zar},(\mathcal{K}/p)_{-j} )$ are zero when $-i-j \geq d + N +1$. Since these groups vanish for $i < 0$, it suffices to show that the sheaf $(\mathcal{K}/p)_{-j}$ vanish when
    $-j \geq d+N+1$. Since the Gersten resolution of the sheaf $(\mathcal{K}/p)_{-j}$ is universally exact (see \cite{Grayson}), it suffices to show that 
    $K_{-j}(k(\eta))/p = 0$ for $-j \geq d+N+1$, where $\eta$ is a generic point of $X$. To prove this, we first note that $K_{-j}(k(\eta))/p \cong K^M_{-j}(k(\eta))/p = \Omega^{-j}_{k(\eta), {\rm log}} \inj \Omega^{-j}_{k(\eta)}$. We finally observe that  $\Omega^{-j}_{k(\eta)} =0 $ for $-j \geq d+N+1$ because $[k(\eta): k(\eta)^p] = d + [k_N: k_N^p] = d+N$. This completes the proof of the lemma. 
\end{proof}

We now prove Theorem \ref{thm:structure of algebraic K-group over N-local}.

\begin{proof}
Let $m \geq d+N+1$ be any given integer. We first claim that  \begin{center}
        $K_{m}(X) \cong F \oplus D$,
    \end{center}
    where $F$ is a finite group and $D$ is a $p'$-divisible group. By Lemma \ref{lem: group-theory-bounded}, this claim is equivalent to show that there exists an integer $M$ such that $ \lvert  K_{m}(X)/n \rvert \leq M$ for all $n \in \mathbb{L}$. This follows from Proposition \ref{prop:algebraic K-group-bounded-with-finite coefficient}, which completes the proof of the claim. It remains to show that $D$ is uniquely $p'$-divisible, which is equivalent to show $D  \lbrace p' \rbrace = 0$. First note that the group  $D  \lbrace p' \rbrace$ is $p'$-divisible. By applying Corollary \ref{finitess of p'-torsion in algebraic K theory}, we get that $K_{m}(X) \{ p' \}$ is finite, which in particular implies that $D  \lbrace p' \rbrace $ is finite. The group $D \{ p'\}$ is therefore a finite $p'$-torsion and $p'$-divisible group, and hence $D \{p'\} = 0$. This completes the proof of the first part of \thmref{thm:structure of algebraic K-group over N-local}. We now assume that $k_N$ is an equi-characteristic $N$-local fields, i.e., 
    ${\rm char}(k_N) = {\rm char}(k_0)$. 
     By \lemref{lem:Gei-Lev-p-div}, we have  
     $K_{m}(X, \mathbb{Z}/p) = 0$ for $m \geq  d+N+1$. We use the short exact sequence
\begin{equation}
	0 \rightarrow K_{m}(X)/p \rightarrow K_{m}(X, \mathbb{Z}/p) \rightarrow  _pK_{m-1}(X) \rightarrow 0,	
	\end{equation}
	to conclude that $K_{m}(X)$ is $p$-divisible whenever $m \geq d+N+1$ and has no $p$-torsion when $m \geq d+N$. Since the order of $F$ is not divisible by $p$  (see Lemma \ref{lem: group-theory-bounded}),  $F$ is also uniquely $p$-divisible. This implies that $D$ is uniquely $p$-divisible whenever $m \geq d +N+ 1$. This completes the proof of the theorem. 
    \end{proof}

\section{$K$-groups for smooth varieties over finite fields and local fields} 
\label{sec:Alg-K-groups-Finite-field}

In this section, we strengthen the structure theorems for étale $K$-groups and algebraic $K$-groups (see Theorems~\ref{thm:structure of etale-K-group over N-local} and~\ref{thm:structure K-group for smooth over finite fields}) in the case where $X$ is a smooth projective variety and the base field is either a finite field or a local field.

\subsection{Étale $K$-groups}
Throughout this subsection, we let $k$ to be either a finite field of characteristic $p>0$ or a local field of residue characteristic $p>0$ and let $X$ be a smooth projective variety of dimension $d \geq 0$ over $k$. We denote by $\mathbb{L}$ the set of natural numbers that are not divisible by $p$.

\begin{prop}\label{etale cohomology bounded for finite and local}
     Let $X$ be a smooth projective variety of dimension $d\geq 0$ over $k$. 
     \begin{enumerate}
         \item If $k$ is finite, then there exists an integer $M$ such that
         \begin{center}
              $ \lvert H_{\et}^{i} (X, \mathbb{Z}/m(n) ) \rvert \leq M, \hspace{2mm}  \forall \hspace{2mm}  i \nin 2n, 2n+1, 2n+2$ and $\forall$ $m \in \mathbb{L}$.
         \end{center}
        
         \item If $k$ is local, and $X$ admits a good reduction, then there exists an integer $M'$
         \begin{center}
              $ \lvert H_{\et}^{i} (X, \mathbb{Z}/m(n) ) \rvert \leq M', \hspace{2mm}  \forall \hspace{2mm}  i \nin 2n-1, 2n, 2n+1, 2n+2$ $\forall$ $m \in \mathbb{L}$.
         \end{center}

     \end{enumerate}
\end{prop}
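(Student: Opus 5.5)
We follow the same pattern as the earlier uniform-bound corollaries: first establish finiteness of the divisible-coefficient groups $H_{\et}^{i}(X,n)=\oplus_{l\neq p}H_{\et}^{i}(X,\Q_l/\Z_l(n))$ in a range of $i$, and then pass to finite coefficients via the Bockstein triangle
\[
{\Z}/m(n) \to ({\Q}/{\Z})'(n) \xrightarrow{m} ({\Q}/{\Z})'(n).
\]
For every $m\in\mathbb{L}$ this gives the exact sequence
\[
H_{\et}^{i-1}(X,n)\to H_{\et}^{i}(X,\Z/m(n))\to H_{\et}^{i}(X,n).
\]
Hence $\lvert H_{\et}^{i}(X,\Z/m(n))\rvert \le \lvert H_{\et}^{i-1}(X,n)\rvert\cdot\lvert H_{\et}^{i}(X,n)\rvert$, a bound independent of $m$. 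It therefore suffices that both $H_{\et}^{i-1}(X,n)$ and $H_{\et}^{i}(X,n)$ are finite.

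For part (1), with $k$ finite and $X$ smooth projective, Theorem \ref{finiteness-smooth-finite field}(4) gives finiteness of $H_{\et}^{j}(X,n)$ for $j\nin\{2n,2n+1\}$. For both $j=i$ and $j=i-1$ to avoid $\{2n,2n+1\}$, we need $i\nin\{2n,2n+1,2n+2\}$, which is exactly the stated range. Take $M=\max$ of the products $\lvert H_{\et}^{i-1}(X,n)\rvert\cdot\lvert H_{\et}^{i}(X,n)\rvert$. For fixed $n$, the groups vanish for $i$ large (beyond $2d+1$ by cohomological dimension) and for $i<0$, so only finitely many $i$ matter and the maximum exists. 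If $M$ is meant to be uniform in $n$ as well, I would handle this using Tate twist invariance: $\mu_{l^\infty}$-twisting over a finite field leaves only finitely many distinct groups up to the relevant bounds. The statement, however, is most naturally read with $n$ and $i$ fixed, and I would state it that way.

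For part (2), with $k$ local and $X$ of good reduction, Corollary \ref{good-reduction-finiteness} gives finiteness of $H_{\et}^{j}(X,n)$ for $j\nin\{2n-1,2n,2n+1\}$. Requiring this for both $j=i$ and $j=i-1$ forces $i\nin\{2n-1,2n,2n+1,2n+2\}$, matching the statement. The bound $M'$ then follows as before.

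There is no real obstacle; the only care needed is the index bookkeeping when shifting $i\mapsto i-1$, and checking that $H^{i}(X,\Z/m(n))$ is controlled by the $\ell$-primary pieces for all $\ell\neq p$ simultaneously. The latter holds because $({\Q}/{\Z})'$ is the direct sum over $\ell\neq p$ and the finiteness results are stated for that direct sum.
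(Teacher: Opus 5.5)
Your proposal is correct and is essentially the paper's own argument: the long exact sequence attached to $\Z/m(n)\to(\Q/\Z)'(n)\xrightarrow{m}(\Q/\Z)'(n)$, combined with Theorem~\ref{finiteness-smooth-finite field}(4) for part (1) and Corollary~\ref{good-reduction-finiteness} for part (2), with exactly the index shift $i\mapsto i-1$ you describe. Drop the side remark about obtaining uniformity in $n$ from twist invariance, because it is false: for $X=\Spec(\F_q)$ and $n\ge 1$ one has $1\notin\{2n,2n+1,2n+2\}$ and $H^1_{\et}(X,\Z/m(n))\cong \Z/\gcd(m,q^n-1)$, so no bound uniform in $n$ exists, and the fixed-$n$ reading you adopt is the right one.
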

     
\begin{proof}
    For any integer $m \in \mathbb{L}$, we have the exact sequence of sheaves
  \[
    {\Z}/m(n) \to ({\Q}/{\Z})'(n) \xrightarrow{m} ({\Q}/{\Z})'(n),
  \]
  where $({\Q}/{\Z})' = {\underset{\ell \neq p}\bigoplus} {\Q_\ell}/{\Z_\ell}$ and we have the following long exact sequence.
\begin{equation}
      \cdots \rightarrow H_{\et}^{i-1}(X, n) \rightarrow H_{\et}^{i}(X, \mathbb{Z}/m(n)) \rightarrow H_{\et}^{i}(X, n)  \rightarrow \cdots
      \end{equation}
Taking the limit over integers $m$ such that $(m,p) = 1$, part $(1)$ follows from Theorem \ref{finiteness-smooth-finite field}, and part $(2)$ follows from Corollary \ref{good-reduction-finiteness}.
\end{proof}

\begin{prop}\label{prop:K^et-bounded-with-finite coefficient for n for smooth}
     Let $X$ be smooth projective connected variety of dimension $d$ over $k$.
    \begin{enumerate}
        \item If $k$ is finite, then for any given integer $m \geq 1$, there exists an integer $M$ such that  $  \lvert  K^{\et}_{m}(X, \mathbb{Z}/n) \rvert \leq M$  for all $n \in \mathbb{L}$. 
        \vspace{2mm}

        \item If $k$ is local, and $X$ admits a good reduction, then for any given integer $m \geq 2$, there exists an integer $M'$  such that $  \lvert  K^{\et}_{m}(X, \mathbb{Z}/n) \rvert \leq M'$  for all $n \in \mathbb{L}$.

    \end{enumerate}
   
  \end{prop}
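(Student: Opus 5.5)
The plan is to mimic the proofs of Propositions~\ref{prop:K^et-bounded-with-finite coefficient} and \ref{prop:K^et-bounded-with-finite coefficient for n}. The input will be Proposition~\ref{etale cohomology bounded for finite and local} in place of Theorem~\ref{thm:finiteness for quasiprojective over N-local fields}. We use the Dwyer--Friedlander spectral sequence of Theorem~\ref{spectral-sequence-etale coh to etale K},
\[
E^{p,-q}_2 = H^p_\et(X,\Z/l^v(q/2)) \Rightarrow K^\et_{q-p}(X,\Z/l^v).
\]
Only terms with $q=2j$ even and $j\ge 0$ contribute, and the total degree is $m=2j-p$, so $p = 2j-m$.

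\emph{Finite field case.} Fix $m\ge 1$. The contributing terms to $K^\et_m$ are $H^{2j-m}_\et(X,\Z/l^v(j))$. Since $m\ge 1$, we have $p=2j-m\le 2j-1$, so $p\notin\{2j,2j+1,2j+2\}$. Hence part (1) of Proposition~\ref{etale cohomology bounded for finite and local} gives a uniform bound $M_0$, independent of $l\neq p$ and $v$. The number of nonzero contributing terms is finite and independent of $l^v$. Indeed, $E_2^{p,-q}=0$ unless $0\le p\le cd_l(X)$, and $cd_l(X)\le 2d+1$ uniformly for $l\neq p$. So $j$ ranges over $m/2\le j\le (m+2d+1)/2$, at most $d+1$ values. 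Subquotients do not increase order, so each $E_\infty$ term has order at most $M_0$. The abutment therefore has order at most $M:=M_0^{d+1}$.

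\emph{Local field case.} For $m\ge 2$ we get $p=2j-m\le 2j-2$, which avoids $\{2j-1,2j,2j+1,2j+2\}$. Part (2) of Proposition~\ref{etale cohomology bounded for finite and local} then gives the uniform bound. The filtration has finite length since $cd_l(X)\le 2d+2$, and the counting argument is the same as above.

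\emph{Composite $n$.} For general $n\in\mathbb{L}$, write $n=\prod l_i^{v_i}$. Then $K^\et_m(X,\Z/n)=\bigoplus_i K^\et_m(X,\Z/l_i^{v_i})$, and the $E_2$ terms decompose the same way. So the argument can be run directly with $E_2(n)=H^p_\et(X,\Z/n(j))$, bounded uniformly in $n$ by Proposition~\ref{etale cohomology bounded for finite and local}, exactly as in Proposition~\ref{prop:K^et-bounded-with-finite coefficient for n}.

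\emph{Main obstacle.} The delicate point is the bound on the abutment. It requires the filtration length to be uniform in $n$, not just each $E_2$ term to be bounded. This is where the uniform bound on the $l$-cohomological dimension of $X$ is used, so that finitely many graded pieces, each of bounded order, give a bound on the total. If one is worried about the convergence statement of Theorem~\ref{spectral-sequence-etale coh to etale K} being stated for one $l$ at a time, the fallback is to bound each $l$-primary piece by its contributing graded pieces. Since all $E_2$ terms above vanish for all but finitely many $l$ uniformly (the Proposition bounds the full $\Z/n$ groups), only boundedly many primes contribute.
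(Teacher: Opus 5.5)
Your proposal is correct and follows the paper's route. You run the Dwyer--Friedlander spectral sequence exactly as in the $N$-local case, with Proposition~\ref{etale cohomology bounded for finite and local} supplying the uniform bound on the $E_2^{p,-q}$ terms once $q-p\ge 1$ (finite field) or $q-p\ge 2$ (good reduction). Your explicit use of $cd_l(X)\le 2d+1$ (resp.\ $2d+2$) to bound the number of nonzero graded pieces uniformly in $n$, giving $M_0^{d+1}$ (resp.\ $M_0^{d+2}$), makes precise a step the paper leaves implicit when it passes from bounded $E_\infty$ terms to a bounded abutment.
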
  

\begin{proof}
   \begin{enumerate}
\item[(1)] The proof follows the same steps as in Proposition~\ref{prop:K^et-bounded-with-finite coefficient for n} (also see Proposition~\ref{prop:K^et-bounded-with-finite coefficient}), except that we use Proposition~\ref{etale cohomology bounded for finite and local}$(1)$, which yields
\[
\lvert H_{\et}^{i}(X, \mathbb{Z}/n(j/2)) \rvert \le M \quad \text{whenever } j - i \ge 1,
\]
instead of Theorem~\ref{thm:Main:structure of algebraic-K-group over N-local}. The remainder of the proof proceeds along the same lines.

\item[(2)] The proof of (2) also follows via similar steps once we use Proposition~\ref{etale cohomology bounded for finite and local}$(2)$, which yields
\[
\lvert H_{\et}^{i}(X, \mathbb{Z}/n(j/2)) \rvert \le M \quad \text{whenever } j - i \geq 2,
\]
instead of Theorem~\ref{thm:Main:structure of algebraic-K-group over N-local} in Proposition~\ref{prop:K^et-bounded-with-finite coefficient for n}.

\end{enumerate}
This completes the proof.
\end{proof}

\begin{cor}\label{cor: etale-K theory-mod and torsion boundedness for smooth} Let $X$ be a connected smooth projective variety of dimension $d$ over $k$. For any given integer $m \geq 1$ (if $k$ is finite), and $m \geq 2$ (if $k$ is local and $X$ has good reduction), there exist an integer $M$ (depends upon $k$) such that 
\begin{center}
     $ \lvert  K^{\et}_{m}(X)/n \rvert \leq M$, and  \hspace{1mm}$ \lvert  _nK^{\et}_{m-1}(X) \rvert \leq M$ for all $n \in \mathbb{L}.$
\end{center} 
\end{cor}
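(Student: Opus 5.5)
This is the exact analogue of Corollary~\ref{cor: etale-K theory-mod and torsion boundedness}, with the uniform bound of Proposition~\ref{prop:K^et-bounded-with-finite coefficient for n} replaced by Proposition~\ref{prop:K^et-bounded-with-finite coefficient for n for smooth}. The plan has three steps.

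\textbf{Step 1 (universal coefficient sequence).} Fix $n \in \mathbb{L}$. Smashing $\widehat{\mathbf{K}}^{\et}_X$ with the cofiber sequence $\mathbb{S} \xrightarrow{n} \mathbb{S} \to \mathcal{M}(n)$ gives the short exact sequence
\[
0 \rightarrow K^{\et}_{m}(X)/n \rightarrow K^{\et}_{m}(X, \mathbb{Z}/n) \rightarrow {}_n K^{\et}_{m-1}(X) \rightarrow 0,
\]
which is \eqref{equation-K-et}. It holds for every $m$, since it comes purely from the long exact homotopy sequence. Consequently
\[
\lvert K^{\et}_{m}(X)/n \rvert \leq \lvert K^{\et}_{m}(X, \mathbb{Z}/n) \rvert \quad \text{and} \quad \lvert {}_nK^{\et}_{m-1}(X) \rvert \leq \lvert K^{\et}_{m}(X, \mathbb{Z}/n) \rvert .
\]

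\textbf{Step 2 (uniform bound on the middle term).} Apply Proposition~\ref{prop:K^et-bounded-with-finite coefficient for n for smooth}.
\begin{itemize}
\item If $k$ is finite, part (1) gives, for each $m \geq 1$, an integer $M$ with $\lvert K^{\et}_{m}(X,\mathbb{Z}/n)\rvert \leq M$ for all $n \in \mathbb{L}$.
\item If $k$ is local and $X$ has good reduction, part (2) gives the same conclusion for $m \geq 2$.
\end{itemize}
The ranges $m \geq 1$ and $m \geq 2$ match the statement exactly. Combining with the inequalities from Step 1 yields both bounds simultaneously, with the same $M$.

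\textbf{Where the work lies.} There is essentially no obstacle at this level; all the substance sits in Proposition~\ref{prop:K^et-bounded-with-finite coefficient for n for smooth}. The one point needing care is already handled there: the Dwyer--Friedlander spectral sequence must have only finitely many nonzero $E_2$-terms on each diagonal $q-p=m$, each bounded independently of $n$. This is what turns termwise bounds into a bound on the abutment.
\begin{itemize}
\item Finitely many terms: for fixed $m$, only $0 \leq p \leq 2d + \mathrm{cd}(k)$ can contribute, by finite cohomological dimension.
\item Uniform bounds: each term is controlled by Proposition~\ref{etale cohomology bounded for finite and local}.
\item The bound on the abutment is then at most the product of the bounds on the contributing $E_\infty$-terms.
\end{itemize}
The resulting $M$ depends on $m$ and $X$ (hence on $k$), not on $n$, as claimed.
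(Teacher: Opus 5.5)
Your proposal is correct and is the paper's proof: the paper also just combines the universal coefficient sequence $0 \to K^{\et}_m(X)/n \to K^{\et}_m(X,\mathbb{Z}/n) \to {}_nK^{\et}_{m-1}(X) \to 0$ with the uniform bound on $\lvert K^{\et}_m(X,\mathbb{Z}/n)\rvert$ from the preceding proposition, in the same ranges $m\geq 1$ and $m\geq 2$. Your remarks about the spectral sequence are accurate, but they belong to the proof of that proposition, not to this corollary.
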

\begin{proof}
    Let $n \in \mathbb{L}$ be any integer.  Th corollary follows from Proposition \ref{prop:K^et-bounded-with-finite coefficient for n for smooth} and by the short exact sequence
\begin{equation}\label{equation-K-et-1}
0 \rightarrow K^{\et}_{m}(X)/n \rightarrow K^{\et}_{m}(X, \mathbb{Z}/n) \rightarrow  _n K^{\et}_{m-1}(X) \rightarrow 0.
\end{equation}
This completes the proof.
\end{proof}

\begin{cor}\label{finitess of p'-torsion in etale K theory for smooth}
   Let $X$ be a connected smooth projective variety of dimension $d$ over $k$. For any given integer $m \geq 0$ (if $k$ is finite), and $m \geq 1$ (if $k$ is local and $X$ has good reduction),  the $p'$-torsion subgroup of $K^{\et}_{m}(X) $ is finite i.e., $K^{\et}_{m}(X) \{ p' \} $ is finite.
\end{cor}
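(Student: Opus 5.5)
The plan mirrors the proof of Corollary~\ref{finitess of p'-torsion in etale K theory}, with the degree shift coming from the universal coefficient sequence. Fix $m \geq 0$ if $k$ is finite, or $m \geq 1$ if $k$ is local and $X$ has good reduction. Then $m+1 \geq 1$, respectively $m+1 \geq 2$. So Corollary~\ref{cor: etale-K theory-mod and torsion boundedness for smooth}, applied with $m+1$ in place of $m$, gives an integer $M$ (depending on $m$ and $k$) with
\[
\lvert {}_nK^{\et}_{m}(X) \rvert \leq M \quad \text{for all } n \in \mathbb{L}.
\]
This bound comes from the short exact sequence \eqref{equation-K-et-1} in degree $m+1$: there ${}_nK^{\et}_{m}(X)$ is a quotient of $K^{\et}_{m+1}(X,\mathbb{Z}/n)$, which is bounded by Proposition~\ref{prop:K^et-bounded-with-finite coefficient for n for smooth}.

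Next I pass to the $p'$-torsion subgroup. By definition $K^{\et}_{m}(X)\{p'\} = \varinjlim_{n \in \mathbb{L}} {}_nK^{\et}_{m}(X)$, where the system is ordered by divisibility and the transition maps are inclusions. So it is an increasing union of subgroups, each of order at most $M$. An increasing union of finite groups with uniformly bounded order stabilizes. Concretely, choose $n_0 \in \mathbb{L}$ with $\lvert {}_{n_0}K^{\et}_m(X) \rvert$ maximal. For any $n \in \mathbb{L}$, the group ${}_{n_0}K^{\et}_m(X)$ sits inside ${}_{nn_0}K^{\et}_m(X)$, and both have order at most $M$, so by maximality they are equal. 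Hence every $p'$-torsion element lies in ${}_{n_0}K^{\et}_m(X)$, which gives $\lvert K^{\et}_{m}(X)\{p'\} \rvert \leq M$.

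I expect no real obstacle here. The only point to check is the index bookkeeping: the hypotheses of Corollary~\ref{cor: etale-K theory-mod and torsion boundedness for smooth} and Proposition~\ref{prop:K^et-bounded-with-finite coefficient for n for smooth} in degree $m+1$ must be satisfied exactly when $m \geq 0$ in the finite case and $m \geq 1$ in the local case. They are, since the thresholds there are $m+1 \geq 1$ and $m+1 \geq 2$ respectively.
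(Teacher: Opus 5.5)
Your proposal is correct and follows the route the paper intends. The paper gives no separate proof for this corollary, but the argument mirrors that of Corollary~\ref{finitess of p'-torsion in etale K theory}: apply Corollary~\ref{cor: etale-K theory-mod and torsion boundedness for smooth} in degree $m+1$ to get a uniform bound on $|{}_nK^{\et}_m(X)|$ for all $n \in \mathbb{L}$, then take the union over $n$. Your stabilization argument with a maximal $n_0$ just makes explicit the ``direct limit'' step that the paper leaves implicit.
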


We now restate and prove \thmref{thm:main:structure of etale-K-group over N-local for smooth}. 

    \begin{thm}\label{thm:structure of etale-K-group over N-local for smooth}
    Let $X$ be a smooth projective connected variety of dimension $d$ over $k$.
\begin{enumerate}
    \item If $k$ is finite, then for any integer $m \geq 1$, we have
    \begin{center}
        $K^{\et}_{m}(X) \cong F \oplus D$,
    \end{center}
    where $F$ is a finite group and $D$ is a uniquely $p'$-divisible group.

\vspace{2mm}
     \item If $k$ is local, and $X$ admits a good reduction, then for any integer $m \geq 2$, we have
    \begin{center}
        $K^{\et}_{m}(X) \cong F' \oplus D'$,
    \end{center}
    where $F'$ is finite and $D'$ is a uniquely $p'$-divisible group.
\end{enumerate}

\end{thm}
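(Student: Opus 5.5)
The plan is to copy verbatim the argument given for Theorem~\ref{thm:structure of etale-K-group over N-local}, replacing the input Proposition~\ref{prop:K^et-bounded-with-finite coefficient for n} by its sharper analogue for smooth projective varieties, Proposition~\ref{prop:K^et-bounded-with-finite coefficient for n for smooth}. Fix $m \geq 1$ if $k$ is finite, and $m \geq 2$ if $k$ is local and $X$ has good reduction. By Corollary~\ref{cor: etale-K theory-mod and torsion boundedness for smooth} there is an integer $M$, independent of $n$, such that $\lvert K^{\et}_{m}(X)/n \rvert \leq M$ for all $n \in \mathbb{L}$. Lemma~\ref{lem: group-theory-bounded} then gives a decomposition $K^{\et}_{m}(X) \cong F \oplus D$ with $F$ finite (of order in $\mathbb{L}$) and $D$ $p'$-divisible.

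It remains to see that $D\{p'\} = 0$. By Corollary~\ref{finitess of p'-torsion in etale K theory for smooth}, the group $K^{\et}_{m}(X)\{p'\}$ is finite in the stated ranges; indeed the ranges there ($m \geq 0$, respectively $m \geq 1$) contain ours. This corollary comes from the bound $\lvert {}_nK^{\et}_{m}(X) \rvert \leq M$ of Corollary~\ref{cor: etale-K theory-mod and torsion boundedness for smooth} applied in degree $m+1$, followed by a direct limit over $n \in \mathbb{L}$. Hence $D\{p'\}$ is finite.

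Next, $D\{p'\}$ is itself $p'$-divisible. Given $x \in D\{p'\}$ and $n \in \mathbb{L}$, pick $y \in D$ with $ny = x$. If $x$ has order $r \in \mathbb{L}$, then $nry = 0$ with $nr \in \mathbb{L}$, so $y \in D\{p'\}$. A finite group that is $p'$-divisible and $p'$-torsion is zero, since multiplication by its order (an element of $\mathbb{L}$) must be surjective. Therefore $D\{p'\} = 0$, and $D$ is uniquely $p'$-divisible. This gives both (1) and (2), with $(F', D')$ in the local case.

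The only substantive point is that the uniform bounds in Proposition~\ref{prop:K^et-bounded-with-finite coefficient for n for smooth} really hold in the degrees needed. In the finite field case, Proposition~\ref{etale cohomology bounded for finite and local}(1) bounds $E_2^{i,-j}$ uniformly whenever $i \notin \{j, j+1, j+2\}$ (here $n = j/2$). All terms on the line $j - i = m \geq 1$ satisfy $i \leq j-1$, so they are uniformly bounded. Since $X$ has finite cohomological dimension, only finitely many terms lie on each diagonal, so the abutment is uniformly bounded. The local case works the same way using Proposition~\ref{etale cohomology bounded for finite and local}(2) with $j - i \geq 2$.
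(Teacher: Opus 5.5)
Your proposal is correct and follows the paper's proof: the paper also reruns the argument of Theorem~\ref{thm:structure of etale-K-group over N-local}, substituting Corollaries~\ref{cor: etale-K theory-mod and torsion boundedness for smooth} and~\ref{finitess of p'-torsion in etale K theory for smooth} for their quasi-projective counterparts. You also fill in details the paper leaves implicit: you check that the diagonals $j-i=m\geq 1$ (resp.\ $\geq 2$) avoid the excluded degrees of Proposition~\ref{etale cohomology bounded for finite and local}, and you justify that $D\{p'\}$ is $p'$-divisible.
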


\begin{proof}
    The proof proceeds along the same lines as the proof of Theorem~\ref{thm:structure of etale-K-group over N-local}. The only modification is that, in place of Corollary~\ref{cor: etale-K theory-mod and torsion boundedness} (resp. Corollary \ref{finitess of p'-torsion in etale K theory}), we use Corollary~\ref{cor: etale-K theory-mod and torsion boundedness for smooth} (resp. Corollary \ref{finitess of p'-torsion in etale K theory for smooth})  in the argument of Theorem~\ref{thm:structure of etale-K-group over N-local}. This completes the proof.
\end{proof}

\subsection{Algebraic $K$-groups} Throughout this subsection, we let $k$ to be a finite field of characteristic $p>0$ and let $X$ be a smooth projective variety of dimension $d \geq 0$ over $k$. Then the structure of $K$-groups of $X$ is conjecturally predicted by the following two deep conjectures.
\begin{conj}(Bass Conjecture)
	 The groups $K_{n}(X)$ are finitely generated for all integers $n \geq 0$.
\end{conj}

\begin{conj}(Parshin Conjecture)
	 $K_{n}(X)$ are torsion for all integers $n \geq 1$.	
\end{conj}

In particular, combining both conjectures, it predicts that $K_{n}(X)$ are finite for all integers $n \geq 1$.  In general, both conjectures remain open, except in the case where $\dim (X) \le 1$. The case $d=0$ was shown by Quillen \cite{Quillen-Annals}, while the case $d=1$ was proved by Quillen–Harder \cite[Chapter~VI, Theorem~6.1]{Weibel-K}. Here we prove the following.

      \begin{thm}\label{thm:structure K-group for smooth over finite fields}
       	Let $X$ be a smooth projective variety of dimension $d$ over a finite field $k$. Then for any $m \geq d + 1$, we have 
        \[
       K_{m}(X) \cong F \oplus D,
       \]
       where $F$ is a finite group and $D$ is a uniquely divisible group. Moreover, if $m = d$, then $K_{m}(X)_{\tor}$ is finite. In particular, $K_{m}(X)_{\tor}$ is finite if $ m \geq d$.
       \end{thm}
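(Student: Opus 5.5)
The plan is to copy the proof of \thmref{thm:structure of algebraic K-group over N-local} with $N=0$, replacing the general étale input by the sharper smooth projective input over finite fields from the previous subsection. The starting point is \propref{prop:K^et-bounded-with-finite coefficient for n for smooth}(1). It gives, for every $m \geq 1$, a uniform bound $|K^{\et}_m(X,\Z/n)| \le M$ for all $n \in \mathbb{L}$.

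First, for $m \geq d+1 = d + cd(k)$, I would apply \thmref{Quillen Lichtenbaum} to identify $K_m(X,\Z/n) \cong K^{\et}_m(X,\Z/n)$ for $n \in \mathbb{L}$. This gives $|K_m(X,\Z/n)| \le M$. The universal coefficient sequence \eqref{equation-K-group-UCT} then bounds $|K_m(X)/n|$ and $|{}_nK_{m-1}(X)|$ uniformly in $n$. From this:
\begin{itemize}
\item \lemref{lem: group-theory-bounded} gives $K_m(X) \cong F \oplus D$ with $F$ finite of order in $\mathbb{L}$ and $D$ $p'$-divisible.
\item Taking the direct limit over $n$ of the bound on ${}_nK_{m-1}(X)$ shows that $K_{m-1}(X)\{p'\}$ is finite for all $m-1 \geq d$.
\item Hence $D\{p'\}$ is a finite, $p'$-divisible $p'$-torsion group, so $D\{p'\}=0$.
\end{itemize}

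Next, the $p$-part. \lemref{lem:Gei-Lev-p-div} with $N=0$ (here $k$ is perfect, so this is Geisser--Levine) gives $K_m(X,\Z/p)=0$ for $m \geq d+1$. The analogous sequence with $\Z/p$ coefficients then shows two things. First, $K_m(X)$ is $p$-divisible for $m \geq d+1$. Second, $K_{m}(X)$ has no $p$-torsion for $m \geq d$. Since $F$ has order prime to $p$, it follows that $D$ is uniquely $p$-divisible. Together with the previous paragraph, $D$ is uniquely divisible for $m \geq d+1$.

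For $m = d$, $K_d(X)\{p'\}$ is finite by the second bullet above, and $K_d(X)\{p\}=0$ by the $p$-torsion statement. Hence $K_d(X)_{\tor}$ is finite. For $m \geq d+1$, $K_m(X)_{\tor}$ equals $F$, which is finite. This gives the final claim that $K_m(X)_{\tor}$ is finite for all $m \geq d$.

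The main obstacle is matching the ranges in the two comparison results. The Quillen--Lichtenbaum range $m \geq d+cd(k)$ with $cd(k)=1$ has to reach down exactly to $d+1$. If it instead only holds for $m \geq d+2$, the torsion statement for $K_d$ fails. The fallback is the Geisser--Levine range: their theorem gives vanishing of $K_m(X,\Z/p)$ as soon as $m > d$, which is what the $p$-torsion argument requires.
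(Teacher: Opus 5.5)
Your proposal is correct and follows essentially the same argument as the paper. The paper gets $K_m(X)\cong F\oplus D$ with $D$ uniquely $p'$-divisible, and the finiteness of $K_d(X)\{p'\}$, by citing the $N=0$ case of \thmref{thm:structure of algebraic K-group over N-local} and Corollary~\ref{finitess of p'-torsion in algebraic K theory}; you rederive these from the smooth projective \'etale bound plus Quillen--Lichtenbaum, which amounts to the same thing. It then uses the Geisser--Levine vanishing $K_m(X,\Z/p)=0$ for $m\ge d+1$ exactly as you do. Your closing worry is moot, since the stated Quillen--Lichtenbaum range $m\ge d+cd(k)=d+1$ is exactly what is needed; Geisser--Levine could not have served as a fallback there anyway, because it only controls the $p$-part.
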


\begin{proof}
 Let $m \geq d+1$ be a given integer.   By Theorem \ref{thm:structure of algebraic K-group over N-local} (for $N = 0$), we know that \begin{equation}\label{Expression of Km(X)}
      K_{m}(X) \cong F \oplus D, 
 \end{equation}
       where $F$ is a finite group and $D$ is a $p'$-divisible group. Therefore it suffices to show that $D$ is uniquely $p$-divisible group. Note that $K_{m}(X, \mathbb{Z}/p) = 0$ for $m \geq  d+1$ (see \cite[Theorem 8.4]{GL}). We now use the short exact sequence
\begin{equation}
	0 \rightarrow K_{m}(X)/p \rightarrow K_{m}(X, \mathbb{Z}/p) \rightarrow  _pK_{m-1}(X) \rightarrow 0,	
	\end{equation}
	to conclude that $K_{m}(X)$ is $p$-divisible whenever $m \geq d+1$ and has no $p$-torsion when $m \geq d$. Since the order of $F$ is not divisible by $p$  (see Lemma \ref{lem: group-theory-bounded}),  $F$ is also uniquely $p$-divisible. This implies that $D$ is uniquely $p$-divisible whenever $m \geq d + 1$. This completes the proof of the first part of the claim. Also, we observed that $K_{m}(X)$ has no $p$-torsion if $m \geq d$.  By Corollary \ref{finitess of p'-torsion in algebraic K theory}, we have $K_{m}(X) \lbrace p' \rbrace $ is finite. Hence, we conclude that $K_{m}(X)_{\tor}$ is finite.  This completes the proof of  Theorem \ref{thm:structure K-group for smooth over finite fields} . 
\end{proof}

\begin{rem}
    Bass and Parshin conjectures predicts that the group $D$ in  Theorem \ref{thm:structure K-group for smooth over finite fields} vanishes.
\end{rem}
 One of the nice consequences of the above theorem is the following corollary, which shows the equivalence of Bass and Parshin’s conjectures in higher degrees. 
      
     \begin{cor}\label{equivalence of conjectures}
      	Let $X$ be a smooth projective variety of dimension $d$ over a finite field $k$. For any integer $m \geq  d + 1$, the following are equivalent.

\begin{enumerate}
    \item $K_{m}(X)$ is finite group of order not divisible by $p$.
    \item  $K_{m}(X)$ is finitely generated.
    \item $K_{m}(X)$ is torsion.
\end{enumerate}
     
  Moreover, if $K_{d}(X)$ is torsion, then $K_{d}(X)$ is finite.
      \end{cor}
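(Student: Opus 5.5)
The plan is to read everything off Theorem~\ref{thm:structure K-group for smooth over finite fields}. Fix $m \geq d+1$. That theorem gives $K_{m}(X) \cong F \oplus D$ with $F$ finite and $D$ uniquely divisible. Since the proof there goes through Lemma~\ref{lem: group-theory-bounded}, we may also take the order of $F$ prime to $p$; I will recheck this point, because it is what makes (1) say "order not divisible by $p$".

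The implications (1) $\Rightarrow$ (2) and (1) $\Rightarrow$ (3) are trivial. For (2) $\Rightarrow$ (1): if $K_m(X)$ is finitely generated, then its direct summand $D$ is finitely generated. A finitely generated divisible group is zero, since its free part would be divisible and so vanish, and its torsion part would be a finite divisible group and so vanish. Hence $K_m(X) \cong F$, which is finite of order prime to $p$. For (3) $\Rightarrow$ (1): if $K_m(X)$ is torsion, then $D$ is torsion, but $D$ is uniquely divisible and hence torsion-free. So $D=0$, and again $K_m(X)\cong F$.

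For the final assertion, Theorem~\ref{thm:structure K-group for smooth over finite fields} already states that $K_d(X)_{\tor}$ is finite. If $K_d(X)$ is torsion, then $K_d(X)=K_d(X)_{\tor}$ is finite. I do not expect any real obstacle here. The only thing to watch is the prime-to-$p$ claim on $|F|$. If Lemma~\ref{lem: group-theory-bounded} is doubted on this point, there is a fallback. The proof of Theorem~\ref{thm:structure K-group for smooth over finite fields} shows that $K_m(X)$ has no $p$-torsion for $m \geq d$, using $K_m(X,\Z/p)=0$ from \cite[Theorem 8.4]{GL}. Therefore $F \subset K_m(X)$ has order prime to $p$ in any case.
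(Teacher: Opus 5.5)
Your proposal is correct and follows the paper's own proof. Both read the equivalences off the decomposition $K_m(X)\cong F\oplus D$ of Theorem~\ref{thm:structure K-group for smooth over finite fields}, with $|F|$ prime to $p$ and $D$ uniquely divisible: such a $D$ vanishes if it is finitely generated or torsion, and the final claim follows from the finiteness of $K_d(X)_{\tor}$.
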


\begin{proof}

By Theorem \ref{thm:structure K-group for smooth over finite fields}, we have  
\[
K_{m}(X) \cong F \oplus D,
\]
where $F$ is a finite group of order not divisible by $p$ and $D$ is a uniquely divisible group if $ m \geq d+ 1$. Also, we have $K_{d}(X)_{\tor}$ is finite. Now it is easy to conclude the proof as if $D$ is either finitely generated or torsion, then 
$D=0$. This completes the proof of Corollary \ref{equivalence of conjectures}.
  \end{proof}

\section{Finiteness of torsion in Bloch's higher Chow groups} \label{sec:Mot-HCG}

In this section, we study torsion in Bloch's higher Chow groups and prove Theorems~\ref{finiteness for chow groups over finite fields} and~\ref{thm:Main-3}. To this end, we begin by recalling motivic cohomology and identifying Bloch's higher Chow groups with motivic cohomology in Section \ref{subsection:motivic cohomology groups}. In the subsection Subsection \ref{subsection:finiteness of torsion in higher chow groups}, we use this identification together with finiteness results for \'etale cohomology from Section~\ref{section: quasi-proj over N-local} to prove Theorems~\ref{finiteness for chow groups over finite fields} and~\ref{thm:Main-3}. We also prove a result for finiteness of $p'$-torsion subgroup of higher Chow groups over $N$-local fields in Section \ref{torsion over N-local fields}.

\subsection{Recollection of Motivic cohomology groups}\label{subsection:motivic cohomology groups} 
Throughout the subsection, we fix a field $k$  of characteristic exponent $p \ge 1$. We let $\Lambda$ be a commutative ring which we assume to be $\Z$ if $p = 1$ or
any $\Z[\tfrac{1}{p}]$-algebra if $p \ge 2$. 

It was shown by Cisinski-D{\'e}glise \cite{CD-Springer} that
given a Noetherian $k$-scheme $X$ of finite Krull-dimension, there is a monoidal triangulated category of
mixed motives $\dm(X, \Lambda)$ which coincides with the original construction of
Voevodsky \cite{Voe00} when $X$ is the spectrum of a perfect field, and of Suslin
\cite{Suslin-AKT} when $X$ is the spectrum of an arbitrary field.
One defines $\un{\dm}_\cdh(X, \Lambda)$ 
in the same
manner by replacing the Nisnevich topology on $\Sm_X$ by the cdh topology on
$\Sch_X$. We let $\dm_\cdh(X)$ be the full localizing triangulated subcategory of
$\un{\dm}_\cdh(X, \Lambda)$ generated by motives of the form $M_X(Y)(n)$ for
$Y \in \Sm_X$ and $n \in \Z$. 
By the main result of \cite{CD-Doc}, the assignment $X \mapsto \dm_\cdh(X, \Lambda)$
satisfies Grothendieck's six functor formalism (see \cite[A.5]{CD-Springer})
in full generality in the category of Noetherian $k$-schemes of finite Krull
dimension. 
The constant  Nisnevich (resp. cdh) sheaf with transfer associated to the ring $\Lambda$ is the identity
object for the monoidal structure of $\dm(X, \Lambda)$ (resp. $ \dm_\cdh(X, \Lambda)$). We shall denote both of these by
$\Lambda_X$. 

Let $f \colon X \to \Spec(k)$ be the structure map,
recall the following definitions of motives in
$\dm_\cdh(k, \Lambda) \simeq \dm(k, \Lambda)$ (e.g., see \cite[\S~8.5]{CD-Doc}).
\begin{equation}\label{eqn:Motive-def}
  M_k(X) = f_\sharp \Lambda_X \cong f_! f^! \Lambda_k, \
\end{equation}
where all functors are considered on the premotivic category
$X \mapsto \dm_\cdh(X, \Lambda)$. Recall that $\Z_k(1) =
{\Z_{\rm tr}(\P^1_k)}/{\Z_{\rm tr}(\Spec(k))}$ as a presheaf with transfer
via the identification $\Spec(k) \cong k(\infty)$. In particular,
$\Lambda_k(1) \cong M_k({\P^1_k}/{\Spec(k)}) \in  \dm(k, \Lambda)$. 
We shall drop highlighting the base field $k$ in the notations of motives once it
is fixed in a context.

\begin{defn}\label{defn:MC-defn} For integers $q$ and $i$, we define the motivic cohomology and motivic cohomology with compact support (respectively) as follows.
\begin{equation}\label{eqn:motivic-Coh-defn}
  H_{M}^i(X, \Lambda(j)) = \Hom_{\dm_\cdh(k, \Lambda)}(M_k(X), \Lambda_{k}(j)[i]);
\end{equation}
\begin{equation}\label{eqn:motivic-Coh-defn-compact support}
  H^i_{M,c}(X, \Lambda(j)) = \Hom_{\dm_\cdh(k, \Lambda)}(M^c_k(X),  \Lambda_{k}(j)[i]).
\end{equation}
\end{defn}

\vspace{1mm}

By \cite[Corollary 5.9]{CD-Doc}, the change of topology functor $\dm(X, \Lambda) \rightarrow  \dm_\cdh(X, \Lambda)$ is an equivalence of monoidal triangulated categories if $X \in \Sm_{k}$. This implies that for $X \in \Sm_{k}$, the above cohomology group \eqref{eqn:motivic-Coh-defn} agrees with the usual Voevodsky motivic cohomology groups, i.e., 
\begin{equation} \label{eqn:cdh-mot=mot}
    H_{M}^{i}(X, \Lambda(j)) \cong \Hom_{\dm(k, \Lambda)} (M_{k}(X), \Lambda_{k}(j)[i] ).
\end{equation}

For $X  \in \Sm_{k}$, we let
\begin{center}
    $H_{M}^i(X, \Z(j)) :=
\Hom_{\dm(k, \Z)}(\Z_{\rm tr}(X), \Z(j)[i])$;
\end{center}
\begin{center}
    $H^i_{M,c}(Y, \Z(j)) :=
\Hom_{\dm(k, \Z)}(f_* f^! \Z_k, \Z(j)[i])$,
\end{center}
 where $\Z(1) =
{\Z_{\rm tr}(\P^1_k)}/{\Z_{\rm tr}(\Spec(k))}$ and $f \colon X \to \Spec(k)$ is the structure map. 
We then have the following isomorphism (see \cite[Corollary 2]{Voe-imrn}). 
\begin{equation}\label{motivic-higher-chow-iso}
  H_{M}^{i}(X, \mathbb{Z}(j)) \cong \CH^{j}(X, 2j-i),
\end{equation} 
   where $\CH^{j}(X,i)$ denotes Bloch's higher Chow groups (\cite{Bloch-Adv}, \cite[Definition 17.1]{Mazza-Voevodsky-Weibel}). 

For any  $X\in \Sch_{k} $, and any integer $n$ invertible on $X$, we have the following realization  maps (see  \cite[Equation(9.4), Lemma 9.3]{GKR}) for all $i\geq 0, j \in \Z$. 
\begin{equation}\label{eqn:RE-2}
 \epsilon^{i,j}_{X,n} :  H_{M}^i(X, \mathbb{Z}/n(j)) \rightarrow H^i_\et(X, \mathbb{Z}/n(j)).
\end{equation}

\begin{lem}\label{lem:Real-iso}
  Let $X \in \Sm_k$ be of dimension $d$. Then $\epsilon^{i,j}_{X,n}$ is an
  isomorphism
  when $j \ge {\min}\{i, d + cd(k)\}$. This map is injective when $i = j+1$.
\end{lem}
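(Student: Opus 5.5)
The plan is to reduce to the Beilinson--Lichtenbaum conjecture, now the theorem of Rost--Voevodsky, together with the comparison between étale and Nisnevich motivic cohomology. Since $X$ is smooth, the isomorphism \eqref{eqn:cdh-mot=mot} identifies the cdh motivic cohomology with Voevodsky's motivic cohomology. So $\epsilon^{i,j}_{X,n}$ is the map
\[
H^i_{\nis}(X, \Z/n(j)) \to H^i_{\et}(X, \Z/n(j))
\]
induced by the change of topology $\alpha\colon X_\et \to X_\nis$. Here we use that, for $n$ invertible, $\Z/n(j)_\et \simeq \mu_n^{\otimes j}$ (Suslin--Voevodsky). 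Beilinson--Lichtenbaum says that $\Z/n(j) \to \tau_{\le j} R\alpha_* \mu_n^{\otimes j}$ is a quasi-isomorphism on $\Sm_k$. Write $C$ for the cone of this map; then $C \simeq \tau_{> j} R\alpha_*\mu_n^{\otimes j}$.

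First I treat the case $j \ge i$. The hypercohomology spectral sequence for $C$ has $E_2$-terms $H^a_\nis(X, R^b\alpha_*\mu_n^{\otimes j})$ with $b \ge j+1$. These vanish in total degree $\le j$, so $H^m_\nis(X,C)=0$ for $m\le j$. The long exact sequence for the triangle then shows that $\epsilon$ is an isomorphism for $i\le j$ and injective for $i=j+1$. This gives both the first range and the injectivity statement.

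Next I treat the case $j \ge d+cd(k)$. The sheaves $R^b\alpha_*\mu_n^{\otimes j}$ are the Nisnevich sheafifications of $U\mapsto H^b_\et(U,\mu_n^{\otimes j})$. Their stalks are étale cohomology of henselian local schemes $\Spec \mathcal{O}^h_{X,x}$, which have cohomological dimension at most that of the residue field. This is bounded by $\dim\overline{\{x\}} + cd(k) \le d+cd(k)$ (Gabber's affine analogue of proper base change, or the standard bound for henselizations). So for $b > d+cd(k)$ the sheaf vanishes, and hence $\tau_{\le j}R\alpha_* = R\alpha_*$ whenever $j \ge d+cd(k)$. Thus $C=0$ and $\epsilon$ is an isomorphism in all degrees $i$.

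The main obstacle is this cohomological dimension bound on henselian local rings, together with the precise compatibility of the realization map of \cite{GKR} with the change-of-topology map. For the latter I would simply cite \cite[Lemma 9.3]{GKR}; the former is standard (SGA4 X, or \cite{GKR}).
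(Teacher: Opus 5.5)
Your argument is correct. The Beilinson--Lichtenbaum quasi-isomorphism $\Z/n(j)\simeq\tau_{\le j}R\alpha_*\mu_n^{\otimes j}$ gives the isomorphism for $i\le j$ and injectivity at $i=j+1$. The vanishing of the Nisnevich stalks $H^b_\et(\mathcal{O}^h_{X,x},\mu_n^{\otimes j})\cong H^b_\et(k(x),\mu_n^{\otimes j})$ for $b>d+cd(k)$ then gives the isomorphism for all $i$ once $j\ge d+cd(k)$.

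The paper gives no argument of its own here; it simply defers to \cite[Lemma 10.1]{GKR}, and what you wrote is the standard proof of that statement. One small remark: the stalk step does not need Gabber's affine analogue of proper base change. The classical identification of the étale cohomology of a henselian local ring with that of its residue field suffices, together with Serre's bound $cd_\ell(k(x))\le cd_\ell(k)+\mathrm{trdeg}_k\,k(x)$.
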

\begin{proof}
    See \cite[Lemma 10.1]{GKR}.
\end{proof}

 We denote $H_{M}^{i}(X, j) := \oplus_{l \neq p} H_{M}^{i}(X, \mathbb{Q}_{l}/\mathbb{Z}_{l}(j) )$,  where
 \begin{equation*}
     H_{M}^{i}(X, \mathbb{Q}_{l}/\mathbb{Z}_{l}(j) ) = \varinjlim_{v \in \mathbb{N} } H_{M}^{i}(X, \mathbb{Z}/l^{v}(j)).
 \end{equation*} The maps $\epsilon^{i,j}_{X,n}$ (see \ref{eqn:RE-2}) induce the  maps
    \begin{equation}\label{eqn:RE-2-infty}
 \epsilon^{i,j}_{X} :  H_{M}^i(X, j) \rightarrow H^i_\et(X, j).
 \end{equation}

\begin{lem}\label{lem:Real-iso infty}
  Let $X \in \Sm_k$ be of dimension $d$. Then the induced map $\epsilon^{i,j}_X$ is an
  isomorphism
  when $j \ge {\min}\{i, d + cd(k)\}$. This map is injective when $i = j+1$.
\end{lem}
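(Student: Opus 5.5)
The plan is to deduce \lemref{lem:Real-iso infty} from \lemref{lem:Real-iso} by passing to the colimit over the coefficients $\Z/l^v$ and then summing over all primes $l \neq p$. Both properties in question, being an isomorphism and being injective, are preserved under filtered colimits of abelian groups and under direct sums. So it suffices to check that the finite-level realization maps form a compatible directed system whose colimit is $\epsilon^{i,j}_X$.

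First, fix a prime $l \neq p$. Then $l$ is invertible on $X$, so for each $v \ge 1$ the map $\epsilon^{i,j}_{X,l^v}$ of \eqref{eqn:RE-2} is defined. Next, check that these maps are compatible with the transition maps induced by the inclusions $\Z/l^v \to \Z/l^{v+1}$, i.e.\ multiplication by $l$ on coefficient sheaves. This holds because the realization maps of \cite[Lemma 9.3]{GKR} come from a natural transformation of cohomology theories with coefficients in the torsion motive $\Z/n(j)$. Concretely, they are induced by the étale sheafification/realization functor applied to the coefficient maps $\Z/l^v(j) \to \Z/l^{v+1}(j)$ in $\dm_\cdh(k,\Lambda)$. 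This naturality in the coefficient object gives a commutative square for each $v$. Passing to $\varinjlim_v$, we obtain the map
\[
H^i_M(X,\Q_l/\Z_l(j)) \to H^i_\et(X,\Q_l/\Z_l(j)),
\]
where on the étale side we use that étale cohomology of the quasi-compact scheme $X$ commutes with filtered colimits of sheaves, so that $\varinjlim_v H^i_\et(X,\Z/l^v(j)) = H^i_\et(X,\Q_l/\Z_l(j))$. Taking the direct sum over $l \neq p$ then recovers $\epsilon^{i,j}_X$ of \eqref{eqn:RE-2-infty}.

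Now suppose $j \ge \min\{i, d+cd(k)\}$. This condition is independent of $v$ and $l$, so by \lemref{lem:Real-iso} every $\epsilon^{i,j}_{X,l^v}$ is an isomorphism. Hence the colimit map is an isomorphism, since filtered colimits are exact, and therefore so is the direct sum over $l$. Likewise, when $i=j+1$ each $\epsilon^{i,j}_{X,l^v}$ is injective, and exactness of filtered colimits and of direct sums gives injectivity of $\epsilon^{i,j}_X$.

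The only step needing care is the compatibility of the finite-level realization maps with change of coefficients, i.e.\ naturality of the realization in \cite{GKR}. I expect this to follow directly from their construction as maps induced by a functor on motives applied to $\Z/n(j)$, so no real obstacle is anticipated.
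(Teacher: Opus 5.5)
Your proposal is correct and is essentially the paper's argument: the paper dismisses \lemref{lem:Real-iso infty} as an easy consequence of \lemref{lem:Real-iso}. Your write-up supplies the omitted routine details, namely compatibility of the finite-level realization maps with the transition maps $\Z/l^v(j)\to\Z/l^{v+1}(j)$, exactness of filtered colimits and direct sums, and the fact that étale cohomology of the Noetherian scheme $X$ commutes with filtered colimits of sheaves.
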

  \begin{proof}
     This is an easy consequence of Lemma \ref{lem:Real-iso}.
 \end{proof}

\subsection{Finiteness of torsion over finite and local fields}\label{subsection:finiteness of torsion in higher chow groups}
Throughout this subsection, we let $k$ to be either a finite field of characteristic $p>0$  or a local field of residue characteristics $p >0$ and let $X$ be a smooth projective variety of dimension $d \geq 0$ over $k$. In this subsection, we prove finiteness of $p'$-torsion subgroup of $\CH^{i}(X, j)$ in certain ranges of $i$ and $j$. We let $\mathbb{L}$ be the set of natural numbers that are not divisible by $p$. For reader's convenience we restate Theorem~\ref{thm:Main-3}.

 \begin{thm}\label{thm:Main-3 restate}

       Let $X$ be a smooth projective variety of dimension $d$ over a local field $k$ having a good reduction. Then the $\CH^{i}(X, j) \lbrace p' \rbrace$ is finite if $j \geq 1$ and either $i \leq j+2$ or $i \geq d + 2$.  
        
        \end{thm}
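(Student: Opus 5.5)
We identify $\CH^{i}(X,j)$ with $H^{2i-j}_M(X,\Z(i))$ via \eqref{motivic-higher-chow-iso}. We then control its $p'$-torsion through the Bockstein sequence for $\Q_\ell/\Z_\ell$-coefficients. For each $\ell\neq p$, the exact triangle $\Z(i)\to\Q(i)\to \Q/\Z(i)$ (restricted to $\ell$-parts) gives a surjection
\[
H^{2i-j-1}_M(X,\Q_\ell/\Z_\ell(i)) \twoheadrightarrow H^{2i-j}_M(X,\Z(i))\{\ell\}.
\]
Summing over $\ell\neq p$, we obtain a surjection $H^{2i-j-1}_M(X,i)\twoheadrightarrow \CH^i(X,j)\{p'\}$. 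It therefore suffices to show that $H^{2i-j-1}_M(X,i)$ is finite in the stated ranges. Our plan is to compare this group with $H^{2i-j-1}_\et(X,i)$ via the map $\epsilon^{2i-j-1,i}_X$ of \eqref{eqn:RE-2-infty}, and then invoke Corollary~\ref{good-reduction-finiteness}. That corollary gives finiteness of $H^{r}_\et(X,n)$ whenever $r\notin\{2n-1,2n,2n+1\}$.

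Set $r=2i-j-1$ and $n=i$. The condition $r\notin\{2i-1,2i,2i+1\}$ amounts to $j\notin\{0,-1,-2\}$, which holds since $j\ge 1$. So the étale target is always finite in our range, and the whole problem reduces to showing that $\epsilon^{r,i}_X$ is injective (an isomorphism is more than enough). Here we use Lemma~\ref{lem:Real-iso infty}, with $cd(k)=2$ for a local field (after removing $p$-parts). There are two cases.

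\begin{enumerate}
\item[(a)] If $i\le j+2$, then $r=2i-j-1\le i+1$. When $r\le i$, the condition $i\ge \min\{r,d+2\}$ holds, so $\epsilon^{r,i}_X$ is an isomorphism. When $r=i+1$ (that is, $i=j+2$), the lemma gives injectivity, which suffices.
\item[(b)] If $i\ge d+2=d+cd(k)$, then $i\ge\min\{r,d+cd(k)\}$ automatically, so $\epsilon^{r,i}_X$ is an isomorphism.
\end{enumerate}

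In either case $H^{r}_M(X,i)$ embeds into the finite group $H^{r}_\et(X,i)$, hence is finite, and so its quotient $\CH^i(X,j)\{p'\}$ is finite.

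The main point to check carefully is the Bockstein surjection. Specifically, we need that $H^{2i-j}_M(X,\Z(i))\otimes\Q_\ell/\Z_\ell$ fits correctly, i.e. that $\varinjlim_v H^{r}_M(X,\Z/\ell^v(i))$ surjects onto the $\ell^\infty$-torsion of $H^{r+1}_M(X,\Z(i))$. This follows from the universal coefficient sequences $0\to H^r_M(\Z(i))/\ell^v\to H^r_M(\Z/\ell^v(i))\to {}_{\ell^v}H^{r+1}_M(\Z(i))\to 0$ by passing to the colimit. We should also confirm that the cohomological dimension used in Lemma~\ref{lem:Real-iso infty} is the $\ell$-cohomological dimension $2$ of the local field, which holds for all $\ell\neq p$.
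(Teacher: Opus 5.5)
Your proposal is correct and is essentially the paper's own argument. Like the paper, you reduce via the sequence \eqref{eqn:H1-fin-3} to the finiteness of $H^{2i-j-1}_M(X,i)$, embed this group into $H^{2i-j-1}_\et(X,i)$ using Lemma~\ref{lem:Real-iso infty} with $cd(k)=2$, and conclude with Corollary~\ref{good-reduction-finiteness}, which applies because $j\ge 1$ keeps the degree out of $\{2i-1,2i,2i+1\}$; your explicit case split in (a) between isomorphism and injectivity at $i=j+2$ just spells out what the paper leaves to a one-line appeal to the lemma.
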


 By (\ref{motivic-higher-chow-iso}), we have
\begin{equation}\label{motivic-chow-iso-p'-torsion}
     \CH^{i}(X, j) \lbrace p' \rbrace \cong H_{M}^{2i-j}(X, \mathbb{Z}(i) ) \lbrace p' \rbrace,
\end{equation}
  for integers $i$ and $j \geq 0$.
Therefore, finiteness of $\CH^{i}(X, j) \lbrace p' \rbrace$ is equivalent to the finiteness of  $H_{M}^{2i-j}(X, \mathbb{Z}(i) ) \lbrace p' \rbrace$. Furthermore,  using the exact triangle 
\begin{center}
    $\mathbb{Z}(i) \xrightarrow{n} \mathbb{Z}(i) \rightarrow \mathbb{Z}/n(i)$,
\end{center}
where $n \in \mathbb{L}$ be any integer, we get the following short exact sequence. 
\begin{equation}\label{eqn:H1-fin-3.0}
	0 \to H_{M}^{2i-j-1}(X, \mathbb{Z}(i) ) \otimes \mathbb{Z}/n \rightarrow H_{M}^{2i-j-1}(X, \mathbb{Z}/n(i))
	\rightarrow _nH_{M}^{2i-j}(X, \mathbb{Z}(i)) \to 0,
	\end{equation}
Taking limit over all integers $n$  in $\mathbb{L}$, we get
\begin{equation}\label{eqn:H1-fin-3}
	0 \to H_{M}^{2i-j-1}(X, \mathbb{Z}(i) ) \otimes (\mathbb{Q}/\mathbb{Z})' \rightarrow H_{M}^{2i-j-1}(X, i)
	\rightarrow H_{M}^{2i-j}(X, \mathbb{Z}(i)) \{ p'\} \to 0.
	\end{equation}

 \begin{prop}\label{prop: motivic finiteness bounded}
    Let $X$ be a smooth projective variety of dimension $d$ over $k$. 
     \begin{enumerate}
        \item If $k$ is finite, then  $H_{M}^{2i-j-1}(X, i )$ is finite if $j \geq 0$ and either $i \leq j+2$ or $i \geq d + 1$.

        \vspace{2mm}

        \item If $k$ is local, and $X$ admits a good reduction, then  $H_{M}^{2i-j-1}(X, i )$ is finite if $j \geq 1$ and either $i \leq j+2$ or $i \geq d + 2$. 

    \end{enumerate}
    
        \end{prop}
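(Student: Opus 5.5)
The plan is to compare the motivic group $H_{M}^{2i-j-1}(X, i)$ with étale cohomology through the realization map $\epsilon^{2i-j-1,i}_X$ of \eqref{eqn:RE-2-infty}. I will show that in the stated ranges this map is injective, and that its target $H^{2i-j-1}_\et(X, i)$ is finite by the results of Section~\ref{sec:Et-Proj-local}.

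\emph{Step 1 (realization is injective).} Apply Lemma~\ref{lem:Real-iso infty} with cohomological degree $2i-j-1$ and weight $i$. The map $\epsilon^{2i-j-1,i}_X$ is an isomorphism when $i \geq \min\{2i-j-1, d+cd(k)\}$, and injective when $2i-j-1 = i+1$. The inequality $i \geq 2i-j-1$ is equivalent to $i \leq j+1$, and the equality $2i-j-1=i+1$ is equivalent to $i = j+2$. Together these cover the range $i \leq j+2$. For the other range I use the $l$-cohomological dimension with $l \neq p$: $cd_l(k)=1$ for $k$ finite and $cd_l(k)=2$ for $k$ local. The condition $i \geq d+cd(k)$ then reads $i\geq d+1$ for $k$ finite and $i \geq d+2$ for $k$ local, which are exactly the two ranges in the statement. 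In every case $H_{M}^{2i-j-1}(X,i)$ therefore injects into $H^{2i-j-1}_\et(X,i)$.

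\emph{Step 2 (étale finiteness).} For $k$ finite, Theorem~\ref{finiteness-smooth-finite field}(4) applies because $X$ is smooth projective. It says $H^{2i-j-1}_\et(X,i)$ is finite whenever $2i-j-1 \notin [2i,2i+1]$, and this holds automatically since $j \geq 0$. For $k$ local with good reduction, Corollary~\ref{good-reduction-finiteness} gives finiteness whenever $2i-j-1 \notin \{2i-1,2i,2i+1\}$. Since $j \geq 0$, the only excluded case is $j=0$, so the condition is $j \geq 1$, as required. Combining with Step 1 gives both parts (1) and (2).

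\emph{Main obstacle.} The delicate point is justifying the use of Lemma~\ref{lem:Real-iso infty} with the $p'$-cohomological dimension. The lemma is stated with $cd(k)$, but for finite fields the full cohomological dimension differs at $p$ (and for local fields of characteristic $p$ the $p$-part misbehaves). Since we only use coefficients $\Z/l^v$ with $l \neq p$, I expect Lemma~\ref{lem:Real-iso}, which comes from \cite[Lemma 10.1]{GKR}, to hold with $cd_l(k)$ in place of $cd(k)$; I would check this there first. A second point to confirm is that the direct limits over $v$ and the sum over $l \neq p$ preserve isomorphisms and injections, which is clear since filtered colimits are exact.
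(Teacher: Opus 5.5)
Your proposal is correct and is exactly the paper's argument: you get injectivity of $\epsilon^{2i-j-1,i}_X$ from Lemma~\ref{lem:Real-iso infty} in the ranges $i\le j+2$ or $i\ge d+cd(k)$, and then finiteness of the étale target from Theorem~\ref{finiteness-smooth-finite field} (resp.\ Corollary~\ref{good-reduction-finiteness}). Your ``main obstacle'' is not a real issue: $cd(\F_q)=1$ and $cd(k)=2$ for every non-archimedean local field, including at $p$ (in characteristic $p$ one has $cd_p\le 1$), so the lemma applies verbatim with $cd(k)$, as the paper uses it.
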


\begin{proof}
     By Lemma \ref{lem:Real-iso infty}, the map  $\epsilon^{2i-j-1,i}_{X} :  H_{M}^{2i-j-1}(X, i) \rightarrow H_{\et}^{2i-j-1}(X, i)$ is injective whenever either $i \leq j+2$ or $i \geq d + cd(k)$. Note that $cd(k) = 1$ and $2$ when $k$ is finite and local, respectively.
  On the other hand, the group $H_{\et}^{2i-j-1}(X, i)$ is finite if $ j \geq 0$ (resp. $j \geq 1$) by Theorem \ref{finiteness-smooth-finite field} (resp.  Corollary \ref{good-reduction-finiteness}). This completes the proof.
\end{proof}

 \begin{thm}\label{thm:p'-torsion finiteness in higher chow group}
    Let $X$ be a smooth projective variety of dimension $d$ over $k$. 
     \begin{enumerate}
        \item If $k$ is finite, then  $\CH^{i}(X, j) \lbrace p' \rbrace$ is finite if $j \geq 0$ and either $i \leq j+2$ or $i \geq d + 1$.

        \vspace{2mm}

        \item If $k$ is local, and $X$ admits a good reduction, then  $\CH^{i}(X, j) \lbrace p' \rbrace$ is finite if $j \geq 1$ and either $i \leq j+2$ or $i \geq d + 2$. 

    \end{enumerate}
    
        \end{thm}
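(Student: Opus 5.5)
The plan is to deduce the statement directly from the identification \eqref{motivic-chow-iso-p'-torsion} together with the short exact sequence \eqref{eqn:H1-fin-3} and Proposition~\ref{prop: motivic finiteness bounded}.

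By \eqref{motivic-higher-chow-iso}, which holds since $X$ is smooth, we have
\[
\CH^{i}(X, j) \lbrace p' \rbrace \cong H_{M}^{2i-j}(X, \mathbb{Z}(i)) \lbrace p' \rbrace .
\]
The group on the right is the rightmost term of \eqref{eqn:H1-fin-3}. That sequence is obtained from \eqref{eqn:H1-fin-3.0} by passing to the filtered colimit over $n \in \mathbb{L}$, ordered by divisibility, and filtered colimits are exact. Consequently, $H_{M}^{2i-j}(X, \mathbb{Z}(i)) \lbrace p' \rbrace$ is a quotient of $H_{M}^{2i-j-1}(X, i)$. Here I use that $\varinjlim_n {}_nA = A\{p'\}$ and that $\varinjlim_n A/n = A \otimes (\mathbb{Q}/\mathbb{Z})'$ for the prime-to-$p$ system. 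I also use that $\varinjlim_{n\in\mathbb{L}} H_M(X,\mathbb{Z}/n(i))$ agrees with $\oplus_{l\neq p}\varinjlim_v H_M(X,\mathbb{Z}/l^v(i)) = H^{2i-j-1}_M(X,i)$, which follows from the Chinese remainder decomposition.

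It therefore suffices to show that $H_{M}^{2i-j-1}(X, i)$ is finite in the stated ranges. This is exactly Proposition~\ref{prop: motivic finiteness bounded}: part (1) covers $k$ finite with $j \ge 0$ and either $i \le j+2$ or $i \ge d+1$, and part (2) covers $k$ local with good reduction, $j\ge1$, and either $i\le j+2$ or $i\ge d+2$. A quotient of a finite group is finite, and this gives both parts of the theorem.

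The only point needing care is the bookkeeping in Proposition~\ref{prop: motivic finiteness bounded}, namely that the injectivity range of Lemma~\ref{lem:Real-iso infty} matches the ranges here. With the realization index $i' = 2i-j-1$ and weight $i$, injectivity holds when $i'=i+1$ (that is, $i=j+2$), and bijectivity holds when $i \ge \min\{i', d+cd(k)\}$. The condition $i \ge i'$ means $i \le j+1$, and the other condition gives $i\ge d+cd(k)$, which is $d+1$ in the finite case and $d+2$ in the local case.

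Finiteness of the target étale group $H^{2i-j-1}_\et(X,i)$ then comes from Theorem~\ref{finiteness-smooth-finite field}(4) in the finite case: the degree $2i-j-1$ avoids $\{2i,2i+1\}$ precisely when $j\ge0$. In the local case it comes from Corollary~\ref{good-reduction-finiteness}, where avoiding $\{2i-1,2i,2i+1\}$ requires $j\ge1$. Since this is already recorded in the proposition, the main step is just the colimit/quotient argument above, and I expect no real obstacle.
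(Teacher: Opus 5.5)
Your proposal is correct and follows the paper's own proof. Like the paper, you use \eqref{motivic-chow-iso-p'-torsion} and the colimit sequence \eqref{eqn:H1-fin-3} to exhibit $\CH^{i}(X,j)\{p'\}$ as a quotient of $H^{2i-j-1}_M(X,i)$, then invoke Proposition~\ref{prop: motivic finiteness bounded}; your recheck of the index ranges there (injectivity of the realization map when $i\le j+2$ or $i\ge d+cd(k)$, and finiteness of the \'etale target for $j\ge 0$, resp.\ $j\ge 1$) agrees with that proposition's proof.
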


        \begin{proof}
            By (\ref{motivic-chow-iso-p'-torsion}) and (\ref{eqn:H1-fin-3}), it is enough to show that $H_{M}^{2i-j-1}(X, i )$ is finite in the respective ranges. This follows from Proposition \ref{prop: motivic finiteness bounded}. This completes the proof of Theorem \ref{thm:p'-torsion finiteness in higher chow group}, and hence completes the proof of Theorem \ref{thm:Main-3 restate}.
        \end{proof}

This completes the proof of Theorem~\ref{finiteness for chow groups over finite fields} and Theorem~\ref{thm:Main-3}

\subsection{Finiteness of $p'$-torsion over higher local fields}\label{torsion over N-local fields}

In this section, we prove the finiteness of $p'$-torsion subgroup of the higher Chow groups over $N$-local fields when $N \geq 2$. We prove the following.
\begin{thm}\label{thm:finiteness of p'-torsion in chow groups over N-fields}
 Let $X$ be a smooth projective variety of dimension $d$ over $k_{N}$, where $N\geq 2$. Then  $\CH^{i}(X, j) \lbrace p' \rbrace$ is finite if $j \geq d+N$ and either $i \leq j+2$ or $i \geq d + N+1$. 
\end{thm}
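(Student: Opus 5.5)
We follow the same strategy as in the proof of Theorem~\ref{thm:p'-torsion finiteness in higher chow group}. By \eqref{motivic-chow-iso-p'-torsion}, $\CH^{i}(X,j)\{p'\}\cong H_{M}^{2i-j}(X,\Z(i))\{p'\}$. By \eqref{eqn:H1-fin-3}, this group is a quotient of $H_{M}^{2i-j-1}(X,i)$. So it suffices to show that $H_{M}^{2i-j-1}(X,i)$ is finite in the stated range, namely $j\ge d+N$ together with either $i\le j+2$ or $i\ge d+N+1$.

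First we inject into étale cohomology. The cohomological dimension of $k_N$ (for $\ell\neq p$) is $cd(k_N)=N+1$. Lemma~\ref{lem:Real-iso infty} applies to the map $\epsilon^{2i-j-1,i}_X\colon H_{M}^{2i-j-1}(X,i)\to H_{\et}^{2i-j-1}(X,i)$.

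\begin{itemize}
\item If $i\le j+2$, then $2i-j-1\le i+1$. When $2i-j-1\le i$, the lemma gives an isomorphism. When $2i-j-1=i+1$, the lemma gives injectivity.
\item If $i\ge d+N+1=d+cd(k_N)$, the lemma gives an isomorphism.
\end{itemize}

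In either case $\epsilon^{2i-j-1,i}_X$ is injective. (The lemma only requires $X$ smooth, which holds.)

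Second, we show the target $H_{\et}^{2i-j-1}(X,i)$ is finite using Theorem~\ref{projective-N-local field-bounded-restate}. This is the step where the hypothesis $j\ge d+N$ is used. If $i\notin[0,d+N]$, finiteness follows from part $(2)$. Otherwise we have $0\le i\le d+N$, and we apply part $(1)$, which requires $2i-j-1\notin[2i-d-N,\,i+d+1]$. Since $j\ge d+N$, we get $2i-j-1\le 2i-d-N-1<2i-d-N$. So the degree lies strictly below the interval, and $H_{\et}^{2i-j-1}(X,i)$ is finite.

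Combining the two steps, $H_{M}^{2i-j-1}(X,i)$ injects into a finite group and is therefore finite. Its quotient $\CH^{i}(X,j)\{p'\}$ is then finite, as required.

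The main point needing care is the first step. We must check that Lemma~\ref{lem:Real-iso infty} is valid over $k_N$ with $cd(k_N)=N+1$, including the injectivity clause when $i=j+1$. This means verifying that the cited \cite[Lemma 10.1]{GKR} holds for arbitrary fields of finite $\ell$-cohomological dimension, not only for local fields; it should, since that lemma rests on the Beilinson--Lichtenbaum conjecture. The finiteness in the second step is then immediate and needs no uniformity in the coefficients. In fact, $j\ge d+N$ is more than needed for the étale side, which suggests the hypothesis was chosen for convenience.
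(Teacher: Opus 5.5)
Your proof is correct and is essentially the paper's proof: you reduce via \eqref{motivic-chow-iso-p'-torsion} and \eqref{eqn:H1-fin-3} to finiteness of $H_{M}^{2i-j-1}(X,i)$, inject this into $H_{\et}^{2i-j-1}(X,i)$ by Lemma~\ref{lem:Real-iso infty} with $cd(k_N)=N+1$, and conclude by étale finiteness. You cite the projective statement Theorem~\ref{projective-N-local field-bounded-restate} directly, whereas the paper cites the uniform-bound Theorem~\ref{thm:finiteness for quasiprojective over N-local fields}; either works. Your closing remark needs a correction: $j\ge d+N$ is superfluous only in the branch $i\ge d+N+1$, where part~(2) gives finiteness in every degree, while in the branch $i\le j+2$, $0\le i\le d+N$, it is exactly the threshold that part~(1) requires.
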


\begin{proof}
     By (\ref{motivic-chow-iso-p'-torsion}) and (\ref{eqn:H1-fin-3}), it is enough to show that $H_{M}^{2i-j-1}(X, i )$ is finite in the given range. By Lemma \ref{lem:Real-iso infty}, the map  $\epsilon^{2i-j-1,i}_{X} :  H_{M}^{2i-j-1}(X, i) \rightarrow H_{\et}^{2i-j-1}(X, i)$ is injective whenever $i \leq j+2$ or $i \geq d + N+1$. On the other hand, the group $H_{\et}^{2i-j-1}(X, i)$ is finite if $ j \geq d+N$ by Corollary \ref{thm:finiteness for quasiprojective over N-local fields}. This completes the proof.
\end{proof}

\section{Application to class field theory} \label{sec:App-CFT}

In this section, we apply the finiteness result about torsion in the higher Chow groups to the  tame class field theory for smooth varieties over local fields. We fix $k$ to be a local field with the residue characteristic $p >0$ and let $X$ be a smooth variety over $k$ with a smooth compactification $\overline{X}$. In \cite{GKR}, the authors studied tame class field theory for smooth varieties over local fields. In loc. cit, it was  shown that the kernel of reciprocity map 
\[
\rho^{t}_{X} : C^{t}(X) \rightarrow \pi^{ab}_{1}(X)
\]
is $p'$-divisible, whenever $\overline{X}$ has a good reduction  \cite[Theorem 1.5]{GKR}, where $C^{t}(X)$ is idele class group associated to $X$ (see \cite[Definition 4.7]{GKR})  and $\pi^{ab}_{1}(X)$ is the abelianized étale fundamental group of $X$ . In this section, we strengthen this result by showing that the $\Ker(\rho^{t}_{X}) $ is in fact uniquely $p'$-divisible. In order to prove this, we prove certain key lemmas which are also of independent interest. Recall from \eqref{eqn:RE-2}, we have etale realization maps
 $\epsilon^{i,j}_{\overline{X},n} :  H_{M}^i(\overline{X}, \mathbb{Z}/n(j)) \rightarrow H^i_\et(\overline{X}, \mathbb{Z}/n(j))$ for integers $i, j$.

\begin{lem}\label{realisation_iso_good red}
Let $\overline{X}$ be smooth projective variety of dimension $d$ over $k$ having good reduction. Then étale realization maps $\epsilon_{ \overline{X},m}^{2d, d+1} : H_{M}^{2d}(\overline{X}, \mathbb{Z}/m(d+1) ) \rightarrow H_{\et}^{2d}(\overline{X}, \mathbb{Z}/m(d+1) )$ is an isomorphism, where $m \in \mathbb{L}$ be any integer. 
\end{lem}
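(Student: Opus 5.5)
The plan is to split by dimension: for small $d$ the statement follows from \lemref{lem:Real-iso}, and for larger $d$ I would compare motivic and étale cohomology through the Beilinson--Lichtenbaum truncation, which reduces the claim to the vanishing of certain Kato homology groups.

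\textbf{Small dimension.} Here $cd(k)=2$. By \lemref{lem:Real-iso}, $\epsilon^{i,j}_{\overline{X},m}$ is an isomorphism whenever $j \ge \min\{i, d+2\}$. For $(i,j)=(2d,d+1)$ this holds exactly when $2d \le d+1$, i.e.\ $d \le 1$. For $d=2$ we have $i=j+1$, so the same lemma gives injectivity, and only surjectivity remains. So the real content is $d \ge 2$.

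\textbf{General case: reduction to the coniveau spectral sequence.} Let $\varepsilon \colon \overline{X}_\et \to \overline{X}_\zar$ be the change of sites. By the Beilinson--Lichtenbaum theorem (Rost--Voevodsky), $\Z/m(j)_M \simeq \tau_{\le j} R\varepsilon_*\Z/m(j)$ on $\overline{X}_\zar$; this is the input behind \lemref{lem:Real-iso}. Set $j = d+1$. The triangle $\tau_{\le j} \to R\varepsilon_* \to \tau_{>j}$ gives an exact sequence
\[
H^{2d-1}_\zar(\overline{X}, \tau_{>j}R\varepsilon_*) \to H^{2d}_M(\overline{X}, \Z/m(j)) \to H^{2d}_\et(\overline{X}, \Z/m(j)) \to H^{2d}_\zar(\overline{X}, \tau_{>j}R\varepsilon_*).
\]
So it suffices to show that both outer groups vanish. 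I would compute them with the Bloch--Ogus/Gersten resolution of the sheaves $\mathcal{H}^b = R^b\varepsilon_*\Z/m(j)$. By Bloch--Ogus, $H^a_\zar(\overline{X}, \mathcal{H}^b)$ is the $a$-th cohomology of the complex whose term in degree $a$ is
\[
\bigoplus_{x \in \overline{X}^{(a)}} H^{b-a}_\et(k(x), \Z/m(j-a)).
\]
Since $cd_l(k(x)) \le (d-a)+2$, these terms vanish for $b > d+2$. Hence $\tau_{>j}R\varepsilon_* = \tau_{\ge d+2}R\varepsilon_*$ reduces to the single sheaf $\mathcal{H}^{d+2}[-(d+2)]$.

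\textbf{Identification with Kato homology.} For $b=d+2$ the Gersten complex is exactly the Kato complex of $\overline{X}$, with terms $\bigoplus_{x\in\overline{X}^{(a)}} H^{d+2-a}(k(x),\Z/m(d+1-a))$. Its cohomology in degree $a$ is the Kato homology $KH_{d-a}(\overline{X},\Z/m)$, up to the usual reindexing. The two outer groups above are therefore $H^{d-3}_\zar(\overline{X},\mathcal{H}^{d+2})$ and $H^{d-2}_\zar(\overline{X},\mathcal{H}^{d+2})$, i.e.\ $KH_3(\overline{X},\Z/m)$ and $KH_2(\overline{X},\Z/m)$; the first is absent when $d=2$.

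\textbf{Main obstacle.} The key step is to prove $KH_q(\overline{X},\Z/m)=0$ for $q = 2,3$. For this I would invoke Kato's conjecture over local fields for varieties with good reduction, as proved by Kerz--Saito (building on Jannsen--Saito): for $\overline{X}$ with smooth projective model $\mathcal{X}$ over $\mathcal{O}_k$, $KH_q(\overline{X},\Z/m)=0$ for $q \ge 1$ and $m$ prime to $p$. If I wanted to avoid citing this, I would try to compare the Kato complex of $\overline{X}$ with that of $\mathcal{X}$ and of the special fiber $A$, using the localization sequence and Gabber purity as in Corollary~\ref{good-reduction-finiteness}. That route would reduce the claim to Kato homology of the smooth projective variety $A$ over a finite field, which vanishes in positive degrees by Jannsen--Saito/Kerz--Saito. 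This vanishing is the delicate input; the remaining steps are formal bookkeeping with the truncation and the coniveau spectral sequence.
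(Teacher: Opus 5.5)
Your proposal is correct and follows essentially the paper's route: the truncation triangle with third term $\mathcal{H}^{d+2}[-d-2]$, Bloch--Ogus to identify $H^{d-3}_{\zar}$ and $H^{d-2}_{\zar}$ of $\mathcal{H}^{d+2}$ with Kato homology, Kerz--Saito's vanishing of that Kato homology in positive degrees, and Beilinson--Lichtenbaum to pass from the truncated hypercohomology to $H^{2d}_M$. The differences are minor. The paper quotes \cite[Lemma 13.2]{GKR} for surjectivity and proves only injectivity of $H^{2d}_M(\overline{X},\Z/m(d+1)) \to H^{2d}_{\zar}(\overline{X},\tau_{\le d+1}R\varepsilon_*\mu_m^{\otimes d+1})$, by passing to the perfection $\overline{X}^{\hs}$ since $k$ may be imperfect; you instead apply Beilinson--Lichtenbaum over $k$ itself (legitimate since $p\nmid m$), which gives both directions at once. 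Your small-dimension case is superfluous, and the localization route through the special fiber would not avoid Kerz--Saito, because it still needs the vanishing of Kato homology of the model $\mathcal{X}$.
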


\begin{proof}
	
	This map is known to be surjective (see \cite[Lemma 13.2]{GKR}). So it is enough to show this map is injective. Let's denote by $\mathcal{H}^{i}(\mu^{\otimes d+1}_{m})$ the Zariski sheaf associated to the presheaf $ U \rightarrow  H_{\et}^{i}(X, \mu^{\otimes d+1}_{m} )$ for every open $ U \subset X$.   We have the following distinguished  triangle in the derived category of complexes of Zariski sheaves (see \cite[Page 190]{Szamuely}).
	\begin{equation}
	\tau_{\leq d+1}\mathbb{Z}/m(d+1) \rightarrow R\epsilon_{\ast}\mu^{\otimes d+1}_{m} \rightarrow R^{d+2}\epsilon_{\ast}\mu^{\otimes d+1}_{m}[-d-2].
	\end{equation}

This yields a long exact sequence of cohomology groups.
\begin{equation}\label{les-Kato}
H_{zar}^{i-d-3}(\overline{X},  \mathcal{H}^{d+2}(\mu^{\otimes d+1}_{m})  ) \rightarrow  H^{i}_{zar}(\overline{X}, \tau_{\leq d+1}\mathbb{Z}/m(d+1) ) \rightarrow H^{i}_{\et}(\overline{X}, \mathbb{Z}/m(d+1) ) \rightarrow H_{zar}^{i-d-2}(\overline{X},  \mathcal{H}^{d+2}(\mu^{\otimes d+1}_{m})  )
\end{equation}

By Bloch-Ogus \cite{Bloch-Ogus}, we know the following complex (\ref{Bloch-ogus complex}) can be used to compute the Zariski cohomology of the sheaf $\mathcal{H}^{d+2}(\mu^{\otimes d+1}_{m})$.

\begin{equation}\label{Bloch-ogus complex}
 \bigoplus_{x \in \overline{X}_{(d)}} H^{d+2}_{\et}(k(x), \mu^{\otimes d+1}_{m})  \rightarrow \bigoplus_{x \in \overline{X}_{(d-1)}} H^{d+1}_{\et}(k(x), \mu^{\otimes d}_{m}) \rightarrow \cdots \rightarrow \bigoplus_{x \in \overline{X}_{(0)}} H^{2}_{\et}(k(x), \mu_{m}).
\end{equation}

We denote the above complex (\ref{Bloch-ogus complex}) by $C_{d+2, d+1}(\overline{X}, \mathbb{Z}/m)$. Therefore, we get that
\begin{equation}
H_{zar}^{i}(\overline{X},  \mathcal{H}^{d+2}(\mu^{\otimes d+1}_{m}) ) \cong H_{d-i} (C_{d+2, d+1}(\overline{X}, \mathbb{Z}/m)).
\end{equation}
Since the complex $C_{d+2, d+1}(\overline{X}, \mathbb{Z}/m)$  is known to be acyclic in positive degrees (see \cite[Theorem 0.4]{Kerz-Saito}), we have $H_{zar}^{i}(\overline{X},  \mathcal{H}^{d+2}(\mu^{\otimes d+1}_{m}) ) = 0$ whenever $ i \neq d$. It follows from sequence (\ref{les-Kato}) that the map $H^{2d}_{zar}(\overline{X}, \tau_{\leq d+1}\mathbb{Z}/m(d+1) ) \rightarrow H^{2d}_{\et}(\overline{X}, \mathbb{Z}/m(d+1) )$ is an isomorphism. We are therefore reduced  to show that 
the natural map  
\begin{equation}\label{motivic-iso-truncated complex}
     H^{2d}_{M}(\overline{X}, \mathbb{Z}/m(d+1) ) \to  H^{2d}_{zar}(\overline{X}, \tau_{\leq d+1}\mathbb{Z}/m(d+1) )
\end{equation} is injective.
To see this, we consider the following commutative diagram, where the top horizontal arrow is the same as the map in \eqref{motivic-iso-truncated complex}. 
\begin{equation}
    \xymatrix@C.8pc{ 
    H^{2d}_{M}(\overline{X}, \mathbb{Z}/m(d+1) ) \ar[r] \ar[d] & H^{2d}_{zar}(\overline{X}, \tau_{\leq d+1}\mathbb{Z}/m(d+1) ) \ar[d] \\
H^{2d}_{M}(\overline{X}^{\hs}, \mathbb{Z}/m(d+1) )  \ar[r] & H^{2d}_{zar}(\overline{X}^{\hs}, \tau_{\leq d+1}\mathbb{Z}/m(d+1) ).}
\end{equation}
The left vertical arrow is an isomorphism by \cite[Proposition 8.2]{GKR} and the bottom horizontal arrow is isomorphism by 
 \cite[Proposition 13.9]{Mazza-Voevodsky-Weibel} and \cite[Theorem 1.6]{GL-Crelle}. It follows that the top horizontal arrow is injective, which completes the proof.
\end{proof}

\begin{cor}\label{torsion in SK1}
	The group $\CH^{d+1}(\overline{X}, 1)\lbrace p' \rbrace$ is finite.
\end{cor}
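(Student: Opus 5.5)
We prove the finiteness of $\CH^{d+1}(\overline{X}, 1)\lbrace p' \rbrace$ by the same mechanism as in \thmref{thm:p'-torsion finiteness in higher chow group}, with Lemma \ref{realisation_iso_good red} supplying the one injectivity statement that Lemma \ref{lem:Real-iso infty} does not cover. Take $i = d+1$ and $j = 1$. By (\ref{motivic-chow-iso-p'-torsion}) we have $\CH^{d+1}(\overline{X},1)\lbrace p'\rbrace \cong H^{2d+1}_M(\overline{X},\Z(d+1))\lbrace p'\rbrace$. The exact sequence (\ref{eqn:H1-fin-3}) then exhibits this group as a quotient of $H^{2d}_M(\overline{X}, d+1) = \oplus_{l\neq p} H^{2d}_M(\overline{X},\Q_l/\Z_l(d+1))$. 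It therefore suffices to show that $H^{2d}_M(\overline{X}, d+1)$ is finite.

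First we pass to the étale side. Lemma \ref{realisation_iso_good red} says that
\[
\epsilon^{2d,d+1}_{\overline{X},m}\colon H^{2d}_M(\overline{X},\Z/m(d+1)) \to H^{2d}_\et(\overline{X},\Z/m(d+1))
\]
is an isomorphism for every $m\in\mathbb{L}$. Filtered colimits are exact, so taking the colimit over $m=l^v$ for each $l\neq p$ and summing gives an isomorphism (in particular an injection)
\[
\epsilon^{2d,d+1}_{\overline{X}}\colon H^{2d}_M(\overline{X}, d+1)\to H^{2d}_\et(\overline{X}, d+1).
\]
Lemma \ref{lem:Real-iso infty} cannot be used directly here. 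Its isomorphism range $j\ge \min\{i, d+2\}$ fails for $j=d+1$, $i=2d$ once $d\ge 2$, and $i=j+1$ holds only when $d=2$. This is why the good-reduction lemma is needed.

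Next we check that the target is finite. For $i=2d$ and $n=d+1$, the excluded degrees in Corollary \ref{good-reduction-finiteness} are $2n-1,2n,2n+1$, that is, $2d+1, 2d+2, 2d+3$. Since $2d$ is not among them, $H^{2d}_\et(\overline{X},d+1)$ is finite. Alternatively, Theorem \ref{projective-local field-bounded} gives the same conclusion: $2d < 2n-d-1 = d+1$ fails in general, but $2d\notin[d+1,2d+2]$ is false too, so the good-reduction corollary is the right input.

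Combining the two steps, $H^{2d}_M(\overline{X},d+1)$ injects into a finite group and is therefore finite. Its quotient $\CH^{d+1}(\overline{X},1)\lbrace p'\rbrace$ is then finite as well. The only substantive step is the injectivity of the realization map, and that is exactly the content of Lemma \ref{realisation_iso_good red}. The rest is bookkeeping: checking that the degree $2d$ avoids the excluded range $\{2d+1,2d+2,2d+3\}$, and that the colimit of isomorphisms is again an isomorphism.
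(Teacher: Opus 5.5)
Your proposal is correct and follows the paper's proof: identify $\CH^{d+1}(\overline{X},1)\{p'\}$ with $H^{2d+1}_M(\overline{X},\Z(d+1))\{p'\}$, use (\ref{eqn:H1-fin-3}) with $i=d+1$, $j=1$ to reduce to finiteness of $H^{2d}_M(\overline{X},d+1)$, pass to \'etale cohomology via Lemma~\ref{realisation_iso_good red}, and conclude with Corollary~\ref{good-reduction-finiteness}, since $2d\notin\{2d+1,2d+2,2d+3\}$. Your ``Alternatively'' sentence about Theorem~\ref{projective-local field-bounded} contradicts itself, because that theorem does not cover $i=2d$, $n=d+1$ once $d\ge 1$; you rightly discard it, so it does not affect the argument.
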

\begin{proof}

By \eqref{motivic-chow-iso-p'-torsion}, we have 
\begin{equation}
     \CH^{d+1}(X, 1) \lbrace p' \rbrace \cong H_{M}^{2d+1}(X, \mathbb{Z}(d+1) ) \lbrace p' \rbrace.
\end{equation}
Using the short exact sequence \eqref{eqn:H1-fin-3} with $i = d+1$ and $j=1$, it suffices to show that the group $H_{M}^{2d}(X, d+1)$ is finite. We now apply Lemma \ref{realisation_iso_good red} to reduce the problem to showing that $H_{\et}^{2d}(X, d+1)$ is finite. Indeed, Lemma \ref{realisation_iso_good red} provides an isomorphism $H_{M}^{2d}(X, d+1) \cong H_{\et}^{2d}(X, d+1)$. The latter group is finite by Corollary \ref{good-reduction-finiteness}. This completes the proof.
	\end{proof}

\begin{prop}\label{finitess of torsion in tame class group}
	The group $C^{t}(X)\lbrace p' \rbrace$ is finite.
\end{prop}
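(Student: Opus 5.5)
We will prove that $C^{t}(X)\lbrace p'\rbrace$ is finite by comparing the tame idele class group with a motivic cohomology group of the compactification, and then reducing to Corollary~\ref{torsion in SK1} together with a finiteness statement for the boundary.

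\textbf{Step 1: the comparison sequence.} Write $\overline{X}$ for the smooth compactification with good reduction and $D=\overline{X}\setminus X$. By the construction of $C^{t}(X)$ in \cite{GKR}, the tame class group sits in an exact sequence whose terms are, on one side, $SK_1(\overline{X})\cong \CH^{d+1}(\overline{X},1)$, and on the other, a contribution from the boundary. This boundary contribution is a quotient or subgroup of $\bigoplus_{x}K^M_1$ of the henselian (tame) local fields at the points of $D$, modulo the tame ramification subgroups. Concretely, we expect to use the description
\[
H^{2d+1}_{M,c}(X,\Z(d+1)) \to C^{t}(X)
\]
from \cite{GKR}. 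There, after completing or modding out by $p$-primary and tame-units parts, $C^{t}(X)\lbrace p'\rbrace$ is controlled by $H^{2d+1}_{M,c}(X,\Z(d+1))\lbrace p'\rbrace$.

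\textbf{Step 2: localization.} We will use the localization sequence for compactly supported motivic cohomology,
\[
H^{2d}_{M}(D,\Z(d+1))\to H^{2d+1}_{M,c}(X,\Z(d+1))\to H^{2d+1}_{M}(\overline{X},\Z(d+1)).
\]
Passing to $p'$-torsion, finiteness of the middle term reduces to two inputs:
\begin{itemize}
\item the finiteness of $\CH^{d+1}(\overline{X},1)\lbrace p'\rbrace$, which is Corollary~\ref{torsion in SK1};
\item a bound on the image of $H^{2d}_{M}(D,\Z(d+1))\lbrace p'\rbrace$.
\end{itemize}
Torsion does not pass through exact sequences directly. We will handle this in the standard way by working instead with the finite-coefficient groups $H^{*}(-,i)=\varinjlim_{n\in\mathbb{L}}$, using the sequence \eqref{eqn:H1-fin-3}. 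It then suffices to show that $H^{2d}_{M,c}(X,d+1)$ is finite.

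\textbf{Step 3: passing to étale cohomology.} For the boundary term, $D$ has dimension $d-1$ and weight $d+1\ge (d-1)+cd(k)$. Stratifying $D$ into smooth pieces and using Lemma~\ref{lem:Real-iso infty}, the realization map is an isomorphism onto the corresponding étale group. Theorem~\ref{thm:finiteness for quasiprojective over N-local fields} with $N=1$ and $n=d+1\notin[0,\dim D+1]$ then gives finiteness. For $\overline{X}$, Lemma~\ref{realisation_iso_good red} identifies $H^{2d}_M(\overline{X},d+1)$ with the finite group $H^{2d}_{\et}(\overline{X},d+1)$ of Corollary~\ref{good-reduction-finiteness}. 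The five lemma then gives finiteness of $H^{2d}_{M,c}(X,d+1)$, and hence of $H^{2d+1}_{M,c}(X,\Z(d+1))\lbrace p'\rbrace$.

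\textbf{Main obstacle.} The hardest step is Step 1: pinning down precisely how $C^{t}(X)$ relates to $H^{2d+1}_{M,c}(X,\Z(d+1))$, so that $p'$-torsion is transferred and the possibly infinite part of the comparison map does not contribute torsion. We would first try to show that the comparison map is injective on $p'$-torsion, or that its cokernel is uniquely $p'$-divisible. To do this we would use the tame-units filtration, whose graded pieces are pro-$p$ or finitely generated, together with the divisibility results from \cite{GKR}.
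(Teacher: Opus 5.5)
Your Steps 2--3 follow the paper's route: localization for $D=\overline{X}\setminus X\subset\overline{X}$ with prime-to-$p$ torsion coefficients, Lemma~\ref{realisation_iso_good red} plus Corollary~\ref{good-reduction-finiteness} for $\overline{X}$, and a realization-plus-\'etale-finiteness argument for $D$. However, there is a genuine gap exactly where you flagged it. Step~1 is never established: you posit a comparison map $H^{2d+1}_{M,c}(X,\Z(d+1))\to C^{t}(X)$ but never prove the property needed to transfer $p'$-torsion.

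The missing input is \cite[Proposition 8.15]{GKR}, which gives an isomorphism $H^{2d+1}_{c,M}(X,\Z[1/p](d+1))\cong C^{t}(X)[\tfrac1p]$. With it, no analysis of tame units is needed. The kernel of $C^{t}(X)\to C^{t}(X)[\tfrac1p]$ is $p$-primary, so $C^{t}(X)\lbrace p'\rbrace$ embeds into $H^{2d+1}_{c,M}(X,\Z[1/p](d+1))\lbrace p'\rbrace$. You then apply the Bockstein sequence for $\Z[1/p]\xrightarrow{m}\Z[1/p]\to\Z/m$. Working with $\Z[1/p]$ rather than $\Z$ coefficients is also what the paper's motivic formalism requires when $\Char(k)=p$, and it costs nothing for $p'$-torsion.

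Your proposed fallback would not close the gap. Injectivity of $H^{2d+1}_{M,c}(X,\Z(d+1))\lbrace p'\rbrace\to C^{t}(X)\lbrace p'\rbrace$ bounds the motivic group by $C^{t}(X)\lbrace p'\rbrace$, which is the opposite of what you need. A uniquely $p'$-divisible cokernel is also not enough on its own unless you control the kernel modulo $n$, for instance by knowing it is $p$-primary.

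The second problem is the boundary term. ``Stratifying $D$ into smooth pieces and applying Lemma~\ref{lem:Real-iso infty}'' does not work as stated, for two reasons. First, localization along a stratification produces compactly supported groups of non-proper smooth strata, which that lemma does not cover. Second, over a local field of characteristic $p$, which is imperfect, the regular locus of $D$ need not be smooth at all; for example, the curve $x^p=ty^p$ in $\P^2_{\F_q((t))}$ is regular but nowhere smooth.

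What you actually need is a uniform bound on $\lvert H^{2d-1}_{M}(D,\Z/m(d+1))\rvert$ for $m\in\mathbb{L}$, where $D$ is an arbitrary, possibly singular, projective scheme with $\dim D\le d-1$. This comes from the realization isomorphism for singular schemes in weight $\ge \dim D+cd(k)$, which \cite{GKR} establishes via cdh descent and $p$-alterations. The paper simply cites \cite[Corollary 10.11]{GKR} for this bound.

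Once these two inputs are supplied, the rest of your argument is the paper's proof. Using the $(\Q/\Z)'$-colimit instead of a uniform bound on $\lvert H^{2d}_{c,M}(X,\Z/m(d+1))\rvert$ is a harmless variant. The appeal to Corollary~\ref{torsion in SK1} is unnecessary.
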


\begin{proof}  Note that $C^{t}(X)\lbrace p' \rbrace $ injects into $C^{t}(X)[ \frac{1}{p} ] $. So it is enough to show that $C^{t}(X)[ \frac{1}{p} ] \lbrace p' \rbrace$ is finite. By \cite[Prop 8.15]{GKR}, we know that 
	
	\begin{equation}
 H_{c,M}^{2d+1}(X,\mathbb{Z}[1/p](d+1) ) \cong C^{t}(X)[
   \frac{1}{p}].
	\end{equation}

We have exact triangle of complexes,
\begin{equation}
0 \rightarrow \mathbb{Z}[\frac{1}{p}] \xrightarrow{m}  \mathbb{Z}[\frac{1}{p}] \rightarrow \mathbb{Z}[\frac{1}{p}]/m \rightarrow 0.
\end{equation}	
As $p\nmid m$, $ \mathbb{Z}[\frac{1}{p}]/m \cong \mathbb{Z}/m$. We therefore have 
\begin{equation}\label{ses-multiplication by m}
0 \rightarrow \mathbb{Z}[\frac{1}{p}] \xrightarrow{m}  \mathbb{Z}[\frac{1}{p}] \rightarrow \mathbb{Z}/m \rightarrow 0.
\end{equation}
	
Using \eqref{ses-multiplication by m}, we get an exact sequence of cohomology groups,
	\begin{equation}
	0 \to H_{c,M}^{2d}(X, \mathbb{Z}[1/p](d+1) )/m \rightarrow H_{c,M}^{2d}(X, \mathbb{Z}/m(d+1)) \rightarrow
	_mH_{c,M}^{2d+1}(X,\mathbb{Z}[1/p](d+1) )  \to 0.
	\end{equation}
	
Therefore, it is enough to show that
\begin{equation}
\lvert	H_{c,M}^{2d}(X, \mathbb{Z}/m(d+1)) \rvert \leq M
	\end{equation}
for all $m \in \mathbb{L}$ and for some integer $M \geq 0$. Now using the long exact sequence,
	\begin{equation}
... \rightarrow	H_{M}^{2d-1}(\overline{X} \setminus X, \mathbb{Z}/m(d+1)) \rightarrow	H_{c,M}^{2d}(X, \mathbb{Z}/m(d+1)) \rightarrow  	H_{M}^{2d}(\overline{X}, \mathbb{Z}/m(d+1)) \rightarrow,
	\end{equation}
		we are reduced to show the existence of two integers $M'$ and $M''$ such that the following two conditions hold.

\begin{enumerate}

\vspace{1mm}
    \item $ \lvert  H_{M}^{2d-1}(\overline{X} \setminus X, \mathbb{Z}/m(d+1)) \rvert \leq M'$, 
    \vspace{1mm}

    \item $ \lvert	H_{M}^{2d}(\overline{X}, \mathbb{Z}/m(d+1)) \rvert \leq M''$.
\end{enumerate}
	The result for $\overline{X} \setminus X$ follows from \cite[Corollary 10.11]{GKR} and for $\overline{X}$, it follows from combining Lemma \ref{realisation_iso_good red} and Proposition \ref{etale cohomology bounded for finite and local}. This completes the proof.
    \end{proof}

We are now ready to prove the main result of this section, i.e., the kernel of tame reciprocity map $\rho^{t}_{X}$ is uniquely $p'$-divisible.

\begin{thm}
	The kernel of the tame reciprocity map $\rho^{t}_{X} : C^{t}(X) \rightarrow \pi^{ab,t}_{1}(X)$ is uniquely $p'$-divisible.
\end{thm}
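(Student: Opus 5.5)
The plan is to split the statement into its two halves, $p'$-divisibility and absence of $p'$-torsion, and treat them separately. Set $K := \Ker(\rho^{t}_{X})$. By \cite[Theorem 1.5]{GKR}, $K$ is already $p'$-divisible whenever $\overline{X}$ has good reduction, which is the standing hypothesis of this section. What remains is to show $K\{p'\} = 0$.

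\textbf{Step 1: $K\{p'\}$ is finite.} Since $K \subset C^{t}(X)$, we have $K\{p'\} \subset C^{t}(X)\{p'\}$. The latter group is finite by Proposition~\ref{finitess of torsion in tame class group}. Hence $K\{p'\}$ is a finite group.

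\textbf{Step 2: $K\{p'\}$ is $p'$-divisible.} Let $x \in K\{p'\}$ have order $a$ with $(a,p)=1$, and let $n \in \mathbb{L}$. Because $K$ is $p'$-divisible, there is $y \in K$ with $ny = x$. Then $any = ax = 0$, so $y$ is torsion of order dividing $an$, which is prime to $p$. Thus $y \in K\{p'\}$, and $K\{p'\}$ is $p'$-divisible.

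\textbf{Step 3: conclude.} A finite abelian group of order prime to $p$ that is $p'$-divisible must vanish: multiplication by its order $e \in \mathbb{L}$ is surjective, yet it is also the zero map. Applying this to $K\{p'\}$ gives $K\{p'\} = 0$. Combined with $p'$-divisibility, $K$ is uniquely $p'$-divisible. This is exactly the argument pattern already used for $D\{p'\}$ in the proof of Theorem~\ref{thm:structure of etale-K-group over N-local}.

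The formal group theory in Steps 2 and 3 is routine. All the real content sits in Step 1, namely in Proposition~\ref{finitess of torsion in tame class group}. That proposition rests in turn on Lemma~\ref{realisation_iso_good red}, the injectivity of the étale realization in bidegree $(2d, d+1)$, and on the uniform bound of Proposition~\ref{etale cohomology bounded for finite and local} for good reduction. Since these are established earlier in the paper, the only point to double-check is the hypothesis match. The $p'$-divisibility from \cite{GKR} is stated for the same reciprocity map $\rho^{t}_{X}$ on $C^{t}(X)$, with $\overline{X}$ smooth projective of good reduction, so the two inputs fit together.
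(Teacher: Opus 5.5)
Your proposal is correct and is essentially the paper's own argument: $p'$-divisibility of $\Ker(\rho^{t}_{X})$ from \cite{GKR}, finiteness of $\Ker(\rho^{t}_{X})\{p'\}$ via its inclusion into the finite group $C^{t}(X)\{p'\}$ (Proposition~\ref{finitess of torsion in tame class group}), and the vanishing of a finite $p'$-divisible $p'$-torsion group. The only difference is that you spell out in Step 2 why the $p'$-torsion subgroup inherits $p'$-divisibility, which the paper asserts in one line.
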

\begin{proof}
	Note that $\Ker(\rho_{X}^{t})$ is $p'$-divisible by \cite[Theorem 13.3]{GKR}. It is therefore enough to show that $\Ker(\rho_{X}^{t}) \lbrace p' \rbrace = 0 $. Note that the group $\Ker(\rho_{X}^{t}) \lbrace p' \rbrace$ is $p'$-divisible because the group $\Ker(\rho_{X}^{t})$ itself is $p'$-divisible. Therefore, it suffices to show that $\Ker(\rho_{X}^{t}) \lbrace p' \rbrace$ is finite, but this follows from  Corollary \ref{finitess of torsion in tame class group}. This completes the proof.	
\end{proof}

\noindent\emph{Acknowledgements.} The second and the third authors would like to thank the Institute of Mathematical Sciences (IMSc), Chennai, for hosting their visits in June--July 2025, during which this work was completed.

\end{document}